
\documentclass[12pt]{amsart}
\synctex=1

\usepackage{url, 
	amssymb,setspace, mathrsfs,fontenc, comment}
\usepackage[alphabetic]{amsrefs}
\usepackage{fullpage} 
\usepackage{color}
\usepackage{tikz-cd}
\usepackage[all]{xy}
\usepackage{amsmath}

\usepackage{url, 
	amssymb,setspace, mathrsfs,fontenc}
\usepackage{amsrefs}
\usepackage{graphicx}
\usepackage[mathcal]{euscript}
\usepackage{verbatim}
\usepackage{hyperref}
\hypersetup{backref=true}
\usepackage{mathtools}
\usepackage{tikz}
\usetikzlibrary{chains}


\tikzset{node distance=2em, ch/.style={circle,draw,on chain,inner sep=2pt},chj/.style={ch,join},every path/.style={shorten >=4pt,shorten <=4pt},line width=1pt,baseline=-1ex}

\newtheorem{thm}{Theorem}
\newtheorem{lem}[thm]{Lemma}
\newtheorem{prop}[thm]{Proposition}
\newtheorem{conj}[thm]{Conjecture}
\newtheorem{cor}[thm]{Corollary}
\newtheorem{defe}[thm]{Definition}

\theoremstyle{remark}
\newtheorem{rem}[thm]{Remark}
\newtheorem{exam}[thm]{Example}


\DefineSimpleKey{bib}{myurl}
\newcommand\myurl[1]{\url{#1}}
\BibSpec{webpage}{
	+{}{\PrintAuthors} {author}
	+{,}{ \textit} {title}
	+{}{ \parenthesize} {date}
	+{,}{ \myurl} {myurl}
}
\usepackage{arydshln}

\newcommand{\nc}{\newcommand}

\nc{\ssec}{\subsection}

\nc{\on}{\operatorname}

\nc{\sE}{\mathscr{E}}
\nc{\sF}{\mathscr{F}}
\nc{\sL}{\mathscr{L}}
\nc{\sD}{\mathscr{D}}
\nc{\sA}{\mathscr{A}}

\nc{\cC}{\mathcal{C}}
\nc{\cG}{\mathcal{G}}
\nc{\cV}{\mathcal{V}}
\nc{\CB}{\mathcal{B}}
\nc {\K}{\mathcal{K}}

\nc{\cE} {\mathcal{E}}
\nc{\Kl}{\mathrm{Kl}}
\nc{\cO}{\mathcal{O}}
\nc{\cF}{\mathcal{F}}
\nc{\cZ}{\mathcal{Z}}
\nc{\bcZ}{\overline{\mathcal{Z}}}
\nc{\bcB}{\overline{\mathcal{B}}}
\nc{\cD}{\mathcal{D}}
\nc{\cDt}{\mathcal{D}^\times}
\nc{\cH}{\mathcal{H}}
\nc{\bZ}{\mathbb{Z}}
\nc{\bH}{\mathbb{H}}
\nc{\bQ}{\mathbb{Q}}
\nc{\bR}{\mathbb{R}}
\nc{\bC}{\mathbb{C}}
\nc{\bQl}{\overline{\mathbb{Q}}_\ell}
\nc{\bQlt}{\bQl^\times} 
\nc{\FG}{\mathrm{FG}}
\nc{\dR}{\mathrm{dR}}
\nc{\dv}{\dot{v}}
\nc{\du}{\dot{u}}

\nc{\uG}{\underline{G}}
\nc{\uc}{\underline{c}}
\nc{\uu}{\underline{u}}
\nc{\cU}{\mathcal{U}}
\nc{\rat}{\mathrm{rat}}
\nc{\Hyp}{\mathrm{Hyp}}
\nc{\Lie}{\mathrm{Lie}}
\nc{\ctheta}{\check{\theta}}
\nc{\nil}{\mathrm{nil}}
\nc{\bLX}{\overline{LX}}
\nc{\bOmega}{\overline{\Omega}}
\nc{\tOmega}{\widetilde{\Omega}}

\nc{\fF}{\mathfrak{F}}
\nc{\fB}{\mathfrak{B}}
\nc{\fZ}{\mathfrak{Z}}
\nc{\fx}{\mathfrak{x}}
\nc{\fy}{\mathfrak{y}}
\nc{\fb}{\mathfrak{b}}
\nc{\fk}{\mathfrak{k}}
\nc{\fI}{\mathfrak{i}}
\nc{\fj}{\mathfrak{j}}
\nc{\fg}{\mathfrak{g}}
\nc{\fu}{\mathfrak{u}}
\nc{\fl}{\mathfrak{l}}
\nc{\fn}{\mathfrak{n}}
\nc{\cP}{\mathcal{P}}
\nc{\cQ}{\mathcal{Q}}
\nc{\ft}{\mathfrak{t}}
\nc{\fz}{\mathfrak{z}}
\nc{\fc}{\mathfrak{c}}
\nc{\cfc}{\check{\mathfrak{c}}}
\nc{\fh}{\mathfrak{h}}
\nc{\fp}{\mathfrak{p}}
\nc{\cfp}{\mathring{\mathfrak{p}}}
\nc{\bone}{\mathbf{1}}
\nc{\tg}{\mathtt{g}}
\nc{\hfg}{\widehat{\fg}}
\nc{\ch}{\check{\fh}}
\nc{\hP}{\hat{P}}
\nc{\hg}{\widehat{\mathfrak{g}}}
\nc{\gO}{\mathfrak{g}[\![t]\!]}
\nc{\Ug}{\widehat{U}(\mathfrak{g})}
\nc{\dl}{/\!\!/}

\nc{\bGm}{\mathbb{G}_m}
\nc{\bGa}{\mathbb{G}_a}
\nc{\bL}{\mathbf{L}}
\nc{\bK}{\mathbf{K}}
\nc{\bJ}{\mathbf{J}}
\nc{\bI}{\mathbf{I}}
\nc{\bV}{\mathbb{V}}
\nc{\bM}{\mathbb{M}}
\nc{\bP}{\mathbb{P}}
\nc{\bA}{\mathbb{A}}
\nc{\bN}{\mathbb{N}}

\nc {\Q}{\mathrm{Q}}
\nc{\diag}{\mathrm{diag}}
\nc{\diff}{\mathrm{diff}}
\nc{\ev}{\mathrm{ev}}
\nc{\Res}{\mathrm{Res}}
\nc{\Fl}{\mathcal{F}\ell}
\nc{\Ad}{\mathrm{Ad}}
\nc{\ad}{\mathrm{ad}}
\nc{\pr}{\mathrm{pr}}
\nc{\Sl}{\mathfrak{sl}}
\nc{\gl}{\mathfrak{gl}}
\nc{\ra}{\rightarrow}
\nc{\tra}{\twoheadrightarrow}
\nc{\hra}{\hookrightarrow}
\nc{\quo}{\mathopen{ /\!/}}
\nc{\GL}{\mathrm{GL}}
\nc{\SL}{\mathrm{SL}}
\nc{\Sp}{\mathrm{Sp}}
\nc{\SO}{\mathrm{SO}}
\nc{\so}{\mathfrak{so}}
\nc{\PGL}{\mathrm{PGL}}
\nc{\Bun}{\mathrm{Bun}}
\nc{\supp}{\mathrm{supp}}
\nc{\bgamma}{\bar{\gamma}}
\nc{\ab}{\mathrm{ab}}
\nc{\td}{\mathrm{d}}
\nc{\Ht}{\mathrm{ht}}
\nc{\tX}{\tilde{X}}

\nc         {\rar}[1]       {\stackrel{#1}{\longrightarrow}}

\nc{\fa}{\mathfrak{a}}
\nc{\Hit}{\mathrm{Hit}}

\nc{\RS}{\mathrm{RS}}
\nc{\Loc}{\mathrm{Loc}}
\nc{\tLoc}{\widetilde{\mathrm{Loc}}}
\nc{\reg}{\mathrm{reg}}
\nc{\im}{\mathrm{Im}}

\nc{\tp}{\mathfrak{p}}
\nc{\cA}{\mathcal{A}}
\nc{\cY}{\mathcal{Y}}

\nc{\opp}{\mathrm{opp}}
\nc{\Ind}{\mathrm{Ind}}
\nc{\sAn}{\mathrm{can}}
\nc{\Lg}{\check{\fg}}
\nc{\cDelta}{\check{\Delta}}
\nc{\cPhi}{\check{\Phi}}
\nc{\LV}{\check{V}}
\nc{\Lh}{\check{h}}
\nc{\LG}{\check{G}}
\nc{\cT}{\check{T}}
\nc{\ct}{\check{\ft}}
\nc{\cB}{\check{B}}
\nc{\cb}{\check{\fb}}
\nc{\cN}{\check{N}}
\nc{\sN}{\mathcal{N}}
\nc{\cn}{\check{\fn}}
\nc{\Spec}{\mathrm{Spec}}
\nc{\End}{\mathrm{End}}
\nc{\crho}{\check{\rho}}
\nc{\clambda}{\check{\lambda}}

\nc{\rX}{\mathring{X}}
\nc{\ru}{\mathring{u}}

\nc{\sW}{\mathscr{W}}
\nc{\sH}{\mathscr{H}}
\nc{\sV}{\mathscr{V}}
\nc{\geom}{\mathrm{geom}}
\nc{\Irr}{\mathrm{Irr}}
\nc{\fm}{\mathfrak{m}}
\nc{\aff}{\mathrm{aff}}
\nc{\Aut}{\mathrm{Aut}}
\nc{\cJ}{\mathcal{J}}
\nc{\fs}{\mathfrak{s}}
\nc{\Stab}{\mathrm{Stab}}
\nc{\st}{\mathrm{st}}
\nc{\tw}{{\widetilde{w}}}
\nc{\gen}{\mathrm{gen}}
\nc{\genn}{\mathrm{genn}}
\nc{\sss}{\mathrm{ss}}
\nc{\fsp}{\mathfrak{sp}}
\nc{\Hom}{\mathrm{Hom}}
\nc{\bm}{\mathbf{m}}
\nc{\HG}{\mathcal{HG}}
\nc{\Gal}{\mathrm{Gal}}
\nc{\Sym}{\mathrm{Sym}}
\nc{\rank}{\mathrm{rank}}

\nc{\calX}{\mathcal{X}}
\nc{\tP}{\mathtt{P}}
\nc{\tL}{\mathtt{L}}
\nc{\tU}{\mathtt{U}}

\nc{\tW}{\widetilde{W}}
\nc{\tdb}{\tilde{b}}
\nc{\tdd}{\tilde{d}}
\nc{\tv}{\tilde{v}}
\nc{\Hk}{\on{Hk}}
\nc{\cL}{\mathcal{L}}
\nc{\talpha}{\widetilde{\alpha}}
\nc{\tQ}{{\widetilde{Q}}}
\nc{\ochi}{\overline{\chi}}
\nc{\tdelta}{\widetilde{\Delta}}
\nc{\wt}{\mathrm{wt}}
\nc{\fQ}{\mathfrak{Q}}

\nc{\Rep}{\mathrm{Rep}}
\nc{\Conn}{\mathrm{Conn}}
\nc{\Hecke}{\mathrm{Hecke}}
\nc{\Gr}{\mathrm{Gr}}
\nc{\GR}{\mathrm{GR}}
\nc{\IC}{\mathrm{IC}}
\nc{\Std}{\mathrm{Std}} 
\nc{\Db}{\mathrm{D}^{\mathrm{b}}}
\nc{\tr}{\mathrm{tr}}
\nc{\gr}{\mathrm{gr}}
\nc{\tmin}{\mathrm{min}}
\nc{\Fun}{\mathrm{Fun}~}

\nc{\bbA}{\mathbb{A}}
\nc{\mO}{\mathrm{O}}


\newcommand{\quash}[1]{}


\setcounter{tocdepth}{1}
\setcounter{tocdepth}{2}

\AtEndDocument{\bigskip{\footnotesize

\textsc{Tsao-Hsien Chen, School of Mathematics, University of Minnesota, Twin cities, Minneapolis, MN 55455 } \par
\textit{E-mail address}: \texttt{chenth@umn.edu} \par
		
\textsc{Lingfei Yi, Shanghai Center for Mathematical Sciences, Fudan University, Shanghai 200438, China} \par
\textit{E-mail address}: \texttt{yilingfei@fudan.edu.cn} \par
}}

\begin{document} 
\renewcommand{\thepart}{\Roman{part}}

\renewcommand{\partname}{\hspace*{20mm} Part}
	
\begin{abstract}
We study the 
singularities of closures of Iwahori orbits
on loop spaces of symmetric varieties 
extending the celebrated work of Lusztig-Vogan  
to the affine setting.
We show that the
$\IC$-complexes of orbit closures (with possible non-trivial coefficients) are pointwise pure and 
satisfy a
parity vanishing property. We apply those geometric results to study the affine Lusztig-Vogan modules and obtain 
fundational results about them including 
 the positivity properties of the affine Kazhdan-Lusztig-Vogan polynomials.
Along the way, we 
construct conical transversal slices 
inside loop spaces of symmetric varieties generalizing the work
of Mars-Springer in the finite dimensional setting.
Our results answer a question of Lusztig. 

We deduce results  for singularities of spherical orbit closures
and provide applications to
relative Langlands duality including the 
positivity for the 
relative Kostka-Foulkes polynomials
and the formality conjecture.
\end{abstract}

\title{Singularities of  
orbit closures in
loop spaces of symmetric varieties} 
\author{Tsao-Hsien Chen and Lingfei Yi}
\date{\today} 
\maketitle
	
\tableofcontents

\section{Introduction}
Let $G$ be a  connected reductive group over $k=\overline{\mathbb F}_p$
of characteristic $p\neq 2$,
and $\theta:G\to G$ an involution.
Let $K=G^\theta$ be the symmetric subgroup of fixed point set of $\theta$.
In their celebrated work \cite{LV}, Lusztig-Vogan 
explore connections between the geometry of $K$-orbits on the flag variety 
of $G$ and representation theory of 
real groups.
They
establish deep results on 
singularities of $K$-orbit closures on 
flag varieties, along the way, they introduce several fundamental objects in representation theory, such as Lusztig-Vogan modules and Kazhdan-Lusztig-Vogan polynomials, 
providing representation theoretic and combinatorial interpretation
for the singularities.

In \cite[page 27]{L2}, Lusztig asked a question about extending the work \cite{LV} to the affine setting,  namely, one considers the $LK$-orbits on
 the affine flag variety 
$\Fl=LG/I_0$, where $LK=K(\!(t)\!)$ (resp.  $LG=G(\!(t)\!)$)
is the loop group of $K$ (resp. $G$) and 
$I_0\subset L^+G=G[\![t]\!]$ is an Iwahori subgorup.
The main results of the paper provide an answer to Lusztig's question.\footnote{In \emph{loc. cit.}, Lusztig remarked that, in 1993,  he  communicated the  question to 
 M. Finkelberg  as a research problem.}

We will be working with  $I_0$-orbits on the loop space $LX=X(\!(t)\!)$ of the symmetric variety $X=G/K$
instead of $LK$-orbits on $\Fl$.\footnote{
The reason for working with $I_0$-orbits on $LX$
is two-fold: first, 
there is a canonical 
injection 
$LK\backslash\Fl\cong I_0\backslash LG/LK\to I_0\backslash LX$
from the set of  $LK$-orbits on $\Fl$ to the set of 
$I_0$-orbits on $LX$, which is a bijection if $K$ is connected. 
Thus we are dealing with a more 
general setting.
Secondly, as we shall see, the geometry of $I_0$-orbits 
closures on $LX$
behaves much nicer than $LK$-orbit closures on $\Fl$, see Section \ref{placid}.}
Let $\sD$ be the set of pairs $(\cL,\cO)$ where $\cL$ is an $I_0$-equivariant local system on an 
$I_0$-orbit $\cO$ in $LX$.
We show that the free $\mathbb Z[q,q^{-1}]$-module $M$ (here $q$ is an indeterminate) with  basis indexed by
$\sD$ has a canonical module structure over the affine Hecke algebra of $G$, to be called the \emph{affine Lusztig-Vogan module}. 
We show that the $\IC$-complexes for closures of $I_0$-orbits on $LX$ give rise to another basis of 
$M$, called the 
\emph{Kazhdan-Lusztig basis}, 
with several remarkable properties.
The entries of the 
transition matrix between the 
standard basis and the Kazhdan-Lusztig basis
are polynomials 
in $q$, to be called the \emph{affine Kazhdan-Lusztig-Vogan polynomials}, and 
we provide an algorithm to compute them. 
We further show that the  $\IC$-complexes of orbit closures 
are pointwise pure and satisfy a parity vanishing property
and we 
deduce from it the positivity of 
the affine Kazhdan-Lusztig-Vogan polynomials.

The proof strategy follows the one in \cite{LV} 
but there are several new difficulties in the affine setting and hence requires new ideas.
First,  the combinatorics and geometry of Iwahori orbits are much more complicated.
For example, Iwahori orbits 
and their closures are all semi-infinite (dimension and codimension are
all infinite) and the theory of perverse sheaves and Verdier duality on them are  delicate. 
To this end, we prove 
several fundational results 
on Iwahori orbits
including 
orbit parametrizations, characterization of closed orbits, and
the placidness of orbit closures. The latter enables us to reduce  many technical difficulties in sheaf theory 
to the finite dimensional setting.
Secondly, the proof of parity vanishing of $\IC$-complexes
in \emph{loc. cit.}
uses representation theory of real groups, which is not available 
in the affine setting.  In \cite{MS}, Mars-Springer found a more geometric argument for the parity vanishing which bypass the use of real groups.\footnote{That being said, the  use of real groups in \cite{LV} should be viewed 
as an important feature of symmetric varieties since it provides
representation interpretations of the singularities.
See Section \ref{Intro: RKF} for relevant discussions in the affine setting.} 
We extend Mars-Springer's argument to the affine setting. A key geometric ingredient here is a construction of certain conical transversal 
slices in $LX$
generalizing  the construction due to Mars-Springer in the case of $X$.

We deduce results for singularities of 
spherical orbit closures  in $LX$
and provide applications to the relative Langlands duality including 
the  positivity for the 
relative Kostka-Foulkes polynomials,
a semisimplicity criterion for the (abelian) relative Satake category,
and the formality conjecture.

The recent works \cite{CN3}, \cite{QuantumSL2}, \cite{BZSV}
relate the singularities of orbit closures in $LX$
with harmonic analysis, real groups, and quantum groups. We mention some results in these directions.

We now describe the paper in more details.

\subsection{Main results}
To simplify the discussions, we will assume $G$ is simply connected in the introduction.
We fix a $\theta$-stable Borel pair $(B_0,T_0)$ 
and let $I_0\subset L^+G$ be the corresponding Iwahori subgroup.  
Let $\mathrm W_\aff$ be the affine Weyl group of $G$.  
Let $H$ be the  affine Hecke algebra associated to $\mathrm W_\aff$
 over $\mathbb Z[q,q^{-1}]$ with standard basis elements $T_w,w\in\mathrm W_\aff$.

Recall the free $\mathbb Z[q^{},q^{-1}]$-module 
$M$ with basis indexed by $(\cL,\cO)\in\sD$. 
\begin{thm}\label{main 1}
The module $M$ has a unique module structure over the 
affine Hecke algebra $H$ such that $T_w$ acts according to the formula
~\eqref{eq:aff Hecke act}.
\end{thm}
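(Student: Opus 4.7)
\emph{Uniqueness and setup.} Since $G$ is simply connected, the affine Hecke algebra $H$ is generated as a $\mathbb Z[q,q^{-1}]$-algebra by the elements $T_s$ attached to the simple affine reflections $s\in\mathrm W_\aff$. Any $H$-module structure on $M$ is therefore determined by the action of the $T_s$'s, which is prescribed by \eqref{eq:aff Hecke act} specialised to $w=s$; hence uniqueness is automatic and the content of the theorem is existence. For each simple affine reflection $s$ let $\mathbf P_s\subset LG$ denote the corresponding rank-one parahoric, so $\mathbf P_s/I_0\cong\mathbb P^1$, and define $\rho_s:M\to M$ by the right-hand side of \eqref{eq:aff Hecke act} with $w=s$. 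By the standard presentation of $H$, it suffices to verify (a) the quadratic relation $(\rho_s-q)(\rho_s+1)=0$ for every simple $s$, and (b) the braid relation $\rho_s\rho_t\rho_s\cdots=\rho_t\rho_s\rho_t\cdots$ of length $m_{st}$ for every pair $s,t$. Granted (a) and (b), the induced $H$-action produces operators $T_w$ for each $w\in\mathrm W_\aff$, and a routine induction on $\ell(w)$ identifies them with the right-hand side of \eqref{eq:aff Hecke act}.

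For (a), I would fix an $I_0$-orbit $\mathcal O\subset LX$ and analyse its $\mathbf P_s$-saturation $\mathbf P_s\cdot\mathcal O$, whose $I_0$-stratification falls into a short list of combinatorial types (the affine analogues of the complex, real, and imaginary cases of \cite{LV}, distinguished by how the $I_0$-stabiliser of a point of $\mathcal O$ interacts with the root subgroup indexed by $s$). On each type, the composition $\rho_s\circ\rho_s$ is computed as an iterated push-pull of an $I_0$-equivariant local system along the $\mathbb P^1$-fibration $\mathbf P_s\times^{I_0}\mathcal O\to\mathbf P_s\cdot\mathcal O$; the placidness of orbit closures established earlier in the paper reduces this sheaf-theoretic computation to its finite-dimensional counterpart already treated in \cite{LV,MS}, yielding the quadratic identity type by type.

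For (b), I would reduce to a rank-two calculation. The pair $\{s,t\}$ generates a parahoric $\mathbf P_{s,t}\supset I_0$, and each $\mathbf P_{s,t}$-orbit in $LX$ is stratified into $I_0$-orbits according to the rank-two subsystem $\langle s,t\rangle$, of type $A_1\times A_1$, $A_2$, $B_2$, $G_2$, or $\widetilde A_1$. Placidness again reduces the identity on each $\mathbf P_{s,t}$-orbit to the corresponding braid relation on a partial flag variety of a reductive subgroup, which is precisely what is verified in \cite{LV,MS}. The main technical obstacle I anticipate is this rank-two classification: one must enumerate the possible $\mathbf P_{s,t}$-orbit types when $s$ or $t$ is the new affine simple reflection (producing the $\widetilde A_1$ configuration absent in the finite-dimensional setting) and track the induced monodromies on $I_0$-equivariant local systems through the reduction to finite type. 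Assembling (a) and (b) then produces the affine Hecke algebra action, proving the theorem.
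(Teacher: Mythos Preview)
Your approach is viable but takes a genuinely different route from the paper. You propose to \emph{define} the operators $\rho_s$ by the formulas and then verify the quadratic and braid relations by a case analysis reducing to rank-two parahorics. The paper instead constructs $M$ as the Grothendieck group $\K(\cC_{LX})$ of a category of mixed $I_0$-equivariant sheaves on $LX$, and $H$ as $\K(\cC_{LG})$; the convolution product $D(I_0\backslash LG/I_0)\times D(I_0\backslash LX)\to D(I_0\backslash LX)$ is a monoidal action, so associativity (hence the Hecke relations) is automatic on Grothendieck groups. The content of Proposition~\ref{construction of M} is then purely a \emph{computation} of $[\cL_{s_\alpha}]\star[\cL_{\xi,v}]$, reducing via the $\mathbb P^1$-fibration $P_\alpha\times^{I_0}\cO_v\to P_\alpha\cdot\cO_v$ to the same local picture as in \cite[\S4.1.3]{MS}; no relations are ever checked. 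The advantage of the paper's route is that the sheaf-theoretic machinery you already invoke for (a) and (b) delivers the module structure directly, making the relation-checking redundant.

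Two minor corrections to your outline: first, the $\widetilde A_1$ configuration you flag as the ``main technical obstacle'' is actually vacuous for (b), since $m_{st}=\infty$ means there is no braid relation to verify (and $\{s,t\}$ does not generate a proper parahoric in that case). Second, your claim that the braid relations are ``precisely what is verified in \cite{LV,MS}'' is not quite right: both references also use the convolution/function-theoretic construction and obtain the relations for free, so there is no finite-dimensional braid-relation check to cite.
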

Theorem \ref{main 1} is proved in 
Proposition \ref{construction of M}
and Lemma \ref{l:Omega Hecke action}.
Along the way, we obtain a combinatoric
parameterization of $I_0$-orbits on $LX$
and a characterization of closed orbits (see Lemma \ref{l:Iwahori orbirs on LX}
and Lemma \ref{l:orbit type b}).
The module $M$ is an affine version of the Lusztig-Vogan module in \cite[Proposition 1.7]{LV}, to be called the \emph{affine Lusztig-Vogan module}.

To state the next result, we recall some geometric facts 
on $I_0$-orbits in $LX$.
The $I_0$-orbits on $LX$ in general are infinite dimensional.
However, the placidness results in Section \ref{placid} imply that, for a pair of $I_0$-orbits 
$\cO_u,\cO_v\subset LX$ such that $\cO_u\subset\overline\cO_v$,
there is well-defined notion of codimension 
$\on{codim}_{\overline\cO_v}(\overline\cO_u)\in\mathbb Z_{\geq0}$
between their orbit closures. Moreover, in Proposition \ref{dim for LX}
we show that there exists a  dimension theory 
of $LX$, that is,  
one can assign an integer $\delta(v)\in\mathbb Z$ for each 
orbit $\cO_v$ such that 
\begin{equation}\label{delta}
\delta(v)-\delta(u)=\on{codim}_{\overline\cO_v}(\overline\cO_u)
\end{equation}
if $\cO_u$ is in the closure of $\cO_v$.
The integers $\delta(v)$ can be thought as 
a replacement of the dimension of $\cO_v$.

To simplify notations, we will write $(\xi,v)=(\cL_{\xi,v},\cO_v)\in\sD$
and $m_{\xi,v}\in M$ for the corresponding basis element.
We write $u\leq v$ if $\cO_u\subset\overline\cO_v$
and $u<v$ if $u\leq v$ and $\cO_u\neq\cO_v$.

\begin{thm}\label{main 2}
A choice of a dimension theory $\delta=\{\delta(v)\}$ 
of $LX$ gives rise to a unique $\mathbb Z$-linear map 
$D_\delta: M\to M$ such that 
\begin{itemize}
\item [(i)] $D_\delta(qm)=q^{-1}D_\delta(m)$ for $m\in M$.
\item [(ii)] $D_\delta((T_{s_\alpha}+1)m)=q^{-1}(T_{s_\alpha}+1)D_\delta(m)$.
\item [(iii)] For any $(\xi,v)\in\sD$, we have 
\[D_\delta(m_{\xi,v})=q^{-\delta(v)}m_{\xi,v}+\sum_{u<v} b_{\eta,u,\xi,v}m_{\eta,\mu}\]
where $q^{\delta(v)}b_{\eta,u,\xi,v}$ are polynomials in $q$ of degree at most 
$\delta(v)-\delta(u)$. Moreover, there is an 
algorithm to compute $b_{\eta,u,\xi,v}$.

\end{itemize}

\end{thm}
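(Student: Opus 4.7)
The plan is to exploit the fact that condition (ii) is exactly bar-semilinearity for the affine Hecke algebra $H$: a direct computation gives $\overline{T_{s_\alpha}+1} = q^{-1}(T_{s_\alpha}+1)$, so conditions (i) and (ii) together extend to $D_\delta(h \cdot m) = \bar h \cdot D_\delta(m)$ for every $h \in H$. In particular, $D_\delta$ is determined by its values on any set of $H$-module generators of $M$. The natural candidate set is $\{m_{\xi,v} : \cO_v \text{ closed}\}$, which by the orbit combinatorics of Lemmas \ref{l:Iwahori orbirs on LX} and \ref{l:orbit type b} and the Hecke action formula \eqref{eq:aff Hecke act} should generate $M$ over $H$: every orbit can be reached from a closed one by a chain of simple-reflection Hecke operations.

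To prove \textbf{uniqueness} and produce the algorithm in (iii), I would induct on $\delta(v)$. For closed orbits, (iii) directly forces $D_\delta(m_{\xi,v}) = q^{-\delta(v)} m_{\xi,v}$. For non-closed $\cO_v$, pick a simple affine root $\alpha$ and an orbit $\cO_u$ with $\delta(u) < \delta(v)$ such that the expansion
\[
(T_{s_\alpha}+1)\, m_{\eta,u} \;=\; c\, m_{\xi,v} \;+\; (\text{terms on orbits with } \delta < \delta(v))
\]
provided by \eqref{eq:aff Hecke act} has invertible leading coefficient $c \in \mathbb{Z}[q,q^{-1}]$. Applying (ii) to both sides and solving expresses $D_\delta(m_{\xi,v})$ as an explicit $\mathbb{Z}[q,q^{-1}]$-linear combination of terms of the form $D_\delta(m_{\eta',u'})$ with $\delta(u') < \delta(v)$, all known by induction.

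For \textbf{existence}, the same recipe defines a candidate map $D_\delta$. Condition (i) is built in. The degree bound on $q^{\delta(v)} b_{\eta,u,\xi,v}$ in (iii) then follows by tracking $q$-factors through the recursion, using $\delta(v)-\delta(u) = \on{codim}_{\overline\cO_v}(\overline\cO_u)$ and the explicit shape of \eqref{eq:aff Hecke act}. Condition (ii) holds by construction for the distinguished $s_\alpha$ used at each step; the non-trivial point is to verify it for every other simple reflection.

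The hardest step is precisely this compatibility check, which is equivalent to showing that two different choices of descending pair $(s_\alpha, u)$ for $(\xi,v)$ yield the same value of $D_\delta(m_{\xi,v})$. This reduces to a rank-two braid-relation check for the $H$-action on $M$; in the finite-dimensional Lusztig--Vogan setting the analogous verification is handled by a case analysis of orbit types with respect to each pair of simple reflections, and in the affine setting I expect the placidness results of Section \ref{placid} to reduce each rank-two compatibility to a finite-dimensional conical transversal slice of the Mars--Springer type advertised in the introduction, where the classical analysis can be imported. A more geometric route to existence is to define $D_\delta$ directly as the action of Verdier duality on the Grothendieck group of constructible $I_0$-equivariant sheaves on $LX$ (with placidness used to make Verdier duality well-posed in the semi-infinite setting): then (i) and (ii) are automatic from naturality, and (iii) becomes a computation of Verdier duality on the standard extensions $j_! \cL_{\xi,v}$.
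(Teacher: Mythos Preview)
Your ``geometric route'' via Verdier duality is exactly what the paper does for existence: $D_\delta$ is defined as the effect of the Verdier duality functor $\mathbb D_\delta$ on the Grothendieck group, and properties (i)--(ii) follow from the compatibility $\mathbb D_\delta(\mathcal M\star\cF)\cong\mathbb D(\mathcal M)\star\mathbb D_\delta(\cF)$ (Proposition~\ref{decomposition theory}). The triangular shape in (iii) is read off from $\mathbb D_\delta(j_{v,!}\cL_{\xi,v})\cong j_{v,*}\cL_{-\xi,v}\langle 2\delta(v)\rangle$ (note the leading term is $q^{-\delta(v)}[\cL_{-\xi,v}]$, with the sign flip on $\xi$). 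So your speculative ``rank-two braid check'' is never needed: the compatibility with all simple reflections is automatic from Verdier duality.

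Your inductive algorithm via a type-IIb reflection is precisely the paper's Case~1: from~\eqref{eq:aff Hecke act} one inverts $T_{s_\alpha}$ to get $D_\delta[\cL_{\xi,v}]=(q^{-1}T_{s_\alpha}+q^{-1}-1)D_\delta[\cL_{s_\alpha\xi,v'}]$ with $\delta(v')=\delta(v)-1$, and the degree bound follows since every coefficient in~\eqref{eq:aff Hecke act} has $q$-degree $\le 1$.

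The genuine gap is Case~2, when $\cO_v$ is of type~b only for IIIb/IVb reflections. Your scheme asks for a smaller $u$ with $(T_{s_\alpha}+1)m_{\eta,u}=c\,m_{\xi,v}+(\text{lower})$ and $c$ invertible, but in type~IVa one has $T_{s_\alpha}m_{\xi,v'}=m_{\xi,v'}+m_{\xi_1,v}+m_{\xi_2,v}$, hitting a \emph{pair} of local systems on $v$ rather than one; there is no single invertible coefficient to peel off. The paper does not try to untangle this combinatorially. Instead it observes that the set $J$ of such reflections generates a proper parahoric $P_J$ stable under $\psi_v$, proves $P_Jx_v=\overline\cO_v$, and exhibits a smooth $P_J^+$-fibration $\pi:\overline\cO_v\to L_{P_J}/L_{P_J}^{\psi_v}$ onto a \emph{finite-dimensional} symmetric variety. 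Then $\mathbb D_\delta(j_{v,!}\cL_{\xi,v})$ is the $\pi^*$-pullback (up to shift) of the analogous object downstairs, and the degree bound on $b_{\eta,u;\xi,v}$ is inherited from the classical Lusztig--Vogan result \cite{LV} for $L_{P_J}/L_{P_J}^{\psi_v}$. This parahoric reduction, not a rank-two slice argument, is the missing ingredient in your outline.
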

Theorem \ref{main 2} is proved in Section \ref{polynomials b and c}
and Section \ref{ss:algorithm}.
The existence of $D_\delta$ is a combinatorial problem, 
but we have to use sheaf theory on 
$LX$ (namely, the Verdier duality) to solve it.
The polynomial $q^{\delta(v)}b_{\eta,u,\xi,v}$
is an affine analogy of the polynomial $R_{\gamma,\delta}$
in \cite[Theorem 1.10]{LV}.

\begin{thm}\label{main 3}
There is 
a unique family of polynomials $P_{\eta,u;\xi,v}\in\mathbb Z_{\geq0}[q]$ 
 in $q$ with non-negative integer coefficients, indexed by pairs $(\eta,u),(\xi,v)\in\sD$,
satisfying the following conditions:
	\begin{itemize}
		\item [(i)] $P_{\xi,v;\xi,v}=1$.
		\item [(ii)] If $u\neq v$, 
		$\deg(P_{\eta,u;\xi,v})\leq\frac{1}{2}(\delta(v)-\delta(u)-1)$.
		\item [(iii)] $C_{\xi,v}:=\sum_{\eta,u}P_{\eta,u;\xi,v}m_{\eta,u}\in M$
		satisfies $D_\delta C_{\xi,v}=q^{-\delta(v)}C_{-\xi,v}$.
	\end{itemize}
\end{thm}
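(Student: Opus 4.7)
The plan is to follow the classical Kazhdan--Lusztig strategy, separating the formal combinatorial problem (existence and uniqueness of the $P_{\eta,u;\xi,v}\in\mathbb{Z}[q]$) from the geometric positivity problem (non-negativity of their coefficients).

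For existence and uniqueness, I would argue by induction on $v$ with respect to the closure order $\leq$. The base case is a closed orbit $\cO_v$, for which we set $C_{\xi,v}:=m_{\xi,v}$ and $P_{\xi,v;\xi,v}=1$; conditions (i)--(iii) are trivial using that $D_\delta m_{\xi,v}=q^{-\delta(v)}m_{\xi,v}$ when there are no $u<v$. For the inductive step, treat the $P_{\eta,u;\xi,v}$ with $u<v$ as unknowns constrained by the degree bound in (ii), expand $D_\delta C_{\xi,v}$ using Theorem~\ref{main 2}(i)--(iii) together with the inductively known $P$'s for $u<v$ but $v'<v$, and compare with $q^{-\delta(v)}C_{-\xi,v}$. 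Extracting the coefficient of each $m_{\eta,u}$ yields a linear equation relating $P_{\eta,u;\xi,v}$ to $P_{-\eta,u;-\xi,v}$ via a known polynomial built from the $b_{\eta',u',\xi,v}$. Because the contribution from the $P$'s lies in $q^{-\delta(v)}\mathbb{Z}[q]$ while the contribution from the $b$'s has a mixed degree range, splitting the equation according to positive and non-positive powers of $q$ pins down each $P_{\eta,u;\xi,v}$ uniquely and shows that it lies in $\mathbb{Z}[q]$ of the prescribed degree. This simultaneously produces an algorithm for computing them from the $b$'s.

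For positivity, I would identify $C_{\xi,v}$ with the class of the $\IC$-complex $\IC(\overline{\cO_v},\cL_{\xi,v})$, suitably shifted by $\delta(v)$, in a Grothendieck group whose standard basis corresponds to the $m_{\eta,u}$. Verdier duality on the placid ind-scheme $LX$, whose foundations are provided by the placidness results announced in the paper, implements $D_\delta$; the support and smallness properties of $\IC$ translate into the degree bound (ii); and the coefficient $P_{\eta,u;\xi,v}$ becomes the Poincar\'e polynomial of the stalk of $\IC(\overline{\cO_v},\cL_{\xi,v})$ at a point of $\cO_u$, twisted by $\cL_{\eta,u}$. The uniqueness from the first step then forces this geometric $C_{\xi,v}$ to coincide with the algebraically defined one, so positivity reduces to the pointwise purity and parity vanishing of $\IC(\overline{\cO_v},\cL_{\xi,v})$ announced earlier in the introduction: parity vanishing kills the odd-degree stalks so $P_{\eta,u;\xi,v}\in\mathbb{Z}[q]$ rather than $\mathbb{Z}[q^{1/2}]$, and purity gives non-negative Frobenius weights, whence $P_{\eta,u;\xi,v}\in\mathbb{Z}_{\geq 0}[q]$.

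The main obstacle is this last identification in the infinite-dimensional setting. The combinatorial self-duality argument is routine once $D_\delta$ is in hand, but to import purity one must have a well-behaved theory of perverse sheaves, Verdier duality, and weights on the semi-infinite orbit closures $\overline{\cO_v}\subset LX$, and then know that the resulting $\IC$-complexes are pointwise pure and satisfy parity vanishing. Both assertions rely on the placidness of $\overline{\cO_v}$ together with the conical transversal slices constructed in the paper in the spirit of Mars--Springer, which reduce the purity question to a finite-dimensional transverse slice where an extension of the Mars--Springer argument applies.
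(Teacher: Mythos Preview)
Your proposal is correct and follows essentially the same approach as the paper: the uniqueness via the degree-splitting recurrence is exactly Theorem~\ref{t:I_0 Poincare poly uniqueness} (the paper runs the induction on $\delta(v)-\delta(u)$ rather than on $v$, since the right-hand side of the recurrence~\eqref{eq:I_0 Poincare poly recurrence} involves $P_{\zeta,z;\xi,v}$ for $u<z<v$ with the same $v$), and the positivity is deduced from pointwise purity and parity vanishing of $\IC_{\xi,v}$ via the conical slices, as in Theorems~\ref{purity} and~\ref{t:IC parity vanishing}. The only difference is ordering: the paper first \emph{defines} $P_{\eta,u;\xi,v}$ as the stalk Poincar\'e polynomial of $\IC_{\xi,v}$ and then verifies (i)--(iii), whereas you first construct the $P$'s combinatorially and then match with geometry; both are standard and equivalent. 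One small slip: for a closed orbit $D_\delta m_{\xi,v}=q^{-\delta(v)}m_{-\xi,v}$, not $m_{\xi,v}$, but this is exactly what condition (iii) requires, so your base case is fine.
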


Theorem \ref{main 3} is restated in Theorem \ref{t:I_0 Poincare poly uniqueness}.
In fact in \emph{loc. cit.} we prove a slightly stronger uniqueness result: 
the family of polynomials $P_{\eta,u;\xi,v}$ 
are unique over the larger ring $\mathbb Z[q^{\frac{1}{2}},q^{\frac{-1}{2}}]$.

Note that part (i) and (ii) imply that the family of elements $C_{\xi,v}$, $(\xi,v)\in\sD$
forms a basis of $M$ to be called the Kazhdan-Lusztig basis of $M$
and the entries of the transition matrix between the standard basis $m_{\xi,v}$
and Kazhdan-Lusztig basis $C_{\xi,v}$
are given by the family of polynomials $P_{\eta,u;\xi,v}$.
We will call the family $P_{\eta,u;\xi,v}$ 
the \emph{affine Kazhdan-Lusztig-Vogan polynomials}
associated to $X$
since they include the Kazhdan-Lusztig-Vogan polynomials
$P_{\gamma,\delta}$ in \cite[Theorem 1.11]{LV} (see Lemma \ref{comparison with MS}).
Theorem \ref{main 2} (iii) provides an algorithm to compute them.

The most difficult part in the proof of Theorem \ref{main 3} is the positivity property 
of $P_{\eta,u;\xi,v}$. It is a consequence of the following 
pointwise purity and  
parity vanishing results
for $\IC$-complexes for Iwahori orbit closures.
For any $(\xi,v)\in\sD$, let
$\IC_{\xi,v}=(j_v)_{!*}(\cL_{\xi,v})[\delta(v)]$ be the $\IC$-extension 
of the local system $\cL_{\xi,v}$ along the inclusion $j_v:\cO_v\to\overline\cO_v$.
For any pair $(\eta,u),(\xi,v)\in\sD$ we write $[\cL_{\eta,u}:\sH^{i}(\IC_{\xi,v})|_{\cO_u}]$ for the multiplicity of 
$\cL_{\eta,u}$ in the Jordan-H\"older series of
$\sH^{i}(\IC_{\xi,v})|_{\cO_u}$.

\begin{thm}\label{main 4}
For any pair $(\eta,u),(\xi,v)\in\sD$ we have 
\begin{itemize}
\item [(i)] $\IC_{\xi,v}$ is pointwise pure of weight $\delta(v)$.
\item [(ii)]
$\sH^i(\IC_{\xi,v})=0$ if $i+\delta(v)$ is odd.
\item [(iii)]
$P_{\eta,u,\xi,v}=\sum_{i} [\cL_{\eta,u}:\sH^{2i-\delta(v)}(\IC_{\xi,v})|_{\cO_u}]q^{i}$.
\end{itemize}
\end{thm}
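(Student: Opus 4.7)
The plan is to establish pointwise purity (i) and parity vanishing (ii) together by induction on the closure order of Iwahori orbits, and then to deduce (iii) from the uniqueness characterization in Theorem \ref{main 3}. The strategy adapts Mars-Springer \cite{MS} to the affine setting, the key new ingredient being the conical transversal slices constructed earlier in this paper together with the placidness results of Section \ref{placid}.

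Fix $(\xi,v) \in \sD$ and assume inductively that (i) and (ii) hold for $\IC_{\eta,w}$ with $w < v$. On the open orbit $\cO_v$ the restriction of $\IC_{\xi,v}$ is $\cL_{\xi,v}[\delta(v)]$, which is tautologically pure of weight $\delta(v)$ and concentrated in a single cohomological degree of the correct parity. To analyze a point $x_u \in \cO_u$ with $u < v$, I would invoke the affine Mars-Springer slice $S_{u,v}$: a locally closed subspace of $LX$ transverse to $\cO_u$ at $x_u$, carrying a $\bGm$-action contracting $S_{u,v}$ onto $x_u$, and whose intersection with any Iwahori orbit closure is finite type thanks to placidness. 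Transversality identifies the stalks and costalks of $\IC_{\xi,v}$ at $x_u$ (up to the relevant cohomological shift) with those at $x_u$ of the IC sheaf of $S_{u,v} \cap \overline\cO_v$ equipped with the restricted local system coefficients, thereby reducing both (i) and (ii) to classical statements on an honest finite-dimensional variety.

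On the slice, purity is Deligne's theorem applied to the intermediate extension. For parity vanishing, the contracting $\bGm$-action computes the stalk cohomology at $x_u$ as a weighted sum over $\bGm$-weights, and the $\bGm$-invariant orbit stratification
\[ S_{u,v} \cap \overline\cO_v \;=\; \bigsqcup_{u \leq w \leq v} (S_{u,v} \cap \cO_w) \]
together with the inductive parity vanishing on smaller orbits propagates the correct parity from the open stratum inwards, so that only cohomological degrees $i$ with $i + \delta(v)$ even contribute. This is the affine counterpart of the core geometric observation of Mars-Springer that the slice intersections are cellular with respect to the contracting torus.

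Once (i) and (ii) are secured, set $P'_{\eta,u;\xi,v} := \sum_i [\cL_{\eta,u} : \sH^{2i-\delta(v)}(\IC_{\xi,v})|_{\cO_u}]\, q^i \in \bZ_{\geq 0}[q]$. The support and cosupport conditions defining the intermediate extension immediately yield $P'_{\xi,v;\xi,v} = 1$ and $\deg P'_{\eta,u;\xi,v} \leq \tfrac{1}{2}(\delta(v) - \delta(u) - 1)$ for $u \neq v$. Since the operator $D_\delta$ from Theorem \ref{main 2} is constructed as the combinatorial shadow of Verdier duality on $M$, and since $\IC_{\xi,v}$ is Verdier self-dual up to the Tate twist by $\delta(v)$ combined with the involution $\xi \mapsto -\xi$ obtained by dualizing the equivariant local system, the class $C'_{\xi,v} := \sum P'_{\eta,u;\xi,v}\, m_{\eta,u}$ satisfies $D_\delta C'_{\xi,v} = q^{-\delta(v)} C'_{-\xi,v}$. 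The uniqueness in Theorem \ref{main 3} then forces $P' = P$, giving (iii). The main obstacle is the parity vanishing step: even granting the slices, verifying that the $\bGm$-weight filtration and the orbit stratification interact compatibly enough to yield single-parity cohomology is the core content, and the semi-infinite ambient geometry means each classical finite-dimensional input has to be carefully ferried through the placidness framework of Section \ref{placid}.
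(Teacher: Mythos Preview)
Your approach to pointwise purity (i) is essentially correct and matches the paper: the contracting slices of Theorem~\ref{main 5} reduce the question to Deligne's purity on a finite-type scheme (Proposition~\ref{purity criterion}). Note, though, that no induction is needed for this part; purity follows directly from the slice construction applied to the globally pure complex $\IC_{\xi,v}$, and $!$-purity is obtained by dualizing.

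The genuine gap is in your parity vanishing argument. You write that Mars--Springer's ``core geometric observation'' is that the slice intersections are cellular with respect to the contracting torus, and that parity then propagates inductively through the stratification. This is not correct: the slices $S^v_{\leq u}$ in symmetric varieties (finite or affine) are \emph{not} cellular in general---see Example~\ref{non-normal}, where already $S^0_{\leq\omega_2}\cong\{xy=0\}$ is reducible and non-normal. Moreover, inductive parity vanishing for $\IC_{\eta,w}$ with $w<v$ says nothing directly about the stalks of $\IC_{\xi,v}$ on $\cO_w$; these are different sheaves. The Mars--Springer mechanism for parity, which the paper adapts, is algebraic rather than geometric: one first uses purity to pin the Frobenius eigenvalues on $\sH^i$ to $q^{(i+\delta(v))/2}$, so that the stalk coefficients $c_{\eta,u;\xi,v,i}$ lie in $\mathbb N\, q^{(i+\delta(v))/2}$. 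Independently, the affine Hecke module structure (Proposition~\ref{construction of M}) yields an algorithm (Section~\ref{ss:algorithm}) computing the Verdier-duality coefficients $b_{\eta,u;\xi,v}$ and showing that $q^{\delta(v)}b_{\eta,u;\xi,v}\in\mathbb Z[q]$ (Lemma~\ref{degree bound}). The identity~\eqref{eq:b c coeff relation} then expresses the $c$'s in terms of the $b$'s (Lemma~\ref{l:c coeff}), forcing $(i+\delta(v))/2\in\mathbb Z$. The Hecke-algebra input---in particular the type-by-type analysis of simple reflections and the reduction to finite-dimensional Levi symmetric varieties when no type IIb move is available---is the missing idea in your sketch.

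Your treatment of (iii) via the uniqueness in Theorem~\ref{main 3} is valid, though in the paper the formula is essentially the definition of $P_{\eta,u;\xi,v}$ once (ii) guarantees it is a polynomial in $q$ rather than $q^{1/2}$.
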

Theorem \ref{main 4} is a combination of   
Theorem \ref{purity}, Theorem \ref{t:IC parity vanishing},
and Theorem \ref{t:I_0 Poincare poly uniqueness}.
Part (ii) extends the parity vanishing results in \cite[Theorem 1.12]{LV}
to the affine setting. The proof in \emph{loc. cit.} uses the theory of weights and 
a representation theoretic interpretation of the multiplicity.
In the affine setting, a representation theoretic interpretation of the multiplicity
$[\cL_{\eta,u}:\sH^{2i-\delta(v)}(\IC_{\xi,v})|_{\cO_u}]$ is not available at the moment 
(but it would be very interesting to find one, see Section \ref{Intro: RKF} for some related discussions).
Thus we follow the approach in \cite{MS} where Mars-Springer exploit the
pointwise purity property in (i).
The key geometric ingredient in \cite{MS}  is the construction of 
a conical transversal slices for $B_0$-orbits on $X$, to be called the Mars-Springer  slices, 
where the pointwise purity property is a direct consequence.

We generalize Mars-Springer's construction to the affine setting. 
\begin{thm}\label{main 5}
There exists conical transversal slices to the $L^+G$-orbits and $I_0$-orbits on $LX$.
\end{thm}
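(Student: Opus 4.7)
The plan is to adapt the Mars-Springer construction from \cite{MS} to the affine setting, treating the $L^+G$-orbits first (which are essentially finite-dimensional modulo a pro-unipotent group) and then addressing the $I_0$-orbits using an extra loop-rotation factor to achieve contractibility.

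First I would fix an orbit $\cO_v$ and a representative point $\dv \in \cO_v$, put in normal form using the orbit parametrization of Lemma \ref{l:Iwahori orbirs on LX}; typically $\dv$ is the class in $LX$ of a product of a cocharacter $\clambda(t)$ with a suitable element normalizing $(B_0,T_0)$. By the placidness results of Section \ref{placid}, a neighborhood of $\dv$ in $\overline{\cO}_v$ is a pro-smooth fibration over a finite-dimensional scheme, so the problem of constructing a transversal slice at $\dv$ reduces to a finite-dimensional computation after quotienting by a deep enough congruence subgroup of $L^+G$.

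Next I would construct a contracting $\bGm$-action on a neighborhood of $\dv$ in $LX$. The key point is to combine the loop-rotation action $t\mapsto st$ with a cocharacter $\chi: \bGm\to T_0^\theta$ in such a way that (i) the composite action fixes $\dv$, (ii) its weights on $T_{\dv}\cO_v$ are all strictly positive, and (iii) a sufficiently small neighborhood of $\dv$ in $LX$ (equivalently, in its finite-dimensional placid approximation) contracts to $\dv$. The existence of such a $\chi$ is a combinatorial check using the $\theta$-eigenspace decomposition of $\fg(\!(t)\!)$ together with the positivity supplied by loop rotation, which ensures that one can always dominate the finitely many bad weights coming from $\fk$. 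The slice $S_v$ is then defined as the attractor of the opposite $\bGm$-action through $\dv$ inside the placid approximation, lifted back to $LX$; equivalently, as the intersection of $LX$ with the subspace cut out by a complement to $T_{\dv}\cO_v$ inside $T_{\dv}LX$ stable under the chosen $\bGm$-action.

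Transversality $T_{\dv}S_v\oplus T_{\dv}\cO_v = T_{\dv}LX$ then holds by construction, and conicality is immediate from the $\bGm$-equivariance. To descend from the local construction to an honest subscheme of $LX$ (not merely of a placid approximation) one uses that the pro-unipotent radical of the relevant congruence subgroup acts freely and transitively on the fibers of the placid presentation, so the attractor in the approximation pulls back to an ind-subscheme meeting $\cO_v$ only at $\dv$. The $L^+G$-case follows from the same procedure with $I_0$ replaced by $L^+G$, and is technically easier since $L^+G$-orbits are finite codimension in $LX$ and the slice lives in an honest finite-dimensional transversal to an $L^+G$-orbit in a single step.

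The main obstacle, I expect, is the semi-infinite nature of the $I_0$-orbits: both $\cO_v$ and its complement in $LX$ are infinite-dimensional, so transversality and contractibility cannot be verified naively on tangent spaces but must be interpreted through the placidness dictionary of Section \ref{placid}, and one must arrange that the slice produced in one finite-dimensional presentation is independent of the choices (congruence level, finite-type approximation) up to pro-smooth equivalence. Controlling this coherence, and choosing $\chi$ so that the combined $(\chi,t\mapsto st)$-action truly contracts a \emph{whole} neighborhood of $\dv$ in $LX$ rather than only in a single approximation, is where most of the work lies; the $\theta$-eigenspace analysis of the loop algebra, together with the dimension theory $\delta$ guaranteeing that $\on{codim}_{\overline\cO_v}(\overline\cO_u)$ is finite and additive along the $\bGm$-flow, is what makes this feasible.
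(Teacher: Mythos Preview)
Your proposal misses the central idea of the paper's construction and substitutes a vaguer scheme that, as written, does not close.

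The paper does \emph{not} build the slice by taking attractors in a placid approximation and then arguing coherence across levels. Instead it uses the embedding $\tau:LX\hookrightarrow LG$, $\tau(g)=g\theta(g)^{-1}$, to pull back the \emph{already known} affine Grassmannian slice $W^\lambda=(L^{<0}G\cap\Ad_{t^\lambda}L^{<0}G)t^\lambda$ (for $L^+G$-orbits) and affine flag slice $W^{n_v}=(I_0^{--}\cap\Ad_{n_v}I_0^{--})n_v$ (for $I_0$-orbits). One checks that $W^\lambda$ and $W^{n_v}$ are stable under the involution $\mathrm{inv}\circ\theta$ on $LG$, and that $\tau$ identifies $S^\lambda:=LX\times_{LG}W^\lambda$ with the fixed-point ind-scheme $L^\lambda=(W^\lambda)^{\mathrm{inv}\circ\theta}$ (similarly for $S^v$). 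Formal smoothness of the action map $L^+G\times S^\lambda\to LX$ is then obtained either by a direct tangent-space surjectivity computation or, more elegantly, by taking $(\mathrm{inv}\circ\theta)$-fixed points of the known formal smoothness of $L^+G\times W^\lambda\times L^+G\to LG$. The contracting $\bGm$-action is the one on $W^\lambda$ (or $W^{n_v}$) restricted to the fixed locus; this is where the combination of loop rotation with a cocharacter enters, and for the Iwahori case the cocharacter $\mu\in X_*(T_0^{\psi_{\bar v}})$ is chosen exactly as in Mars--Springer.

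The point is that $W^\lambda$ and $W^{n_v}$ are already ind-schemes of ind-finite type with all the desired properties, so the slice in $LX$ inherits finite-typeness, formal smoothness, and the contracting action for free; there is no coherence-across-approximations issue at all. Your attractor construction, by contrast, gives no a priori reason why the resulting object is of ind-finite type (the repeller through $\dv$ in $LX$ itself is not obviously finite-dimensional, and working in a single placid approximation loses exactly the transversality you need in the other approximations). You correctly flag this as ``where most of the work lies,'' but you do not indicate how to do that work, and the paper's method shows it is unnecessary. Also, your claim that $L^+G$-orbits have finite codimension in $LX$ is not right as stated: only the relative codimension between orbit closures is finite.
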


Theorem \ref{main 5} is stated in Corollary \ref{c:LX equi-singular} and Corollary \ref{c:bar O equi-singular}.
We refer to Section \ref{Transversal slices} for a more detailed explanation of the
statement including the 
identification of the spherical orbits slices with certain Lagrangian subvarieties of the affine Grassmannian slices  
(see Proposition \ref{t:Lagrangian}).

Note that since the orbits are in general semi-infinite,
the meaning of transversal slices
and  pointwise purity  
require some treatment (see Section \ref{A: pointwise purity}).

\subsection{Applications}
We deduce the pointwise purity property and parity vanishing for spherical orbits in $LX$ from the results  for Iwahori orbits.\footnote{In fact, one can  give a direct proof of pointwise purity property using the transversal slices for  spherical orbits. 
However, we do not know a proof of parity vanishing
without appealing to the results for Iwahori orbits (see Remark \ref{remark on parity vanishing}).}
The singularities of spherical orbit closures in $LX$ are related to the so called relative Langlands duality \cite{BZSV} 
and we discuss applications of our main results to the subject.
We also discuss connections with 
quantum groups and real groups, providing 
(possible) representation theoretic meaning of the
singularities.

\subsubsection{Formality conjecture}
One application of the pointwise purity for spherical orbits is the formality conjecture in relative Langlands duality. 
In \cite[Conjecture 8.1.8]{BZSV}, the authors conjecture
a version of derived Satake equivalence for 
a spherical variety $X$ satisfying some technical assumptions.
Their conjecture implies that  
the so called de-equivariantized Ext algebra 
\[A_X:=R\Hom_{D^{}(L^+G\backslash LX)}(\mathrm{e}_{L^+X},\IC_{\cO(\check G)}\star\mathrm{e}_{L^+X})\]
(a.k.a the Plancherel algebra)
for the derived relative Satake category $D^{}(L^+G\backslash LX)$
of $X$ is formal.
A general approach to proving formality 
is to prove the  purity of the 
Ext algebra $H^\bullet(A_X)$.
We confirm this conjecture in the case of symmetric varieties.
\begin{cor}\label{appl 1}
	For $X$ a symmetric variety, $A_X$ is formal.
\end{cor}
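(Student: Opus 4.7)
The plan is to apply the standard purity-implies-formality criterion to $A_X$, taking as input the pointwise purity of $\IC$-complexes of spherical $L^+G$-orbit closures in $LX$, which is the spherical counterpart of Theorem \ref{main 4}(i) and is deduced from the Iwahori case at the start of the Applications section.

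First, I would descend everything to a finite field: choose compatible $\mathbb F_q$-structures on $G$, $\theta$, and $K$ so that $LX$, the closed $L^+G$-orbit $L^+X$, the unit $\mathrm{e}_{L^+X}$, and the object $\IC_{\cO(\check G)}$ all acquire Weil structures. Then $A_X$ becomes a dg algebra equipped with a Frobenius action, and purity statements for its cohomology make sense.

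Second, I would compute $H^\bullet(A_X)$ via the spherical decomposition of the convolution $\IC_{\cO(\check G)}\star\mathrm{e}_{L^+X}$. Because the convolution morphism is $L^+G$-equivariantly proper onto its image and $\IC_{\cO(\check G)}$ is a direct sum of IC-sheaves of $\check G$-orbits tensored with multiplicity spaces, the decomposition theorem gives an isomorphism
\[
\IC_{\cO(\check G)}\star\mathrm{e}_{L^+X}\;\simeq\;\bigoplus_{v}\IC_{\overline{\cO_v}}[n_v](r_v),
\]
where the sum runs over (finitely many per bounded substack) spherical $L^+G$-orbits $\cO_v$ in $LX$. Adjunction then identifies $H^i(A_X)$ with a direct sum of stalk cohomology groups $\sH^{i-n_v}(\IC_{\overline{\cO_v}})_{x}(r_v)$ for a base point $x\in L^+X\subset\overline{\cO_v}$.

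Third, by the spherical analogue of Theorem \ref{main 4}(i)--(ii), each $\IC_{\overline{\cO_v}}$ is pointwise pure of the appropriate weight, so Frobenius acts on $H^i(A_X)$ with eigenvalues of pure weight $i$. The Deligne formality criterion, in the form used by Beilinson--Ginzburg--Soergel, then yields a quasi-isomorphism $A_X\simeq H^\bullet(A_X)$ of dg algebras: in any minimal model, a Frobenius eigen-decomposition forces the differentials to vanish since they would otherwise connect weight classes of different Frobenius weights.

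The principal obstacle is the semi-infinite nature of $L^+G\backslash LX$: one must justify the dg algebra structure on $A_X$, the existence of Weil/Frobenius structures on the relevant $\IC$-sheaves, the applicability of the decomposition theorem, and the meaning of \emph{pointwise purity} in this ind-pro setting. This is precisely where the placidness results of Section \ref{placid} and the conical transversal slices of Theorem \ref{main 5} do the work: they locally model $\overline{\cO_v}$ by a placid scheme times a finite-dimensional equisingular slice, reducing purity, Verdier duality and the decomposition theorem to the classical finite-type situation where the standard mixed-sheaf formalism and the Beilinson--Ginzburg--Soergel formality argument apply verbatim.
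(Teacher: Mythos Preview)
Your proposal is correct and follows the paper's own argument: apply the decomposition theorem to the convolution, invoke pointwise purity of the spherical $\IC$-complexes, and conclude formality from purity of the Ext algebra (the paper cites \cite{BF} rather than Beilinson--Ginzburg--Soergel for the last step). The only slips are that adjunction identifies the Ext groups with equivariant cohomology of the $!$-restriction $j^!\IC$ to $L^+X$ rather than with $*$-stalks at a point, and the decomposition may involve $\IC$-complexes $\IC_{\chi,\lambda}$ with nontrivial equivariant local systems; neither affects the argument, since the paper's purity theorem already covers $!$-pointwise purity for all such $\IC_{\chi,\lambda}$.
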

Theorem \ref{appl 1} is stated in Theorem \ref{t:formality}.
We refer to Section \ref{Ext algebras} for a more detailed explanation of the
statement.
\begin{rem}
    In \cite{BZSV},
the authors consider a more general setting of spherical varieties $X$ with the assumption that 
the cotangent bundles $M=T^*X$ are the so called \emph{hyperspherical varieties}. Corollary \ref{appl 1} implies that the
formality conjecture holds true 
for \emph{all} symmetric varieties $X$
without the 
hyperspherical assumption.
\end{rem}

\begin{rem}\label{splitting rank}
Corollary \ref{appl 1} was previously known in the 
 case of symmetric varieties of splitting ranks
(see \cite{QuaternionicSatake} and \cite{LOSatake}).
The proof in \emph{loc. cit.}
uses some special features of splitting rank symmetric varieties
which are not available for the general cases.
  Our proof gives a uniform argument for all cases.
\end{rem}

\subsubsection{Semisimplicity
criterion and relative Satake equivalences}
A basic result about perverse sheaves on affine Grassmannian $\Gr=LG/L^+G$
is the semisimplicity of the  Satake category 
$\on{Perv}(L^+G\backslash\Gr)$ of $G$(see, e.g., \cite[Proposition 5.1.1.]{ZhuIntroGr}). The  proof 
relies on 
the parity vanishing result 
for 
spherical orbits on $\Gr$ due to Lusztig \cite[Theorem 11.c]{L3}.
Using the 
parity vanishing for spherical orbits in Theorem \ref{t:IC parity vanishing}
and \cite{CN3}, we prove the 
following semisimplicity criterion 
and Langlands dual description of the 
relative Satake categories:
\begin{cor}\label{appl 3}
Assume the codimensions of $L^+G$-orbits in the same connected component of $LX$ are even.
\begin{itemize}
    \item [(i)] The relative Satake category  $\on{Perv}_{}(L^+G\backslash LX)$
of $L^+G$-equivariant perverse sheaves on $LX$ is semisimple.
\item [(ii)] Assume further that the 
$L^+G$-stabilizers on $LX$ are connected.
Then there is an equivalence of abelian categories
\[\on{Perv}(L^+G\backslash LX)\cong\on{Rep}(\check G_X)\]
where the $\on{Rep}(\check G_X)$
is the category of finite dimensional
complex representations of the dual group 
$\check G_X$ of X \cite{GN}.
\end{itemize}

\end{cor}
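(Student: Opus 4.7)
The plan is to prove part (i) by mimicking the classical proof of semisimplicity of the Satake category on the affine Grassmannian, as in \cite[Proposition 5.1.1]{ZhuIntroGr}, replacing Lusztig's parity vanishing \cite[Theorem 11.c]{L3} with the parity vanishing for spherical $L^+G$-orbits on $LX$, which we have already deduced from Theorem \ref{main 4}(ii) (namely from the affine parity vanishing Theorem \ref{t:IC parity vanishing} for Iwahori orbits, transferred to the spherical setting as announced in Section~1.2). Semisimplicity of a finite-length abelian category reduces to showing that $\on{Ext}^1(\IC_{\xi,v},\IC_{\eta,u})=0$ between any two simple objects of $\on{Perv}(L^+G\backslash LX)$.

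To compute these $\on{Ext}^1$ groups, I would use recollement with respect to the stratification of $LX$ by $L^+G$-orbits, and the placidness results of Section \ref{placid} to justify honest truncation triangles and the resulting long exact sequences in a semi-infinite setting. The parity vanishing for spherical orbits says that the stalks, and dually the costalks, of $\IC_{\xi,v}$ are concentrated in cohomological degrees of a single parity determined by $\delta(v)$. The even-codimension hypothesis guarantees that $\delta(v)$ and $\delta(u)$ have the same parity for any two $L^+G$-orbits in a common connected component of $LX$, while $\on{Ext}$-groups between sheaves supported on different connected components vanish for trivial reasons. A standard dévissage along the orbit stratification then forces $\on{Ext}^1(\IC_{\xi,v},\IC_{\eta,u})=0$, which gives part (i).

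For part (ii), given the semisimplicity from part (i) and the connected-stabilizer hypothesis (which makes every $L^+G$-equivariant local system on an orbit trivial, hence indexes the simples of $\on{Perv}(L^+G\backslash LX)$, up to the shift by $\delta$, by $L^+G$-orbits alone), the identification with $\on{Rep}(\check G_X)$ follows by invoking the main construction of \cite{CN3}, which for symmetric $X$ produces a $\check G_X$-action on (a suitable version of) the relative Satake category. Combining the tensor functor of \emph{loc.\ cit.}\ with the abelian semisimplicity and the parametrization of simples just noted upgrades the derived statement of \cite{CN3} to an equivalence of abelian categories $\on{Perv}(L^+G\backslash LX)\cong \on{Rep}(\check G_X)$.

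The main obstacle I expect is the careful bookkeeping in the semi-infinite setting for part (i): one must verify that the dimension theory $\delta$ of Proposition \ref{dim for LX}, which we introduced for the $I_0$-stratification, descends to give a consistent $\mathbb{Z}/2$-grading on the $L^+G$-stratification (this is exactly the role of the even-codimension hypothesis), and that the placidness results of Section \ref{placid} are strong enough to legitimize the usual perverse-sheaf manipulations despite every stratum being of infinite dimension and infinite codimension. Once this technical setup is in place, the logical skeleton of the argument is identical to the classical finite-dimensional one.
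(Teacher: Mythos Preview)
Your approach to part (i) is essentially the same as the paper's: both invoke the standard argument from \cite[Proposition 5.1.1]{ZhuIntroGr}, feeding in the parity vanishing for spherical orbits (Theorem \ref{t:IC parity vanishing}) together with the even-codimension hypothesis.

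For part (ii), however, your outline has a genuine gap. You correctly observe that the connected-stabilizer hypothesis forces the simple objects of $\on{Perv}(L^+G\backslash LX)$ to be the $\IC$-complexes with trivial coefficients, indexed by $\Lambda_S^+$. You then appeal to \cite{CN3} to produce an equivalence. The issue is that the input from \cite{CN3} (specifically \cite[Corollary 13.8]{CN3}) only gives a fully faithful embedding $\on{Rep}(\check G_X)\hookrightarrow \on{Perv}(L^+G\backslash LX)$, whose essential image consists of those $\IC_\lambda$ with $\lambda\in\Lambda_S^+$ of the special form $\lambda=-\theta(\mu)+\mu$ for some $\mu\in\Lambda_T$. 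A priori this can be a proper subset of $\Lambda_S^+$, so semisimplicity plus a matching of index sets is not automatic.

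The paper closes this gap with a separate combinatorial lemma (Lemma \ref{support}): under the even-codimension hypothesis, every element of $\Lambda_S$ is of the form $-\theta(\mu)+\mu$. The proof is not entirely formal; it uses the codimension formula (Proposition \ref{codim formula}) to rule out simple coroots $\alpha$ with $-\theta(\alpha)=\alpha$ inside the relevant Levi, since such an $\alpha$ would produce an orbit of codimension $\langle\rho,\alpha\rangle=1$. You should identify and prove this lemma; without it, the essential surjectivity of the functor from \cite{CN3} is unjustified.
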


Corollary \ref{appl 3} is restated in Theorem \ref{semi}. In
Proposition \ref{codim formula} we proved a 
 codimension formula 
of spherical orbits (assuming $LX$ is connected) 
and one can use it 
to find symmetric varieties satisfying the assumptions
in (i) and (ii) above.
Interesting examples include the
splitting rank symmetric varieties
$\SL_{2n}/\Sp_{2n},\mathrm{Spin}_{2n}/\on{Spin}_{2n-1}$, $\mathrm{E}_6/\mathrm{F}_4$. The corresponding relative dual groups $\check G_X$ are 
 $\PGL_{2n},\PGL_2$, $\PGL_3$.
In particular, we obtain 
new  instances of 
relative Satake equivalence, see Example \ref{new examples}.

\begin{rem}
We suspect that 
the converse of  Corollary \ref{appl 3} (i) might be also true.
\end{rem}

\begin{rem}
The proof of the codimension formula Proposition \ref{codim formula} is quite involved, it uses real groups, quasi-maps, and a spreading out argument. It would be nice to find a more direct argument.
\end{rem}

\begin{rem}
Using the real-symmetric equivalence in \cite{CN3}, 
one can show that the equivalence in (ii) is compatible with 
the fusion product on $\on{Perv}(L^+G\backslash LX)$ 
and the tensor product on $\on{Rep}(\check G_X)$, and hence admits an upgrade
to an equivalence of symmetric monoidal 
abelian categories.
\end{rem}

\begin{rem}
The assumptions in the corollary are necessary. 
In fact, the work \cite{CN3} shows that 
$\on{Perv}(L^+G\backslash LX)$ always contains the semisimple abelian category $\on{Rep}(\check G_X)$ as a full subcategory but 
if there are  orbits  with odd codimensions or having disconnected stabilizers, 
then $\on{Perv}(L^+G\backslash LX)$
might not be semisimple 
or contain $\IC$-complexes with non-trivial coefficients not belonging to $\on{Rep}(\check G_X)$ (see Remark \ref{SO_2} for the discussion when $X=\SL_2/\SO_2$). 

Thus for general symmetric varieties, the relative Satake categories $\on{Perv}(L^+G\backslash LX)$ 
might be larger than $\on{Rep}(\check G_X)$
and in those situations
we expect  the Langlands dual descriptions of them
would be more complicated involving certain quantum
groups (see Section \ref{real and quantum}).

\end{rem}

\subsubsection{Relative
Kostka-Foulkes polynomials}\label{Intro: RKF}
The $L^+G$-orbits on $LX$
are parametrized by 
the set of dominant coweights $\Lambda_S^+$ 
for a maximal $\theta$-split torus $S$
(see Proposition \ref{p:sym var orbits}).
We write $(\chi,\lambda)=(\cL_{\chi,\lambda},LX_\lambda)$
where $\cL_{\chi,\lambda}$ is a 
 $L^+G$-equivariant local systems 
on the orbits $LX_\lambda$ indexed by $\lambda\in\Lambda_S^+$
and representations $\chi$ of the component groups of stabilizers, 
and $\IC_{\chi,\lambda}$ for the $\IC$-extension of 
$\cL_{\chi,\lambda}$ to the closure of $\overline{LX}_\lambda$.
For any pair $(\chi',\lambda'),(\chi,\lambda)$ as above, we define
\[
P_{\chi',\mu;\chi,\lambda}=\sum_{i} [\cL_{\chi',\mu}:\sH^{i-\delta(\lambda)}(\IC_{\chi,\lambda})|_{LX_{\mu}}]q^{\frac{i}{2}},
\]
here $\delta(\lambda)=\delta(v_\lambda)$,
where 
$\cO_{v_\lambda}\subset LX_\lambda$
is the unique open $I_0$-orbit (see Corollary \ref{c:Iwahori closure and G(O) into Iwahori}).

We show that 
the polynomials $P_{\chi',\mu;\chi,\lambda}$
are special cases of the affine Kazhdan-Lusztig-Vogan polynomials 
(see~\eqref{IC=IC})
and deduce from Theorem \ref{main 3} the following:
\begin{cor}\label{appl 2}
For any pair $(\chi',\mu),(\chi,\lambda)$,
$P_{\chi',\mu;\chi,\lambda}$ is a polynomial in $q$ with non-negative integer coefficients.
It is the unique family of polynomials in $q^{\frac{1}{2}}$ 
	satisfying the following conditions:
	\begin{itemize}
		\item [(i)] $P_{\chi,\lambda;\chi,\lambda}=1$.
		\item [(ii)] If $\mu\neq\lambda$, 
		$\deg(P_{\chi',\mu;\chi,\lambda})\leq\frac{1}{2}(\delta(\lambda)-\delta(\mu)-1)$.
		\item [(iii)] $C_{\chi,\lambda}:=\sum_{\chi',\mu}P_{\chi',\mu;\chi,\lambda}[\cL_{\chi',\mu}]$
		satisfies $D_\delta C_{\chi,\lambda}=q^{-\delta(\lambda)}C_{-\chi,\lambda}$.
		\end{itemize}
In particular, we have 
\begin{itemize}
\item [(iv)]
$\sH^i(\IC_{\xi,\lambda})=0$ if $i+\delta(\lambda)$ is odd.
\end{itemize}
\end{cor}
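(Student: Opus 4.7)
The plan is to reduce Corollary \ref{appl 2} to the Iwahori statements Theorems \ref{main 3} and \ref{main 4} via the identification \eqref{IC=IC}. The geometric input, from Corollary \ref{c:Iwahori closure and G(O) into Iwahori}, is that each spherical orbit $LX_\lambda$ contains a unique dense Iwahori orbit $\cO_{v_\lambda}$; consequently $\overline{LX}_\lambda=\overline{\cO}_{v_\lambda}$ and $\delta(\lambda)=\delta(v_\lambda)$, and the $L^+G$-equivariant simple local system $\cL_{\chi,\lambda}$ restricts to an $I_0$-equivariant simple local system $\cL_{\xi,v_\lambda}$, where $\xi$ corresponds to $\chi$ under the inclusion of the $L^+G$-stabilizer component group into the $I_0$-stabilizer component group. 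Taking intermediate extension along the open embedding yields
\[
\IC_{\chi,\lambda}\;=\;\IC_{\xi,v_\lambda}.
\]

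Next I would compare the Jordan--H\"older multiplicities on a smaller spherical orbit $LX_\mu$ with those on the dense Iwahori orbit $\cO_{v_\mu}\subset LX_\mu$. Since $\sH^{i-\delta(\lambda)}(\IC_{\chi,\lambda})|_{LX_\mu}$ is $L^+G$-equivariant and $\cO_{v_\mu}$ is dense in $LX_\mu$, this restriction is determined by its further restriction to $\cO_{v_\mu}$, and the $L^+G$- and $I_0$-equivariant multiplicities match in a shift-consistent way. Combined with the parity vanishing $\sH^i(\IC_{\xi,v_\lambda})=0$ for $i+\delta(v_\lambda)$ odd from Theorem \ref{main 4}(ii) (which already yields part (iv) of the corollary), the summation $q^{i/2}$ collapses to integer powers of $q$ matching Theorem \ref{main 4}(iii), and we obtain
\[
P_{\chi',\mu;\chi,\lambda}\;=\;P_{\xi',v_\mu;\xi,v_\lambda}.
\]
Positivity and integrality in (iii) then follow from Theorem \ref{main 4}(iii), while items (i) and (ii) transcribe the analogous assertions in Theorem \ref{main 3} using $\delta(\lambda)=\delta(v_\lambda)$ and $\delta(\mu)=\delta(v_\mu)$. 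For the element in (iii) I would set $C_{\chi,\lambda}:=C_{\xi,v_\lambda}\in M$; by $L^+G$-equivariance of the IC stalks the expansion coefficients vanish on non-dense Iwahori orbits inside a given spherical orbit, so $C_{\chi,\lambda}$ lies in a naturally defined spherical submodule of $M$, and the duality $D_\delta C_{\chi,\lambda}=q^{-\delta(\lambda)}C_{-\chi,\lambda}$ is inherited verbatim from Theorem \ref{main 3}(iii).

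For the uniqueness in (iii), I would run the standard triangular Kazhdan--Lusztig argument inside this spherical submodule: given two candidate families satisfying (i)--(iii), their difference produces elements that are $D_\delta$-fixed up to the sign twist $\chi\mapsto-\chi$, strictly bounded in degree by (ii), and supported strictly below $(\chi,\lambda)$; a downward induction on the closure order on spherical orbits forces them to vanish. The main anticipated obstacle is the clean translation between $L^+G$- and $I_0$-equivariant local system data on spherical orbits, specifically verifying that restriction along $\cO_{v_\mu}\hookrightarrow LX_\mu$ identifies irreducible $L^+G$-equivariant local systems on $LX_\mu$ with a (possibly proper) subclass of $I_0$-equivariant local systems on $\cO_{v_\mu}$, and that Jordan--H\"older multiplicities are preserved under this restriction. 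This reduces to a statement about component groups of stabilizers and should follow from the orbit parametrizations established in Lemma \ref{l:Iwahori orbirs on LX} and Proposition \ref{p:sym var orbits}.
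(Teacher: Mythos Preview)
Your overall strategy---reducing to the Iwahori statements via $\IC_{\chi,\lambda}=\IC_{\xi_\chi,v_\lambda}$---is exactly what the paper does, and your derivation of (iv), positivity, (i), and (ii) from Theorems~\ref{main 3} and~\ref{main 4} is sound. The key identity $P_{\chi',\mu;\chi,\lambda}=P_{\eta,u;\xi_\chi,v_\lambda}$ for any $(\eta,u)\subset(\chi',\mu)$ is precisely the content of the paper's \eqref{eq:c v.s. C}.

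However, your treatment of (iii) contains a genuine error. The claim that ``the expansion coefficients vanish on non-dense Iwahori orbits inside a given spherical orbit'' is false: the $\IC$-stalks are nontrivial on every Iwahori orbit $\cO_u\subset\overline{LX}_\lambda$, so $P_{\eta,u;\xi_\chi,v_\lambda}$ is generally nonzero for \emph{all} such $u$, not just the dense $v_\mu$. The correct statement, implied by $L^+G$-equivariance and recorded in the paper as ``$c_{\eta,u;\xi_\chi,v_\lambda,i}$ depends only on the $L^+G$-orbit of $u$'', is that these coefficients are \emph{constant} across all $(\eta,u)\subset(\chi',\mu)$ for fixed $(\chi',\mu)$. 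Thus $C_{\xi_\chi,v_\lambda}$ lies not in the span of $[\cL_{\xi',v_\mu}]$'s but in the image of the map $h$ sending $[\cL_{\chi',\mu}]\mapsto\sum_{(\eta,u)\subset(\chi',\mu)}[\cL_{\eta,u}]$. (A minor related slip: the stabilizer component-group map goes $\pi_0(\Stab_{I_0})\to\pi_0(\Stab_{L^+G})$, not the reverse.)

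Rather than trying to carve out a submodule of $M$ and check that the Iwahori $D_\delta$ restricts to it, the paper proceeds differently: it works directly in the Grothendieck group of mixed $L^+G$-constructible sheaves, defines spherical coefficients $B_{\chi',\mu;\chi,\lambda}$ analogous to the Iwahori $b$'s, establishes the spherical recurrence~\eqref{eq:G(O) Poincare poly recurrence}, and then runs the same triangular Kazhdan--Lusztig uniqueness argument verbatim. This sidesteps the compatibility question between the two $D_\delta$'s entirely. Your induction sketch for uniqueness is the right shape, but it should be run in the spherical module, not inside $M$.
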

Corollary \ref{appl 2} is restated in 
Theorem \ref{t:IC parity vanishing} and
Theorem \ref{t:G(O) Poincare poly uniqueness}.
We refer to Section \ref{Def of KF poly}  for the explanation of condition (iii).
The polynomials $P_{\chi',\mu;\chi,\lambda}$  are computed explicitly in 
\cite[Theorem 1.9]{QuaternionicSatake} and
 \cite[Theorem 1.3]{LOSatake}
for splitting rank symmetric varieties $X$ and 
they coincide with the  Kostka-Foulkes polynomials
for the relative dual group $\check G_X$, generalizing 
the work \cite{L3} in the group cases.
For this reason, we will call the family of polynomials $P_{\chi',\mu;\chi,\lambda}$  the \emph{relative Kostka-Foulkes polynomials}.

\begin{rem}\label{remark on parity vanishing}
    Proving the parity vanishing in $(iv)$ was one of the original motivation for the work. 
  However, 
due to many technical difficulties, such as 
the appearance of 
non-trivial local systems on orbits, 
we do not known a direct proof 
without appealing to the results for Iwahori orbits. In a sense, the study of Iwahori orbits closures is simpler than spherical orbit closures due to the presence of affine Hecke algebra module structures.
\end{rem}

\begin{rem}
The results in \cite{L3} imply,
for splitting rank $X$, the 
 polynomials
  $P_{\chi',\mu;\chi,\lambda}$ 
 admits a representation theory interpretation as a $q$-analogy of  weight multiplicities of the dual group $\check G_X$.
\end{rem}

\subsubsection{Real and quantum groups}\label{real and quantum}
We comment on connections and applications to real and quantum groups.
Let $G_\bR$ be the real form of
$G$ corresponding to $X$ under the Cartan bijection.
The real-symmetric equivalence in \cite{CN3} says there is an equivalence
\begin{equation}\label{real-sym}
    \on{Perv}(L^+G\backslash LX)\cong\on{Perv}(L^+G_\bR\backslash\Gr_\bR)
\end{equation}
where the right hand side is the real Satake category of
equivariant perverse sheaves on the real affine Grassmannian 
$\Gr_\bR$ of $G_\bR$.\footnote{This is a 
 remarkable new phenomenon
specific to $L^+G$-orbits on $LX$.}
One can translate the results in the paper to the $G_\bR$-side of the equivalence~\eqref{real-sym} and obtain
several interesting results in Langlands duality for real groups, such as the 
 parity 
vanishing for $\IC$-complexes on 
$\Gr_\bR$ and the formality conjecture for real Satake category \cite[Corollary 1.7]{CN3}. Those results  
are not obvious since
powerful tools in algebraic geometry (e.g., 
Hodge theory or theory of weights)
are not available in the real analytic setting.

On the other hand, 
in \cite{QuantumSL2} the authors use the 
concrete geometry of the real affine Grassmannian 
 to show that the real Satake category
 $\on{Perv}(L^+\on{PGL}_2(\bR)\backslash\Gr_\bR)$ for $G_\bR=\on{PGL}_2(\bR)$
 is related to the representations of quantum $\SL_2$ 
 at primitive fourth root of unity. Through the real-symmetric equivalence~\eqref{real-sym}, their results
 provide a representation theoretic and combinatorial interpretations of the relative Kostka-Foulkes polynomials
for $X=\on{PGL}_2/\on{PO}_2$.
In particular, we see that, in contrast with 
the group cases or
the splitting rank cases,
 the polynomials
  $P_{\chi',\mu;\chi,\lambda}$ 
  for general $X$
  might be related to weight multiplicities of a certain quantum group instead of $\check G_X$.
In the forthcoming work \cite{CMT}, 
we plan to 
explore those connections for more general symmetric varieties, real groups, and quantum groups.

\subsubsection{Spherical varieties}
We comment on generalization to general spherical varieties.
A large part of the geometric approach taken in \cite{MS} is written uniformly for 
an arbitrary spherical variety.
However, Mars-Springer's 
construction of transversal slices  in symmetric varieties, and also our generalization
to the affine setting,
uses the involution $\theta$ in an essential way and hence 
does not have a direct generalization to general spherical varieties. 
Thus we expect that most of the results of the paper
have  direct 
generalizations to spherical varieties, except the 
pointwise purity and positivity results where the existence of slices is 
a crucial ingredient.

With that being said, there have been recent
works, for examples \cite{FGT,BFGT}, where the authors established, among other things, 
the pointwise purity and formality results for a particular class of spherical varieties.
In a forthcoming work, we extend the main results of the paper to 
the $\GL_{2n}$-spherical variety  
$X=\GL_{2n}/\Sp_{2n}\times\bA^{2n}$ where 
$\GL_{2n}$ acts diagonally (the so called \emph{linear periods} cases), including 
a construction of transversal slices for spherical orbits in $LX$
and  a version of 
the relative derived Satake equivalence.
We hope to explore the situation of more general spherical varieties.

\subsubsection{Characteristic}
In this paper, we work with $\ell$-adic sheaves on schemes over 
fields of positive characteristic
where Deligne's theory of weights  
plays a crucial role.
The proofs are all geometric and hence are applicable in the setting of $D$-modules on schemes over complex numbers where one uses
Saito's theory of mixed Hodge modules. 
 
\subsection{Organization}
In Section \ref{group}, we recall some basic definitions 
in Lie theory.
In Section \ref{orbits} and Section \ref{S:Iwahori orbits}, we study spherical and Iwahori orbits parametrizations
on $LX$.
In Section \ref{placid}, we prove
the placidness of orbit closures in $LX$, a key result on 
the geometry of orbit closures, 
and uses it to show the existence of 
a dimension theory on $LX$.
In Section \ref{Transversal slices}, we 
construct conical transversal slices for both spherical and Iwahori orbits inside 
$LX$ and establish several foundational results about them.
In Section \ref{Lagrangian slices}, we prove a codimension formula for spherical orbits and deduce from it the Lagrangian property of spherical slices.
In Section \ref{Affine Hecke modules}, we construct the 
affine Lusztig-Vogan modules and introduce and study the affine Kazhdan-Lusztig-Vogan
polynomials. We explain an algorithm to compute them.
In Section \ref{main results}, we prove the  pointwise purity 
and parity vanishing results for $\IC$-complexes for 
orbit closures 
and deduce from it the positivity of 
affine Kazhdan-Lusztig-Vogan
polynomials.
In Section \ref{Formality}, we discuss applications 
to the relative Langlands duality. 
In Section \ref{disconnected cases},  we 
discuss the case when $K$ is disconnected (we assume $K$ is connected  from Section \ref{S:Iwahori orbits}-Section \ref{Formality}).
In Appendix \ref{s:appendix}, 
we recall and prove some basic results 
about the geometry of placid ind-schemes and sheaf theory, including 
the discussion of dimension theory, $t$-structures and perverse sheaves,  Verdier duality, etc.

\subsection*{Acknowledgement}
We would like to thank 
Alexis Bouthier, Huanchen Bao,
Michael Finkelberg,
Mark Macerato, 
David Nadler, Jeremy Taylor, David Vogan,
and Ruotao Yang for useful discussions and comments.

The research of T.-H. Chen is supported by 
NSF grant  DMS-2143722.
The research of Lingfei Yi is supported by the Start-up Grant No. JIH1414033Y of Fudan University.

\section{Group data}\label{group}
\subsection{Symmetric varieties}\label{sym}
We recall some basic notations about symmetric varieties following  
 \cite{Springer85,Springer87} and  \cite{RSbook}.
 Let $G$ be a  connected reductive group over $k=\overline{\mathbb F}_p$
of characteristic $p\neq 2$,
and  $\theta:G\to G$ an involution.
We  fix a $\theta$-stable 
Borel pair $(B_0,T_0)$.

Let $K=G^\theta$ be the associated symmetric subgroup. 
The quotient scheme $X=G/K$ is a symmetric variety.
Let $\tau:G\to G, \tau(g)=g\theta(g)^{-1}$.
It induces a $G$-equivariant isomorphism
$\tau:X\cong G^{\on{inv}\circ\theta,\circ}$ (denoted again by $\tau$)
between $X$ and the identity component 
of the subvariety of $G$ on which $\theta$ acts as inverse.
Here $G$ acts on $G^{\on{inv}\circ\theta,\circ}$ 
by the twisted conjugation action 
$g\cdot x=gx\theta(g)^{-1}$.

A torus $S\subset G$ is called $\theta$-split 
if $\theta(s)=s^{-1}$ on $S$.
A parabolic $P$ is called $\theta$-split
if $P\cap\theta(P)$ is a Levi subgroup of $P$.
We fix a maximal $\theta$-split torus $S$ and a minimal 
$\theta$-stable parabolic 
$P$ such that 
$M=Z_G(S)=P\cap\theta(P)$.
We pick a  Borel pair $(B,T)$ such that 
$S\subset T\subset B\subset P$.
Note that $T$ is automatically $\theta$-stable. We call such a pair 
a $\theta$-split Borel pair.

Let $\fg=\Lie(G)$ be the Lie algebra.
The tangent map $d\theta:\fg\rightarrow\fg$ 
has eigenspace decomposition 
$\fg=\fk\oplus\fp$,  
where $\fk=\Lie(K)$
and $\fp$ is the $(-1)$-eigenspace.

\subsection{Loop and arc spaces}
Let $F=k(\!(t)\!)$ and $\cO=k[\![t]\!]$ be the Laurent and Taylor series ring respectively.
For a scheme $Y$ over $k$, 
we will denote its loop space and arc space by $LY$ and  $L^+Y$
such that $LY(k)=Y(F)$
and $L^+Y(k)=Y(\cO)$.
For any $j\in\mathbb Z_{\geq0}$, we have $j$-th arc space 
$L^+_jY$ such that 
$L^+_jY(k)=Y(\cO/t^j)$.
When $Y=G$, the corresponding loop and arc spaces $LG$ and $L^+G$
are called the loop group and arc group of $G$.
We denote by $I_0\subset L^+G$ the Iwahori subgroup associated to $B_0$. 
We let $\Gr=LG/L^+G$ be the affine Grassmannian
and $\Fl=LG/I_0$ the affine flag variety.

\section{Spherical orbits}\label{orbits}

\subsection{Spherical orbits parametrizations}\label{ss:recollection orbits}
We have the following parametrization of 
$L^+G$-orbits on $LX$.
Recall the embedding 
$\tau:X\cong G^{\on{inv}\circ\theta,\circ}\subset G$ 
in Section \ref{sym} induces a map 
$\tau:LX\to LG$ on the corresponding loop spaces.

We denote by $\Phi,\Phi^+
, \check\Phi,\check\Phi^+$ the set of 
roots, positive roots, coroots, positive coroots with respect to the 
$\theta$-split Borel pair $(B,T)$.
We denote by $\Delta\subset\Phi$
and $\check\Delta\subset\check\Phi$ the set of simple roots and simple coroots.
We denote by $\Lambda_T$ and $\Lambda_T^+$
the sets of coweights and dominant coweights. 
We denote by $\Lambda_S\subset\Lambda_T$
the coweights of $S$
and $\Lambda_S^+=\Lambda_S\cap\Lambda_T^+$
the set of dominant coweights of $S$. 
$\Lambda_S$ is naturally ordered by the restriction of the ordering from the coweights 
$\Lambda_T$: we have $\mu\leq\lambda\in\Lambda_S$ if and only if 
$\lambda-\mu$ is a non-negative sum of 
positive coroots.
We write $2\rho$ the sum of positive roots. 
 
\begin{prop}\label{p:sym var orbits}
\begin{itemize}
    \item [(i)] The $L^+G$-orbits on $LX$ are represented by $\Lambda_S^+$:
        for each $\mu\in\Lambda_S^+$, there exists  $x_\mu\in LX(k)=X(F)$ such that $\tau(x_\mu)=t^\mu\in S(F)$ and $LX_\mu=LG^+\cdot x_\mu$ is the corresponding orbit.
\item [(ii)] 
Assume $K$ is connected.
The natural map $LG\to LX$ induces a bijection  $G(F)/K(F)\cong X(F)$ 
\end{itemize}
\end{prop}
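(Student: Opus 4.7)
The plan is to prove (i) as an affine Cartan decomposition for the symmetric variety $X$ over $F = k(\!(t)\!)$, and (ii) via a cohomological vanishing argument.

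For (i) I would argue in three steps. \emph{Existence of representatives:} Via $\tau$, the $L^+G$-action on $LX$ becomes the $L^+G$-twisted-conjugation action $g\cdot y = g y \theta(g)^{-1}$ on $\tau(LX)\subseteq LG$, and the task is to show every such orbit meets $S(F)$ at a point of the form $t^\mu$. I would invoke the local Cartan decomposition for the symmetric space $X$ over $F$, namely $X(F) = G(\cO)\cdot\{x_\mu : \mu\in\Lambda_S^+\}$ with $\tau(x_\mu)=t^\mu$. In the geometric setting this can be established either by Bruhat--Tits theory applied to the spherical building of $X$, or more directly from the open dense Bruhat-type cell $S\cdot U\hookrightarrow X$ (with $U$ the unipotent radical of a minimal $\theta$-split parabolic) together with iterated one-parameter subgroup arguments inside $L^+G$. \emph{Normalization:} The little Weyl group $W_X$, realized as $N_K(S)/Z_K(S)$, lifts into $L^+K\subseteq L^+G$ and, since $\theta$ acts trivially on $K$, acts on $t^\mu\in S(F)$ by twisted conjugation as ordinary conjugation, realizing its tautological action on $\Lambda_S$. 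Using this I bring $\mu$ into the dominant chamber $\Lambda_S^+$. \emph{Disjointness:} If $g\cdot x_\mu = x_{\mu'}$ for some $g\in L^+G$ and $\mu,\mu'\in\Lambda_S^+$, then $g\,t^\mu\,\theta(g)^{-1} = t^{\mu'}$ in $LG$, with $\theta(g)^{-1}\in L^+G$ since $\theta$ preserves $L^+G$; passing to the classical Cartan decomposition $L^+G\backslash LG/L^+G\cong\Lambda_T^+$ forces $\mu$ and $\mu'$ to be $W$-conjugate in $\Lambda_T$ (where $W$ is the finite Weyl group of $G$), and intersecting with $\Lambda_S$ gives $W_X$-conjugacy. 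Since $\Lambda_S^+$ is a fundamental domain for $W_X$ acting on $\Lambda_S$, we conclude $\mu = \mu'$.

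For (ii), the projection $\pi\colon G\to X=G/K$ is a right $K$-torsor, so for any $x\in X(F)$ the fiber $\pi^{-1}(x)$ is a $K$-torsor on $\Spec F$, whose isomorphism class in $H^1(\Spec F, K)$ is the obstruction to lifting $x$ to an $F$-point of $G$. Since $F=k(\!(t)\!)$ with $k=\overline{\mathbb F}_p$ is a $C_1$-field by Tsen--Lang, and $K=G^\theta$ is connected by hypothesis and reductive because $\on{char}(k)\neq 2$, Steinberg's theorem gives $H^1(\Spec F, K)=1$. Hence $G(F)\twoheadrightarrow X(F)$, and since the fibers of $G(F)\to X(F)$ are $K(F)$-orbits, we obtain the desired bijection $G(F)/K(F)\cong X(F)$.

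The main obstacle will be the affine Cartan decomposition invoked in the existence step: in the finite-dimensional setting it is classical, but the loop-space version requires careful handling of the ind-scheme structure of $LX$ and of the interaction between $\theta$-twisted conjugation and the valuation data defining $L^+G$-orbits inside $LG$. The normalization, disjointness, and the argument for (ii) are then relatively standard consequences of well-known tools.
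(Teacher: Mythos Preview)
The paper does not prove this independently: part (i) is cited from \cite[Theorem 8.2.9]{GN} and \cite[Theorem 4.2]{NadlerMatsuki}, and part (ii) from \cite[Theorem 4.3]{NadlerMatsuki}. Your sketch for (i) follows the shape of those references, and you correctly flag the existence step (the affine Cartan decomposition for $X(F)$) as the place where the substantive work lies; the normalization and disjointness steps are routine. One simplification in your disjointness argument: once $\mu,\mu'\in\Lambda_S^+=\Lambda_S\cap\Lambda_T^+$ are shown to be $W$-conjugate, they are already equal, since two dominant coweights in the same $W$-orbit coincide. The detour through $W_X$ is unnecessary, and the implication ``$W$-conjugate in $\Lambda_T$ and lying in $\Lambda_S$ implies $W_X$-conjugate'' would itself need justification.

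Your argument for (ii) via $H^1(F,K)=1$ is correct in substance, but the appeal to Steinberg's theorem is imprecise: that theorem, as stated for instance in Serre's \emph{Galois Cohomology} III.2.3, assumes the base field is perfect, whereas $F=k(\!(t)\!)$ with $k=\overline{\mathbb F}_p$ is not. The vanishing nonetheless holds here---$K$ is split reductive over $F$ since it is defined over the algebraically closed field $k$, Tsen's theorem gives $\operatorname{Br}(F)=0$, and one can reduce to split tori (Hilbert 90) and simply connected groups via a $z$-extension---but you should either supply this reduction or cite a reference that covers the imperfect-residue-field case rather than invoking Steinberg's name alone.
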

\begin{proof}
     Part (i) is proved in   \cite[Theorem 8.2.9]{GN}
        and \cite[Theorem 4.2.]{NadlerMatsuki}.
      Part  (ii) 
        follows from 
        \cite[Theorem 4.3.]{NadlerMatsuki}
        (Since $\cL=\Lambda^+_S$ in \emph{loc. cit.} as $K$ is connected).
\end{proof}

\subsection{Closure relations}
Let $\bLX_\mu$ be the closure of $LX_\mu$ in $LX$ (with the reduced scheme structure). The goal of this section is to prove the following closure relation.
\begin{thm}\label{t:sym var orbit closure}
   For $\forall\mu\in\Lambda_S^+$, 
   $\bLX_\mu=\bigsqcup_{\lambda\leq\mu}LX_\lambda$.
\end{thm}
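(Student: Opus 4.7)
The plan is to prove the two inclusions separately, following the strategy used for the closure relation on the affine Grassmannian $\Gr$, adapted via the embedding $\tau\colon LX\hookrightarrow LG$ of Section \ref{sym} and the orbit parametrization of Proposition \ref{p:sym var orbits}(i).

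For the inclusion $\bigsqcup_{\lambda\leq\mu}LX_\lambda\subseteq\bLX_\mu$, I would proceed by transitivity of closures and induction on the height of $\mu-\lambda$, reducing to the case where $\lambda$ is covered by $\mu$, so that $\mu-\lambda=\check\beta$ is a simple positive coroot of the relative root system of $(G,S)$. For this base case I would fix a $\theta$-stable rank-one subgroup $G_{\check\beta}\subset G$ attached to $\check\beta$, equipped with an $\Sl_2$-triple whose semisimple element lies in $\Lie(S)\subset\fp$, and construct an explicit one-parameter family $c\mapsto g(c)\cdot x_\mu\in LX(k[\![c]\!])$ with $g(c)\in G_{\check\beta}(F[\![c]\!])$ whose specialization at $c=0$ lies in the $L^+G$-orbit of $x_\lambda$. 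The computation reduces to an explicit rank-one calculation inside $G_{\check\beta}/(G_{\check\beta}\cap K)$, and is the direct analog inside a $\theta$-stable $\SL_2$ of the standard Lusztig-style degeneration used for $\Gr$.

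For the opposite inclusion $\bLX_\mu\subseteq\bigsqcup_{\lambda\leq\mu}LX_\lambda$, I would combine an Iwasawa-type semi-infinite decomposition of $LX$ with a contraction argument. Let $P^-=MU^-$ be opposite to the minimal $\theta$-stable parabolic $P$ from Section \ref{sym}, with $M=Z_G(S)$. The Iwasawa decomposition $G(F)=U^-(F)\cdot M(F)\cdot K(F)$ combined with Proposition \ref{p:sym var orbits}(i) applied to the Levi $M$ (and its symmetric quotient $M/(M\cap K)$) produces a decomposition $LX=\bigsqcup_{\nu\in\Lambda_S}S_\nu^-$, where $S_\nu^-$ is the $LU^-$-orbit through $x_\nu$. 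Combining loop rotation with the left action through the cocharacter $-\nu\in\Lambda_S$ yields a $\bGm$-action on $LX$ that fixes $x_\nu$ and contracts $S_\nu^-$ onto it. Since $\bLX_\mu$ is both $L^+G$-stable and loop-rotation-stable, hence preserved by this $\bGm$, a nonempty intersection $\bLX_\mu\cap S_\nu^-$ forces $x_\nu\in\bLX_\mu$; applying $\tau$ and invoking the standard closure relation on $\Gr$ (whose image under the quotient $LG\to\Gr$ controls the $\Lambda_T^+$-component of $\tau(x_\nu)$) then yields $\nu\leq\mu$ in $\Lambda_S^+$. Since $LX_\lambda$ meets $S_\lambda^-$ at $x_\lambda$ for each $\lambda\in\Lambda_S^+$, this gives the required bound.

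The main obstacle is handling the semi-infinite orbits $S_\nu^-$ rigorously: they have both infinite dimension and infinite codimension inside $LX$, so both the contraction argument and the claim that $x_\nu\in\bLX_\mu$ implies $\nu\leq\mu$ must be formulated inside the placidness framework of Section \ref{placid} to be meaningful. A secondary subtlety appears in Direction 1, where one must confirm the limit of the rank-one family lies in the precise orbit $LX_{\mu-\check\beta}$ and not a strictly smaller one; this is verified by computing the $S$-coweight of the limit directly inside the rank-one symmetric space $G_{\check\beta}/(G_{\check\beta}\cap K)$.
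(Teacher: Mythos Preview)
Your two directions are swapped in difficulty relative to the paper, and each needs correction.

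\textbf{The inclusion $\bLX_\mu\subseteq\bigsqcup_{\lambda\leq\mu}LX_\lambda$.} Your semi-infinite Iwasawa/contraction argument is a substantial overcomplication, and the obstacle you flag (making the semi-infinite orbits $S_\nu^-$ rigorous in the placid framework) is entirely self-inflicted. The paper's argument is one line: since $\tau(LX_\mu)\subset L^+G\,t^\mu\,L^+G$, taking closures gives $\tau(\bLX_\mu)\subset\overline{L^+G\,t^\mu\,L^+G}=\bigsqcup_{\lambda\leq\mu}L^+G\,t^\lambda\,L^+G$, and since $\tau(LX_\lambda)\subset L^+G\,t^\lambda\,L^+G$ the claim follows immediately from the known closure relation on the affine Grassmannian. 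No semi-infinite geometry is needed.

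\textbf{The inclusion $\bigsqcup_{\lambda\leq\mu}LX_\lambda\subseteq\bLX_\mu$.} Your reduction-to-rank-one strategy is correct in outline and matches the paper's, but your description understates two genuine difficulties. First, the covers in $\Lambda_S^+$ (via \cite[Lemma~2.1.1]{NadlerRealGr}) are of two types: $\mu-\lambda=\alpha\in\Lambda_S\cap\check\Phi^+$, or $\mu-\lambda=\alpha-\theta(\alpha)$ for $\alpha\in\check\Phi^+$. In the second type, when $\alpha+\theta(\alpha)$ is itself a coroot, the $\theta$-stable subgroup generated by $G_\alpha$ and $G_{\theta(\alpha)}$ has type $A_2$, not $A_1$; the paper handles this separately by an explicit reduction of $\GL_3$ with the involution $g\mapsto\Ad_J(g^t)^{-1}$ to two applications of the $\GL_2$ case. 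Your ``$\theta$-stable rank-one subgroup $G_{\check\beta}$ with an $\Sl_2$-triple'' does not cover this. Second, the base case $\GL_2/\mathrm O_2$ is \emph{not} the standard Lusztig degeneration for $\Gr$: the embedding $\tau$ sends the $L^+G$-action to $g\cdot x=gxg^t$, which is quadratic in $g$, so the usual one-parameter unipotent curve does not lift. The paper's Proposition~\ref{p:GL2/O2 closure curve} is a multi-page explicit construction of $g(s)\in\SL_2(\cO)$ with $g(s)\,t^{(m+1,-1)}\,g(s)^t\to t^{(m,0)}$ as $s\to 0$, requiring nontrivial recursive determination of the Taylor coefficients. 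Calling this ``the direct analog of the standard Lusztig-style degeneration'' is where your proposal has its real gap.
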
 

\begin{rem}
  If $k=\mathbb C$, the theorem is proved in \cite[Theorem 10.1]{NadlerMatsuki}.
It is interesting to note that 
the proof of in \emph{loc. cit.}
makes use of real affine Grassmannians.
Our argument is algebraic and works 
over an arbitrary algebraically closed field whose characteristic is not two.  
\end{rem}

\subsubsection{The case $\GL_2/\mathrm O_2$}
We begin with the fundamental case of 
$G=\GL_2$, $\theta(g)=(g^t)^{-1}$, $K=\mathrm O_2$, $X=\GL_2/\mathrm O_2$.
We will see later that the general orbit closures
can be reduced to this situation.

In this case $S=T$.
Note that in \cite{NadlerMatsuki}, 
Proposition \ref{p:sym var orbits}.(i) is proved without
assuming $K$ to be connected.
Thus $L^+G$-orbits on $LX$ are parameterized by 
$\Lambda_T^+=\{\lambda=(p,q)\in\bZ^2\,|\,p\geq q\}$.
We will work with the image of $LX$ under embedding
$\tau:LX\rightarrow LG$, $g\mapsto g\theta(g)^{-1}=gg^t$.
The orbit representatives are $\tau(x_\lambda)=t^\lambda$,
and the $L^+G$-action is $g\cdot t^\lambda=gt^\lambda g^t$.
The orbits closure relation follows from the following statement:

\begin{prop}\label{p:GL2/O2 closure curve}
For any integer $m\geq 0$, let $\mu_0=\sqrt{(-1)^{m+1}}$.
Then for any $s\in k^\times$, there exists 
$g=g(s)=
\begin{pmatrix}
	a&b\\
	c&d
\end{pmatrix}
\in\SL_2(\cO)$
such that
\begin{equation}\label{eq:GL2/O2 closure}
g\begin{pmatrix}
	t^{m+1}&\\
	&t^{-1}
\end{pmatrix}g^t
=
\begin{pmatrix}
	a^2t^{m+1}+b^2t^{-1}&act^{m+1}+bdt^{-1}\\
	act^{m+1}+bdt^{-1}&c^2t^{m+1}+d^2t^{-1}
\end{pmatrix}
=
\begin{pmatrix}
	\sum_{i=0}^{m+1}(-s^2)^{m+1-i}t^{i-1}& \mu_0 s^{m+2}t^{-1}\\
	\mu_0s^{m+2}t^{-1}&s^2t^{-1}+1
\end{pmatrix}.
\end{equation}
\end{prop}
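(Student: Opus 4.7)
The plan is to exhibit $g(s)$ explicitly via the ansatz $c = 0$, which reduces the condition $g \in \SL_2(\cO)$ to $ad = 1$ with $a,d\in\cO^\times$, and turns \eqref{eq:GL2/O2 closure} into three scalar equations: $d^2 = s^2+t$ (from the $(2,2)$ entry), $bd = \mu_0 s^{m+2}$ (from $(1,2)$), and $d^{-2}t^{m+1}+b^2t^{-1}=\sum_{i=0}^{m+1}(-s^2)^{m+1-i}t^{i-1}$ (from $(1,1)$). This ansatz is natural because the $(2,2)$ entry on the right of \eqref{eq:GL2/O2 closure} is $\equiv 1 \pmod t$, so $d$ should be a unit of $\cO$, and setting $c=0$ is then the simplest normalization.

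First I would solve the $(2,2)$ equation: since $s\in k^\times$ and $\on{char}(k)\neq 2$, the element $s^2+t \in \cO$ has a unique square root $d\in\cO^\times$ with constant term $s$ (Hensel's lemma). I would then set $a=d^{-1}$ and $b=\mu_0 s^{m+2}/d$; both lie in $\cO$ since $d\in\cO^\times$. These choices automatically satisfy the $(1,2)$ equation and $ad-bc=1$, so $g(s)\in\SL_2(\cO)$.

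The main remaining step is to verify the $(1,1)$ equation. Substituting $a^2=(s^2+t)^{-1}$ and $b^2=(-1)^{m+1}s^{2m+4}(s^2+t)^{-1}$ (using $\mu_0^2=(-1)^{m+1}$) and clearing the denominator $t(s^2+t)$, the $(1,1)$ identity reduces to the polynomial identity
\[
t^{m+2} + (-1)^{m+1} s^{2m+4} \;=\; (s^2 + t) \sum_{i=0}^{m+1}(-s^2)^{m+1-i}t^i.
\]
I would prove this by expanding $(s^2+t)(-s^2)^{m+1-i}t^i=-(-s^2)^{m+2-i}t^i+(-s^2)^{m+1-i}t^{i+1}$ and telescoping over $i=0,\ldots,m+1$: all intermediate terms cancel, leaving only $t^{m+2}$ (from the second summand with $i=m+1$) and $-(-s^2)^{m+2}=(-1)^{m+1}s^{2m+4}$ (from the first with $i=0$).

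There is no serious obstacle: once the ansatz is in place, the proposition reduces to an elementary telescoping identity. The payoff is that the one-parameter family $g(s)\cdot\mathrm{diag}(t^{m+1},t^{-1})\cdot g(s)^t$ specializes at $s=0$ to $\mathrm{diag}(t^m,1)=\tau(x_{(m,0)})$, producing the adjacent-coweight covering relation $(m,0)<(m+1,-1)$ that is the essential input for Theorem \ref{t:sym var orbit closure}.
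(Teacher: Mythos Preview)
Your proof is correct and takes a genuinely simpler route than the paper's. The paper does not use the ansatz $c=0$; instead it first solves the three equations modulo $t^{m+2}$ by a recursive computation of the low-order coefficients of $b$ and $d$ (essentially rediscovering the Taylor expansion of $\sqrt{s^2+t}$), then fixes $a=s^{-1}$ as a constant, expresses $c$ in terms of the higher coefficients $\tilde d$ of $d$, and finally solves a quadratic equation for $\tilde d$ after verifying that its discriminant has nonzero constant term. Your observation that setting $c=0$ forces $d^2=s^2+t$, which is a unit in $\cO$ solvable by Hensel's lemma, short-circuits all of this: the $(1,2)$ entry and the determinant condition then solve themselves, and the $(1,1)$ entry reduces to the elementary telescoping identity you wrote. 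The two approaches produce different elements $g(s)\in\SL_2(\cO)$ satisfying \eqref{eq:GL2/O2 closure} (the paper's has $a=s^{-1}$ constant and $c\neq 0$; yours is upper-triangular), but either suffices for the application to Corollary~\ref{c:GL2/O2 orbits closure}, and yours is substantially more direct.
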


\begin{cor}\label{c:GL2/O2 orbits closure}
	For $\forall\mu\in\Lambda_S^+$, 
	$\bLX_\mu=\bigsqcup_{\lambda\leq\mu}LX_\lambda$.
\end{cor}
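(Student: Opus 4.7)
The plan is to establish the two inclusions in $\bLX_\mu=\bigsqcup_{\lambda\leq\mu}LX_\lambda$ separately: the forward inclusion $\supseteq$ from Proposition~\ref{p:GL2/O2 closure curve} combined with a central translation and induction on height, and the reverse inclusion $\subseteq$ by transporting the question along the closed embedding $\tau\colon LX\hookrightarrow LG$ and invoking the standard closure relation for $L^+G$-orbits in the affine Grassmannian of $\GL_2$.

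For $\supseteq$, I will first specialize~\eqref{eq:GL2/O2 closure} at $s=0$: only the $i=m+1$ summand in the $(1,1)$ entry survives, the off-diagonal entries and the $s^2t^{-1}$ contribution in the $(2,2)$ slot vanish, and the limit is the diagonal matrix $\diag(t^m,1)=t^{(m,0)}=\tau(x_{(m,0)})$. Hence the curve $s\mapsto g(s)\cdot t^{(m+1,-1)}$, which lies in $\tau(LX_{(m+1,-1)})$ for every $s\in k^\times$, extends to $s=0$ landing in $\tau(LX_{(m,0)})$, yielding $LX_{(m,0)}\subset\bLX_{(m+1,-1)}$. For a general $\mu=(p,q)\in\Lambda_T^+$ with $p\geq q+2$, I would write $\mu=(m+1+a,-1+a)$ with $a=q+1$ and $m=p-q-2\geq 0$, and left-multiply~\eqref{eq:GL2/O2 closure} by the scalar (hence central) loop element $t^{(a,a)}$. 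Since $t^{(a,a)}$ commutes with $g(s)\in\SL_2(\cO)$, the identity rewrites as $g(s)\cdot t^\mu = t^{(a,a)}\cdot g(s)t^{(m+1,-1)}g(s)^t$, whose $s=0$ limit is $t^{(m+a,a)}=t^{\mu-\check\alpha}$. This gives the one-step closure relation $LX_{\mu-\check\alpha}\subset\bLX_\mu$ whenever $\mu-\check\alpha\in\Lambda_T^+$; an induction on the height $p-q$ (with the base cases $p-q\in\{0,1\}$ vacuous, as $\mu$ is then minimal in $\Lambda_T^+$) produces $\bLX_\mu\supseteq\bigsqcup_{\lambda\leq\mu}LX_\lambda$.

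For $\subseteq$, the map $\tau\colon LX\hookrightarrow LG$ is a closed embedding that intertwines the $L^+G$-action on $LX$ with the twisted conjugation $g\cdot x = gxg^t$ on $LG$, so $\tau(LX_\mu)$ is contained in the $L^+G$-biinvariant double coset $L^+G\cdot t^\mu\cdot L^+G\subset LG$. By the standard closure relation $\overline{\Gr_\mu}=\bigsqcup_{\lambda\leq\mu}\Gr_\lambda$ in the affine Grassmannian of $\GL_2$, the closure of this double coset in $LG$ is $\bigsqcup_{\lambda\leq\mu}L^+G\cdot t^\lambda\cdot L^+G$. Hence any orbit $LX_\nu\subset\bLX_\mu$ has $\tau(LX_\nu)\subset L^+G\cdot t^\nu\cdot L^+G$ sitting inside this closure, forcing $\nu\leq\mu$. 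Granted Proposition~\ref{p:GL2/O2 closure curve}, the principal thing to be careful about is that the central element $t^{(a,a)}$ genuinely commutes past $g(s)$ and respects the twisted conjugation (both immediate from centrality in $\GL_2$); modulo this, the corollary is a clean synthesis of the $s=0$ specialization and the $L^+G$-biinvariant affine Grassmannian closure.
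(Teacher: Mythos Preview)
Your proof is correct and follows essentially the same approach as the paper: one inclusion via the embedding $\tau$ into $LG$ and the standard affine Grassmannian closure relation, the other via central translation reducing to the case $\mu=(m+1,-1)$ and then specializing Proposition~\ref{p:GL2/O2 closure curve} at $s=0$. The paper phrases the central translation as ``acting by center, we can assume $\mu=(m+1,-1)$'' and leaves the induction on height implicit, but the content is identical.
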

\begin{proof}
	Since $L^+G\cdot t^\mu\subset L^+G t^\mu L^+G$,
	by the orbits closure relation in affine Grassmannian we know
	$\bLX_\mu\subset\bigsqcup_{\lambda\leq\mu}LX_\lambda$.
	
	For the other direction, 
	it suffices to show $t^{\mu-\alpha}\in\bLX_\mu$ 
	if $\mu-\alpha\in\Lambda_S^+=\Lambda_T^+$, $\alpha=(1,-1)$.
	Acting by center, we can assume $\mu=(m+1,-1)$, $\mu-\alpha=(m,0)$.
	By Proposition \ref{p:GL2/O2 closure curve},
	we get $t^{(m,0)}=\lim_{s\rightarrow0}g(s)\cdot t^{(m+1,-1)}\in\bLX_\mu$.
	This proves the statement.
\end{proof}

In the rest of the subsection, we prove Proposition \ref{p:GL2/O2 closure curve} in a few steps.

\subsubsection{}
Write $a=\sum_{i\geq0}a_it^i,b=\sum_{i\geq0}b_it^i,
c=\sum_{i\geq0}c_it^i,d=\sum_{i\geq0}d_it^i$.
We first show that we can find $a,b,c,d$ 
such that \refeq{eq:GL2/O2 closure} holds modulo $t^{m+1}$.
This only involves $b_0,...,b_{m+1},d_0,...,d_{m+1}$.
Explicitly, we need to find $b_0,...,b_{m+1},d_0,...,d_{m+1}\in k[s,s^{-1}]$ depending only on $m$ such that
\begin{equation}\label{eq:step 1 eqs}
	b^2\equiv \sum_{i=0}^{m+1}(-s^2)^{m+1-i}t^i,\quad
	d^2\equiv s^2+t,\quad
	bd\equiv \mu_0 s^{m+2} \mod t^{m+2}.
\end{equation}

We first solve $d_i$. 
Let $d_i=\lambda_i s^{-(2i-1)}$, $0\leq i\leq m+1$.
Then $d\equiv \sum_{i=0}^{m+1}d_it^i\mod t^{m+2}$.
We get equation 
\[
d^2\equiv 
\lambda_0^2s^2+2\lambda_0\lambda_1s^0t+(2\lambda_0\lambda_2+\cdots)s^{-2}t^2
+\cdots+(2\lambda_0\lambda_{m+1}+\cdots)s^{-2m}t^{m+1}
\equiv s^2+t\mod t^{m+2}.
\]
Let $\lambda_0=1$, $d_0=s$. 
From the above we can uniquely solve $\lambda_1=\frac{1}{2},\lambda_2=\frac{1}{2^3},...$, 
until $\lambda_{m+1}$, 
where the denominator of $\lambda_i$ is a power of $2$.

Next let $b_i=\mu_is^{m+1-2i}$, $0\leq i\leq m+1$, where $\mu_0=\sqrt{(-1)^{m+1}}\neq0$.
From equation
\begin{align*}
bd\equiv& 
\mu_0s^{m+2}+(\sum_{i=0}^1\mu_i\lambda_{1-i})s^mt
+(\sum_{i=0}^2\mu_i\lambda_{2-i})s^{m-2}t^2+\cdots
+(\sum_{i=0}^{m+1}\mu_i\lambda_{m+1-i})s^{-m}t^{m+1}\\
\equiv& \mu_0s^{m+2} \mod t^{m+2},
\end{align*}
$\mu_1,\mu_2,...,\mu_{m+1}$ are uniquely determined.

We still need to check that the above 
$b_0,...,b_{m+1},d_0,...,d_{m+1}$ 
satisfy the first equation in \eqref{eq:step 1 eqs}:
\[
b^2\equiv
\sum_{i=0}^{m+1}(\sum_{k=0}^i\mu_k\mu_{i-k})(s^2)^{m+1-i}t^i\equiv
\sum_{i=0}^{m+1}(-s^2)^{m+1-i}t^i
\mod t^{m+2}.
\]
It amounts to show 
$\sum_{k=0}^i\mu_k\mu_{i-k}=(-1)^{m+1-i}$, 
$\forall\ 0\leq i\leq m+1$.
For this,
first it is easy to prove by induction on $i$ 
using the construction of $\mu_i,\lambda_i$ that 
$\mu_i=(\sum_{k=0}^i(-1)^k\lambda_{i-k})\mu_0$, $\forall\ 0\leq i\leq m+1$.
Next we prove $N_i:=\sum_{k=0}^i\mu_k\mu_{i-k}=(-1)^{m+1-i}$ by induction.
Assume this is true for $0\leq i\leq r<m+1$, 
where $N_0=\mu_0^2=(-1)^{m+1}$. 
In $bd\equiv \mu_0s^{m+2}\mod t^{m+2}$,
we time both sides by $b$ and evaluate at $s=1$, 
we get
\[
(b|_{s=1})^2d|_{s=1}\equiv \sum_{i=0}^{m+1}N_i t^i\sum_{k=0}^{m+1}\lambda_i t^i
\equiv \mu_0\sum_{i=0}^{m+1}\mu_it^i\mod t^{m+2}.
\]
Compare the coefficient of $t^{r+1}$ and using the inductive hypothesis, 
we get
\[
\mu_0^2(\lambda_{r+1}-\lambda_r+\cdots+(-1)^r\lambda_1)+N_{r+1}\lambda_0=\mu_0\mu_{r+1}.
\]
By the previous identity 
$\mu_i=(\sum_{k=0}^i(-1)^k\lambda_{i-k})\mu_0$,
we obtain 
\[
N_{r+1}
=\mu_0(\mu_{r+1}-\mu_0(\lambda_{r+1}-\cdots+(-1)^r\lambda_1))
=(-1)^{r+1}\mu_0^2
=(-1)^{m+1-r-1}.
\]
Thus we have verified that the above 
$b_0,...,b_{m+1},d_0,...,d_{m+1}$ 
satisfy \eqref{eq:step 1 eqs}.

\subsubsection{}
Next we show that we can further find 
$a,c,b_i,d_i,i\geq m+2$ 
such that \eqref{eq:GL2/O2 closure} holds.
Denote 
$b_{\leq m+1}=\sum_{i=0}^{m+1}b_it^i$,
$\tdb_i=b_{m+1+i}$,
$\tdb=\sum_{i\geq 0}\tdb_it^i$,
then $b=b_{\leq m+1}+\tdb t^{m+2}$.
Similarly denote $d_{\leq m+1},\tdd,\tdd_i,d=d_{\leq m+1}+\tdd t^{m+2}$.
Then
\begin{align*}
	b^2&=(\sum_{i=0}^{m+1}(-s^2)^{m+1-i}t^i)+(\tdb^2t^{m+2}+2b_{\leq m+1}\tdb+\widetilde{b^2_{\leq m+1}})t^{m+2},\\
	d^2&=(s^2+t)+(\tdd^2t^{m+2}+2d_{\leq m+1}\tdd+\widetilde{d^2_{\leq m+1}})t^{m+2},\\
	bd&=(\mu_0s^{m+2})+(\tdb\tdd t^{m+2}+b_{\leq m+1}\tdd+d_{\leq m+1}\tdb+\widetilde{b_{\leq m+1}d_{\leq m+1}})t^{m+2},
\end{align*}
where $\widetilde{b^2_{\leq m+1}},\widetilde{d^2_{\leq m+1}},\widetilde{b_{\leq m+1}d_{\leq m+1}}$
are polynomials in terms of $t,b_1,...,b_{m+1},d_1,...,d_{m+1}$
that fit into the above identities.
The problem is reduced to finding $a,c,\tdb,\tdd\in\cO$ satisfying the following equations:
\begin{align*}
	a)\quad &a^2+\tdb^2t^{m+2}+\widetilde{b^2_{\leq m+1}}+2b_{\leq m+1}\tdb=0,\\
	b)\quad &c^2+\tdd^2t^{m+2}+\widetilde{d^2_{\leq m+1}}+2d_{\leq m+1}\tdd=0,\\
	c)\quad &ac+\tdb\tdd t^{m+2}+\widetilde{b_{\leq m+1}d_{\leq m+1}}+b_{\leq m+1}\tdd+d_{\leq m+1}\tdb=0.
\end{align*}

First we let $a=s^{-1}$. 
Since the constant term of $b_{\leq m+1}$ is $b_0=\mu_0s^{m+1}\neq0$, 
we can solve $\tdb$ uniquely from equation a).
From equation c), we can solve $c$ in terms of $\tdd$.
Now plug the $c$ into equation b),
we obtain an equation of $\tdd$:
\begin{align*}
	 &(t^{m+2}+s^2(\tdb t^{m+2}+b_{\leq m+1})^2)\tdd^2\\
	+&(2s^2(\widetilde{b_{\leq m+1}d_{\leq m+1}}+\tdb d_{\leq m+1})(\tdb t^{m+2}+b_{\leq m+1})+2d_{\leq m+1})\tdd\\
	+&(s^2(\widetilde{b_{\leq m+1}d_{\leq m+1}}+\tdb d_{\leq m+1})^2+\widetilde{d^2_{\leq m+1}})=0.
\end{align*}

To guarantee a solution of $\tdd$, 
it suffices to show that the determinant $\Delta$ 
of the above quadratic equation in $\tdd$ 
has a square root in $\cO^\times$,
for which it suffices to show $\Delta$ has nonzero constant term.
Denote the constant term of $\widetilde{X}$ by $\widetilde{X}^0$, $X=b^2_{\leq m+1},d^2_{\leq m+1},b_{\leq m+1}d_{\leq m+1}$.
Note that $\tdb_0=-\frac{s^{-2}+\widetilde{b^2_{\leq m+1}}^0}{2b_0}$.
Then the constant term of the determinant is
\begin{align*}
\Delta_0&=(2s^2(\widetilde{b_{\leq m+1}d_{\leq m+1}}^0+\tdb_0d_0)b_0+2d_0)^2
-4s^2b_0^2(s^2(\widetilde{b_{\leq m+1}d_{\leq m+1}}^0+\tdb_0 d_0)^2+\widetilde{d^2_{\leq m+1}}^0)\\
&=4s^2(2b_0d_0\widetilde{b_{\leq m+1}d_{\leq m+1}}^0-\widetilde{b^2_{\leq m+1}}^0d_0^2-b_0^2\widetilde{d^2_{\leq m+1}}^0)\\
&=4s^2(2\mu_0\lambda_0\sum_{i=1}^{m+1}\mu_i\lambda_{m+2-i}-\sum_{i=1}^{m+1}\mu_i\mu_{m+2-i}\lambda_0^2-\mu_0^2\sum_{i=1}^{m+1}\lambda_i\lambda_{m+2-i})\\
&=-4s^2\mu_0^2\sum_{i=1}^{m+1}(\mu_i\mu_0^{-1}-\lambda_i)(\mu_{m+2-i}\mu_0^{-1}-\lambda_{m+2-i})\\
&=-4s^2\sum_{i=0}^m\mu_i\mu_{m-i}.
\end{align*}
The last identity above used 
$\mu_i=(\sum_{k=0}^i(-1)^k\lambda_{i-k})\mu_0$.
By induction on $m$, it is easy to show $\sum_{i=0}^m\mu_i\mu_{m-i}=(-1)^m\mu_0^2=-1$.
Thus $\Delta_0=4s^2\neq 0$ for $s\neq 0$.
Therefore, we can solve $\tdd$ out of the above equation 
unique up to a choice of the constant term $\tdd_0$
satisfying a quadratic equation.

\subsubsection{}
Now we have found $a,b,c,d\in\cO$ satisfying \eqref{eq:GL2/O2 closure}.
It remains to show $g\in\SL_2(\cO)$,
i.e. $\det(g)=ad-bc=1$.
In fact, taking determinant on both sides of \eqref{eq:GL2/O2 closure}, 
we obtain
\[
\det(g)^2t^m
=(\sum_{i=0}^{m+1}(-s^2)^{m+1-i}t^{i-1})(s^2t^{-1}+1)-\mu_0 s^{m+2}t^{-1}\mu_0s^{m+2}t^{-1}
=t^m.
\]

Thus $\det(g)=\pm 1$.
We can choose the constant term of $\tdd$ such that the constant term of $ad-bc$ is $1$, 
then $ad-bc=1$.
This completes the proof of Proposition \ref{p:GL2/O2 closure curve}.

\subsubsection{Proof of Theorem \ref{t:sym var orbit closure}}\label{Orbits closure for general symmetric varieties}
As in the proof of Proposition \ref{c:GL2/O2 orbits closure},
we know $\bLX_\mu\subset\bigsqcup_{\lambda\leq\mu}LX_\lambda$.
It remains to show that for $\lambda<\mu$, $\lambda\in\Lambda_S^+$,
we have $x_\lambda\in\bLX_\mu$.
By \cite[Lemma 2.1.1]{NadlerRealGr},
it suffices to prove for two cases:
$\mu-\lambda=\alpha$ is a positive coroot $\alpha\in\Lambda_S^+\cap\check\Phi^+$,
or $\mu-\lambda=\alpha-\theta(\alpha)$ is not a multiple of coroots where $\alpha\in\check\Phi^+$.

Case 1: $\mu-\lambda=\alpha\in\Lambda_S^+\cap\check\Phi^+$.

Since $\theta$ acts on $S$ as inverse, 
it acts as $-1$ on $\Lambda_S$.
As $\alpha=\mu-\lambda\in\Lambda_S$,
we have $\theta(\alpha)=-\alpha$.
The Levi subgroup $G_\alpha$ corresponding to $\alpha$
is preserved by $\theta$.
Under the isomorphism 
$G_\alpha\simeq\SL_2\times\bGm^r,\GL_2\times\bGm^r$, or $\PGL_2\times\bGm^r$,
consider the action of $\theta$ on the first factor,
we can see it has the same action on roots 
as transpose inverse $\theta_0(g)=(g^t)^{-1}$.
Thus $\theta$ and $\theta_0$ different by 
an inner automorphism by some $t\in T$.
Let $r\in T$ be such that $r\theta_0(r)^{-1}=r^2=t$,
then $\theta=\Ad_t\circ\theta_0=\Ad_r\circ\theta_0\circ\Ad_{r^{-1}}$.
Since $x_\lambda,x_\mu\in L(G_\alpha/(G_\alpha)^\theta)\hookrightarrow LX$
and $\Ad_r$ fixes $t^\lambda,t^\mu$,
it suffices to prove for $G=\SL_2,\GL_2$, or $\PGL_2$ 
and $\theta=\theta_0:g\mapsto(g^t)^{-1}$.
Thus the closure relation $x_\lambda\in\bLX_\mu$ follows from 
Corollary \ref{c:GL2/O2 orbits closure}.
Note that the corollary applies to $\SL_2$ because 
in Proposition \ref{p:GL2/O2 closure curve} 
we only used action of $\SL_2(\cO)$.

Case 2: $\mu-\lambda=\alpha-\theta(\alpha)$ is not a multiple of coroots, and $\alpha+\theta(\alpha)$ is also not a coroot.

In this case $\alpha\neq\pm\theta(\alpha)$. 
Let $H$ be the subgroup generated by 
Levi subgroups $G_\alpha$ and $G_{\theta(\alpha)}$.
The root system of $H$ is of type $A_1\times A_1$.
Fix isomorphisms of root subgroups
$U_{\pm\alpha}\simeq\bGa,U_{\pm\theta(\alpha)}\simeq\bGa$.
Consider the following subgroups of $H$:
\[
U_+=\mathrm{Im}(\bGa\xrightarrow{\Delta}\bGa\times\bGa\simeq U_\alpha\times U_{-\theta(\alpha)}),\quad
U_-=\mathrm{Im}(\bGa\xrightarrow{\Delta}\bGa\times\bGa\simeq U_{-\alpha}\times U_{\theta(\alpha)}).
\]
Then $\theta(U_+)=U_-$.
Let $A$ be the subgroup generated by $U_+,U_-$,
then $A\simeq \SL_2$ or $\mathrm{PSL}_2$ with unique coroots $\pm(\alpha-\theta(\alpha))$.
Similar as in Case 1, 
we can assume $\theta$ acts as transpose inverse on $A$.
Consider the action of $A(\cO)$ on $L(H/H^\theta)\hookrightarrow LX$,
then again by Proposition \ref{p:GL2/O2 closure curve} 
we conclude $x_\lambda\in\bLX_\mu$.

Case 3: $\mu-\lambda=\alpha-\theta(\alpha)$ is not a multiple of coroots, but $\alpha+\theta(\alpha)$ is a coroot.

Let $H$ be the subgroup generated by 
Levi subgroups $G_\alpha,G_{\theta(\alpha)}$.
By \cite[Lemma 2.5]{Springer85}, the root system of $H$ is of type $A_2$,
with coroots $\pm\alpha,\pm\theta(\alpha),\pm(\alpha+\theta(\alpha))$.
Observe that $\theta$ exchanges $\alpha,\theta(\alpha)$, 
and fixes $\alpha+\theta(\alpha)$.
Therefore the statement is reduced to the case of
$G=\GL_3, \SL_3$, or other isogeny types, $T$ is the diagonal, 
$J=\begin{pmatrix}
	& &1\\
	&1& \\
   1& &	
\end{pmatrix}$,
and $\theta$ has the same action on roots as involution
$\theta_0(g)=\Ad_J(g^t)^{-1}$.
Similar as in Case 1, 
we may assume $\theta=\theta_0$.
Note
\[
S=T^{\mathrm{inv}\circ\theta}
=\{\begin{pmatrix}
	a&&\\
	&b&\\
	&&a
\end{pmatrix}\,|\,
a,b\in k^\times\}.
\]
It suffices to prove for $\mu=(p,q,p),\lambda=(p-1,q+2,p-1)$, where $p>q+2$.

Let $\theta_0(g)=(g^t)^{-1}$,
then $\theta=\Ad_J\circ\theta_0=\theta_0\circ\Ad_J$.
Let $i=\sqrt{-1}\in k$.
Consider matrix
\[
h=\begin{pmatrix}
	-i&0&1\\
	0&1&0\\
	\frac{i}{2}&0&\frac{1}{2}
\end{pmatrix}.
\]
Then $J=hh^t=h\theta_0(h)^{-1}$, 
$\theta=\Ad_{\theta_0(h)}\circ\theta_0\circ\Ad_{\theta_0(h)^{-1}}$.
Observe that $\Ad_{\theta_0(h)}$ acts trivially on $S$.
Thus in order to show that the $t^\lambda$ 
lies in the orbit closure of $t^\mu$ with respect to the $\theta$-twisted conjugation of $\SL_3(\cO)$, 
it is equivalent to show that $\Ad_{\theta_0(h)^{-1}}t^\lambda=t^\lambda$ 
is in the orbit closure of $\Ad_{\theta_0(h)^{-1}}t^\mu=t^\mu$ with respect to the $\theta_0$-twisted conjugation of $\SL_3(\cO)$.
The later follows from applying Proposition \ref{p:GL2/O2 closure curve} twice.
This completes the proof.

\section{Iwahori orbits}\label{S:Iwahori orbits}

We will assume $K$ is connected  from Section \ref{S:Iwahori orbits}-Section \ref{Formality}.
In the last Section \ref{disconnected cases}, we 
discuss the case when $K$ is disconnected.

\subsection{Iwahori orbits parametrization}\label{s:Iwahori orbits}
Let $I_0\subset LG$ be an Iwahori subgroup 
assoicated to the $\theta$-stable Borel 
$B_0$.
We consider $I_0$-orbits on $LX$. 
Many properties of $B_0$-orbits on $X$ have been studied in \cite{Springer85, RSorder,MS}.
Our goal is to generalize these finite dimensional results to the affine setting.

In the following, we denote the pro-unipotent radical of an Iwahori subgroup $I$ by $I^+$. 
For a split group $H$, we denote the pro-unipotent radical of $L^+H=H(\cO)$ by $L^{++}H$.
Recall $T_0\subset B_0$ 
is a $\theta$-stable maximal torus 
contained in a $\theta$-stable Borel subgroup.
Then $L^+T_0,LT_0$ are $\theta$-stable.
The preimage of $B_0$ under the evaluation map 
$L^+G\rightarrow G$
defines a $\theta$-stable Iwahori subgroup $I_0$
that contains $L^+T_0$.

For every split maximal torus $T\subset LG$,
via the canonical isomorphism $X_*(T)\otimes_\bZ F^\times\simeq T$,
we can define a canonical integral model
$L^+T:=X_*(T)\otimes_\bZ\cO^\times\hookrightarrow T$.
Denote $N_0=N_{LG}(I_0)$.
Note that $N_0\backslash LG$ parametrizes Iwahori subgroups via
$g\mapsto \Ad_{g^{-1}}I_0$.
Consider sets
\[
\cC=\{(T,I)\ |\ I\in N_0\backslash LG,\ L^+T\subset I\},\quad
\cC^\theta=\{(T,I)\in\cC\ |\ \theta(T)=T\},
\]
where $T$ is any split maximal torus of $LG$. 
Note that $K$ acts on $\cC^\theta$ via adjoint action.
We can define a map
\[
\phi:LG\rightarrow\cC,\quad g\mapsto(\Ad_{g^{-1}} LT_0,\Ad_{g^{-1}} I_0).
\]
Note that
\[
\cV
:=\phi^{-1}(\cC^\theta)
=\{g\in LG\ |\ \tau(g)=g\theta(g)^{-1}\in N_{LG}(LT_0)\},
\]
and $\phi|_\cV$ factors through the quotient $L^+T_0\backslash\cV$ 
by left multiplication of $L^+T_0$.

We will need the following presumably well-known fact:
\begin{lem}\label{l:tori in Iwahori}
	For any Iwahori subgroup $I$, 
	all the maximal tori in $I$ can be conjugated to each other
	by an element of pro-unipotent radical $I^+$.
\end{lem}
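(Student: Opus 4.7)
The plan is to realize the set of maximal tori in $I$ as a principal homogeneous space for $I^+$ acting by conjugation. First, since $I^+$ is the pro-unipotent radical of $I$, the quotient $\bar{T} := I/I^+$ is a torus. Any maximal torus $T' \subset I$ satisfies $T' \cap I^+ = \{1\}$, because a torus admits no non-trivial homomorphism to a pro-unipotent group; combined with the fact that the image of $T'$ in $\bar{T}$ must be a maximal torus (hence all of $\bar{T}$), this shows that the projection $T' \to \bar{T}$ is an isomorphism. Thus the maximal tori of $I$ are in natural bijection with algebraic splittings of the extension
\[
1 \longrightarrow I^+ \longrightarrow I \longrightarrow \bar{T} \longrightarrow 1,
\]
and conjugation by an element $u \in I^+$ corresponds to the standard action of coboundaries on the set of sections.

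Next I would fix one reference maximal torus $T \subset I$ to obtain a semidirect-product decomposition $I \cong I^+ \rtimes \bar{T}$. Any other splitting then differs from this canonical one by an algebraic $1$-cocycle $c \colon \bar{T} \to I^+$, and conjugation by $u \in I^+$ replaces $c$ by $c'(t) = u \cdot c(t) \cdot \bigl({}^{t}\!u\bigr)^{-1}$, i.e.\ alters it by the principal coboundary attached to $u$. Consequently, the lemma is equivalent to the vanishing of the rational cohomology $H^1_{\mathrm{alg}}(\bar{T}, I^+)$.

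Finally, for this vanishing one expresses $I^+$ as a strict inverse limit of finite-dimensional unipotent algebraic groups $U_n$ (using the filtration by congruence subgroups) carrying rational $\bar{T}$-actions, and filters each $U_n$ by its descending central series so that the graded pieces become rational $\bar{T}$-modules $V$. The linear reductivity of the torus gives $H^1_{\mathrm{alg}}(\bar{T}, V) = 0$, and one assembles a conjugating element $u$ layer by layer through the resulting nilpotent tower, the pro-completeness of $I^+$ ensuring convergence. The main (and rather mild) obstacle is the bookkeeping for this inverse-limit assembly, which is routine because the tower has surjective transition maps with successively vanishing obstructions.
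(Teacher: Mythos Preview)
Your argument is correct and somewhat more conceptual than the paper's. You recast maximal tori as sections of $1 \to I^+ \to I \to \bar T \to 1$, identify $I^+$-conjugacy classes of sections with $H^1_{\mathrm{alg}}(\bar T, I^+)$, and kill this by filtering $I^+$ into $\bar T$-module layers and invoking linear reductivity of the torus. The paper instead argues directly at each finite level: in the solvable quotient $I_0/L^mG$ the images of the two maximal tori are conjugate by classical theory, and the paper then uses the identity $N_{I_0/L^mG}(T_0)=C_{I_0/L^mG}(T_0)=T_0(\cO/t^m)$ together with the surjections $T_0(\cO/t^{m+1}) \twoheadrightarrow T_0(\cO/t^m)$ to adjust the successive conjugators so that they form a convergent sequence in $I_0^+$. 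Both proofs climb the same congruence tower; yours packages the inductive step as $H^1(\bar T,V)=0$, while the paper makes the correction at each stage explicit. Your version is cleaner to state; the paper's avoids the language of group cohomology and relies only on the classical Borel theory of solvable groups.

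One sentence to tighten: ``the image of $T'$ in $\bar T$ must be a maximal torus (hence all of $\bar T$)''. In finite dimensions this is standard, but in the pro-setting it deserves a line. The quickest fix, once your $H^1$ vanishing is in hand, is to apply it with $\bar T$ replaced by the image $S=\pi(T')$: then $T'$ is $I^+$-conjugate to $S \subset T_0 \subset I$, so if $S \subsetneq \bar T$ a conjugate of $T_0$ strictly contains $T'$, contradicting maximality.
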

\begin{proof}
	By conjugation, we can assume $I=I_0$ is the standard Iwahori,
	where $I_0=T_0I_0^+$.
	It suffices to show that any maximal torus
	$H\subset I_0$ can be conjugated to $T_0$ by $I_0^+$. 
	
	First for any $m>0$, 
	images of $H,T_0$ in $I_0/L^mG$ are conjugated by some $u_m\in I_0^+$,
	i.e. $\Ad_{u_m}H\equiv T_0\mod L^mG$.
	Next the images of $\Ad_{u_m}H,T_0$ in $I_0/L^{m+1}G$ are conjugated by some $g_m\in I_0^+$,
	i.e. $\Ad_{g_m}\Ad_{u_m}H\equiv T_0\mod L^{m+1}G$.
	Priori, $\Ad_{g_m}\Ad_{u_m}H\equiv \Ad_{g_m}T_0\equiv T_0\mod L^mG$.
	Let $\bar{g}_m$ be the image of $g_m$ in $I_0/L^mG$,
	then $\bar{g}_m\in N_{I_0/L^mG}(T_0)$. 
	As $T_0$ is the maximal torus of solvable group $I_0/L^mG$,
	we have $\bar{g}_m\in N_{I_0/L^mG}(T_0)=C_{I_0/L^mG}(T_0)$.
	Observe that 
	$C_{I_0/L^{m+1}G}(T_0)=T_0(\cO/t^{m+1})\twoheadrightarrow C_{I_0/L^mG}(T_0)=T_0(\cO/t^m)$.
	Thus we can write $g_m=a_mb_m$, $a_m\in I_0^+$ has image in $C_{I_0/L^{m+1}G}(T_0)$, $b_m\in L^{m+1}G$.
	Let $u_{m+1}=b_mu_m\in I_0^+$, then 
	$\Ad_{u_{m+1}}H\equiv T_0\mod L^{m+1}G$.
	Repeat the above, we get a convergent sequence $u_m\in I_0^+$, $m>0$, with limit $u\in I_0^+$, such that
	$\Ad_u H\equiv \Ad_{u_m}H\equiv T_0\mod L^mG$, $\forall m$.
	Thus $\Ad_u H=T_0$.
\end{proof}

Now we give the parametrization of orbits.

\begin{lem}\label{l:Iwahori orbirs on LX}
	\begin{itemize}
		\item [(i)] We have bijection 
		$\bar{\phi}:(LT_0\cap N_0)\backslash\cV\simeq\cC^\theta$.
		
		\item [(ii)] The projection 
		$p:\cC^\theta\rightarrow N_0\backslash LG, (T,I)\mapsto I$ 
		induces a bijection 
        $\cC^\theta/LK\simeq N_0\backslash LG/LK$
        on $LK$-orbits. 
		
		\item [(iii)] 
        The inclusion $\mathcal V\to LG$
        induces bijections
		$\sV:=L^+T_0\backslash\cV/LK\simeq 
        I_0\backslash LG/LK\simeq
        I_0\backslash LX$.
	\end{itemize}
\end{lem}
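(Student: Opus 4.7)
The plan is to prove (i)--(iii) in sequence: (i) is a fiber computation for $\phi$ using Lemma \ref{l:tori in Iwahori}, (ii) is a Matsuki-type fixed-point argument for $\theta$ acting on tori inside Iwahoris, and (iii) combines (i), (ii), and Proposition \ref{p:sym var orbits}(ii).

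For (i), I first verify that $\phi: LG \to \cC$ is surjective. Given $(T, I) \in \cC$, pick any $g$ with $\Ad_{g^{-1}} I_0 = I$; then $\Ad_g T \subset I_0$ is a split maximal torus, so by Lemma \ref{l:tori in Iwahori} there exists $u \in I_0^+$ with $\Ad_u LT_0 = \Ad_g T$, and $h := u^{-1} g$ satisfies $\phi(h) = (T, I)$ since $u \in I_0$ conjugates $I_0$ to itself. The stabilizer of the basepoint $(LT_0, I_0)$ is $N_{LG}(LT_0) \cap N_0$; by an analysis of the Iwahori decomposition together with the fact that any element of $N_0$ normalizing $LT_0$ must already lie in $LT_0$ itself (finite Weyl reflections do not preserve $I_0$, and length-zero elements of the extended affine Weyl group that normalize $LT_0$ reduce to $LT_0$), this stabilizer equals $LT_0 \cap N_0$. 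Restricting $\phi$ to $\cV = \phi^{-1}(\cC^\theta)$ then yields the bijection $\bar\phi$.

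For (ii), the $LK$-equivariance of $p$ is immediate, so it descends to orbit sets. Surjectivity on orbits requires producing, in every $LK$-orbit of Iwahoris, an Iwahori containing a $\theta$-stable split maximal torus. I would first use a $\theta$-twisted argument to replace $I$ by a $\theta$-stable Iwahori $\Ad_k I$; then $\theta$ acts as an involution on the pro-unipotent $I^+$-torsor of split maximal tori in $\Ad_k I$ supplied by Lemma \ref{l:tori in Iwahori}, and since $p \neq 2$, a successive-approximation argument modelled on the proof of Lemma \ref{l:tori in Iwahori} produces a $\theta$-fixed torus. Injectivity on orbits follows from the same torsor description: two $\theta$-stable split maximal tori in a common $\theta$-stable Iwahori are conjugate by an element of $(I^+)^\theta$, and such an element lies in $K(\!(t)\!) = LK$.

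For (iii), the bijection $I_0 \backslash LG / LK \simeq I_0 \backslash LX$ is immediate from Proposition \ref{p:sym var orbits}(ii). For the first isomorphism, I would show directly that (a) every $I_0$-$LK$ double coset in $LG$ meets $\cV$, and (b) two points of $\cV$ lie in the same $I_0$-$LK$ double coset iff they lie in the same $L^+T_0$-$LK$ double coset. Passing through $\tau$, (a) becomes the assertion that every $\theta$-twisted $I_0$-orbit in $\tau(LG) \subset LG^{\mathrm{inv}\circ\theta}$ meets $N_{LG}(LT_0)$, which is just surjectivity in (ii) rephrased; and (b) reduces, via the same computation of $\theta$-twisted $I_0$-stabilizers on $N_{LG}(LT_0)$, to the injectivity of (ii) together with the identification $L^+T_0 = I_0 \cap LT_0$. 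The main obstacle is the Matsuki-type step in (ii)/(a): the classical finite-dimensional proof uses real-form compactness, which is unavailable on the loop group, so the substitute is a purely algebraic pro-unipotent fixed-point argument that exploits $p \neq 2$ and the Iwahori decomposition of $I_0^+$, patterned on the successive-approximation method used in the proof of Lemma \ref{l:tori in Iwahori}.
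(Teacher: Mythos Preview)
The main gap is in (ii). You propose to first replace $I$ by a $\theta$-stable Iwahori $\Ad_k I$ with $k\in LK$, but since $\theta(k)=k$ this gives $\theta(\Ad_k I)=\Ad_k\theta(I)$, so $\Ad_k I$ is $\theta$-stable if and only if $I$ already is; the reduction step is vacuous. In fact most $LK$-orbits of Iwahoris contain no $\theta$-stable member --- in the finite-dimensional picture, $\theta$-stable Borels are $K$-fixed points of $G/B$ and occur only in closed orbits. The paper never assumes $I$ is $\theta$-stable: it works instead inside the $\theta$-stable subgroup $I\cap\theta(I)$. For surjectivity it passes to the finite-dimensional solvable quotients $(I\cap\theta(I))/(I_m\cap\theta(I_m))$, invokes Steinberg's theorem \cite[Theorem 7.5]{Steinberg} to find a $\theta$-stable maximal torus there, and lifts by successive approximation; for injectivity it observes that both $\theta$-stable tori automatically lie in $I\cap\theta(I)$, conjugates them by an element $g_1$ of its pro-unipotent radical (via Lemma~\ref{l:tori in Iwahori}), and then corrects $g_1$ by a square root in $(L^{++}T)^{\mathrm{inv}\circ\theta}$ to obtain an element of $LK\cap I$. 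Your pro-unipotent fixed-point heuristic is the right shape, but it must be run in $I\cap\theta(I)$, and Steinberg's theorem is the missing external input.

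A smaller point in (i): your assertion that length-zero elements of $\widetilde{\mathrm W}$ normalizing $LT_0$ reduce to $LT_0$ is false for non-simply-connected $G$, since every element of $\Omega\subset\widetilde{\mathrm W}=N_{LG}(LT_0)/L^+T_0$ normalizes $LT_0$ by construction, yet a nontrivial $\omega\in\Omega$ has nontrivial image in the finite Weyl group and so does not lie in $LT_0$. The paper's own one-line proof of (i) is equally terse on this point; the extra $\Omega$-ambiguity is absorbed in its proof of (iii), where surjectivity of $\iota$ is argued by writing $n\in N_0$ as $he$ with $h\in I_0$ and $e\in N_{LG}(LT_0)\cap N_{LG}(I_0)$, and injectivity uses $LT_0\cap N_0=L^+T_0\,LZ_G$ together with a computation of $\tau(z)$ for central $z$.
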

\begin{proof}
	(i): Clearly $\phi$ is surjective, so is its restriction $\bar{\phi}$. 
	To see it is injective, suppose $\phi(g_1)=\phi(g_2)$. 
	Then $g_2g_1^{-1}$ normalizes $LT_0$ and $I_0$,
        thus belongs to $LT_0\cap N_0$.
	
	(ii): 
        First we show the surjectivity. 
	For any Iwahori $I$, 
        we need to find a $\theta$-stable maximal torus $T\subset LG$ 
	  with $L^+T\subset I$. 
	Note that $\theta$ acts on $I\cap\theta(I)$
        and that the intersection of two Iwahori subgroups $I,\theta(I)$
        contains a torus $A$ of maximal rank in $I$.
        Denote by $I_m$ the $m$th Moy-Prasad subgroup of $I_m$.
        Observe that $I_m\cap\theta(I_m)$ is a 
        $\theta$-stable normal subgroup of
        $I\cap\theta(I)$ of finite codimension.
        Thus $\theta$ acts on the finite dimensional solvable group 
	$I\cap\theta(I)/I_m\cap\theta(I_m)$.
        Let $m$ be large enough such that
        $A$ is isomorphic to its image in
        $I\cap\theta(I)/I_m\cap\theta(I_m)$,
        so that the quotient group is of the same rank as $I$.
	By \cite[Theorem 7.5]{Steinberg}, there exists 
	a $\theta$-stable maximal split torus 
        $T_m\subset I\cap\theta(I)/I_m\cap\theta(I_m)$.
	We can lift $T_m$ to $LG$ as follows.
	Let $H_m$ be the preimage of $T_m$ in $I\cap\theta(I)$.
	In $H_m/I_{m+1}\cap\theta(I_{m+1})$, 
        we can similarly find maximal torus $T_{m+1}$,
	and take its preimage $H_{m+1}$ in $H_m$.
	Repeat this and let $T'=\cap_m H_m$, 
	then $T'\simeq T_m$ is a $\theta$-stable torus in 
        $I\cap\theta(I)$. 
	Moreover, $T=C_{LG}(T')$ is a 
        split $\theta$-stable maximal torus of $LG$.
	In fact, note that this is true if $T'\subset G\subset LG$.
	Since $T'$ can be conjugated into the standard Iwahori $I_0$,
	and by Lemma \ref{l:tori in Iwahori} we know
	all the tori in $I_0$ can be conjugated into $G$,
	we obtain that $T$ is split.
	Thus $T$ satisfies the requirements.

	For the injectivity, suppose $(I,T),(I,T')\in\cC^\theta$.
	We need to find $g\in LK$ such that $\Ad_gI=I$, $\Ad_g T=T'$.
	Here $L^+T,L^+T'\subset I\cap\theta(I)$.
	For the split torus $T\simeq X_*(T)\otimes_\bZ F^\times$,
	consider the constant subtorus 
        $\mathring{T}:=X_*(T)\otimes_\bZ k^\times\subset T$.
	Then $T\simeq L\mathring{T}$, $L^+T\simeq L^+\mathring{T}$.
	Similarly define $\mathring{T}'\subset T'$.
	By the same proof of Lemma \ref{l:tori in Iwahori}, 
	$\mathring{T},\mathring{T}'$ can be conjugated by 
	an element of the pro-unipotent radical 
	$(I\cap\theta(I))_u$ of $I\cap\theta(I)$.
	Let $g_1\in(I\cap\theta(I))_u$ be
	such that $\Ad_{g_1}\mathring{T}=\mathring{T}'$.
	Then $\Ad_{g_1}L^+T=L^+T'$, $\Ad_{g_1} T=\Ad_{g_1} T'$.
	Since $T,T'$ are $\theta$-stable, we get 
	$g_1^{-1}\theta(g_1)\in(N(T)\cap I^+)^{\on{inv}\circ\theta}= 
        (L^{++}T)^{\on{inv}\circ\theta}$.
	Since $(L^{++}T)^{\on{inv}\circ\theta}$ is pro-unipotent, 
	we can find $t_1\in (L^{++}T)^{\on{inv}\circ\theta}$
	such that $t_1\theta(t_1^{-1})=t_1^2=g_1^{-1}\theta(g_1)$.
	Thus $g:=g_1t_1\in LK\cap I$ maps $(I,T)$ to $(I,T')$.
	
	(iii): From (i), (ii)
        and Proposition \ref{p:sym var orbits} (ii),
        we obtain bijection
        \[
        (LT_0\cap N_0)\backslash\cV/LK\simeq N_0\backslash LG/LK.
        \]
        We have commutative diagram
        \[\xymatrix{
        L^+T_0\backslash\cV/LK\ar[d]\ar[r]^{\iota}& 
        I_0\backslash LG/LK\ar[d]\\ 
        (LT_0\cap N_0)\backslash\cV/LK\ar[r]^{\iota'}&N_0
        \backslash LG/LK.}
        \]
        
        Let $Z_G=Z(G)$.
        Observe that $LT_0\cap N_0=L^+T_0 LZ_G$.
        In fact, we can write $LT_0=t^{X_*(T_0)}L^+T_0$.
        Then $t^{X_*(T_0)}\cap N_0$ consists of $t^\lambda$
        for $\langle\pm\lambda,\alpha\rangle\geq0$, 
        $\alpha$ any simple root.
        Thus $\langle\lambda,\alpha\rangle=0$, $t^\lambda\in LZ_G$.

        We first show that $\iota$ is injective.
        Given $u,v\in\cV$ such that $\iota(u)=\iota(v)$.
        By the injectivity of $\iota'$,
        we have $u=tzvk$
        for some $t\in L^+T_0$, $z\in LZ_G$, $k\in LK$.
        Then 
        $\iota(u)=I_0zvLK=I_0vLK=\iota(v)$.
        Taking $\tau$, we obtain that
        $\tau(u)\tau(z)\equiv \tau(u)\in\widetilde{\mathrm W}=N_{LG}(LT_0)/L^+T_0$.
        We deduce that $\tau(z)\in L^+T_0\cap LZ_G=L^+Z_G$,
        $z\in L^+Z_G L(Z_G\cap K)$.
        We obtain $L^+T_0uLK=L^+T_0vLK$.

        Next we show $\iota$ is surjective.
        From the surjectivity of $\iota'$,
        any $I_0$-orbit $I_0gLK$ satisfies 
        $g=nvk$ for some $v\in\cV$, $n\in N_0$, $k\in LK$.
        Since $N_0/I_0\simeq (N_{LG}(LT_0)\cap N_{LG}(I_0))/L^+T_0$,
        we can write $n=he$, $h\in I_0$, $e\in N_{LG}(I_0)\cap N_{LG}(LT_0)$.
        Note $\tau(ev)\in N_{LG}(LT_0)$.
        Thus $I_0gLK=\iota(ev)$
        for $ev\in\cV$.
\end{proof}

\begin{defe}
    For any $v\in\sV:=L^+T_0\backslash\cV/LK$, we let $\cO_v\subset LX$
    be the corresponding $I_0$-orbit on $LX$
    under the bijection in Lemma \ref{l:Iwahori orbirs on LX} (iii).
\end{defe}

\subsection{Twisted involutions in extended affine Weyl groups }
We denote by $\Delta_\aff$, $\Phi_\aff$, $\Phi_\aff^+$, $\Phi_\aff^-$ the set of 
simple affine roots,
 affine roots,
 positive affine roots, and negative affine roots with respect to $T_0$ and $I_0$.
Since $T_0, I_0$ are $\theta$-stable,
$\theta$ acts on $\Phi_\aff$ and $\Phi_\aff^+$.

Let $\widetilde{\mathrm W}=N_{LG}(LT_0)/L^+T_0$ be the extened 
affine Weyl group. 
We denote by $\mathrm W_\aff\subset\widetilde{\mathrm W}$
the affine Weyl group generated by the simple reflections $\{s_\alpha\}_{\alpha\in\Delta_\aff}$
and 
$\Omega=\widetilde{\mathrm W}/{\mathrm W_\aff}$ the quotient.
We denote by $l:\widetilde{\mathrm W}\to\mathbb Z_{\geq0}$ the length function.

The natural map $N_{LG}(LT_0)\to LG$
induces a bijection 
$\widetilde{\mathrm W}=
N_{LG}(LT_0)/L^+T_0\cong I_0\backslash LG/I_0$.
The composition 
 $N_{LG}(I_0)/I_0\subset I_0\backslash LG/I_0\cong\widetilde{\mathrm W}\to\Omega$ induces an
isomorphism of groups
 $N_{LG}(I_0)/I_0\cong\Omega$, which gives rise to an isomorphism 
 $\widetilde{\mathrm W}\cong\mathrm W_\aff\rtimes\Omega$
 where $\Omega=\{w\in\widetilde{\mathrm W}|l(w)=0\}$ consists of elements of length zero.

Since  $L^+T_0$ and $N_{LG}(LT_0)$ are $\theta$-stable, we get an induced action of $\theta$ on the 
extended affine Weyl group  
$\widetilde{\mathrm W}$.
Let $\mathscr I=\{w\in\widetilde{\mathrm W}|\theta(w)=w^{-1}\}$.
We say that the elements in $\mathscr I$ are twisted involutions.

Note that the map 
$\tau:G\to G$, $\tau(g)=g\theta(g)^{-1}$
induces a map
$p:\sV=L^+T_0\backslash\cV/LK\to\mathscr I$
from the set of $I_0$-orbits on $LX$ to the set of twisted
involutions in the extended affine Weyl group.
We study some basic properties of it.

We first need a lemma.
\begin{lem}\label{l:Iwahori orbit rep}
	Any coset $v\in\sV$ 
        contains a representative
	$\dv\in\cV$ such that 
	$\tau(\dv)=n_v=\bar{n}_vt^\lambda$,
	where $\bar{n}_v\in N_{G}(T_0)$ and $\lambda\in X_*(T_0)$
	satisfy $\theta(\bar{n}_v)=\bar{n}_v^{-1},\
	\Ad_{\bar{n}_v}t^{-\lambda}=t^{\theta(\lambda)}$.
\end{lem}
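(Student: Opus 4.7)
The plan is to start from any lift $g\in\cV$ of the coset $v$ and massage it by left multiplication by $L^+T_0$ (which preserves $v$; right multiplication by $LK$ fixes $\tau$ since $\theta$ acts trivially on $LK$). Using the decomposition $N_{LG}(LT_0)=N_G(T_0)\cdot\{t^\lambda\}_{\lambda\in X_*(T_0)}\cdot L^+T_0$, write $\tau(g)=\bar n_0 t^\lambda s$ with $\bar n_0\in N_G(T_0)$ lifting some $\bar w_v\in W$, $\lambda\in X_*(T_0)$, and $s\in L^+T_0$. Unpacking the relation $\theta(\tau(g))\tau(g)=1$ and commuting factors via $\bar n_0^{-1}r\bar n_0=\bar w_v^{-1}(r)$ for $r\in LT_0$ yields the identity $c\cdot t^{\sigma(\lambda)+\lambda}\cdot\sigma(s)s=1$ in $LT_0$, where $c:=\theta(\bar n_0)\bar n_0\in T_0$ and $\sigma:=\bar w_v^{-1}\theta$ is an involution. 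Matching the three factors separately gives $\theta(\bar w_v)=\bar w_v^{-1}$, $\theta(\lambda)=-\bar w_v\lambda$, and $c\sigma(s)s=1$.

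Next, split $s=s_0 s'$ along the canonical decomposition $L^+T_0=T_0\times L^{++}T_0$ and set $\bar n_v:=\bar n_0 s_0\in N_G(T_0)$; since $LT_0$ is abelian this repackages $\tau(g)$ as $\bar n_v t^\lambda s'$. Evaluating $c\sigma(s)s=1$ at $t=0$ gives $c\sigma(s_0)s_0=1$, which is exactly the identity certifying $\theta(\bar n_v)\bar n_v=1$, i.e.\ $\theta(\bar n_v)=\bar n_v^{-1}$. Dividing the full equation by this constant-term equation leaves $\sigma(s')=s'^{-1}$, and this is all that remains to eliminate.

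I expect the main obstacle to be showing that $s'$ can be further absorbed by left multiplication by some $t\in L^{++}T_0$. A direct computation reduces this to solving $s'=\bar w_v^{-1}(t)^{-1}\theta(t)$. My plan is to work level-wise along the congruence filtration of the commutative pro-unipotent group $L^{++}T_0$, whose associated graded pieces are all copies of the $k$-vector space $X_*(T_0)_k$. At each graded level the problem becomes the linear equation $(\theta-\bar w_v^{-1})x=y$ with $y$ in the $(-1)$-eigenspace of $\sigma$. Factoring $\theta-\bar w_v^{-1}=\bar w_v^{-1}(\bar w_v\theta-1)$ and using (i) that in characteristic $\neq 2$ the image of an involution minus the identity is exactly its $(-1)$-eigenspace, together with (ii) the conjugacy $\bar w_v\sigma\bar w_v^{-1}=\bar w_v\theta$ which identifies the relevant $(-1)$-eigenspaces via $\bar w_v^{-1}$, gives surjectivity at each graded level. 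A successive-approximation argument then produces the required global $t$, and setting $\dv:=tg$ yields $\tau(\dv)=\bar n_v t^\lambda$.

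Finally, the remaining identity $\Ad_{\bar n_v}t^{-\lambda}=t^{\theta(\lambda)}$ is immediate from $\Ad_{\bar n_v}t^{-\lambda}=t^{-\bar w_v\lambda}$ (since $\bar n_v$ lifts $\bar w_v$) together with $\theta(\lambda)=-\bar w_v\lambda$ derived in the first step.
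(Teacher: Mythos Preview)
Your proof is correct and follows essentially the same strategy as the paper: decompose $\tau(g)$ in $N_{LG}(LT_0)$, read off the constraints from $\theta(\tau(g))=\tau(g)^{-1}$, and then eliminate the $L^{++}T_0$-tail by left-multiplying by a suitable element of $L^+T_0$.

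The paper's execution is shorter in two places. First, it writes $\tau(\dv)=\bar n_v t^\lambda\gamma$ with $\gamma\in L^{++}T_0$ (not $L^+T_0$) from the outset; this decomposition is already unique, so comparing it with its $\theta$-inverse yields $\theta(\bar n_v)=\bar n_v^{-1}$ immediately, bypassing your absorption of $s_0$ into $\bar n_0$. Second, to kill $\gamma$ the paper simply observes that $(L^{++}T_0)^{\mathrm{inv}\circ\theta_v}$ is pro-unipotent, so in characteristic $\neq 2$ the squaring map on it is surjective and one may take $\beta$ with $\beta^2=\gamma$; then $\dv':=\theta(\beta)\dv$ works. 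This square-root trick is exactly what your level-by-level linear-algebra argument produces (your $t$ is the paper's $\theta(\beta)$), but packaged in one line.
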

\begin{proof}
	We can uniquely write 
        $\tau(\dv)=\dv\theta(\dv)^{-1}\in N_{LG}(LT_0)$ as
	$\tau(\dv)=\bar{n}_vt^\lambda\gamma$, 
        where $\bar{n}_v\in N_{G}(T_0),\lambda\in X_*(T_0),\gamma\in L^{++}T_0$.
	Since $\tau(\dv)\in(LG)^{\on{inv}\circ\theta}$, we have
	\[
	\theta(\tau(\dv))=\theta(\bar{n}_v)t^{\theta(\lambda)}\theta(\gamma)
	=(\tau(\dv))^{-1}=\gamma^{-1}t^{-\lambda}(\bar{n}_v)^{-1}
	=(\bar{n}_v)^{-1}t^{-\Ad_{\bar{n}_v}\lambda}\Ad_{\bar{n}_v}\gamma^{-1}.
	\]
	By uniqueness, we get 
	$\theta(\bar{n}_v)=\bar{n}_v^{-1}$,
	$t^{\theta(\lambda)}=t^{-\Ad_{\bar{n}_v}\lambda}$,
	$\theta(\gamma)=\Ad_{\bar{n}_v}\gamma^{-1}$.
	Denote $\theta_v:=\Ad_{\bar{n}_v^{-1}}\circ\theta$,
	then $\theta_v(\gamma)=\gamma^{-1}$.
	Since $\theta_v$ acts on the pro-unipotent group
	$L^{++}T_0$ as an involution, 
	there exists 
	$\beta\in (L^{++}T_0)^{\on{inv}\circ\theta_{\bar{v}}}$ 
	such that $\beta^2=\gamma$, i.e. 
	$\beta\theta_{\bar{v}}(\beta)^{-1}=\gamma$.
	Then $\dv'=\theta(\beta)\dv$ satisfies 
	$\tau(\dv')=\bar{n}_vt^\lambda$, giving the desired representative.
\end{proof}

\begin{lem}\label{l:finite fiber}
The map  
$p:\sV\to \mathscr{I}$
has finite fibers.
\end{lem}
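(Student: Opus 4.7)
The plan is to construct an injection from $p^{-1}(w)$ into a finite component group $\pi_0(T_0^\vartheta)$ for a suitable involution $\vartheta$ of $T_0$. This realizes the affine analogue of the Richardson--Springer fiber description in the finite-dimensional theory of symmetric varieties, with the key simplification that in characteristic $\neq 2$ the pro-unipotent part $L^{++}T_0$ of $L^+T_0$ contributes nothing to the classifying invariant.

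Fix $w \in \mathscr{I}$ with $p^{-1}(w) \neq \emptyset$, and using the semidirect product decomposition $\widetilde{\mathrm W} = X_*(T_0) \rtimes W$ write $w = t^\lambda \bar w$ with $\bar w \in W$ and $\lambda \in X_*(T_0)$. The condition $\theta(w) = w^{-1}$ forces $\theta(\bar w) = \bar w^{-1}$, and one checks (using $\theta \bar w^{-1} = \bar w \theta$) that the formula $\vartheta := -\bar w^{-1}\theta$ defines an involution of $X_*(T_0)$, hence of $T_0$. By Lemma~\ref{l:Iwahori orbit rep}, every $v \in p^{-1}(w)$ admits a representative $\dot v \in \cV$ with $\tau(\dot v) = \bar n_v t^\lambda$ for some lift $\bar n_v \in N_G(T_0)$ of $\bar w$ satisfying $\theta(\bar n_v) = \bar n_v^{-1}$; a direct computation shows that the set $S_w$ of such lifts is a single coset $\bar n_0 \cdot T_0^\vartheta$ once a base point $\bar n_0$ is chosen.

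The core step is to understand when two such representatives yield the same orbit. Suppose $\dot v_2 = a \dot v_1 b$ with $a \in L^+T_0$ and $b \in LK$, and decompose $a = \bar a \cdot a^+$ with $\bar a \in T_0$ and $a^+ \in L^{++}T_0$. Exploiting that $L^+T_0$ is abelian and that $\bar n_1$ normalizes $L^{++}T_0$, the equation $\tau(\dot v_2) = \bar n_2 t^\lambda$ splits cleanly:
\begin{align*}
\bar n_2 &= \bar a\, \bar n_1\, \theta(\bar a)^{-1}, \\
\Ad_{\bar n_1^{-1}}(a^+) &= \theta(a^+).
\end{align*}
The second equation merely confines $a^+$ to the fixed points of the involution $\Ad_{\bar n_1}\theta$ on $L^{++}T_0$ and is irrelevant for $\bar n_v$; the first, rewritten as $\bar n_1^{-1}\bar n_2 = (\bar w^{-1} - \theta)(\bar a)$ in additive notation on $T_0$, shows that the class of $\bar n_v$ in $T_0^\vartheta/(\bar w^{-1} - \theta)T_0$ depends only on $v$. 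Conversely, given $\bar n_1^{-1}\bar n_2 = (\bar w^{-1} - \theta)(\bar a)$ for some $\bar a \in T_0$, the element $\dot v_2' := \bar a \dot v_1$ has $\tau(\dot v_2') = \bar n_2 t^\lambda$ and lies in the orbit of $\dot v_1$, matching $\dot v_2$ up to a right $LK$-factor. This produces an injection
\[
p^{-1}(w) \hookrightarrow T_0^\vartheta \big/ (\bar w^{-1}-\theta)T_0.
\]

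To conclude I would identify the target with $\pi_0(T_0^\vartheta)$, which is finite. On $\ft_0 \otimes \mathbb{Q}$ the operators $\bar w, \theta$ generate a finite dihedral group (finite order of $\bar w$ together with $\theta \bar w \theta = \bar w^{-1}$), and decomposing $\ft_0 \otimes \mathbb{Q}$ into its irreducibles for this group reduces the desired identity $\rank(\bar w^{-1} - \theta) = \dim \ker(\theta + \bar w) = \dim \ft_0^\vartheta$ to a direct verification on the $1$- and $2$-dimensional pieces (on the former a case analysis of the signs of $\bar w, \theta$; on the latter an explicit $2 \times 2$ determinant computation, using char $\neq 2$). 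Hence $(\bar w^{-1} - \theta)T_0 = T_0^{\vartheta,\circ}$, and $T_0^\vartheta/(\bar w^{-1} - \theta)T_0 = \pi_0(T_0^\vartheta)$ is finite since $T_0^\vartheta$ is a diagonalizable closed subgroup of the torus $T_0$. The main obstacle is the careful bookkeeping in the decomposition of $a = \bar a \cdot a^+$ and in matching the $T_0$ and $L^{++}T_0$ contributions; the abelian-ness of $L^+T_0$ together with char $\neq 2$ is exactly what decouples the two and causes the pro-unipotent piece to drop out of the classifying invariant.
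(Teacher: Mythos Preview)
Your proof is correct and follows essentially the same strategy as the paper: both arguments show that the fiber $p^{-1}(w)$ injects into the component group of the $(-1)$-fixed points of $T_0$ under an involution built from $\bar w$ and $\theta$, by comparing the normalized representatives of Lemma~\ref{l:Iwahori orbit rep}. The paper writes the comparison as $\bar n_u = c\,\bar n_v$ (left) with $c \in T_0^{\mathrm{inv}\circ\psi_{\bar v}}$ for $\psi_{\bar v} = \Ad_{\bar n_v}\circ\theta$, while you use the right-coset form $\bar n_0\cdot c$ and the $\bar w$-conjugate involution $\vartheta$; your explicit treatment of the $L^{++}T_0$-part of $a$ and your dihedral rank computation for $(\bar w^{-1}-\theta)T_0 = T_0^{\vartheta,\circ}$ are correct but more detailed than needed---the paper simply invokes the standard identity $\tau_{\bar v}(T_0) = T_0^{\mathrm{inv}\circ\psi_{\bar v},\circ}$ for an involution on a torus, which follows immediately from the eigenspace decomposition since $\bar w\theta$ is itself an involution.
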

\begin{proof}
(i): 
For $u,v\in \sV=L^+T_0\backslash\cV/LK$
with representatives $\du,\dv$ as in Lemma \ref{l:Iwahori orbit rep},
suppose $\tau(\du)=\bar{n}_ut^\lambda,\tau(\dv)=\bar{n}_vt^\mu$ 
have the same images in $\widetilde{\mathrm W}$.
Then $\lambda=\mu$ and $\bar{n}_u=c\bar{n}_v$ for some $c\in T_0$.
Since $\tau(\du),\tau(\dv)\in (LG)^{\on{inv}\circ\theta}$, we have
\[
\theta(c\tau(\dv))=\theta(c)\tau(\dv)^{-1}=\theta(c)t^{-\mu}\bar{n}_v^{-1}
=\tau(\dv)^{-1}c^{-1}=t^{-\mu}\bar{n}_v^{-1}c^{-1}.
\]
Thus $\psi_{\bar v}(c)=c^{-1}$ 
where $\psi_{\bar v}:=\Ad_{\bar{n}_v}\circ\theta$ is an involution.
Denote $\tau_{\bar v}(g)=g\psi_{\bar v}(g)^{-1}$.
Observe that $T_0$ is $\psi_{\bar v}$-stable.
Write $T_0^{\on{inv}\circ\psi_{\bar v}}
=\bigsqcup_i t_iT_0^{\on{inv}\circ\psi_{\bar v},\circ}$,
where $T_0^{\on{inv}\circ\psi_{\bar v},\circ}=\tau_{\bar v}(T_0)$.
There exists $a\in T_0$ such that $c=\tau_{\bar v}(a)t_i$ for some $t_i$.
Thus $\tau(a\du)=t_i\tau(\dv)$.
The preimage of $p$ over the image of $\tau(\dv)$ is at most
the number of connected components of $T_0^{\on{inv}\circ\psi_{\bar v}}$,
which is finite.
\end{proof}

\begin{cor}\label{c:Iwahori closure and G(O) into Iwahori}
	\begin{itemize}
		\item [(i)]
		The closure of each $I_0$-orbit contains only finitely many
		$I_0$-orbits.
		
		\item [(ii)]
		Each $L^+G$-orbit on $LX$ is a union of finitely many $I_0$-orbits.
	\end{itemize}
\end{cor}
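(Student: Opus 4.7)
The plan is to combine the finite-fiber property of the map $p: \sV \to \mathscr{I}$ from Lemma \ref{l:finite fiber} with the Bruhat decomposition of $LG$, transported through the map $\tau: LX \hookrightarrow LG$. The starting point is that since $\tau(gx) = g\tau(x)\theta(g)^{-1}$ and $\theta(I_0) = I_0$, the map $\tau$ intertwines the left $I_0$-action on $LX$ with the $\theta$-twisted conjugation action of $I_0$ on $LG$. Consequently, if $\dv \in \cV$ is a representative of $v \in \sV$ with $\tau(\dv) = n_v$ as in Lemma \ref{l:Iwahori orbit rep}, then $\tau(\cO_v)$ sits inside the single Iwahori double coset $I_0 n_v I_0$.

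For (i), I would push $\overline{\cO_v}$ forward by $\tau$ to obtain $\tau(\overline{\cO_v}) \subseteq \overline{I_0 n_v I_0}$. By the affine Bruhat decomposition the right-hand side equals $\bigsqcup_{w \leq n_v} I_0 w I_0$, a finite union indexed by the (finite) Bruhat interval $[1, n_v] \subset \widetilde{\mathrm W}$. Hence every $I_0$-orbit $\cO_u \subseteq \overline{\cO_v}$ satisfies $p(u) \in [1, n_v] \cap \mathscr{I}$, a finite set; combined with the finiteness of the fibers of $p$, this bounds the number of $I_0$-orbits in $\overline{\cO_v}$.

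For (ii), the same equivariance together with the $\theta$-stability of $L^+G$ yields $\tau(LX_\lambda) \subseteq L^+G \cdot t^\lambda \cdot \theta(L^+G)^{-1} = L^+G\, t^\lambda\, L^+G$. The image of $L^+G t^\lambda L^+G$ in the affine Grassmannian $\Gr = LG/L^+G$ is the single (finite-dimensional) Schubert cell $\overline{\Gr_\lambda}$ minus deeper strata, so its preimage in the affine flag variety $\Fl = LG/I_0$ is a finite-dimensional locally closed subvariety, hence a finite union of Schubert cells. Equivalently, $L^+G t^\lambda L^+G$ is a finite union of Iwahori double cosets in $LG$. Thus $p$ sends the $I_0$-orbits in $LX_\lambda$ into a finite subset of $\widetilde{\mathrm W}$, and another application of Lemma \ref{l:finite fiber} concludes the argument. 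No serious obstacle is expected; the only minor subtlety is that the orbits involved are generally infinite-dimensional, but this is harmless because $\tau$ is a closed embedding of ind-schemes and the relevant closures are topological notions.
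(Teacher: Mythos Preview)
Your proposal is correct and follows essentially the same argument as the paper: both parts use that $\tau$ carries $I_0$-orbits (resp.\ $L^+G$-orbits) into single Iwahori (resp.\ $L^+G$) double cosets, that the relevant closure or $L^+G$-double coset decomposes into finitely many Iwahori double cosets, and then invoke Lemma~\ref{l:finite fiber}. Your write-up supplies a bit more justification (the equivariance of $\tau$ and the reason $L^+G t^\lambda L^+G$ is a finite union of $I_0$-double cosets), but the strategy is identical.
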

\begin{proof}
	(i):
	For an $I_0$-orbit $\cO_v$,
	$\tau(\cO_v)\subset I_0n_vI_0$,
	$\tau(\overline{\cO}_v)\subset\overline{I_0n_vI_0}$.
	We know $\overline{I_0n_vI_0}$ is a finite union of double cosets.
	By Lemma \ref{l:finite fiber},
	the preimage under $\tau$ of each double coset consists of finitely many
	$I_0$-orbit.
	Part (i) follows.
	
	(ii):
	Note that $\tau(L^+Gx_v)\subset L^+Gn_vL^+G$
	where $L^+Gn_vL^+G$ is a finite union of $I_0$-double cosets.
	Then part (ii) follows from Lemma \ref{l:finite fiber}.
\end{proof}

\subsection{Simple parahoric orbits}
In this section we collect some basic facts on Iwahori orbits.

For a simple affine root $\alpha\in\Delta_\aff$,
let $P_\alpha$ be the standard parahoric subgroup associated to $\alpha$.
Let $P_\alpha^+$ and $L_\alpha$ 
be the pro-unipotent radical and Levi subgroup of $P_\alpha$.
Denote by $s_\alpha\in W_\aff$ the simple reflection with respect to 
the hyperplane defined by $\alpha$, then
\[
P_\alpha=I_0\sqcup I_0s_\alpha I_0.
\]

Let $\cO_v=I_0\dv LK/LK=I_0x_v$ be an $I_0$-orbit on $LX$, 
where $\dv\in\cV$ is as in Lemma \ref{l:Iwahori orbit rep}.
Denote by $w_v=p(v)\in\mathscr I$ the associated twisted involution 
in the extended affine Weyl group.
Recall we associated to $v$ an involution $\psi_v=\on{Ad}_{n_v}\circ\theta$
so that $\Stab_{LG}(x_v)=(LG)^{\psi_v}$.
The stabilizer of $P_\alpha$ on $\dv$ is
$P_{\alpha,v}=P_\alpha^{\psi_v}$.
For $\alpha\in\Delta_\aff$,
observe
\[
I_0\backslash P_\alpha \cO_v\simeq 
I_0\backslash P_\alpha/P_{\alpha,v}\simeq
\bP^1/H_{\alpha,v}
\]
where $H_{\alpha,v}\subset\Aut(\bP^1)\simeq\mathrm{PGL}_2$ 
is the image of the right action of $P_{\alpha,v}$ 
on $I_0\backslash P_\alpha\simeq\bP^1$.

Denote by $\phi:P_\alpha\rightarrow\Aut(\bP^1)$ the action.
Let $U_{\pm\alpha}\subset P_\alpha$ 
be the affine root subgroups for $\pm\alpha$.
Choose isomorphisms $u_{\pm\alpha}:k\xrightarrow{\sim}U_{\pm\alpha}$
as in \cite[\S4.1.1]{MS},
so that $n_\alpha=u_\alpha(1)u_{-\alpha}(-1)u_\alpha(1)$ 
is a representative of $s_\alpha$.
Recall from \cite[\S4.1.3, \S4.1.4]{MS} the following classification of 
infinite $H_{\alpha,v}$:
\begin{lem}\label{l:orbit type}
	Up to conjugation by $\phi(I_0)$,
	any infinite subgroup $H=H_{\alpha,v}=\phi(P_\alpha)\subset\mathrm{PGL}_2$
	is one of the following classes:
	\begin{itemize}
		\item [(I)] $H=\mathrm{PGL}_2$,\hspace{5.2cm}
		$P_\alpha\cO_v=\cO_v$.
		
		\item [(IIa)] $\phi(U_\alpha)\subset H\subset \phi(I_0)$,\hspace{3.7cm}
		$P_\alpha\cO_v=\cO_v\sqcup\cO_{v'}$
		in which $\cO_{v'}$ is open.
		
		\item [(IIb)] $\phi(U_{-\alpha})\subset H\subset \phi(\Ad_{s_\alpha}I_0)$,\hspace{2.6cm}
		$P_\alpha\cO_v=\cO_v\sqcup\cO_{v'}$
		in which $\cO_v$ is open.
		
		\item [(IIIa)] $H=\phi(T_0)$,\hspace{5.3cm}
		$P_\alpha\cO_v=\cO_v\sqcup\cO_{v'}\sqcup
		\cO_{v''}$
		in which $\cO_{v''}$ is open.
		
		\item [(IIIb)] $H=\phi(\Ad_{n_\alpha u_{\alpha(-1)}}T_0)$,\hspace{3.5cm}
		$P_\alpha\cO_v=\cO_v\sqcup\cO_{v'}\sqcup
		\cO_{v''}$
		in which $\cO_v$ is open.
		
		\item [(IVa)] $H$ is the normalizer of $\phi(T)$,\hspace{2.1cm}
		$P_\alpha\cO_v=\cO_v\sqcup\cO_{v'}$
		in which $\cO_{v'}$ is open.
		
		\item [(IVb)] $H$ is the normalizer of 
		$\phi(\Ad_{n_\alpha u_{\alpha(-1)}}T_0)$,\hspace{0.2cm}
		$P_\alpha\cO_v=\cO_v\sqcup\cO_{v'}$
		in which $\cO_v$ is open.
	\end{itemize}
\end{lem}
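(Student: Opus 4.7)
The plan is to reduce the classification to the finite-dimensional Mars--Springer setting in \cite[\S4.1.3, \S4.1.4]{MS}. The action of $P_\alpha$ on $I_0\backslash P_\alpha\cong\bP^1$ factors through the Levi quotient $P_\alpha/P_\alpha^+\cong L_\alpha$, whose derived subgroup is isogenous to $\SL_2$, and the resulting map $L_\alpha\twoheadrightarrow\Aut(\bP^1)=\mathrm{PGL}_2$ is precisely $\phi$. Consequently $H_{\alpha,v}=\phi(P_{\alpha,v})$ is the image in $\mathrm{PGL}_2$ of $P_\alpha^{\psi_v}$, and the problem becomes a question about $\psi_v$-fixed points on the $\SL_2$-like group $L_\alpha$.

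First I would classify the infinite algebraic subgroups of $\mathrm{PGL}_2$. Up to conjugation they fall into exactly four families: all of $\mathrm{PGL}_2$; Borel subgroups (distinguished by which of $\phi(U_\alpha)$, $\phi(U_{-\alpha})$ they contain); maximal tori; and normalizers of maximal tori. Next I would use the freedom to conjugate by $\phi(I_0)$ to put any maximal torus occurring in $H_{\alpha,v}$ into standard position. Since $I_0$ already contains a maximal torus of $L_\alpha$, there are precisely two $\phi(I_0)$-conjugacy classes of maximal tori in $\mathrm{PGL}_2$, represented by $\phi(T_0)$ and $\phi(\Ad_{n_\alpha u_\alpha(-1)}T_0)$ respectively; the representative $n_\alpha u_\alpha(-1)$ is a natural choice because $\phi(n_\alpha u_\alpha(-1))$ swaps the two $\phi(T_0)$-fixed points of $\bP^1$. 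Taking the seven possible combinations of a choice of subgroup family plus a choice of torus class (where relevant) produces exactly the list (I)--(IVb).

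Then for each case I would read off the orbit structure on $\bP^1$ directly: $\mathrm{PGL}_2$ acts transitively, giving a single orbit (case I); a Borel fixes a unique point and acts transitively on the complement, giving two orbits, with the fixed point lying in $\cO_v$ or in the new orbit according to whether the unipotent radical of $H$ is $\phi(U_{-\alpha})$ or $\phi(U_\alpha)$, distinguishing (IIa) from (IIb) and similarly (IVa) from (IVb); a maximal torus has two fixed points and one free orbit, giving three orbits (IIIa, IIIb); the normalizer merges the two torus fixed points into one orbit, giving two (IVa, IVb). Which of the resulting orbits is open is determined in each case by whether the new $P_\alpha$-translate lies above $s_\alpha$ in $I_0\backslash P_\alpha$, i.e.\ whether the corresponding Iwahori cell has strictly larger dimension in the $\bP^1$-fiber.

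The main obstacle is not the counting but rather verifying that no further possibilities appear in the affine setting: one must show that $P_\alpha^{\psi_v}$ really projects to an infinite subgroup of $\mathrm{PGL}_2$ from one of the four families above, and that the $\phi(I_0)$-normalization genuinely reduces to the two listed torus conjugacy classes. For the first point, one uses Lemma \ref{l:Iwahori orbit rep} to pick a representative $\dv$ with $\tau(\dv)=\bar n_v t^\lambda$ so that $\psi_v=\Ad_{\bar n_v}\circ\theta$ preserves $T_0$ and hence descends to an involution on $L_\alpha$, at which point a split/non-split analysis of the induced involution on the $\SL_2$-factor exhausts the list. For the second point, one observes that $\phi(I_0)$ acts transitively on each of the two conjugacy classes of maximal tori meeting the image Borel. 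With these two verifications the proof is complete, and the argument of \cite[\S4.1]{MS} applies essentially verbatim since the parahoric quotient $P_\alpha/P_\alpha^+$ plays the role of the minimal parabolic in the finite-dimensional case.
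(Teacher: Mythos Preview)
The paper does not give its own proof of this lemma; it merely recalls the classification from \cite[\S4.1.3, \S4.1.4]{MS}, where it is carried out for $B_0$-orbits on a finite-dimensional symmetric variety. Your sketch is a correct reconstruction of that argument, with the natural adaptation to the parahoric setting via the Levi quotient $P_\alpha/P_\alpha^+$, and is in fact more detailed than what the paper itself supplies.

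One small imprecision worth noting: in your classification of infinite subgroups of $\mathrm{PGL}_2$ you list ``Borel subgroups'' for family (II), but the statement allows any $H$ with $\phi(U_{\pm\alpha})\subset H\subset \phi(I_0)$ (resp.\ $\phi(\Ad_{s_\alpha}I_0)$), so $H$ could be the root group $\mathbb G_a$ alone, or $\mathbb G_a$ extended by a finite subgroup of the torus. This does not affect your argument, since the orbit decomposition of $\bP^1$ under any such $H$ is the same (one fixed point, one open orbit). Also, the infiniteness of $H_{\alpha,v}$ is not needed as input here: the paper deduces it \emph{after} the fact, from the case analysis in the subsequent Lemma~\ref{l:orbit type by psi}.
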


We say an orbit $\cO_v$ is of type I, IIa,... for $s_\alpha$
if $H_{\alpha,v}$ is of type I, IIa,... as in the above.
Observe that $\cO_v$ is open in $P_\alpha\cO_v$
if and only if $\cO_v$ is of type I, IIb, IIIb, or IVb for $s_\alpha$.

Note that the involution $\psi_v=\Ad_{n_v}\circ\theta$ preserves $T_0$,
thus acts on the set of affine roots.
Since $\theta(n_v)=n_v^{-1}$,
the action of $\psi_v$ on affine roots is given by
$w_v\circ\theta=\theta\circ w_v^{-1}$.
By the same discussion as in \cite[\S6.5]{MS},
the type of $\cO_v$ for $s_\alpha$ can be read off
from this action as follows:
\begin{lem}\mbox{}\label{l:orbit type by psi}
	\begin{itemize}
		\item [(i)] $\cO_v$ is of type I for $s_\alpha$
		if and only if $\psi_v\alpha=\alpha$ and $\psi_v|_{U_{\pm\alpha}}=\mathrm{Id}$.
		
		\item [(ii)] $\cO_v$ is of type IIa or IIb for $s_\alpha$
		if and only if $\psi_v\alpha\not=\pm\alpha$.
		
		\item [(iii)] $\cO_v$ is of type IIIa or IVa for $s_\alpha$
		if and only if $\psi_v\alpha=\alpha$ and $\psi_v(g)=g^{-1}$
		on $U_{\pm\alpha}$.
		
		\item [(iv)] $\cO_v$ is of type IIIb or IVb for $s_\alpha$
		if and only if $\psi_v\alpha=-\alpha$.
		
		\item [(v)] If $\cO_v$ is of type I, IIa, IIIa, or IVa for $s_\alpha$, then $\psi_v\alpha>0$, i.e. $w_v^{-1}\alpha>0$.
		If $\cO_v$ is of type b, i.e. IIb, IIIb, or IVb 
		for $s_\alpha$, then $\psi_v\alpha<0$, i.e. $w_v^{-1}\alpha<0$.
	\end{itemize}
\end{lem}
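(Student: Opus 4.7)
My plan is to reduce the entire classification to a finite-dimensional problem by descending to the Levi quotient $L_\alpha = P_\alpha/P_\alpha^+$. Since $P_\alpha^+$ acts trivially on $I_0\backslash P_\alpha\cong\bP^1$, the map $\phi:P_\alpha\to\Aut(\bP^1)\cong\PGL_2$ factors through $L_\alpha$, a reductive group of semisimple rank one; consequently the image $H_{\alpha,v}=\phi(P_{\alpha,v})$ depends only on the image of $P_{\alpha,v}=P_\alpha\cap(LG)^{\psi_v}$ in $L_\alpha$. Two preparatory observations would then be recorded: $\theta$ preserves the $\theta$-stable Borel pair $(B_0,T_0)$, hence preserves the positive affine roots; and the twisted-involution identity $\theta(w_v)=w_v^{-1}$ yields the equivalent descriptions $\psi_v=w_v\circ\theta=\theta\circ w_v^{-1}$ for the induced action of $\psi_v$ on $\Phi_\aff$.

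With this reduction, I would match each of (i)--(iv) to the corresponding case in the finite-dimensional analysis of \cite[\S6.5]{MS}. When $\psi_v\alpha=\pm\alpha$ (cases (i), (iii), (iv)) the involution $\psi_v$ stabilizes $L_\alpha$, so the problem reduces to classifying involutions of an $\SL_2$-variant: trivial action on $U_{\pm\alpha}$ gives type I; the split inverse involution gives type IIIa or IVa; and the swap $U_\alpha\leftrightarrow U_{-\alpha}$ gives a non-split twisted torus and hence type IIIb or IVb. When $\psi_v\alpha\notin\{\pm\alpha\}$ (case (ii)), $\psi_v$ does not preserve $L_\alpha$, but the fixed-point analysis shows that $\psi_v$-fixed elements supported on $U_\alpha$ appear in $P_{\alpha,v}$ only via twisted combinations $u\cdot\psi_v(u)$; these lie in $P_\alpha$ precisely when $\psi_v\alpha$ is a positive affine root, matching IIa versus IIb.

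For part (v), the identity $\psi_v\alpha=\theta(w_v^{-1}\alpha)$ combined with $\theta$-preservation of positivity immediately yields $\psi_v\alpha>0\iff w_v^{-1}\alpha>0$. The correspondence with the a/b dichotomy then reads off the case analysis: in the a-types, the stabilizer $H_{\alpha,v}$ lies in (or equals) the basepoint Borel $\phi(I_0)\subset\PGL_2$, forcing $\psi_v\alpha>0$; in the b-types it lies in $\phi(\Ad_{s_\alpha}I_0)$ instead, forcing $\psi_v\alpha<0$.

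I expect the main obstacle to be the careful treatment of case (ii). Here $\psi_v$ does not preserve $P_\alpha$ itself, so the reduction to a finite-dimensional involution on $L_\alpha$ is less direct than in the cases $\psi_v\alpha=\pm\alpha$. Verifying that the Mars-Springer computation still applies requires showing that $P_\alpha\cap(LG)^{\psi_v}$ can be analyzed, after projection to $L_\alpha$, through the twisted combinations $u\cdot\psi_v(u)$ for $u\in U_{\pm\alpha}$, and that the sign of $\psi_v\alpha$ controls whether such a combination realizes $\phi(U_\alpha)$ or $\phi(U_{-\alpha})$ inside $\PGL_2$. Once this step is in place, the full lemma follows by collating the case analysis with part (v).
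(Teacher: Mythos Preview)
Your proposal is correct and matches the paper's approach: the paper simply cites \cite[\S6.5]{MS} and the observation $\psi_v=w_v\circ\theta=\theta\circ w_v^{-1}$ on affine roots, without spelling out the reduction to the Levi $L_\alpha$ or the case-by-case analysis you describe. Your treatment is in fact more detailed than the paper's, and your identification of case (ii) as the one requiring extra care (since $\psi_v$ need not preserve $P_\alpha$) is apt.
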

In particular, the stabilizer $H_{\alpha,v}$ is always infinite.

\subsection{Characterization of closed orbits}
 We give a characterization of closed orbits.
To this end, we need the following useful lemma:
\begin{lem}\label{l:tau(P) image}
    Let $\psi$ be an involution on a parahoric subgroup $P$
    that preserves its unipotent radical $P^+$ and Levi subgroup $L_P$.
    Then the embedding $\tau(g)=g\psi(g)^{-1}$ has image
    $\tau(P)=P^{\on{inv}\circ\psi,\circ}$.
    In particular, the image is closed in $P$ and in $LG$.
\end{lem}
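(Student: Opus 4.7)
The plan is to exploit the Levi decomposition $P = L_P \ltimes P^+$, which is preserved by $\psi$ by hypothesis, and treat the reductive Levi and pro-unipotent radical separately before gluing. The key input on the Levi side is the classical theorem of Richardson--Steinberg for a connected reductive group with involution: applied to $L_P$ and $\psi|_{L_P}$, it yields $\tau(L_P) = L_P^{\on{inv}\circ\psi,\circ}$. The argument is that $d\tau|_e: \Lie(L_P) \to \Lie(L_P)$, given by $X \mapsto X - d\psi(X)$, surjects onto the $(-1)$-eigenspace of $d\psi$, which is $T_e L_P^{\on{inv}\circ\psi,\circ}$; hence $\tau$ is smooth at $e$, and the induced morphism $L_P/L_P^\psi \to L_P^{\on{inv}\circ\psi,\circ}$ is a bijective étale map between smooth irreducible varieties of the same dimension, hence an isomorphism (the hypothesis $\on{char}(k)\neq 2$ ensures good behaviour of the involution).

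On the pro-unipotent radical $P^+$, the assumption $\on{char}(k) \neq 2$ makes a square root available: for any $h \in (P^+)^{\on{inv}\circ\psi}$, the element $h^{1/2} := \exp(\tfrac{1}{2}\log h) \in P^+$, defined via Campbell--Baker--Hausdorff on the pro-nilpotent Lie algebra of $P^+$, satisfies $\psi(h^{1/2}) = h^{-1/2}$, so that $\tau(h^{1/2}) = h^{1/2} \cdot h^{1/2} = h$. Thus $\tau(P^+) = (P^+)^{\on{inv}\circ\psi}$, and this coincides with its identity component since any pro-unipotent group is connected.

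To glue, I decompose $x \in P^{\on{inv}\circ\psi,\circ}$ as $x = \ell v$ with $\ell \in L_P$, $v \in P^+$. The equation $\psi(x) = x^{-1}$ combined with uniqueness of the Levi decomposition forces $\psi(\ell) = \ell^{-1}$ and $\psi(v) = \ell v^{-1}\ell^{-1}$; continuity of the projection $P \to L_P$ places $\ell$ in the identity component $L_P^{\on{inv}\circ\psi,\circ}$. Lifting $\ell = \tau(\ell_0)$ via the Levi step, set $v' := \psi(\ell_0)^{-1}\, v\, \psi(\ell_0)$; using $\ell_0\psi(\ell_0)^{-1} = \ell$, a short calculation shows $\psi(v') = v'^{-1}$, and the unipotent step produces $u_0 \in P^+$ with $\tau(u_0) = v'$. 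Then
\[
\tau(\ell_0 u_0) = \ell_0\, \tau(u_0)\, \psi(\ell_0)^{-1} = \ell_0 v' \psi(\ell_0)^{-1} = (\ell_0\psi(\ell_0)^{-1})\, v\, (\psi(\ell_0)\psi(\ell_0)^{-1}) = \ell v = x,
\]
finishing surjectivity. The main technical nuisance is tracking the $\Ad$-twists introduced by the semidirect product structure in this gluing step, but no new geometric input is required. For the closure statement, $P^{\on{inv}\circ\psi} = \{g \in P : g\psi(g) = e\}$ is cut out by a closed condition, so its identity component is closed in $P$; and $P$ itself is closed in $LG$.
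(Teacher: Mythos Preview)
Your proof follows essentially the same strategy as the paper's: decompose $P = L_P \ltimes P^+$, invoke the finite-dimensional symmetric-space result $\tau(L_P) = L_P^{\on{inv}\circ\psi,\circ}$ on the Levi, produce square roots on the pro-unipotent radical, and glue via the semidirect product. Your gluing differs cosmetically (you conjugate $v$ by $\psi(\ell_0)$ to reduce to the untwisted involution $\psi$ on $P^+$, whereas the paper keeps $u$ fixed and works with the twisted involution $\psi_{\ell^{-1}} = \Ad_{\ell^{-1}}\circ\psi$), but the two are equivalent reparametrizations.

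One genuine technical issue: the paper works over $k = \overline{\mathbb{F}}_p$ with $p \neq 2$, and in positive characteristic the formulas $\exp$, $\log$, and Campbell--Baker--Hausdorff are not available on $P^+$ (the series have denominators divisible by $p$). Your construction $h^{1/2} := \exp(\tfrac{1}{2}\log h)$ therefore does not make sense as written. The paper avoids this by using the Moy--Prasad filtration of $P^+$ by normal $\psi$-stable subgroups with successive quotients isomorphic to vector groups; on each abelian quotient, squaring is multiplication by $2$, hence bijective since $p \neq 2$, and one lifts square roots by successive approximation. The uniqueness of square roots obtained this way then gives $\psi(h^{1/2}) = (h^{1/2})^{-1}$ exactly as you argue. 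With this replacement your proof is complete.
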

\begin{proof}
    It suffices to show the surjectivity.    
    For any $\ell u\in P^{\on{inv}\circ\psi,\circ}$,
    \[
    \psi(\ell u)=\psi(\ell)\psi(u)=(\ell u)^{-1}=\ell^{-1}\Ad_{\ell}u^{-1}.
    \]
    Denote $\psi_{\ell^{-1}}=\Ad_{\ell^{-1}}\circ\psi$.
    From the decomposition $P=L_PP^+$, we obtain
    $\psi(\ell)=\ell^{-1}$ and $\psi_{\ell^{-1}}(u)=u^{-1}$.
    This shows that the composition 
    $P^{\on{inv}\circ\psi}\hookrightarrow P\twoheadrightarrow L_P$
    lands inside $L_P^{\on{inv}\circ\psi}$.
    In particular, 
    the neutral component $P^{\on{inv}\circ\psi,\circ}$
    maps into $L_P^{\on{inv}\circ\psi,\circ}$,
    so that $\ell\in L_P^{\on{inv}\circ\psi,\circ}$.

    Since $(L_P,\psi)$ is a symmetric pair of finite type,
    we know $\tau(L_P)=L_P^{\on{inv}\circ\psi,\circ}$,
    so that $\ell=a\psi(a)^{-1}$ for some $a\in L_P$.
    Also, since $P^+$ is a pro-unipotent subgroup with 
    a sequence of Moy-Prasad subgroups stable under $\psi_{\ell^{-1}}$
    by the assumptions on $\psi$,
    we know the square map on $P^{+,\on{inv}\circ\psi_{\ell^{-1}}}$ is onto.
    Thus there exists 
    $b\in P^{+,\on{inv}\circ\psi_{\ell^{-1}}}$ with $u=b^2$.
    We have
    \[
    \tau(a\Ad_{\psi(a)^{-1}}(b))
    =a\psi(a)^{-1}b\psi(a)a^{-1}\psi(b)^{-1}a\psi(a)^{-1}
    =\ell b\psi_{\ell^{-1}}(b)^{-1}
    =\ell b^2=\ell u.
    \]
\end{proof}
As in the finite dimensional situation \cite[\S6.6]{MS},
we have the following important characterization of closed orbits:

\begin{lem}\label{l:orbit type b}
	For an $I_0$-orbit $\cO_v$, the following are  equivalent:
	\begin{itemize}
		\item [(i)] $\cO_v$ is a closed orbit,

		\item [(ii)] $n_v$ normalizes $I_0$,

        \item [(iii)]
        $w_v=p(v)\in\Omega$, that is,
        the twisted involution $w_v\in\mathscr{I}$
        has length zero,
		
		\item [(iv)] $\cO_v$ is not of type IIb, IIIb, or IVb 
		for any simple reflection $s_\alpha$.
	\end{itemize}
\end{lem}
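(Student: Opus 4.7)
The plan is to prove the equivalence through the cycle (i)$\Rightarrow$(iv)$\Rightarrow$(iii)$\Rightarrow$(ii)$\Rightarrow$(i), using the classification results of the preceding section.

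First, (i) $\Rightarrow$ (iv) is immediate from Lemma \ref{l:orbit type}: in each of the types IIb, IIIb, IVb the orbit $\cO_v$ is open but not equal to $P_\alpha\cO_v$, so $P_\alpha\cO_v\setminus\cO_v$ is a non-empty closed subset of $P_\alpha\cO_v$ contained in $\overline{\cO}_v$, contradicting the closedness of $\cO_v$.

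For (iv) $\Rightarrow$ (iii), I would apply Lemma \ref{l:orbit type by psi}(v), which under (iv) yields $w_v^{-1}\alpha>0$ for every $\alpha\in\Delta_\aff$. Writing $w_v=w\omega$ with $w\in \mathrm W_\aff$ and $\omega\in\Omega$, the fact that $\omega$ has length zero means $\omega^{-1}$ permutes $\Delta_\aff$ and preserves positivity of all affine roots, so the condition reduces to $w^{-1}\alpha>0$ for every $\alpha\in\Delta_\aff$ inside the Coxeter group $\mathrm W_\aff$. The standard exchange argument then forces $w=1$, giving $w_v=\omega\in\Omega$. The implication (iii) $\Rightarrow$ (ii) follows from the fact that $l(w_v)=0$ collapses $I_0 w_v I_0$ to a single $I_0$-coset, so $w_v$ admits a lift in $N_{LG}(I_0)\cap N_{LG}(LT_0)$; since any two lifts in $N_{LG}(LT_0)$ differ by an element of $L^+T_0\subset I_0\subset N_{LG}(I_0)$, the specific lift $n_v$ also normalizes $I_0$.

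The main work is (ii) $\Rightarrow$ (i). The strategy is to apply Lemma \ref{l:tau(P) image} to $P=I_0$ with the involution $\psi=\psi_v=\Ad_{n_v}\circ\theta$. The involution identity $\psi_v^2=\mathrm{id}$ follows from $\theta(n_v)=n_v^{-1}$ (Lemma \ref{l:Iwahori orbit rep}); preservation of the pro-unipotent radical $I_0^+$ follows from (ii) combined with the characteristic nature of $I_0^+$ in $I_0$; and preservation of the Levi $T_0$ uses the explicit form $n_v=\bar{n}_v t^\lambda$ from Lemma \ref{l:Iwahori orbit rep}, in which $\bar{n}_v\in N_G(T_0)$ normalizes $T_0$ while $t^\lambda\in LT_0$ centralizes it. Lemma \ref{l:tau(P) image} then yields that $\tau_v(I_0):=\{g\psi_v(g)^{-1}\mid g\in I_0\}$ is closed in $I_0$ and hence in $LG$. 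A direct computation $g n_v \theta(g)^{-1}=g\psi_v(g)^{-1}\cdot n_v$ identifies $\tau(\cO_v)=\tau_v(I_0)\cdot n_v$ inside $LG$, which is therefore closed in $LG$. Since $\tau:LX\to LG$ is a closed embedding (obtained by looping the closed embedding $X\cong G^{\on{inv}\circ\theta,\circ}\hookrightarrow G$), the orbit $\cO_v$ is closed in $LX$. The main obstacle is this last bookkeeping step, in particular verifying the hypotheses of Lemma \ref{l:tau(P) image} for $\psi_v$ on $I_0$ and ensuring that closedness transfers correctly between $LG$ and $LX$ through $\tau$.
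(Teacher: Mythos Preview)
Your proof is correct and follows essentially the same approach as the paper. The only cosmetic difference is that the paper records (ii)$\Leftrightarrow$(iii) separately via $\Omega\cong N_{LG}(I_0)/I_0$ and then closes the cycle as (i)$\Rightarrow$(iv)$\Rightarrow$(ii)$\Rightarrow$(i), whereas you thread (iv)$\Rightarrow$(iii)$\Rightarrow$(ii) through the explicit decomposition $w_v=w\omega$; the content of both arguments---$w_v^{-1}\alpha>0$ for all $\alpha\in\Delta_\aff$ forces $w_v$ to fix the fundamental alcove---is identical, as is your use of Lemma~\ref{l:tau(P) image} for the key step (ii)$\Rightarrow$(i).
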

\begin{proof}
$(ii)$ and $(iii)$
are equivalent since $\Omega\cong N_{LG}(I_0)/I_0$.
	We show implications $(i)\Rightarrow(iv)\Rightarrow(ii)\Rightarrow(i)$.
	
	Let $\cO_v$ be a closed orbit.
	If it is of type b for a simple reflection $s_\alpha$,
	then from the list in Lemma \ref{l:orbit type},
	the closure of $\cO_v$ contains at least one distinct orbit,
	so that it cannot be closed. 
	Thus (iv) must be satisfied.
	
	If $\cO_v$ satisfies (iv), 
	then by Lemma \ref{l:orbit type by psi}.(5),
	$w_v^{-1}\alpha>0$ for any simple affine root $\alpha$.
	Thus $w_v^{-1}$ and $w_v$ fix the fundamental alcove 
        and it follows that $n_v$ normalizes $I_0$.
	
	  If $n_v$ normalizes $I_0$, 
        we show the orbit $\cO_v$ is closed.
        The involution $\psi_v=\Ad_{n_v}\circ\theta$ acts on $I_0$
        and preserves $T_0\subset I_0$.
        Let $\tau_v(g)=g\psi_v(g)^{-1}$.
        Observe that
        $\tau(I_0x_v)=\tau_v(I_0)n_v$.
        By Lemma \ref{l:tau(P) image},
        $\tau_v(I_0)=I_0^{\on{inv}\circ\psi_v,\circ}$,
        which is closed in $I_0^{\on{inv}\circ\psi_v}$
        and further closed in $I_0$.
        Thus $\tau(I_0x_v)=\tau_v(I_0)n_v$ is closed in 
        $I_0n_v$, thus closed in $LG$.
        We obtain that $\tau(I_0x_v)$ is closed in $\tau(LX)$,
        so that $I_0x_v$ is a closed orbit in $LX$.

\end{proof}

\begin{rem}\label{closed orbits}
The equivalence of (i) and (ii) in the above 
is an affine generalization of 
\cite[Proposition 1.4.2]{RSbook}.
Moreover, when $G$ is simply connected (hence $\Omega=0$),
for a closed $\cO_v$ we have $\tau(x_v)\in I_0\subset L^+G$.
Thus $\cO_v\subset L^+X$ is the base change 
of a $B_0$-orbit $B_0\bar{x}_v$ on $X$
along the evaluation map $L^+X\rightarrow X$
such that $\tau(\bar{x}_v)\in N_G(T_0)\cap B_0=T_0$.
By \cite[Proposition 1.4.2]{RSbook}, 
$B_0\bar{x}_v$ is a closed orbit on $X$.
This gives a bijection between
closed $I_0$-orbits on $LX$ and
closed $B_0$-orbits on $X$.
\end{rem}

\begin{rem}\label{closed orbits remark}
As mentioned in \cite[\S6.6]{MS}, 
although part (i) and (iv) of the above lemma make sense 
for general spherical varieties,
they are not equivalent in general.
\end{rem}

\section{Placidness of orbit closures}\label{placid}
 In this section, 
we show that spherical and Iwahori obits on the loop space $LX$ are placid schemes. 
In particular, although those orbit closures are infinite dimensional,
their singularities are in fact finite dimensional.
We will use some basic definitions  and constructions of placid schemes 
collected 
in Appendix \ref{dimension theory}.

\subsection{Affine degenerations}\label{affine degeneration}
We recall the construction of affine degenerations of symmetric varieties
and its relationship with spherical orbits 
in $LX$. The main reference is 
\cite[Section 2.2]{SW}.\footnote{
Note that in \emph{loc. cit.}, the authors made some assumptions on the characteristic of $k$. Thanks to the recent work
\cite{BS}, those assumptions can be removed.}

Let $X^\circ=BK/K\subset X$ be the open $B$-orbit.
The action of $B/U$ on $
X^\circ//U=\on{Spec}(k[X^\circ]^U)$ factors through a quotient torus 
$T\ra T/T\cap K\stackrel{\tau}\cong S$.
Let $\check\Lambda_S\subset X^*(T)$ be the characters that
factor through $T\xrightarrow{\tau}S$.
Consider the filtration $k[X]_\chi,\chi\in\check\Lambda_S$ of $k[X]$ 
where $k[X]_\chi$ consists of $f\in k[X]$ such that the representation of $G$ generated by $f$
has highest weights $\chi'\in\check\Lambda_S$ satisfying $\langle\chi-\chi',\Lambda_S^+\rangle\leq 0$.\footnote{Since $k[X]^U\subset k[X^\circ]^U$, the action of $T$ on highest weight 
vectors factors through the quotient torus $S$
and hence the highest weights are contained in $\Lambda_S$}
The \emph{affine degeneration} $\mathcal X$ of $X$ is the affine scheme defined by
\[
k[\mathcal X ]=\bigoplus_{\chi\in\check\Lambda_S} k[X]_\chi\otimes e^\chi\subset k[X\times S],
\]
where $e^\chi\subset k[S]$ is the function 
corresponding to the character $\chi$. 
Consider the following affine toric scheme
 $A=\on{Spec}(\bigoplus_{\chi\in\check\Lambda_S,\langle\chi,\Lambda^+_S\rangle\leq 0} ke^\chi)$.
The natural inclusion $\bigoplus_{\chi\in\check\Lambda_S,\langle\chi,\Lambda^+_S\rangle\leq 0} ke^\chi\to k[\mathcal X]$ gives rise to 
$G\times S$-equivariant morphism 
\[\pi:\mathcal X\to A.\]
For any $a\in A(k)$, the fiber $\mathcal X_a=\pi^{-1}(a)$ is an affine spherical variety for 
$G$ (not necessary smooth and homogeneous). 
We let $\mathcal X^\bullet\subset\mathcal X$ be the open subscheme
such that the fiber $\mathcal X_a^\bullet\subset\mathcal X_a$ over $a$ is the open $G$-orbit.

Let $\lambda\in\Lambda_S^+$. 
Then the composition 
$\mathbb G_m\xrightarrow{\lambda^{-1}}S\rightarrow A$ 
extends to a map 
$\iota_\lambda:\mathbb A^1\to A$. 
We let 
$\mathcal X_\lambda=\mathcal X\times_{A,\iota_\lambda}\mathbb A^1$
and $\mathcal X^\bullet_{\lambda}=\mathcal X^\bullet\times_{A,\iota_\lambda}\mathbb A^1$.
Then $\pi_\lambda:\calX_\lambda\to\mathbb A^1$ is a
$G\times\mathbb G_m$-equivariant affine family and $\pi^\bullet_\lambda:\calX^\bullet_\lambda\to\mathbb A^1$ 
is an open subfamily.
The preimages of both $\pi_\lambda$ and $\pi^\bullet_\lambda$ over $\mathbb G_m$ are isomorphic to $X\times\mathbb G_m$.

The open inclusion $X\times\mathbb G_m\cong\pi_{\lambda}^{-1}(\mathbb G_m)\subset \calX_\lambda$ defines an inclusion
 $LX\times  L\mathbb G_m\subset  L\calX_\lambda$.
Consider the embedding 
\begin{equation}
\label{f}
f_\lambda: LX\to LX\times  L\mathbb G_m\subset  L\calX_\lambda, \ \ \quad\gamma\to (\gamma,t).
\end{equation}
Here $t\in k((t))=L\mathbb G_m(k)$.
We have an fp-open embedding $L^+\calX^\bullet_\lambda\subset L^+\calX_\lambda$
and a closed embedding $ L^+\calX_\lambda\to  L\calX_\lambda$.
We have the following description of $LX_\lambda$
in terms of the arc space $L^+\calX_\lambda^\bullet$:

\begin{lem}\label{fiber product}
The map $f_\lambda$ induces an isomorphism 
\[L X_\lambda\cong  LX\times_{f_\lambda, L\calX_\lambda} L^+\calX_\lambda^\bullet.\]
\end{lem}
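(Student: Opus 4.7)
Set $Y := LX \times_{f_\lambda, L\calX_\lambda} L^+\calX_\lambda^\bullet$, so that for any $k$-algebra $R$, $Y(R)$ consists of those $\gamma \in LX(R)$ whose associated map $(\gamma,t) : \on{Spec}(R(\!(t)\!)) \to X \times \bGm \hookrightarrow \calX_\lambda$ extends to an arc $\on{Spec}(R[\![t]\!]) \to \calX_\lambda$ with special fibre in $\calX_{\lambda,0}^\bullet$. The first reduction is that $Y$ is $L^+G$-stable: the $G$-factor of the $G \times S$-action on $\calX$ descends to a $G$-action on $\calX_\lambda$ preserving $\calX_\lambda^\bullet$, and $f_\lambda$ is $L^+G$-equivariant because $L^+G$ touches only the first coordinate of $X \times \bGm \subset \calX_\lambda$. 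Since $LX = \bigsqcup_\mu LX_\mu$ is the decomposition into $L^+G$-orbits, it suffices to determine which base points $x_\mu$ lie in $Y$.

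For the inclusion $LX_\lambda \subseteq Y$, I would verify that $x_\lambda \in Y$. Using the presentation $k[\calX_\lambda] = \bigoplus_\chi k[X]_\chi \otimes e^\chi$ with $e^\chi \mapsto t^{-\langle \chi, \lambda \rangle}$ via $\iota_\lambda$, extension of $f_\lambda(x_\lambda)$ to an arc amounts to the valuation bound $v_t(f(x_\lambda)) \geq \langle \chi, \lambda \rangle$ for every $f \in k[X]_\chi$, together with nonvanishing of the leading coefficient so that the special fibre lands in the open orbit $\calX_{\lambda,0}^\bullet$. Both statements are direct consequences of the Rees-algebra construction: for $f$ a matrix coefficient of a $G$-representation of highest weight $\chi' \geq \chi$, the identity $\tau(x_\lambda) = t^\lambda$ together with the weight-space decomposition under the action of $t^\lambda \in S(F)$ furnishes the required bound, with equality and nonzero leading coefficient at the highest weight $\chi' = \chi$. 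Equivalently and more geometrically, one invokes the $\bGm$-action on $\calX_\lambda$ induced from $\lambda^{-1} : \bGm \to S$: this is compatible with the scaling action on $\bA^1$, contracts the general fibre $X$ onto the open $G$-orbit of $\calX_{\lambda,0}$, and carries $x_\lambda$ along an arc with the required properties.

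For the reverse inclusion $Y \subseteq LX_\lambda$, by $L^+G$-stability it suffices to show $x_\mu \notin Y$ whenever $\mu \neq \lambda$. Running the same valuation computation with $\tau(x_\mu) = t^\mu$ replaces $\lambda$ by $\mu$ in the resulting exponent; for any $\mu \neq \lambda$ one picks $\chi \in \check\Lambda_S$ with $\langle \chi, \lambda - \mu \rangle > 0$ and $f \in k[X]_\chi$ realizing the bound, and either the extension to $\on{Spec}\cO$ fails or its special fibre sits in a non-open $G$-orbit of $\calX_{\lambda,0}$ outside $\calX_{\lambda,0}^\bullet$. The scheme-theoretic isomorphism then follows because $LX_\lambda \subset LX$ is the reduced orbit and $Y \subset LX$ is the pullback of a locally closed embedding, so both carry the induced reduced structure on the same underlying set. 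The main technical obstacle is the second paragraph: verifying both the valuation bound and the genericity of the leading term in the open orbit of the special fibre. The $\bGm$-contraction viewpoint offers the cleanest route, while the Rees-algebra approach requires careful bookkeeping of sign conventions in the filtration $k[X]_\chi$ and its pairing with $\lambda \in \Lambda_S^+$.
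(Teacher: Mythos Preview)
The paper's proof is a one-line citation to \cite[Lemma 2.3.10]{SW}, which establishes this identification for general spherical varieties once the Borel stabilizer on the open orbit $X^\circ$ is smooth (verified via \cite[\S6.3]{MS}). Your approach is a direct orbit-by-orbit verification, which is more explicit but not what the paper does.

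There is a genuine gap in your argument at the final step. Your reduction via $L^+G$-equivariance and the orbit decomposition only establishes $Y(k) = LX_\lambda(k)$; the assertion that ``both carry the induced reduced structure on the same underlying set'' is where the argument breaks. The fiber product $Y = LX \times_{L\calX_\lambda} L^+\calX_\lambda^\bullet$ is the pullback of a locally closed embedding, but pullback does not preserve reducedness (consider $\{0\} \hookrightarrow \bA^1$ pulled back along the squaring map), and nothing in your outline rules out nilpotents in $Y$. This matters: the lemma is invoked in Theorem~\ref{Placidness of orbits} to exhibit $LX_\lambda$ as an fp-locally closed subscheme of $L^iX$ via the Cartesian diagram, so the identification must hold as schemes, not merely on geometric points. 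To repair your approach one would have to work directly with $R$-points throughout---translating the condition $f_\lambda(\gamma) \in L^+\calX_\lambda^\bullet(R)$ into valuation and reduction-mod-$t$ constraints and matching these against a functorial description of the orbit $LX_\lambda$---which is essentially what the Sakellaridis--Wang lemma packages, and where the smoothness hypothesis on the stabilizer that the paper invokes becomes relevant.
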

\begin{proof}
According to \cite[Section 6.3]{MS}, the 
stabilizer of $B$ on $X^\circ$ is a smooth subgroup and the lemma follows from \cite[Lemma 2.3.10]{SW}.
\end{proof}

\subsection{Placidness of spherical and Iwahori orbit closures}

We have the following basic placidness results for loop spaces due to Drinfeld:

\begin{thm}\label{placid of loop spaces}
Let $Y$ be a smooth affine scheme of finite type
over $k$. The loop space 
$LY$ of $Y$ is a placid ind-scheme.
More precisely,
for any affine embedding $Y\subset\mathbb A^n$
let $L^iY=LY\cap t^{-i}L^+\mathbb A^n$.
Then each $LY^i$ is 
a placid scheme and 
$LY\cong\on{colim}_{i} LY^i$ is an ind-placid presentation of $LY$.

\end{thm}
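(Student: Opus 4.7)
The plan is to handle the ind-scheme structure first, then reduce to showing that each individual $L^iY$ is a placid scheme. Since $Y\subset\bA^n$ is closed, $LY\subset L\bA^n$ is closed, and the presentation $L\bA^n=\bigcup_i t^{-i}L^+\bA^n$ by closed embeddings restricts to $LY=\bigcup_i L^iY$ by closed embeddings. Thus the problem reduces to exhibiting each $L^iY$ as a cofiltered limit of smooth affine finite-type schemes along smooth surjective transition maps.

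For the base case $i=0$, I would write $L^+Y=\lim_j L^+_jY$ along the truncation tower of jet schemes. The classical Greenberg lifting argument, combining Hensel's lemma with the Jacobian criterion, shows that smoothness of $Y$ implies each $L^+_jY$ is smooth of finite type and each transition $L^+_{j+1}Y\to L^+_jY$ is a smooth surjection with fibers $\bA^{\dim Y}$. This gives $L^+Y$ the desired placid structure.

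For $i>0$, I would introduce truncations $L^{i,j}Y$, defined as the scheme-theoretic image of $L^iY$ in the finite-dimensional affine quotient of $t^{-i}L^+\bA^n$ obtained by reducing modulo $t^{j}L^+\bA^n$. To show the transitions $L^{i,j+1}Y\to L^{i,j}Y$ are smooth and surjective for $j$ sufficiently large, I would reduce to the case $i=0$ by using that multiplication by $t^i$ induces an isomorphism of schemes $t^{-i}L^+\bA^n\cong L^+\bA^n$, transporting $L^iY$ onto the closed subscheme of $L^+\bA^n$ cut out by the rescaled equations $t^{iD_k}f_k(t^{-i}x_1,\ldots,t^{-i}x_n)$, where the ideal of $Y$ is $(f_1,\ldots,f_r)$ with $D_k=\deg f_k$. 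Smoothness of $Y$ translates via the chain rule into a Jacobian condition on this rescaled system that again permits Hensel-type lifting of truncated solutions.

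The main obstacle is the bookkeeping in this last step: the rescaled equations impose constraints at various $t$-adic levels depending on $i$ and the degrees $D_k$, and one must check that the combined system still cuts out a smooth scheme at each truncation level for all $j$ larger than some threshold $j_0=j_0(i,Y)$. Once each $L^iY$ is placid, the closed embeddings $L^iY\hookrightarrow L^{i+1}Y$ assemble into the claimed ind-placid presentation of $LY$.
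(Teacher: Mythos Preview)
The paper does not give a proof here; it simply cites \cite[Theorem~6.3]{D}. Your outline for $i=0$ via the jet tower $L^+_jY$ is the standard argument and is correct.

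The gap is in the $i>0$ step. Your proposed resolution --- that the rescaled system inherits a Jacobian condition via the chain rule sufficient for Hensel lifting, so that the truncations $L^{i,j}Y$ are smooth and the transition maps are smooth for $j\geq j_0$ --- fails already for $Y=\{xy=1\}\subset\bA^2$ and $i=1$. The rescaled equation is $uv=t^2$ in $L^+\bA^2$; its Jacobian $(v,u)$ at the point $(u,v)=(t,t)$ is $(t,t)$, which vanishes mod $t$, so the naive Hensel step does not apply. More concretely, for the truncation $Z_j=\{uv\equiv t^2\bmod t^j\}$ and any $j\geq 3$, the transition $Z_{j+1}\to Z_j$ restricted to the locus $\{u_0=0,\,u_1\neq 0\}$ fails to be surjective: the equation at level $m=j$ imposes a relation on $v_{j-1}$ in terms of $u_1,\dots,u_{j-1}$ alone, so the image is a proper closed subset. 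Thus neither the truncations nor the transition maps become smooth past any threshold, and passing to scheme-theoretic images does not repair this. What actually happens in this example is that $L^1Y$ decomposes into three clopen pieces (indexed by $\mathrm{val}(u)\in\{0,1,2\}$), each isomorphic to $L^+\bGm$, and it is the jet presentation of $L^+\bGm$ on each piece --- not the ambient truncation tower --- that exhibits placidness. Drinfeld's argument proceeds in this spirit: one produces the placid presentation \'etale-locally on $L^iY$ (which the paper's definition of placid scheme explicitly allows), rather than by a single global tower of truncations in the ambient $t^{-i}L^+\bA^n$.
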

\begin{proof}
    This is \cite[Theorem 6.3]{D}.
\end{proof}

We shall deduce the placidness of orbit closures 
from 
Lemma \ref{fiber product} and 
Theorem \ref{placid of loop spaces}.

\begin{thm}\label{Placidness of orbits}
\begin{itemize}
\item [(i)]
 The orbit closures 
 $\overline\cO_v$ (resp. 
 $\overline {LX}_\lambda$) 
 are irreducible placid schemes
 and the closed embeddings $\overline\cO_v\to LX$ (resp. $\overline {LX}_\lambda\to LX$)
 are finitely presented.
 \item [(ii)]
 The open embedding $\cO_v\to\overline\cO_v$ (resp. $LX_\lambda\to\overline{LX}_\lambda$) and the closed embedding
 $\overline\cO_v\to \overline\cO_{v'}$, $v\leq v'$
 (resp. $\overline{LX}_\lambda\to\overline{LX}_{\lambda'}$, $\lambda\leq\lambda'$)
 are finitely presented.
\end{itemize}

 \end{thm}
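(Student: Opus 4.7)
The plan is to reduce both parts of the theorem to finite-dimensional assertions via compatible truncations, using the placidness of $LX$ (Theorem \ref{placid of loop spaces}) together with the affine degeneration from Section \ref{affine degeneration}.

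For the spherical case of (i), I would first extend Lemma \ref{fiber product} to the closure, identifying
\[\overline{LX}_\lambda \;\cong\; LX\times_{f_\lambda,L\calX_\lambda} L^+\calX_\lambda,\]
where $L^+\calX_\lambda = \lim_j L^+_j\calX_\lambda$ is the arc space of the affine degeneration. Writing $Y_j$ for the (reduced) image of $\overline{LX}_\lambda$ inside the finite-type jet scheme $L^+_jX$ for $j$ large, this yields a presentation $\overline{LX}_\lambda = \lim_j Y_j$ by closed subvarieties of finite type. The transition map $Y_{j+1}\to Y_j$ is equivariant for the smooth surjective morphism $L^+_{j+1}G\to L^+_jG$ of pro-algebraic groups; since $L^+G$ acts transitively on the open orbit $LX_\lambda$, the transitions become surjective with equidimensional smooth fibers, yielding the desired placid presentation. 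The finite presentation of $\overline{LX}_\lambda\hookrightarrow LX$ then follows because already at level $j$ the embedding $Y_j\hookrightarrow L^+_jX$ is a closed embedding of finite-type schemes.

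For the Iwahori case of (i), I would use that each $I_0$-orbit $\cO_v$ lies in a single $L^+G$-orbit $LX_{\lambda(v)}$ (cf.\ Section \ref{S:Iwahori orbits}). After a sufficiently deep truncation, the image of $I_0$ in $L^+_jG$ is a finite-dimensional solvable subgroup, and the image of $\cO_v$ in $L^+_jX$ is a corresponding finite-type orbit whose closure lives inside the already-constructed $Y_j$. Pulling this back along the pro-smooth projection $\overline{LX}_{\lambda(v)}\to Y_j$ yields the placid presentation of $\overline\cO_v$. Part (ii) is then essentially formal: by Corollary \ref{c:Iwahori closure and G(O) into Iwahori} and Theorem \ref{t:sym var orbit closure}, each orbit closure contains only finitely many orbits, so at a sufficiently deep truncation level the open embedding $\cO_v\hookrightarrow\overline\cO_v$ and the closed embedding $\overline\cO_v\hookrightarrow\overline\cO_{v'}$ (and their spherical counterparts) become pullbacks of embeddings of finite-type varieties, and are therefore finitely presented.

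The main obstacle is establishing the closure identification $\overline{LX}_\lambda = LX\times_{L\calX_\lambda}L^+\calX_\lambda$. One inclusion uses the $\bGm$-equivariance of the family $\calX_\lambda$ together with a valuative criterion; the reverse direction requires showing that the boundary of $L^+\calX_\lambda^\bullet$ inside $L^+\calX_\lambda$ parametrizes exactly the loops in $\bigsqcup_{\mu<\lambda}LX_\mu$, which reduces via Theorem \ref{t:sym var orbit closure} to an analysis of how degenerations in the special fiber $\calX_\lambda|_0$ match up with orbits indexed by smaller dominant coweights. A secondary technical point is that although $L^+_j\calX_\lambda$ is singular for each $j$, the restriction to $Y_j$ is equivariantly smooth-fibered thanks to the $L^+G$-orbit structure, which is what makes the inverse limit placid rather than merely pro-finite-type.
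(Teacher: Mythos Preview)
Your approach has a genuine gap, and it is precisely the ``main obstacle'' you flag but do not resolve.

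First, the projection ``to $L^+_jX$'' does not make sense: for $\lambda\neq 0$ the points of $LX_\lambda$ are not arcs in $X$, so $\overline{LX}_\lambda$ has no image in the jet scheme of $X$. You presumably mean $L^+_j\calX_\lambda$, but even then $\calX_\lambda$ is singular (its special fibre is a degenerate spherical variety), so the transitions $L^+_{j+1}\calX_\lambda\to L^+_j\calX_\lambda$ are not smooth. Your fix---that equivariance under $L^+G$ forces smooth equidimensional fibres on $Y_j$---works only on the open stratum $LX_\lambda$, where there is a single orbit; on the boundary $\bigsqcup_{\mu<\lambda}LX_\mu$ the stabiliser dimensions jump, so there is no reason for the truncated transition maps to have constant fibre dimension across $Y_j$. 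This kills the attempted placid presentation $\lim_j Y_j$.

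The paper sidesteps both issues simultaneously. It never proves the closure identification $\overline{LX}_\lambda\cong LX\times_{L\calX_\lambda}L^+\calX_\lambda$; it uses only the open version (Lemma~\ref{fiber product}, with $L^+\calX_\lambda^\bullet$). The point is that one already knows from Drinfeld's theorem (Theorem~\ref{placid of loop spaces}) that the ambient piece $L^iX=LX\cap t^iL^+V$ is placid, for any affine embedding $X\hookrightarrow V$. The fibre product then exhibits $LX_\lambda\hookrightarrow L^iX$ as the base change of the fp-locally-closed embedding $L^+\calX_\lambda^\bullet\hookrightarrow L^{i'}\calX_\lambda$, hence itself fp-locally-closed. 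Noetherian descent on the placid scheme $L^iX$ then factors this \emph{canonically} as a placid fp-open embedding followed by a placid fp-closed one, and the middle term is forced to be $\overline{LX}_\lambda$. Placidness of $\overline{LX}_\lambda$ is then automatic from the general fact that fp-closed subschemes of placid schemes are placid---no explicit presentation is needed, and in particular nothing has to be checked on the boundary strata. The Iwahori case is reduced to the spherical one by showing $\cO_v\hookrightarrow L^+G\cdot x_v$ is fp, via a straightforward comparison of stabilisers at finite level; your version of this step is essentially correct.
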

\begin{proof}
We first deal with $L^+G$-orbit closures.
Recall the $G\times\mathbb G_m$-equivariant affine family 
$\pi_\lambda:\calX_\lambda\to\mathbb A^1$ and 
the subschemes $X\cong\pi_{\lambda}^{-1}(1)\subset X\times\mathbb G_m\cong\pi_{\lambda}^{-1}(\mathbb G_m)\subset\calX^\bullet_\lambda\subset\calX_\lambda$ in Section \ref{affine degeneration}.
Since $\calX_\lambda$ is an affine $G\times\mathbb G_m$-scheme of finite type, we can find 
 a $G\times\mathbb G_m$-equivariant 
closed embedding $\calX_\lambda\subset V$ into 
a finite dimensional  representation $V$ of $G\times\mathbb G_m$.

We have a $LG\times L\mathbb G_m$-equivariant
closed embedding 
$ L\calX_\lambda\subset LV$ and 
for any $i\in\mathbb Z_{\leq 0}$  we can form the  $ L^+G\times L^+\mathbb G_m$-invariant closed subschemes  
$ L^i\calX_\lambda=  L\calX_\lambda\cap t^{i} L^+V\subset  L\calX_\lambda$ and 
$ L^+G$-invariant closed subscheme
$ L^iX= LX\cap t^{i} L^+V\subset LX$.
Since $X$ is smooth, Theorem \ref{placid of loop spaces} implies 
$ L^iX$ is a placid scheme
and $LX\cong\on{colim}_{i} L^iX$ is a placid presentation.

We claim that for any $i\in\bZ_{\leq0}$, there exists  $i'\in\mathbb Z_{\leq 0}$ such that the 
embedding 
$f_\lambda:  LX\to   LX\times  L\mathbb G_m\subset  L\calX_\lambda, f_{\lambda}(\gamma)=(\gamma,t)$ in~\eqref{f} satisfies 
\begin{equation}\label{image}
    f_\lambda( L^iX)\subset  L^{i'}\calX_\lambda.
\end{equation}
Indeed, let $V=\bigoplus_{j\in\bZ} V_j$ be the weight spaces decomposition with respect to the
$\mathbb G_m$-action, that is, $s\cdot v_j=s^jv_j$ for $v_j\in V_j$.
Then for any $w_j\in  LV_j$ we have 
$t\cdot w_j=t^j w_j$ and it follows that 
$f_\lambda( L^iX)\subset t\cdot ( LX\cap t^i L^+V)\subset t^{i'} L^+V$
for $i':=\on{min}\{i+j| j\in\bZ, V_j\neq 0\}$. The claim follows.

For any $\lambda\in\Lambda_S^+$,
choose $i,i'\in\bZ_{\leq 0}$ satisfying~\eqref{image}
and $LX_\lambda\subset  L^iX$.
Then Lemma \ref{fiber product} implies 
that there is a Cartesian diagram
\[
\xymatrix{LX_\lambda\cong L^iX\times_{L^{i'}\calX_\lambda} L^+\calX_\lambda^\bullet\ar[d]^{\eta_\lambda}\ar[r]& L^+\calX_\lambda^\bullet\ar[d]^{\eta}\\ L^iX\ar[r]^{f_\lambda}&L^{i'}\calX_\lambda}.
\]
Note that $\eta: L^+\calX_\lambda^\bullet\to  L^+\calX_\lambda\to  L^i\calX_\lambda$ is a fp-locally closed embedding
and hence $\eta_\lambda$ is also a fp-locally closed embedding.
Since $ L^iX$ is placid, the Noetherian descent for fp-morphisms 
implies that 
 there is a unique decomposition 
\[\eta_\lambda: LX_\lambda\stackrel{j_\lambda}\to\overline{ LX_\lambda}\stackrel{i_\lambda}\to  L^iX\]
where $j_\lambda$ is a placid fp-open embedding and $i_\lambda$ is a placid fp-closed emebdding (see, e.g., \cite[Lemma 1.3.6 and Lemma 1.3.11]{BKV}).
Now the placidness of $\overline{ LX_\lambda}$
follows from \ref{Placid schemes} (2).

For the case of Iwahori orbits, 
it suffices to show that 
the embedding $\cO_v\to L^+G x_v$ into the 
spherical orbit through $x_v\in\cO_v$ is finitely presented. 
Let $H\subset L^+G$ and $H'\subset I_0$ be the stabilizers of 
$x_v$ in $L^+G$ and $I_0$ respectively.
For any $j\in\mathbb Z_{\geq0}$, let  $H_j\subset L^+_jG$ and $H'_j\subset (I_0)_j$) be the images of $H$ and $H'$ 
along the map $L^+G\to L^+_jG$
and  $I_0\to (I_0)_j$. Here $L^+_jG$ is the $j$-th
arc group of $G$ and $(I_0)_j=\on{Im}(I_0\to L^+G\to L^+_jG)$.
We have a Cartesian diagram
\[\xymatrix{\cO_v\cong\lim_j(I_0)_j/H'_j\ar[r]\ar[d]&L^+Gx_v\cong\lim_j L^+_jG/H_j\ar[d]\\
(I_0)_j/H'_j\ar[r]^{f}&L^+_jG/H_j}\]
where the bottom horizontal arrow is induced by
the natural inclusion 
\[
f:(I_0)_j/H'_j\cong (I_0)_j/H_j\cap (I_0)_j\to L^+_jG/H^j.
\]
Since $f$ is finitely presented it follows that the 
top inclusion
$\cO_v\to L^+G x_v$ is finitely presented. 
The desired claim follows.
\end{proof}

\begin{rem}
Since \cite[Lemma 2.3.10]{SW}
and Theorem \ref{placid of loop spaces}
are valid for general 
spherical varieites, 
Theorem \ref{Placidness of orbits} remains true for general spherical varieties.
\end{rem}

\subsection{Filtered structures on orbits}
We shall show that, on each connected component of $LX$,
the orbits closure relation 
defines filtered partial orders on the sets of $I_0$-orbits and $L^+G$-orbits.

We have  isomorphisms of 
exact sequences 
\begin{equation}\label{component}
    \xymatrix{ \pi_0(LG)\ar[r]\ar[d]^\simeq&\pi_0(LX)\ar[d]^\simeq\ar[r]&0\ar[d]^\simeq\\
\pi_1(G)\ar[r]\ar[d]^\simeq&\pi_1(X)\ar[r]\ar[d]^\simeq&\pi_0(K)\ar[d]\\
\Lambda_T/Q\ar[r]^{\tau}&\Lambda_S/\Lambda_S\cap Q\ar[r]&0}
\end{equation}
induced from the
fibration $K\to G\to G/K=X$ (note that we assume $K$ is connected), here  $Q\subset \Lambda_T$ is the coroot lattice and the map $\tau$
is given by $\tau(\lambda)=-\theta(\lambda)+\lambda$ (see, e.g., \cite[Section 3.9]{NadlerRealGr}).

Let $LX=\bigsqcup_{\beta\in\pi_0(LX)} LX_\beta$ 
be the decomposition into connected components. 
We denote by $\sV_\beta=\{v\in\sV|\cO_v\subset LX_\beta\}$ and $\Lambda^+_{S,\beta}=\{\lambda\in\Lambda_S^+|LX_{\lambda }\subset LX_\beta\}$
and consider the  set 
$\underline\sV=\{\underline v=\{v_\beta\}_{\beta\in\pi_0(LX)}|v_\beta\in\sV_\beta\}$ and $\underline\Lambda^+_S=\{\underline\lambda=\{\lambda_\beta\}_{\beta\in\pi_0(LX)}|\lambda_{\beta}\in\Lambda_{S,\beta}^+\}$.

\begin{lem}\label{direct}
\begin{itemize}
\item [(i)]
    The set $\Lambda_{S,\beta}^+$
    together with the partial order 
$\lambda\leq\lambda'$
if $LX_\lambda\subset\overline{LX}_{\lambda'}$ is a filtered set.
    
\item [(ii)]    The set $\sV_\beta$ 
 together with the partial order $\bar v\leq\bar v'$
if $\cO_v\subset\overline\cO_{v'}$
is a filtered set
\end{itemize}
\end{lem}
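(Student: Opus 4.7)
The plan is to prove part (ii) by reducing to part (i), and to prove part (i) by adding a large multiple of a suitably regular auxiliary element of the identity component to one of the two given elements.

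\emph{Reduction of (ii) to (i).} Given $v_1, v_2 \in \sV_\beta$, the orbits $\cO_{v_1}, \cO_{v_2}$ are contained in $L^+G$-orbits $LX_{\lambda_1}, LX_{\lambda_2}$ with $\lambda_1,\lambda_2\in\Lambda_{S,\beta}^+$ by Proposition \ref{p:sym var orbits}. Granting (i), pick $\mu\in\Lambda_{S,\beta}^+$ with $\lambda_1,\lambda_2\le\mu$. By Corollary \ref{c:Iwahori closure and G(O) into Iwahori}(ii), $LX_\mu$ is a finite union of $I_0$-orbits, and since $LX_\mu$ is irreducible as a homogeneous $L^+G$-space, there is a unique open $I_0$-orbit $\cO_{v_\mu}\subset LX_\mu$, whose closure coincides with $\overline{LX}_\mu$. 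By Theorem \ref{t:sym var orbit closure}, $\overline{LX}_\mu\supset LX_{\lambda_1}\cup LX_{\lambda_2}\supset\cO_{v_1}\cup\cO_{v_2}$, so $v_1,v_2\le v_\mu$ in $\sV_\beta$.

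\emph{Proof of (i).} For $\lambda_1,\lambda_2\in\Lambda_{S,\beta}^+$ we have $\lambda_1-\lambda_2\in\Lambda_S\cap Q$ (with $Q$ the coroot lattice). We seek an auxiliary element $\mu_0\in\Lambda_{S,0}^+\cap Q^+$ (dominant, in the identity component of $\Lambda_S$, and a non-negative sum of simple coroots) lying in the \emph{relative} interior of the cone $Q^+_\bR\cap(\Lambda_S\otimes\bR)$ inside the ambient space $\Lambda_S\otimes\bR$. Granting such $\mu_0$, set $\mu:=\lambda_1+N\mu_0$ for $N$ sufficiently large. Then $\mu\in\Lambda_{S,\beta}^+$ ($\mu$ is $T$-dominant as a sum of dominants, and stays in component $\beta$ since $\mu_0$ lies in the identity component); $\mu-\lambda_1=N\mu_0\in Q^+$, giving $\lambda_1\le\mu$; and $\mu-\lambda_2=(\lambda_1-\lambda_2)+N\mu_0$ is in $\Lambda_S\cap Q$, and by the relative-interior property of $\mu_0$ the term $N\mu_0$ absorbs the fixed perturbation $\lambda_1-\lambda_2$, so that for $N\gg 0$ the sum lies in $Q^+$, giving $\lambda_2\le\mu$. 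Thus $\mu$ is a common upper bound, proving the filtered property.

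\emph{Construction of $\mu_0$ and main obstacle.} The principal technical point is producing $\mu_0$. We propose taking a positive integer multiple of $\eta:=2\crho-\theta(2\crho)\in\Lambda_T$. This element lies in $Q$ (since $Q$ is $\theta$-stable and $2\crho\in Q$) and in the $(-1)$-eigenspace of $\theta$ on $\Lambda_T\otimes\bQ$, hence in $\Lambda_S\otimes\bQ$; clearing denominators places $c\eta$ in $\Lambda_S\cap Q$. Using the $\theta$-split Borel property that $\theta$ restricts to a bijection $\check\Phi^+\setminus\check\Phi(M)^+\to\check\Phi^-\setminus\check\Phi(M)^-$, one verifies that the simple-coroot expansion of $\eta$ has strictly positive coefficient on every simple coroot outside $\check\Phi(M)$ (placing $\eta$ in the required relative interior) and that $\eta$ pairs non-negatively with every simple root of $G$ (giving $T$-dominance). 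Setting $\mu_0:=c\eta$ completes the construction. The main difficulty lies in verifying these regularity and dominance properties of $\mu_0$ simultaneously, which relies essentially on the $\theta$-split Borel structure and on $S\subset T$ being a maximal $\theta$-split torus.
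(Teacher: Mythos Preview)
Your proposal follows essentially the same route as the paper. In fact your auxiliary element is the paper's element up to a scalar: since $\theta$ fixes every (co)root of $M$ and sends $\check\Phi^+\setminus\check\Phi_M^+$ bijectively to $\check\Phi^-\setminus\check\Phi_M^-$, one computes $\theta(2\crho)=2\crho_M-(2\crho-2\crho_M)$, so your $\eta=2\crho-\theta(2\crho)=2(2\crho-2\crho_M)=2\mu$, where $\mu=2\crho-2\crho_M$ is exactly the element the paper uses. The reduction of (ii) to (i) via the unique open $I_0$-orbit in $LX_\mu$ is also identical to the paper's.

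One point where your verification is weaker than needed: you only claim that $\eta$ has strictly positive coefficient on simple coroots \emph{outside} $M$, and then invoke a ``relative interior'' argument. But elements of $\Lambda_S\cap Q$ need not have vanishing $\check\alpha_i$-coefficient for $\check\alpha_i\in\check\Delta_M$ (e.g.\ for $G=\SL_3$ with $|\Delta_M|=1$, the rank-one lattice $\Lambda_S\cap Q$ is generated by $\check\alpha_1+2\check\alpha_2$), so positivity only outside $M$ would not obviously suffice for the absorption step. The paper closes this by observing that $\mu=2\sum_{\alpha\in\Delta\setminus\Delta_M}\check\omega_\alpha$ and that the inverse Cartan matrix has strictly positive entries, so $\mu$ (hence your $\eta$) has strictly positive coefficient on \emph{every} simple coroot; with this stronger fact the absorption argument becomes immediate and the relative-interior framing is unnecessary.
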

\begin{proof}
Proof of (i). Let $\lambda,\lambda'\in\Lambda_{S,\beta}^+$.
According to~\eqref{component}, we have $\lambda-\lambda'\in\Lambda_S\cap Q$.
Let $M=Z_G(S)=P\cap\theta(P)$ be the Levi subgroup of a minimal 
$\theta$-split parabolic $P$. Let $\Delta$ (resp. $\Delta_M$) be the set of simple roots of $G$ (resp.
$M$) and let $\mu=2\check\rho-2\check\rho_M$
where $2\check\rho$ (resp. $2\check\rho_M$) is the sum of positive coroots of 
$G$ (resp. $M$). We have the following properties of $\mu$:

(1)
Note that $\mu=2\sum_{\alpha\in\Delta\setminus\Delta_M}\check\omega_{\alpha}$,
where $\check\omega_\alpha$ is the fundamental coweight corresponding to the simple root $\alpha$.

(2) By \cite[Lemma 8.1]{T}, we have $\theta(\mu)=-\mu$, and hence $\mu\in\Lambda_S\cap Q$.

(3) For any $\alpha\in\Delta$, it follows from (1) that 
$\langle\mu,\alpha\rangle=0$ if $\alpha\in\Delta_M$
and $\langle\mu,\alpha\rangle=2$ if $\alpha\in\Delta\setminus\Delta_M$.

(4) $\mu$ is a sum of simple coroots 
that has positive integer coefficient in each simple coroot.
This follows from the fact 
the coefficients of fundamental coweights $\check\omega_\alpha$
in simple roots are entries of the inverse of Cartan matrix, 
which are all positive (checked case-by-case). 

Using the 
above properties of $\mu$, it is straightforward to check that that for large enough integer $n$,
we have 
$\gamma=\lambda+n\mu\in\Lambda_S^+$, $\gamma\geq\lambda$,
and $\gamma\geq\lambda'$. Part (i) follows.

Proof of (ii).
Let $v,v'\in\sV_\beta$.
Let $LX_\lambda\supset\cO_v$ and $LX_{\lambda'}\supset\cO_{v'}$
be the $L^+G$-orbits through $x_v$ and $x_{v'}$
and let $\gamma\geq\lambda,\lambda'$ be as in (1).
By Corolalry \ref{c:Iwahori closure and G(O) into Iwahori}, there is a unique open $I_0$-orbit $\cO_{v''}\subset LX_\gamma$,
and it follows that $\overline{\cO}_{v''}=\overline{LX_\gamma}\supset LX_\lambda\cup LX_{\lambda'}\supset\cO_v\cup\cO_{v'}$.
Thus $v''\in\sV_\beta$ satisfies 
$v''\geq v$ and $v''\geq v'$. Part (ii) follows.

\end{proof}

We denote $\sV_\beta=\{v\in\cV|\cO_v\subset LX_\beta\}$ and $\Lambda^+_{S,\beta}=\{\lambda\in\Lambda_S^+|LX_{\lambda }\subset LX_\beta\}$
and consider the set 
$\underline\sV=\{\underline v=\{v_\beta\}_{\beta\in\pi_0(LX)}|v_\beta\in\sV_\beta\}$ and $\underline\Lambda^+_S=\{\underline\lambda=\{\lambda_\beta\}_{\beta\in\pi_0(LX)}|\lambda_{\beta}\in\Lambda_{S,\beta}^+\}$.
It follows from Lemma \ref
{direct} that the set $\underline\Lambda_S^+$
(resp. $\underline\sV$)
    together with the partial order 
$\underline\lambda\leq\underline\lambda'$
if $\lambda_\beta\leq\lambda_\beta'$ 
(resp. $\underline v\leq\underline v'$
if $v_\beta\leq v_\beta'$)
is a filtered set.

 The following proposition following immediately from Theorem \ref{Placidness of orbits}.

\begin{prop}\label{placid of LX}
\begin{itemize}
\item [(i)]
  
  The union $\overline\cO_{\underline v}=\bigsqcup_{\beta} \overline\cO_{\bar v_\beta}$
  is a locally equidimensional placid 
  scheme and 
  $LX\cong\on{colim}_{\underline v}\overline\cO_{\underline v}$
  is a locally equidimensional 
  $I_0$-placid presentation.
 
\item [(ii)] The disjoint union
  $\overline{LX}_{\underline \lambda}=\bigsqcup_{\beta} \overline{LX}_{\lambda_\beta}$
  is a locally equidimensional placid scheme and 
  $LX\cong \on{colim}_{\underline\lambda}\overline {LX}_{\underline\lambda}$
  is a locally equidimensional $L^+G$-placid presentation.
\end{itemize}
\end{prop}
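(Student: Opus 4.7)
The proof will consist of assembling the pieces already established: the placidness of individual orbit closures from Theorem \ref{Placidness of orbits}, the filteredness of the closure partial orders on each connected component (Lemma \ref{direct}), and the standard fact that disjoint unions and filtered colimits of placid schemes along finitely presented closed embeddings are again placid.

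First I would treat part (i). Fix $\underline v=\{v_\beta\}\in\underline\sV$. For each connected component $\beta\in\pi_0(LX)$, Theorem \ref{Placidness of orbits} (i) says that $\overline\cO_{v_\beta}$ is an irreducible placid scheme, hence locally equidimensional, and that the closed embedding $\overline\cO_{v_\beta}\hookrightarrow LX_\beta$ is finitely presented. Taking the disjoint union over $\beta$, the scheme $\overline\cO_{\underline v}=\bigsqcup_\beta \overline\cO_{v_\beta}$ is locally equidimensional placid (see Appendix \ref{s:appendix}), with a finitely presented closed embedding into $LX$. Next, for $\underline v\leq\underline v'$ (componentwise) we have $v_\beta\leq v_\beta'$ for every $\beta$, so Theorem \ref{Placidness of orbits} (ii) gives an fp-closed embedding $\overline\cO_{v_\beta}\hookrightarrow \overline\cO_{v_\beta'}$ for each $\beta$, hence a fp-closed embedding $\overline\cO_{\underline v}\hookrightarrow \overline\cO_{\underline v'}$.

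To conclude that $LX\cong\on{colim}_{\underline v}\overline\cO_{\underline v}$ is an $I_0$-placid presentation, I would verify two things: set-theoretically, every $I_0$-orbit $\cO_w\subset LX_\beta$ is contained in some $\overline\cO_{v_\beta}$ with $w\leq v_\beta$, because by Lemma \ref{direct} (ii) the poset $\sV_\beta$ is filtered so we may enlarge any finite collection of orbits into the closure of a single one; and the indexing set $\underline\sV$ is filtered because it is a product of filtered posets, by Lemma \ref{direct} (ii) again. Finally, each $\overline\cO_{\underline v}$ is $I_0$-stable (it is a union of orbit closures), so the presentation is $I_0$-equivariant, yielding the $I_0$-placid presentation in the sense of Appendix \ref{s:appendix}.

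The argument for part (ii) is entirely parallel, replacing $\sV_\beta$ by $\Lambda^+_{S,\beta}$, $\bar\cO_v$ by $\overline{LX}_\lambda$, and Lemma \ref{direct} (ii) by Lemma \ref{direct} (i); the relevant placidness and fp-inclusion statements come from the $L^+G$-orbit half of Theorem \ref{Placidness of orbits}, and the orbit closures are $L^+G$-stable by construction. There is no genuine obstacle in this proof: every nontrivial input (placidness of a single orbit closure, finite presentation of inclusions, filteredness of the closure order on each component) has been proved earlier in the section. The only point requiring minor care is to confirm that the colimit taken over $\underline\sV$ (respectively $\underline\Lambda_S^+$) really recovers $LX$ as a whole rather than component-wise; this reduces to the observation that a placid ind-scheme is the disjoint union of its connected components presented simultaneously, which is handled by choosing the indexing set as the product of the per-component filtered posets.
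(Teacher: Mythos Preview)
Your proof is correct and follows essentially the same approach as the paper, which simply states that the proposition follows immediately from Theorem \ref{Placidness of orbits} (together with the filteredness of $\underline\sV$ and $\underline\Lambda_S^+$ established just before, via Lemma \ref{direct}). You have merely spelled out the details that the paper leaves implicit.
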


\subsection{Dimension theory for $LX$}\label{Dimension theory}
Following \ref{dimension theory} (6),
a dimension theory for the locally equidimensional placid ind-scheme  $LX$
is an assignment  
to  each locally equidimensional placid subscheme 
$S\subset LX$
a locally constant function
$\delta_{S}:S\to\mathbb Z$
such that for any finitely presented locally closed 
embedding $S\subset S'\subset LX$ we have 
\begin{equation}\label{dim_S/S'}
\delta_{S}-\delta_{S'}|_{S}=\on{dim}_{S/S'}:S\to\mathbb Z,
\end{equation}
here
$\dim_{S/S'}:S\to\mathbb Z$ 
is the dimension function associated to 
the inclusion $S\subset S'$
in
Appendix \ref{dimension theory}(4).

\begin{lem}\label{l:closed I_0-orbits equal dim}
For any  closed $I_0$-orbits $\cO_{u},\cO_{u'}$
contained in $\overline\cO_v$, both 
$\dim_{\cO_u/\overline\cO_v}$ and $\dim_{\cO_{u'}/\overline\cO_v}$ are constant functions with the same value.
    
\end{lem}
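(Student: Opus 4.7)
The plan is to first observe that each of the two functions is constant on its domain, and then to show the two constants coincide.

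For constancy: by Theorem \ref{Placidness of orbits}(i), $\cO_u$ and $\cO_{u'}$ are irreducible (hence connected) placid schemes, and their inclusions into $\overline\cO_v$ are finitely presented closed embeddings. Consequently, by the formalism of dimension theory recalled in Appendix \ref{dimension theory}(4), the functions $\dim_{\cO_u/\overline\cO_v}$ and $\dim_{\cO_{u'}/\overline\cO_v}$ are locally constant. Connectedness of each orbit then upgrades ``locally constant'' to ``constant''.

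For the equality of the two constants, I would invoke Lemma \ref{l:orbit type b}: because $\cO_u$ and $\cO_{u'}$ are closed, one can choose representatives $n_u,n_{u'}\in N_{LG}(I_0)$, so that the involutions $\psi_u=\Ad_{n_u}\circ\theta$ and $\psi_{u'}=\Ad_{n_{u'}}\circ\theta$ each preserve $I_0$ together with its pro-unipotent radical and its Levi $T_0$. By Lemma \ref{l:tau(P) image} the orbit map then identifies $\cO_u$ with the placid scheme $\tau_u(I_0)=I_0^{\on{inv}\circ\psi_u,\circ}$ (and similarly for $u'$). Hence the problem is reduced to comparing the pro-codimensions of $I_0^{\psi_u,\circ}$ and $I_0^{\psi_{u'},\circ}$ in $I_0$.

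To carry out that comparison, I would pass to the Lie algebra and exploit the Moy--Prasad filtration of $\fI_0=\Lie(I_0)$, whose graded pieces are finite-dimensional and assemble into $\Lie T_0$ and the affine root spaces $\fg_\alpha$, $\alpha\in\Phi_\aff^+$. Both $\psi_u$ and $\psi_{u'}$ preserve this filtration, and on the graded pieces they act as $w_u\circ\theta$, $w_{u'}\circ\theta$ respectively, where $w_u,w_{u'}\in\Omega$ are length-zero twisted involutions. The pro-codimension of the fixed-point subgroup then decomposes into a contribution from $T_0$ plus a sum over $(w\circ\theta)$-orbits on $\Phi_\aff^+$ of a finite local contribution depending only on the orbit type (fixed, flipped, or swapped-in-pairs).

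The main obstacle, and the real content of the lemma, is to check that these local contributions sum to the same total for $\psi_u$ and $\psi_{u'}$. Since both closed orbits sit in the single irreducible placid scheme $\overline\cO_v$ they lie in the same connected component of $LX$, which corresponds via \eqref{component} to a single class in $\Lambda_S/\Lambda_S\cap Q$. I would use the fact that $\Omega=N_{LG}(I_0)/I_0$ acts on $LX$ by scheme automorphisms that preserve all pro-dimensions and permute closed $I_0$-orbits, and that within a fixed connected component of $LX$ any two length-zero twisted involutions are related by such an $\Omega$-conjugation compatible with $\theta$. This $\Omega$-compatibility then identifies the two signed orbit patterns on $\Phi_\aff^+$, yielding equal pro-codimensions and completing the proof.
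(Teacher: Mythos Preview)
Your reduction to ``pro-codimensions of $I_0^{\psi_u,\circ}$ in $I_0$'' does not go through. The closed embedding $I_0^{\on{inv}\circ\psi_u,\circ}\hookrightarrow I_0$ (equivalently $I_0^{\psi_u}\hookrightarrow I_0$) is \emph{not} finitely presented: each Moy--Prasad graded piece imposes a fresh equation, so the codimension at level $m$ grows without bound and your ``sum over $(w\circ\theta)$-orbits on $\Phi_\aff^+$'' is an infinite sum with no intrinsic value. What has a meaning is the \emph{relative} dimension $\dim_{\cO_u/\overline\cO_v}$, and for that you need a common finitely-presented ambient containing both $\cO_u$ and $\cO_{u'}$; the identification $\cO_u\cong I_0^{\on{inv}\circ\psi_u,\circ}$ lives inside $I_0n_u$, while $\cO_{u'}$ lives inside $I_0n_{u'}$, and neither is related to $\overline\cO_v$ in any useful way. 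Moreover, your $\Omega$-conjugation step cannot distinguish the different closed orbits sharing the same $w_u\in\Omega$: when $G$ is simply connected $\Omega$ is trivial, yet there are in general several closed $I_0$-orbits (one for each closed $B_0$-orbit on $X$), with genuinely different involutions $\psi_u=\Ad_{n_u}\circ\theta$ for $n_u\in T_0$ and hence different sign patterns on the root spaces.

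The paper's argument supplies exactly the missing common ambient. After noting (via additivity and Lemma~\ref{direct}) that one may replace $\overline\cO_v$ by any larger orbit closure, one uses the transitive action of $\Omega\cong N_{LG}(I_0)/I_0$ on $\pi_0(LX)$ to translate into the neutral component. There every closed orbit has $w_u=e$, hence $\tau(\cO_u)\subset I_0$, i.e.\ $\cO_u\subset L^+X$. Taking $\overline\cO_v=L^+X$ (the closure of the open $I_0$-orbit in $L^+X$) and using that $L^+X\to X$ is pro-smooth with $\cO_u$ the base change of a closed $B_0$-orbit on $X$ (Remark~\ref{closed orbits}), the comparison becomes the finite-dimensional statement that all closed $B_0$-orbits on $X$ have the same dimension, which is a classical result of Richardson--Springer.
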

\begin{proof}
By the additive property of  dimension functions in \ref{dimension theory}(6)
and the filtered property of $I_0$-orbits
in Proposition \ref{direct}, it suffices to verify the lemma for a particular $\overline\cO_v$ containing $\cO_u,\cO_{u'}$.

Claim: $N_{LG}(I_0)$ has transitive action on connected components.

Since $K$ is assumed to be connected, 
the natural map $LG\rightarrow LX$ induces a surjection $\pi_0(LG)\to\pi_0(LX)$
on connected components (see~\eqref{component}).
On the other hand, a well-know result of 
Kottwitz \cite{K} implies that 
the inclusion $ N_{LG}(I_0)\subset LG$
induces a bijection 
$\Omega\cong N_{LG}(I_0)/I_0\cong\pi_0(LG)$.
The claim follows.

Since the action of $\Omega$ maps $I_0$-orbit to $I_0$-orbit,
by the claim we can translate $\cO_v$ to the neutral component.
Then $\cO_u,\cO_{u'}$ are closed $I_0$-orbits in the neutral component,
so that $\tau(\cO_u)\subset I_0n_uI_0\subset N_{LG}(I_0)$
lands inside the neutral component of $LG$,
same for $u'$.
It follows that 
$\tau(\cO_u),\tau(\cO_{u'})\subset I_0$ 
and hence
$\cO_u,\cO_{u'}\subset L^+X$ (as $\tau^{-1}( L^+G)=L^{+}X$).
Since the closed $I_0$-orbits on $L^+X$ 
are base change from closed $B_0$-orbits on $X$
via the pro-smooth morphism $L^+X\rightarrow X$,
it suffices to show that closed orbits on $X$
are equal-dimensional.
As discussed in \cite[\S1.4]{RSbook},
closed $B_0$-orbits on $X$ all have the same dimension
as the flag variety of $K$.
This completes the proof.    
\end{proof}

\begin{prop}\label{dim for LX}
  There is a dimension theory $\delta$ on $LX$.
\end{prop}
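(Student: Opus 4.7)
The plan is to reduce the construction to assigning a single integer $\delta(v)$ to each $I_0$-orbit $\cO_v$, compatibly with codimensions of closures. Indeed, by Proposition \ref{placid of LX}, $LX$ admits a locally equidimensional $I_0$-placid presentation by the orbit closures $\overline{\cO}_{\underline v} = \bigsqcup_\beta \overline{\cO}_{v_\beta}$, each component of which is irreducible. A dimension theory in the sense of \ref{dimension theory} (6) is then determined by a locally constant function on each $\overline{\cO}_v$, i.e.\ a single integer $\delta(v)$, subject to
\[
\delta(v') - \delta(v) = \on{codim}_{\overline{\cO}_{v'}}(\overline{\cO}_v) \qquad \text{whenever } v \le v'.
\]
The extension from orbit closures to arbitrary locally equidimensional placid subschemes $S \subset LX$ is then automatic from the compatibility axiom applied to any fp-embedding of $S$ into some $\overline{\cO}_{\underline v}$.

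To construct $\delta(v)$, I would proceed component by component. Fix a connected component $LX_\beta$ of $LX$. By Corollary \ref{c:Iwahori closure and G(O) into Iwahori}, each orbit closure in $LX_\beta$ contains only finitely many orbits, so there exist minimal, hence (by Lemma \ref{l:orbit type b}) closed, $I_0$-orbits in $LX_\beta$; fix one such orbit $\cO_{v_\beta^0}$ and normalize $\delta(v_\beta^0) := 0$. For an arbitrary $v \in \sV_\beta$, the filtered property established in Lemma \ref{direct} produces $v'' \in \sV_\beta$ dominating both $v$ and $v_\beta^0$, and we set
\[
\delta(v) := \on{codim}_{\overline{\cO}_{v''}}(\overline{\cO}_{v_\beta^0}) - \on{codim}_{\overline{\cO}_{v''}}(\overline{\cO}_v).
\]
Independence of $v''$ is verified by choosing (again via the filtered property) a common dominator of two candidates and invoking the additivity of codimensions along composed fp-closed embeddings of placid schemes from the appendix. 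The required compatibility formula for $v \le v'$ follows by picking a single dominator of $v$, $v'$ and $v_\beta^0$ and subtracting. Finally, Lemma \ref{l:closed I_0-orbits equal dim} guarantees that the same recipe assigns $\delta(u) = 0$ to every closed orbit $\cO_u \subset LX_\beta$, so the outcome is canonical up to an overall integer shift per component.

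The substantive geometric inputs have already been isolated in earlier sections: the placidness and finiteness of orbit closures (Theorem \ref{Placidness of orbits} and Corollary \ref{c:Iwahori closure and G(O) into Iwahori}), the filtered structure on orbits within a component (Lemma \ref{direct}), and the coincidence of codimensions of closed orbits inside any fixed closure (Lemma \ref{l:closed I_0-orbits equal dim}). Given these, what remains is combinatorial bookkeeping, and the only delicate point I expect is the repeated invocation of the filtered property to reduce independence statements to additivity of codimensions inside a sufficiently large orbit closure; no deeper geometric obstacle should arise.
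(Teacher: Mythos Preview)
Your proposal is correct and follows essentially the same approach as the paper: reduce via the filtered $I_0$-placid presentation by orbit closures, then assign an integer to each $\overline{\cO}_v$ using closed orbits as reference and additivity of codimensions. The only cosmetic difference is that the paper defines $\delta(v)$ directly as the codimension of any closed orbit inside $\overline{\cO}_v$ (invoking Lemma \ref{l:closed I_0-orbits equal dim} for well-definedness), whereas you fix one closed orbit per component and pass through a common dominator; the two recipes agree and rest on the same ingredients.
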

\begin{proof}
By \ref{dimension theory} (9)
and 
Proposition \ref{direct}, it suffices to 
assign $\delta_{\overline\cO_v}:\overline\cO_v\to\mathbb Z$
for each $v\in\sV$ satisfying~\eqref{dim_S/S'}.
To this end, we set $\delta_{\overline\cO_v}=\dim_{\cO_u/\overline\cO_v}:\overline\cO_v\to\mathbb Z$
where $\cO_u\subset \overline\cO_v$
is a closed $I_0$-orbit in $\overline\cO_v$.
Lemma \ref{l:closed I_0-orbits equal dim} 
and additive property of dimension functions \ref{dimension theory}(6)
imply $\delta_{\overline\cO_v}$ is well-defined and satisfies \eqref{dim_S/S'}. The proposition follows.
\end{proof}


\section{Transversal slices}\label{Transversal slices}
We construct transversal slices of 
spherical and Iwahori obits on the loop space $LX$ 
and establish some basic properties of them.
The construction generalizes the one due to 
Mars-Springer \cite[\S6]{MS} in the case of $B$-orbits on $X$.

\subsection{Slices for spherical orbits}
\subsubsection{Involution on Affine Grassmannian slices}
Let $L^-G=G[t^{-1}]$, $L^{<0}G=\ker(L^-G\rightarrow G)$, and $L^-\fg,L^{<0}\fg$ their Lie algebras. 
For any $\lambda\in\Lambda_T^+$, 
we write $LG_\lambda=L^+Gt^\lambda L^+G$
for the $L^+G\times L^+G$-orbit of $t^\lambda$ in $LG$,
and $\overline {LG_\lambda}$ its closure.
The affine Grassmannian slice of $LG_\lambda$ in $LG$ is defined as 
$W^\lambda=(L^{<0}G\cap \Ad_{t^\lambda}L^{<0}G)t^\lambda$.
For any pair $\lambda,\mu\in\Lambda_T^+$, 
we define
$W^\lambda_\mu=W^\lambda\cap LG_\mu$ and $W^\lambda_{\leq\mu}=W^\lambda\cap \overline{LG_\mu}$.
We have the following well-known facts, see for example \cite{ZhuIntroGr}:
\begin{lem}\label{p:W^lambda_mu}
\begin{itemize}
\item [(i)] $W^\lambda$ is an ind-scheme of ind-finite type and formally smooth.
\item [(ii)] $W^\lambda_\mu$ is non-empty if and only if $\lambda\leq\mu$.
\item [(iii)]  $W^\lambda_\mu$ is smooth and connected (if non-empty).
\item [(iv)]  $W^\lambda_{\leq\mu}$ is of finite type, affine, and normal.
\item [(v)]  $\dim W^\lambda_\mu=W^\lambda_{\leq\mu}=\langle2\rho,\mu-\lambda\rangle$.
\end{itemize}
\end{lem}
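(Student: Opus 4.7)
The plan is to verify each of (i)-(v) by the standard techniques used in Zhu's treatment of affine Grassmannian slices, all based on an explicit parametrization of $W^\lambda$ via the affine root decomposition. Concretely, after fixing an ordering of the roots $\Phi$ of $G$, the multiplication map identifies $L^{<0}G$ with an ordered product of root subgroups $U_\alpha \cap L^{<0}G$ together with the Cartan part $T\cap L^{<0}G$; the defining intersection condition
\[
S^\lambda := L^{<0}G \cap \Ad_{t^\lambda}L^{<0}G
\]
then cuts each root factor down to an explicit ind-affine subspace whose $t$-orders are controlled by $\langle\alpha,\lambda\rangle$. This exhibits $W^\lambda = S^\lambda\cdot t^\lambda$ as ind-finite type and formally smooth, proving (i). For (ii), the basepoint $t^\lambda$ lies in $W^\lambda$, and $t^\lambda\in\overline{LG_\mu}$ iff $\lambda\leq\mu$ by the standard dominance order governing closures of affine Schubert cells; this settles non-emptiness of $W^\lambda_{\leq\mu}$, while non-emptiness of the open stratum $W^\lambda_\mu$ follows from (iv) and (v) together with the irreducibility and positive dimensionality of $W^\lambda_{\leq\mu}$.

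For (iii), smoothness is a transversality statement verified on tangent spaces using the Birkhoff-type decomposition $LG = L^{<0}G\cdot L^+G$ near $t^\lambda$, and connectedness follows from the existence of a contracting $\bGm$-action on $S^\lambda$: loop rotation composed with $\Ad_{t^{-\lambda}}$ acts with strictly positive weights on every root coordinate in the presentation above, so all of $W^\lambda_{\leq\mu}$ is contracted to the fixed point $t^\lambda$, forcing both it and its open stratum to be connected. For (iv), finite type and affineness are immediate from the observation that $\overline{LG_\mu}$ imposes a bounded $t$-order and hence cuts $W^\lambda$ within one of the finite-type affine pieces of its ind-affine presentation. Normality is the crucial point: one invokes Faltings' theorem that $\overline{LG_\mu}$ is normal, and then uses smoothness of $LG_\lambda$ together with the transversality of $W^\lambda$ and the smooth $L^+G$-action to transport normality to $W^\lambda_{\leq\mu}$ via the standard slice argument.

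Finally, for (v) the dimension is counted directly from the root parametrization of (i) restricted to $\overline{LG_\mu}$: the bounded $t$-order conditions defining $\overline{LG_\mu}$ freeze all but $\langle 2\rho,\mu-\lambda\rangle$ many of the root coordinates, recovering the expected Schubert-style codimension. The main obstacle is the normality claim in (iv), which rests on Faltings' nontrivial result; once that is granted, the remaining items follow routinely from the explicit description provided in (i), and it is this uniform use of the root parametrization that I expect to make the whole verification go through cleanly.
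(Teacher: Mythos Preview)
Your proposal is correct and follows the standard approach; the paper itself does not give a proof but simply cites these as well-known facts from \cite{ZhuIntroGr}. Your sketch is essentially what one finds in Zhu's survey, so there is nothing to compare.
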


Recall the involution 
$\on{inv}\circ\theta:LG\to LG, g\to \theta(g)^{-1}$.
\begin{lem}\label{inv on Aff slices}
Let $\lambda\leq\mu\in\Lambda_S^+$.
The 
map $\on{inv}\circ\theta:LG\to LG$
restricts to involutions on $W^\lambda$, $W^\lambda_\mu$, and $W^\lambda_{\leq\mu}$.
Moreover, the identification 
$W^\lambda\cong L^{<0}G\cap \Ad_{t^\lambda}L^{<0}G$, sending $\gamma\cdot t^\lambda\to \gamma $, intertwines the involution $\on{inv}\circ\theta|_{W^\lambda}$
with the involution $\on{inv}\circ\psi_\lambda$
on $L^{<0}G\cap \Ad_{t^\lambda}L^{<0}G$
where $\psi_\lambda=\on{Ad}_{t^\lambda}\circ\theta$.

\end{lem}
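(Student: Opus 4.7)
The plan is to verify all the claims by direct computation, exploiting the two key facts that (a)~$\theta$, being an algebraic involution of $G$ over $k$, extends to an involution of $LG$ that preserves both $L^+G$ and $L^{<0}G$, and (b)~for any $\nu\in\Lambda_S$ we have $\theta(\nu)=-\nu$, hence $\theta(t^\nu)=t^{-\nu}$ and $(\on{inv}\circ\theta)(t^\nu)=t^\nu$. In particular $t^\lambda$ and $t^\mu$ are fixed points of $\on{inv}\circ\theta$.

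First I would handle $W^\lambda$ together with the intertwining claim. Writing an element of $W^\lambda$ as $\gamma\cdot t^\lambda$ with $\gamma\in L^{<0}G\cap\on{Ad}_{t^\lambda}L^{<0}G$, I compute
\[
(\on{inv}\circ\theta)(\gamma\cdot t^\lambda)
=\theta(t^\lambda)^{-1}\theta(\gamma)^{-1}
=t^\lambda\cdot\theta(\gamma^{-1})
=\on{Ad}_{t^\lambda}(\theta(\gamma^{-1}))\cdot t^\lambda
=(\on{inv}\circ\psi_\lambda)(\gamma)\cdot t^\lambda,
\]
which is precisely the intertwining formula. It remains to check that $(\on{inv}\circ\psi_\lambda)(\gamma)$ lies again in $L^{<0}G\cap\on{Ad}_{t^\lambda}L^{<0}G$. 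For the factor $L^{<0}G$: from $\gamma\in\on{Ad}_{t^\lambda}L^{<0}G$ and $\theta(t^\lambda)=t^{-\lambda}$ one has $\theta(\gamma^{-1})\in\on{Ad}_{t^{-\lambda}}L^{<0}G$, so $\on{Ad}_{t^\lambda}\theta(\gamma^{-1})\in L^{<0}G$. For the factor $\on{Ad}_{t^\lambda}L^{<0}G$: from $\gamma\in L^{<0}G$ and the $\theta$-invariance of $L^{<0}G$ we get $\theta(\gamma^{-1})\in L^{<0}G$, hence $\on{Ad}_{t^\lambda}\theta(\gamma^{-1})\in\on{Ad}_{t^\lambda}L^{<0}G$. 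Since $(\on{inv}\circ\theta)^2=\on{id}$ and $(\on{inv}\circ\psi_\lambda)^2=\on{id}$ are formal consequences of $\theta$ being an involution together with $\theta(t^\lambda)=t^{-\lambda}$, both maps are indeed involutions.

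For the stratified statement, I would observe that $LG_\mu=L^+G\cdot t^\mu\cdot L^+G$, so
\[
(\on{inv}\circ\theta)(LG_\mu)=\theta(L^+G)\cdot\theta(t^\mu)^{-1}\cdot\theta(L^+G)=L^+G\cdot t^\mu\cdot L^+G=LG_\mu,
\]
using $\theta(L^+G)=L^+G$ and $\theta(t^\mu)^{-1}=t^\mu$ (as $\mu\in\Lambda_S$). By continuity $\on{inv}\circ\theta$ also preserves the closure $\overline{LG_\mu}$, and intersecting with $W^\lambda$ (which we just showed is stable) gives stability of $W^\lambda_\mu$ and $W^\lambda_{\leq\mu}$.

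I do not anticipate a real obstacle; the entire content is the identity $\theta(t^\lambda)=t^{-\lambda}$, which is exactly the hypothesis that $\lambda$ lies in the $\theta$-split lattice $\Lambda_S$. The only point that requires a moment of care is verifying that the target of $\on{inv}\circ\psi_\lambda$ stays inside both factors of the intersection $L^{<0}G\cap\on{Ad}_{t^\lambda}L^{<0}G$, which is the symmetric calculation carried out above.
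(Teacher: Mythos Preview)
Your proof is correct and follows essentially the same approach as the paper's: both compute $(\on{inv}\circ\theta)(\gamma t^\lambda)=t^\lambda\theta(\gamma)^{-1}=(\on{inv}\circ\psi_\lambda)(\gamma)\cdot t^\lambda$ using $\theta(t^\lambda)=t^{-\lambda}$, and both observe that $LG_\mu$ is $(\on{inv}\circ\theta)$-stable. You are in fact more explicit than the paper, which simply asserts stability of $L^{<0}G\cap\Ad_{t^\lambda}L^{<0}G$ under $\on{inv}\circ\psi_\lambda$ without the two-factor check you wrote out.
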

\begin{proof}
Note that 
$LG_\lambda$ is  stable under $\on{inv}\circ\theta$
and $L^{<0}G\cap \Ad_{t^\lambda}L^{<0}G$ is stable under $\on{inv}\circ\psi_\lambda$.
Thus 
for any $\gamma t^\lambda\in W^\lambda=(L^{<0}G\cap \Ad_{t^\lambda}L^{<0}G)t^\lambda$
(resp. $W^\lambda_\mu$, $W^\lambda_{\leq\mu}$), we have 
\[
\on{inv}\circ\theta(\gamma t^\lambda)=\theta(t^\lambda)^{-1}\theta(\gamma)^{-1}=(t^\lambda\theta(\gamma)^{-1}t^{-\lambda})t^\lambda=\on{inv}\circ\psi_\lambda(\gamma) t^\lambda\in
 W^\lambda\ \    (resp.\ \  W^\lambda_\mu, W^\lambda_{\leq\mu}).
\]
The lemma follows.

\end{proof}

For any $\lambda\in\Lambda_S^+$, consider the 
 maps $\rho_{\lambda}(s):LG\to LG$ sending 
$\gamma(t)\to \rho_\lambda(s)(\gamma(t))=s^\lambda\gamma(s^{-2}t)s^\lambda$.

\begin{lem}\label{G_m action on Aff slices}
The two maps $\rho_\lambda(s)$
and $\on{inv}\circ\theta$ commute with each other.
Moreover,
the  map $\rho_{\lambda}(s)$ restricts to a $\mathbb G_m$-action $\rho_s$ on $W^\lambda,W^{\lambda}_\mu, W^{\lambda}_{\leq\mu}$.
The $\mathbb G_m$-action $\rho_s$ on $W^\lambda$ and $W^\lambda_{\leq\mu}$
is contrating 
with unique fixed point $t^\lambda$.
\end{lem}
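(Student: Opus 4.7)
\bigskip

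\noindent\textbf{Proof plan for Lemma \ref{G_m action on Aff slices}.}
The plan is to verify all three assertions by direct computation, using the explicit formula for $\rho_\lambda(s)$ and the identity $\theta(s^\lambda)=s^{-\lambda}$, which holds because $\lambda\in\Lambda_S$ and $S$ is $\theta$-split.

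For the commutation, I would simply compute both compositions. On the one hand,
$\on{inv}\circ\theta\bigl(\rho_\lambda(s)(\gamma(t))\bigr)=\bigl(\theta(s^\lambda)\theta(\gamma(s^{-2}t))\theta(s^\lambda)\bigr)^{-1}=s^\lambda\,\theta(\gamma(s^{-2}t))^{-1}\,s^\lambda$,
using $\theta(s^\lambda)^{-1}=s^\lambda$. On the other hand, $\rho_\lambda(s)\bigl(\on{inv}\circ\theta(\gamma(t))\bigr)=s^\lambda\theta(\gamma(s^{-2}t))^{-1}s^\lambda$. So the two maps agree on $LG$, which is the desired commutation.

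Next, for the stability of $W^\lambda$, the key is the rewriting
\[
\rho_\lambda(s)(\gamma_0 t^\lambda)=s^\lambda\gamma_0(s^{-2}t)(s^{-2}t)^\lambda s^\lambda=\on{Ad}_{s^\lambda}\bigl(\gamma_0(s^{-2}t)\bigr)\,t^\lambda,
\]
where I used that $s^\lambda, t^\lambda$ commute and $s^{-2\lambda}s^\lambda=s^{-\lambda}$. Then I would check the three operations involved—substitution $t\mapsto s^{-2}t$, conjugation by the constant $s^\lambda\in T$, and (for the second factor of the intersection) conjugation by $t^{-\lambda}$—each preserve both $L^{<0}G$ and $\on{Ad}_{t^\lambda}L^{<0}G$: substitution leaves $t$-exponents intact, the constant $s^\lambda$ commutes with $t^{-\lambda}$, and these operations are built out of commuting ingredients. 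For the stability of $W^\lambda_\mu$ and $W^\lambda_{\leq\mu}$, note that $s^\lambda\in L^+G$ and substitution by $s^{-2}t$ (with $s\in\bGm$) preserves $L^+G$; therefore $\rho_\lambda(s)$ preserves each double coset $LG_\mu=L^+Gt^\mu L^+G$ and hence its closure, so $W^\lambda_\mu=W^\lambda\cap LG_\mu$ and $W^\lambda_{\leq\mu}$ are preserved.

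Finally, for the contracting property with unique fixed point $t^\lambda$, I would analyze the induced action on $\gamma_0\in L^{<0}G\cap\on{Ad}_{t^\lambda}L^{<0}G$, namely $\gamma_0(t)\mapsto\on{Ad}_{s^\lambda}\gamma_0(s^{-2}t)$. Decomposing by root spaces, a component $a_\alpha^{(k)}t^{-k}\in\fg_\alpha\otimes t^{-k}$ is rescaled by $s^{\langle\alpha,\lambda\rangle+2k}$. The defining constraint of the intersection forces $k\geq\max(1,1-\langle\alpha,\lambda\rangle)$, and a short case check (splitting by the sign of $\langle\alpha,\lambda\rangle$) shows $\langle\alpha,\lambda\rangle+2k\geq 2$ in every case. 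Thus $\rho_s$ acts on the tangent space $T_{t^\lambda}W^\lambda$ with strictly positive weights. On the finite type affine $W^\lambda_{\leq\mu}$ this immediately gives contraction to, and uniqueness of, the fixed point $t^\lambda$; the statement for $W^\lambda$ then follows by writing $W^\lambda=\on{colim}_\mu W^\lambda_{\leq\mu}$. The only step that requires a little care is the case analysis of the positive-weight claim; everything else is a direct verification.
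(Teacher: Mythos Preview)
Your proposal is correct and follows essentially the same approach as the paper: the commutation is a direct computation, the rewriting $\rho_\lambda(s)(\gamma t^\lambda)=\on{Ad}_{s^\lambda}(\gamma(s^{-2}t))\,t^\lambda$ is exactly what the paper uses to show stability of $W^\lambda$, and the paper likewise notes that $LG_\mu$ is stable under $\rho_\lambda(s)$ to deduce stability of $W^\lambda_\mu$ and $W^\lambda_{\leq\mu}$. Your root-space weight analysis for the contraction is a more explicit version of the paper's one-line claim that $s^\lambda\gamma(s^{-2}t)s^{-\lambda}\to e$ as $s\to 0$; it is correct and fills in exactly the detail the paper leaves implicit.
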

\begin{proof}
The first claim is a direct computation.
For any $\gamma(t) t^\lambda\in W^\lambda$,
we have 
\[
\rho_\lambda(s)(\gamma(t)t^\lambda)
=s^\lambda\gamma(s^{-2}t)(s^{-2}t)^\lambda s^\lambda
=s^\lambda\gamma(s^{-2}t)s^{-\lambda}t^\lambda\in W^\lambda.
\]
Thus $\rho_\lambda(s)$ preserves $W^\lambda$.
Since $\gamma\in L^{<0}G\cap\on{Ad}_{t^\lambda}L^{<0}G$,
we have 
$s^\lambda\gamma(s^{-2}t)s^{-\lambda}$ goes to the unit 
$e\in LG$ as $s$ goes to $0$, and it follows that the $\mathbb G_m$-action 
on $W^\lambda$
is contracting with unique fixed point $t^\lambda$.
Since $LG_\mu$ is obviously stable under 
$\rho_\lambda(s)$, the $\mathbb G_m$-action 
preserves $W^\lambda_\mu$ and $W^\lambda_{\leq\mu}$.
\end{proof}

Consider the fixed points 
\begin{equation}\label{L}
L^\lambda=(W^\lambda)^{\on{inv}\circ\theta},\ \ \ L^\lambda_\mu=(W_\mu^\lambda)^{\on{inv}\circ\theta},\ \ \ \ 
L^\lambda_{\leq\mu}=(W^\lambda_{\leq\mu})^{\on{inv}\circ\theta}.
\end{equation}\label{identification}
Lemma \ref{inv on Aff slices} (2) implies there is an isomorphism
\begin{equation}
L^\lambda\cong (L^{<0}G\cap \Ad_{t^\lambda}L^{<0}G)^{\on{inv}\circ\psi_\lambda}.
\end{equation}

\begin{lem}\label{p:L^lambda_mu}
\begin{itemize}
\item [(i)] $L^\lambda $ is a connected  ind-scheme of ind-finite type and formally smooth.
\item [(ii)]  $L^\lambda_\mu$ is smooth.
\item [(iii)]  $L^\lambda_{\leq\mu}$ is of finite type, affine, and connected.
\item [(iv)]
The  map $\rho_{\lambda}(s)$
restricts to a $\mathbb G_m$-action $\rho_s$ on $L^\lambda,L^{\lambda}_\mu, L^{\lambda}_{\leq\mu}$.
The $\mathbb G_m$-action $\rho_s$ on $L^\lambda$ and $L^\lambda_{\leq\mu}$
is contrating 
with unique fixed point $t^\lambda$.
\end{itemize}
\end{lem}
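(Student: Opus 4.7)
The plan is to deduce each part by taking $\on{inv}\circ\theta$-fixed points of the corresponding statements in Lemma \ref{p:W^lambda_mu} and Lemma \ref{G_m action on Aff slices}, using the hypothesis $p\neq 2$ throughout. For (i), I would work with the identification $L^\lambda\cong H^{\on{inv}\circ\psi_\lambda}$ from Lemma \ref{inv on Aff slices}, where $H:=L^{<0}G\cap\Ad_{t^\lambda}L^{<0}G$. This $H$ is pro-unipotent and carries a decreasing filtration by $\psi_\lambda$-stable normal subgroups $H_n$ with each quotient $H_n/H_{n+1}$ a finite-dimensional vector group on which $\psi_\lambda$ acts linearly. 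Such a filtration exists because $\psi_\lambda=\Ad_{t^\lambda}\circ\theta$ preserves both the $t$-adic filtration on $\fg[t^{-1}]$ and the root decomposition under $T_0$. Because $p\neq 2$, each $(H_n/H_{n+1})^{\on{inv}\circ\psi_\lambda}$ is the $(-1)$-eigenspace of the linear involution $d\psi_\lambda$, hence an affine space; by successive extensions $(H/H_n)^{\on{inv}\circ\psi_\lambda}$ is a smooth connected unipotent group, and passing to the projective limit exhibits $L^\lambda$ as a formally smooth, connected pro-unipotent scheme. Ind-finite type then follows from the presentation $L^\lambda=\bigcup_\mu L^\lambda_{\leq\mu}$, once (iii) is established.

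For (ii), $L^\lambda_\mu=(W^\lambda_\mu)^{\on{inv}\circ\theta}$ is the fixed locus of a $\bZ/2$-action on the smooth scheme $W^\lambda_\mu$. In characteristic $\neq 2$, such fixed loci are smooth by the standard tangent-space averaging argument combined with formal lifting. For (iii), $L^\lambda_{\leq\mu}=(W^\lambda_{\leq\mu})^{\on{inv}\circ\theta}$ is the fixed locus of a finite group on the affine finite-type scheme $W^\lambda_{\leq\mu}$, and is therefore affine of finite type. Its connectedness will follow from (iv): the contracting $\bGm$-action restricted to $L^\lambda_{\leq\mu}$ flows every point to $t^\lambda$, exhibiting $L^\lambda_{\leq\mu}$ as connected.

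For (iv), the first assertion of Lemma \ref{G_m action on Aff slices} gives that $\rho_\lambda(s)$ commutes with $\on{inv}\circ\theta$, so it preserves all three fixed loci. Since $\theta$ acts as $-1$ on $\Lambda_S$ and $\lambda\in\Lambda_S^+$, we have $\on{inv}\circ\theta(t^\lambda)=t^{-\theta(\lambda)}=t^\lambda$, so $t^\lambda\in L^\lambda\cap L^\lambda_{\leq\mu}$, and the contracting property of $\rho_s$ on $W^\lambda$ and $W^\lambda_{\leq\mu}$ (established in Lemma \ref{G_m action on Aff slices}) restricts directly to the fixed loci.

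The main obstacle I expect is the $\psi_\lambda$-equivariant analysis of $H$ in (i): namely constructing a $\psi_\lambda$-stable exhaustion of $H$ by normal subgroups with cleanly controlled quotients, and justifying in positive characteristic that fixed-point formation commutes with the relevant inverse limit. Once the successive quotients are identified with $(-1)$-eigenspaces of $d\psi_\lambda$, the remaining ingredients reduce to standard finite-dimensional statements about fixed loci of involutions.
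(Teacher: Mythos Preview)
Your treatment of (ii), (iii), and (iv) is correct and essentially matches the paper. The paper deduces smoothness of $L^\lambda_\mu$ from smoothness of $W^\lambda_\mu$ via a general fixed-point lemma (Lemma~\ref{fixed points} in the appendix), obtains affine finite type for $L^\lambda_{\leq\mu}$ as a closed subscheme of $W^\lambda_{\leq\mu}$, and derives connectedness and the $\bGm$-action exactly as you do from Lemma~\ref{G_m action on Aff slices}.

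For (i), however, your framework has a genuine gap. The group $H=L^{<0}G\cap\Ad_{t^\lambda}L^{<0}G$ is \emph{not} pro-unipotent: by the paper's conventions $L^-G=G[t^{-1}]$ consists of polynomial loops, so $L^{<0}G$ and hence $H$ is a group \emph{ind}-scheme of ind-finite type, not a projective limit of finite-dimensional unipotent groups. There is no decreasing filtration by normal subgroups with finite-dimensional quotients and no projective limit to take; the natural exhaustion runs in the ind-direction, and degree-bounded subsets of $H$ are not subgroups. Your proposed inductive argument through successive vector-group quotients therefore does not get off the ground, and the obstacle you anticipate about ``fixed-point formation commuting with the inverse limit'' is aimed at a limit that does not exist in this setting.

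The paper avoids this entirely by invoking Lemma~\ref{fixed points}: the fixed locus of an involution on any formally smooth ind-scheme is again formally smooth, proved directly via the infinitesimal lifting criterion together with $H^1(\mu_2,-)=0$ in characteristic $\neq 2$. This gives formal smoothness of $L^\lambda$ from that of $W^\lambda$ with no group-theoretic filtration needed. Connectedness of $L^\lambda$ then follows, as you already argue for $L^\lambda_{\leq\mu}$, from the contracting $\bGm$-action in (iv): each $L^\lambda_{\leq\mu}$ is connected and contains $t^\lambda$, and $L^\lambda=\bigcup_\mu L^\lambda_{\leq\mu}$. Ind-finite type is inherited from $W^\lambda$.
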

\begin{proof}
Part (i) and (ii)  follow from Lemma \ref{p:W^lambda_mu}
and Lemma \ref{fixed points}.
Since  $W^\lambda_{\leq\mu}$ is of finite type and affine, the fixed points 
$L^\lambda_{\leq\mu}$ is a closed subscheme and hence of finite type and affine.
Lemma \ref{G_m action on Aff slices} and \ref{inv on Aff slices} imply that 
those fixed points subschemes $L^\lambda,L^{\lambda}_\mu, L^{\lambda}_{\leq\mu}$ are stable under the 
$\mathbb G_m$-action $\rho_s$.
Moreover since $t^\lambda\in L^\lambda$, 
the $\mathbb G_m$-action on 
$L^\lambda$ and $L^\lambda_{\leq\mu}$ 
is contracting with unique fixed point $t^\lambda$.
In particular, we see that 
$L^\lambda_{\leq\mu}$ is  connected.
Part (iii) and (iv) follow.
\end{proof}

\subsubsection{The contracting slices $S^\lambda$
and $S^\lambda_{\leq\mu}$}
Recall the  embedding $\tau:LX \to LG$ induced from $\tau(\gamma)=\gamma\theta(\gamma)^{-1}:X\rightarrow G$.

 \begin{defe}
 (i) For any $\lambda\in\Lambda_S^+$, we define
$S^\lambda\subset LX$ as the following fiber product
\[\xymatrix{S^\lambda\ar[r]\ar[d]&LX\ar[d]^{\tau}\\
W^\lambda\ar[r]&LG.} \]
(ii) For any $\lambda,\mu\in\Lambda_S^+$,
let $S^\lambda_\mu=S^\lambda\cap LX_\mu$
and $S^\lambda_{\leq\mu}=S^\lambda\cap\bLX_\mu$.
\end{defe}

The following proposition implies that 
$S^\lambda$ defines a
transversal contracting slice 
to the orbit $LX_\lambda$ inside $LX$.

\begin{prop}\label{slice spherical}
	\begin{itemize}
	\item [(i)] The embedding $\tau$ induces an isomorphism $S^\lambda\cong L^\lambda$. In particular,
	$S^\lambda$ is a connected ind-scheme of ind-finite type and formally smooth.
				\item [(ii)] Let $r_s:t\mapsto s^{-2}t$ be rotation automorphism of $LG$, $s\in\bGm$. Then $\phi_s^\lambda=\phi_s:(x\mapsto s^\lambda r_s(x))$ defines a $\bGm$-action on $S^\lambda$, which fixes $x_\lambda$, and contracts $S^\lambda$ to $x_\lambda$.
		
			\item [(iii)] The multiplication map $m:L^+G\times S^\lambda\rightarrow LX$ is formally smooth.	
		\item [(iv)] 
		We have $S^\lambda\cap LX_\lambda=\{x_\lambda\}$
		and 
		$S^\lambda\subset LX$ is transversal to $LX_\lambda=L^+G\cdot x_\lambda$ at $x_\lambda$.

		\item [(v)] All the $\bGm$-actions $\phi_s^\lambda$ 
		preserve all the $L^+G$-orbits $LX_\mu$. 
		Any $L^+G$-equivariant local system on  $LX_\mu$ is $\bGm$-equivariant with respect to the action $\phi_s^\lambda$ for any $\mu\in\Lambda_S^+$. 
	\end{itemize}
\end{prop}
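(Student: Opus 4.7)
The plan is to translate every assertion across the closed embedding $\tau:LX\hookrightarrow LG$ and then invoke the structure of the affine Grassmannian slice $W^\lambda$ and its fixed-point subvariety $L^\lambda=(W^\lambda)^{\on{inv}\circ\theta}$ established in Lemma~\ref{inv on Aff slices}, Lemma~\ref{G_m action on Aff slices}, and Lemma~\ref{p:L^lambda_mu}.

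For (i), $\tau$ realizes $LX$ as the identity component of $(LG)^{\on{inv}\circ\theta}$. Since $L^\lambda$ is connected by Lemma~\ref{p:L^lambda_mu}(i) and contains $t^\lambda=\tau(x_\lambda)$, all of $L^\lambda$ lies in $\tau(LX)$. Pulling back through the Cartesian square defining $S^\lambda$ gives $S^\lambda\cong L^\lambda$, and all geometric properties transfer from Lemma~\ref{p:L^lambda_mu}. For (ii), using $\theta(s^\lambda)=s^{-\lambda}$ (since $\lambda\in\Lambda_S$ and $\theta=-1$ on $\Lambda_S$) a direct computation gives
\[
\tau(s^\lambda r_s(x))=s^\lambda r_s(x)\,\theta(s^\lambda r_s(x))^{-1}=s^\lambda\, r_s(\tau(x))\, s^\lambda=\rho_\lambda(s)(\tau(x)),
\]
so $\phi_s$ corresponds under $\tau$ to the $\bGm$-action $\rho_s$ on $L^\lambda$ of Lemma~\ref{G_m action on Aff slices}. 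The fact that $\phi_s$ fixes $x_\lambda$ and contracts $S^\lambda$ to $x_\lambda$ is then Lemma~\ref{p:L^lambda_mu}(iv) transported back through $\tau$.

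The main work lies in (iii). Translated through $\tau$, the multiplication map becomes the twisted-conjugation map $\widetilde m:L^+G\times L^\lambda\to LG$, $(g,y)\mapsto gy\theta(g)^{-1}$, with image in $\tau(LX)$. The key input is the tangent-space identity for the ordinary slice, namely that $\mathfrak g(\!(t)\!)=\mathrm{Lie}(L^+G)\cdot y-y\cdot\mathrm{Lie}(L^+G)+T_y W^\lambda$ at every $y\in W^\lambda$ (with direct sum at $t^\lambda$); I would derive the analogous decomposition
\[
T_y(\tau(LX))=\mathrm{Lie}(L^+G)\cdot y - y\cdot d\theta(\mathrm{Lie}(L^+G))+T_y L^\lambda
\]
by intersecting with the $(-1)$-eigenspace of $d\theta$, using that $d\theta$ preserves $\mathrm{Lie}(L^+G)$ and is compatible with the slice decomposition via Lemma~\ref{inv on Aff slices}. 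Equivariance under $L^+G$ and the contracting $\bGm$-action of (ii) then propagate formal smoothness from the single point $(e,x_\lambda)$ to all of $L^+G\times S^\lambda$. The delicate bookkeeping with $d\theta$ and $\on{Ad}_{t^\lambda}$ in this transversality identity is, I expect, the main obstacle.

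Part (iv) is then almost immediate: $S^\lambda\cap LX_\lambda$ maps under $\tau$ into $L^\lambda\cap LG_\lambda\subset W^\lambda\cap LG_\lambda=\{t^\lambda\}$ by Lemma~\ref{p:W^lambda_mu}(ii) with $\mu=\lambda$, forcing the intersection to be $\{x_\lambda\}$; transversality at $x_\lambda$ reads off the tangent-sum identity proved in (iii). For (v), $\phi_s^\lambda=s^\lambda r_s$ preserves every orbit $LX_\mu$ because $s^\lambda\in L^+G$ preserves orbits and loop rotation $r_s$ preserves the $L^+G$-orbit stratification (the orbit label $\mu\in\Lambda_S^+$ being rotation-invariant). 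To obtain automatic $\bGm$-equivariance of an $L^+G$-equivariant local system, I would assemble both actions into an action of the semidirect product $L^+G\rtimes\bGm$ in which $\bGm$ acts on $L^+G$ by $g\mapsto s^\lambda r_s(g)s^{-\lambda}$, and observe that since $\bGm$ is connected it must act trivially on the (discrete) component group of any point-stabilizer, so the $L^+G$-equivariant descent datum carries a canonical $\bGm$-refinement.
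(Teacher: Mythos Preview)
Your outline is essentially correct and closely tracks the paper's own arguments; parts (i), (ii), and (iv) match almost verbatim. Two points deserve attention.

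In (iii), your ``intersection with the $(-1)$-eigenspace'' idea is exactly the paper's alternative proof (Lemma~\ref{l:2nd pf spherical submersion}): the two-sided multiplication $L^+G\times W^\lambda\times L^+G\to LG$ is formally smooth, and the involution $(g_1,x,g_2)\mapsto(\theta(g_2)^{-1},\theta(x)^{-1},\theta(g_1)^{-1})$ on the source intertwines with $\on{inv}\circ\theta$ on the target; taking fixed points of a surjective linear map between $\pm1$-eigenspace decompositions gives surjectivity on the $(+1)$-part directly, at \emph{every} point. Your added ``propagation from $(e,x_\lambda)$ via the contracting $\bGm$-action'' step is therefore unnecessary, and in fact does not obviously work: formal smoothness is not a closed condition under specialization in this infinite-dimensional setting, so contracting to the fixed point does not transport surjectivity of the differential outward. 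Drop that sentence and your argument for (iii) is complete.

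In (v), saying ``$\bGm$ is connected, so it acts trivially on the discrete component group'' is not quite enough: what you need is that $\pi_0(\mathrm{Stab}_{L^+G\rtimes\bGm}(x_\mu))\cong\pi_0(\mathrm{Stab}_{L^+G}(x_\mu))$, and for this the paper exhibits an explicit splitting $s\mapsto(s^{\mu-\lambda},s)$ of the short exact sequence $1\to\mathrm{Stab}_{L^+G}(x_\mu)\to\mathrm{Stab}_{L^+G\rtimes\bGm}(x_\mu)\to\bGm\to1$. With the splitting in hand, the stabilizer is a semidirect product with a connected second factor, and the component groups agree; your conclusion then follows.
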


\begin{proof}
Part (i).
Note that the image $\tau(LX)\cong L(G^{\on{inv}\circ\theta,\circ})\subset L(G^{\on{inv}\circ\theta})=(LG)^{\on{inv}\circ\theta}$
is an union of components of the fixed point $(LG)^{\on{inv}\circ\theta}$.
Thus the image  $\tau(S^\lambda)=W^\lambda\cap\tau(LX)\subset L^\lambda=(W^\lambda)^{\on{inv}\circ\theta}$
is an union of component of $L^\lambda$.
Now the desired claim follows from Lemma \ref{p:L^lambda_mu} (i).

Part (ii). A direct computation shows that the isomorphism $\tau:S^\lambda\cong L^\lambda$
is compatible the  $\rho_s(\mathbb G_m)$-action.
The claims follow from Lemma \ref{p:L^lambda_mu} (iv).

Part (iii). 
It is equivalent to show that the twisted conjugation map
$f:L^+G\times L^\lambda\to LG^{\on{inv}\circ\theta}, (g,h t^\lambda),\to g h t^\lambda\theta(g)^{-1}$
is formally smooth.
Since $f$ is a map between  placid ind-schemes, we can apply the formally smooth criterion in Lemma \ref{formally smooth criterion}
and reduce to check that 
the differential
\[d_{(g,h t^\lambda)}f:
T_{g}L^+G\times T_{h t^\lambda}L^\lambda\to T_{a}{LX}\] is surjective
for any 
$R$-point
$(g,ht^\lambda)\in L^+G(R)\times L^\lambda(R)$ and
$a=ght^\lambda\theta(g)^{-1}\in (LG)^{\on{inv}\circ\theta}(R)$.
Denote $\psi_a=\Ad_a\circ\theta:LG(R)\to LG(R)$
and $L\fg^{-\psi_a}(R)$ the $-1$ eigensubspace of $\psi_a$ in $L\fg^{}(R)$.
Similarly define $\psi_{ht^\lambda}$.
Note that 
\[T_gL^+G\cong L^+\fg(R),\ \ 
T_{ht^\lambda}L^\lambda\simeq(L^{<0}\fg\cap\Ad_{t^\lambda}L^{<0}\fg)^{-\psi_{ht^\lambda}}(R),
\ \ 
T_a(LG)^{\on{inv}\circ\theta,\circ}\simeq(L\fg)^{-\psi_a}(R).
\]
We need to show that 
the tangent map 
\[
L^+\fg(R)\times(L^{<0}\fg\cap\Ad_{t^\lambda}L^{<0}\fg)^{-\psi_{ht^\lambda}}(R)
\rightarrow
(L\fg)^{-\psi_a}(R),
\quad
(X,Y)\mapsto X-\Ad_a\theta(X)+\Ad_g(Y)
\]
is surjective.
For simplicity,
in the proof below we will write $L^+\fg=L^+\fg(R)$, etc.

Applying $\Ad_{g^{-1}}$,
we equivalently need to show that map
\[
L^+\fg\times(L^{<0}\fg\cap\Ad_{t^\lambda}L^{<0}\fg)^{-\psi_{ht^\lambda}}
\rightarrow
(L\fg)^{-\psi_{ht^\lambda}},
\quad
(X,Y)\mapsto X-\Ad_{ht^\lambda}\theta(X)+Y
\]
is surjective, i.e.
\begin{equation}\label{eq:tau_htlambda}
\tau_{ht^\lambda}:
L^+\fg\rightarrow(L\fg)^{-\psi_{ht^\lambda}}/(L^{<0}\fg
\cap\Ad_{t^\lambda}L^{<0}\fg)^{-\psi_{ht^\lambda}},
\quad
X\mapsto X-\Ad_{ht^\lambda}\theta(X)
\end{equation}
is surjective.
Consider subspaces
\begin{align*}
&A_1=L^+\fg\cap\Ad_{ht^\lambda}L^+\fg,\\
&A_2=L^{<0}\fg\cap\Ad_{ht^\lambda}L^{<0}\fg,\\
&A_3=L^+\fg\cap\Ad_{ht^\lambda}L^{<0}\fg\oplus
L^{<0}\fg\cap\Ad_{ht^\lambda}L^+\fg,\\
&L\fg=L^+\fg\oplus L^{<0}\fg=A_1\oplus A_2\oplus A_3.
\end{align*}

Each of $A_i$ is stable under $\psi_{ht^\lambda}$.
Thus we have decompositions
\[
A_i=A_i^{\psi_{ht^\lambda}}\oplus A_i^{-\psi_{ht^\lambda}},
\]
which induces
\[
L\fg^{-\psi_{ht^\lambda}}=A_1^{-\psi_{ht^\lambda}}\oplus A_2^{-\psi_{ht^\lambda}}\oplus A_3^{-\psi_{ht^\lambda}}.
\]

The map \eqref{eq:tau_htlambda} is equivalent to map
\begin{equation}
    \tau': L^+\fg\mapsto A_1^{-\psi_{ht^\lambda}}\oplus A_3^{-\psi_{ht^\lambda}},\quad
    X\mapsto X-\psi_{ht^\lambda}(X).
\end{equation}

Note that for any $X\in A_1^{-\psi_{ht^\lambda}}\subset L^+\fg$,
$\tau'(\frac{1}{2}X)=X$.
Thus $\tau'$ restricts to an isomorphism on $A_1^{-\psi_{ht^\lambda}}$.
We are left to show that
\[
\tau'':L^+\fg\cap\Ad_{ht^\lambda}L^{<0}\fg\rightarrow
A_3^{-\psi_{ht^\lambda}},\quad
X\mapsto X-\psi_{ht^\lambda}(X)
\]
is surjective.

Denote $A=L^+\fg\cap\Ad_{ht^\lambda}L^{<0}\fg\subset L^+\fg$
and $\psi:=\psi_{ht^\lambda}$.
Note that $\psi(A)=L^{<0}\fg\cap\psi(L^+\fg)\subset L^{<0}\fg$,
so that
$A_3=A\oplus\psi(A)$.
For any $a+\psi(b)\in A_3$, $a,b\in A$,
\[
a+\psi(b)\in A_3^{-\psi}\Leftrightarrow
\psi(a+\psi(b))=\psi(a)+b=-a-\psi(b)\Leftrightarrow
b=-a.
\]
Thus $a+\psi(b)=a-\psi(a)=\tau''(a)$, proving the surjectivity.
\\

Part (iv).
$S^\lambda\cap LX_\lambda=\{x_\lambda\}$ follows from
$W^\lambda\cap LG_\lambda=\{t^\lambda\}$.
 To show    $L^\lambda$ is transversal to $LX_\lambda$ at $x_\lambda$, we need to check
    \[
    T_{t^\lambda}\tau(LX_\lambda)\oplus T_{t^\lambda}L_\lambda=T_{t^\lambda}(LG)^{-\psi_\lambda}.
    \]
    Explicitly, 
    \[
    T_{t^\lambda}\tau(LX_\lambda)=\{X-\psi_\lambda(X)|X\in L^+\fg\},\quad
    T_{t^\lambda}L^\lambda=(L^{<0}\fg\cap\Ad_{t^\lambda}L^{<0}\fg)^{-\psi_\lambda}.
    \]
    It follows from part (iii) that the LHS spans RHS.
    It only remains to show the LHS is a direct sum,
    which follows from the decomposition
    \[
    L\fg=L^+\fg\oplus(\Ad_{t^\lambda}L^+\fg\cap L^{<0}\fg)\oplus
    (L^{<0}\fg\cap\Ad_{t^\lambda}L^{<0}\fg).
    \]
\\

Part (v). For a $L^+G$-orbit $LX_\mu=L^+Gx_\mu$ 
	and action $\phi_s^\lambda$, for $g\in G(\cO)$, we have
	\[
	\phi_s^\lambda(gx_\mu)
	=s^\lambda r_s(g)r_s(x_\mu)
	=(s^\lambda r_s(g)s^{-\mu})\phi_s^\mu(x_\mu)
	=(s^\lambda r_s(g)s^{-\mu})x_\mu.
	\]
	As $s^\lambda r_s(g)s^{-\mu}\in G(\cO)$, 
	$\phi_s^\lambda$ preserves $LX_\mu$.
	
	Now we show any local system $\cL$ on $LX_\mu$ 
	is $\phi_s^\lambda$-equivariant. 
	Consider $\bGm$-action on $L^+G$:
	$\gamma_s(g)=\Ad_{s^\lambda}r_s(g)$.
	Then the action of $L^+G$ and $\bGm$ on $LX_\mu$ at $x_\mu$
	can be integrated into an action of the semidirect product
	$L^+G\rtimes\bGm$ with respect to $\gamma_s$.
	Consider exact sequence
	\[
	1\rightarrow
	\mathrm{Stab}_{L^+G}(x_\mu)\rightarrow \mathrm{Stab}_{L^+G\rtimes\bGm}(x_\mu)\rightarrow
	\bGm,\quad
	g\mapsto (g,1),\quad (g,s)\mapsto s.
	\]
	Consider section $p:\bGm\rightarrow\mathrm{Stab}_{L^+G\rtimes\bGm}(x_\mu), 
	p(s)=(s^{\mu-\lambda},s)$.
	This is clearly a group homomorphism. 
	Thus the above exact sequence is a split short exact sequence.
	We get semidirect product
	\[
	\mathrm{Stab}_{L^+G\rtimes\bGm}(x_\mu)\simeq
	\mathrm{Stab}_{L^+G}(x_\mu)\rtimes\bGm.
	\]
	Thus the component groups are 
	\[\pi_0(\mathrm{Stab}_{L^+G\rtimes\bGm}(x_\mu))\simeq
	\pi_0(\mathrm{Stab}_{L^+G}(x_\mu)\rtimes\bGm)\simeq
	\pi_0(\mathrm{Stab}_{L^+G}(x_\mu)).
	\]
	Therefore $L^+G$-equivariant local systems on $LX_\mu$ are $\phi_s^\lambda$-equivariant for any $\mu$.
\end{proof}

We also give an alternative proof of the above 
Proposition \ref{slice spherical}.(iii).
\begin{lem}\label{l:2nd pf spherical submersion}
    The multiplication map $m:L^+G\times S^\lambda\rightarrow LX$ is formally smooth.
\end{lem}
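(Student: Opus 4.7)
The plan is to deduce the formal smoothness of $m$ from its $L^+G\rtimes\bGm$-equivariance combined with openness of the formally smooth locus, thereby reducing the verification to the single fixed point $(e,x_\lambda)$ rather than working uniformly over all of $L^+G\times S^\lambda$.

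First, I would observe that $m$ is $L^+G$-equivariant when $L^+G$ acts on the source by left multiplication on the first factor and on $LX$ by its standard action; hence the locus in $L^+G\times S^\lambda$ where $m$ is formally smooth is $L^+G$-stable, and it is open by the standard openness criterion for maps of placid ind-schemes (cf.\ Lemma~\ref{formally smooth criterion} and the appendix).

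Second, building on the argument in the proof of part~(v), I would lift the $\bGm$-action $\phi_s^\lambda$ on $LX$ to an action on $L^+G\times S^\lambda$ by pairing the action $\gamma_s(g)=\Ad_{s^\lambda}r_s(g)$ on $L^+G$ with $\phi_s^\lambda$ on $S^\lambda$. A direct computation shows $m$ is equivariant for this combined $\bGm$-action, and by part~(ii) together with Lemma~\ref{p:L^lambda_mu}(iv) both factors of the source contract to $(e,x_\lambda)$. Consequently any $L^+G\rtimes\bGm$-invariant open subscheme of $L^+G\times S^\lambda$ containing $(e,x_\lambda)$ must coincide with the whole source, so it suffices to verify formal smoothness of $m$ at the single point $(e,x_\lambda)$.

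Third, at $(e,x_\lambda)$ the differential takes the form
\[
L^+\fg\oplus T_{x_\lambda}S^\lambda\longrightarrow T_{x_\lambda}LX,\qquad (X,Y)\longmapsto X-\Ad_{t^\lambda}\theta(X)+Y,
\]
where by part~(i) we identify $T_{x_\lambda}LX\simeq (L\fg)^{-\psi_\lambda}$ and $T_{x_\lambda}S^\lambda\simeq (L^{<0}\fg\cap\Ad_{t^\lambda}L^{<0}\fg)^{-\psi_\lambda}$. Using the $\psi_\lambda$-stable decomposition
\[
L\fg=L^+\fg\oplus (L^{<0}\fg\cap\Ad_{t^\lambda}L^+\fg)\oplus(L^{<0}\fg\cap\Ad_{t^\lambda}L^{<0}\fg),
\]
taking $-\psi_\lambda$-eigenspaces, the map $X\mapsto X-\psi_\lambda(X)$ on $L^+\fg$ already hits the first two summands (the first since $\tfrac12 X\mapsto X$ on the $-\psi_\lambda$ part of $L^+\fg$, the second via $X\mapsto -\psi_\lambda(X)$ from the complementary piece of $L^+\fg$), and the image of $Y$ fills the remaining summand. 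Surjectivity is therefore immediate at the fixed point, which completes the reduction.

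The main obstacle will be justifying the openness of the formally smooth locus for a map between placid ind-schemes of infinite type and verifying that the $\bGm$-action genuinely lifts to the source making $m$ equivariant; both steps are routine given the explicit form of $\gamma_s$ and $\phi_s^\lambda$ and the placid presentations constructed in Section~\ref{placid}, but they are the only nontrivial ingredients beyond the tangent calculation at $(e,x_\lambda)$.
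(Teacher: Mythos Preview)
Your reduction strategy has a genuine gap at the openness step. You assert that the formally smooth locus of $m$ is open, citing Lemma~\ref{formally smooth criterion}, but that lemma only gives a sufficient condition for formal smoothness (surjectivity of the differential over all $R$-points); it says nothing about openness, and in the placid ind-scheme setting openness of the surjective-differential locus is not automatic and is not established anywhere in the appendix. Without this, the contraction argument cannot get started. There is also a secondary error: the action $\gamma_s(g)=\Ad_{s^\lambda}r_s(g)$ does \emph{not} contract $L^+G$ to $e$ as $s\to 0$ (the coefficient of $t^n$ picks up a factor $s^{-2n}$, which blows up), so the claim that ``both factors of the source contract'' is false; this is salvageable by first using $L^+G$-equivariance to write the locus as $L^+G\times U$ and then contracting only $U\subset S^\lambda$. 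Finally, in your third step the displayed decomposition of $L\fg$ is not $\psi_\lambda$-stable: since $\theta$ preserves $L^+\fg$ while $\Ad_{t^\lambda}$ does not, one has $\psi_\lambda(L^+\fg)=\Ad_{t^\lambda}L^+\fg\neq L^+\fg$, so the phrase ``the $-\psi_\lambda$ part of $L^+\fg$'' is not meaningful; the correct $\psi_\lambda$-stable pieces are the $A_1,A_2,A_3$ used in the first proof of Proposition~\ref{slice spherical}(iii).

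The paper's alternative proof avoids all of this by a different mechanism: it starts from the known formal smoothness of the full multiplication map $L^+G\times W^\lambda\times L^+G\to LG$ for the affine Grassmannian slice, observes that this map intertwines the involution $(g_1,x,g_2)\mapsto(\theta(g_2)^{-1},\theta(x)^{-1},\theta(g_1)^{-1})$ on the source with $\on{inv}\circ\theta$ on the target, and then passes to fixed points. Since a surjective linear map commuting with an involution (in characteristic $\neq 2$) restricts to a surjection on each eigenspace, the differential of the restricted map $m:L^+G\times S^\lambda\to LX$ is surjective at every point simultaneously, and no openness or contraction argument is needed.
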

\begin{proof}
    As before, 
    it sufficies to check the surjectivity of the differential map.
    From standard results on affine Grassmanian slices, we know that
    the multiplication map
    \[
    m: L^+G\times W^\lambda\times L^+G\rightarrow LG,\quad
    (g_1,x,g_2)\mapsto g_1xg_2
    \]
    is a submersion.
    
    Assume $\lambda\in\Lambda_S^+$.
    Denote $\sigma=\on{inv}\circ\theta$, 
    which is an involution of space on $LG$.
    Also denote by $\gamma(g_1,x,g_2)=(\sigma(g_2),\sigma(x),\sigma(g_1)$,
    which is an involution on $L^+G\times W^\lambda\times L^+G$.
    Its fixed subspace is 
    \[
    \{(g_1,x,\sigma(g_1))\mid g_1\in L^+G,x\in(W^\lambda)^\sigma\}
    \simeq L^+G\times S^\lambda.
    \]
    
    Observe that $m$ intertwines $\gamma$ and $\sigma$.
    The induced map on fixed subspaces is isomorphic to 
    $m:L^+G\times S^\lambda\rightarrow LX$.
    Now for any $a=(g,x)\in L^+G\times S^\lambda\hookrightarrow L^+G\times W^\lambda\times L^+G$, the surjective differential map
    \[
    \td m: T_a(L^+G\times W^\lambda\times L^+G)\rightarrow T_{m(a)}LG
    \]
    intertwines $\gamma$ and $\sigma$.
    Since both $\gamma$ and $\sigma$ are involutions, 
    the induced map on fixed subspaces,
    which is $\td m:T_a(L^+G\times S^\lambda)\rightarrow T_{m(a)}LX$,
    is also surjective as desired.
\end{proof}

\begin{cor}\label{c:LX equi-singular}
	Let $\lambda,\mu\in\Lambda_S^+$.
	\begin{itemize}

		\item [(i)] The multiplication map $m:L^+G\times S^\lambda_{\leq\mu}\rightarrow\bLX_\mu$ is 
		formally smooth.
		\item [(ii)] There is a $\bGm$-action on $S^\lambda_{\leq\mu}$, which fixes $x_\lambda$, and contracts $S^\lambda_{\leq\mu}$ to $x_\lambda$.

    \item [(iii)]
$S^\lambda_\mu\neq\emptyset$ if and only if $\lambda\leq\mu$.
		
	\end{itemize}
\end{cor}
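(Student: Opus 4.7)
My plan is that all three parts of Corollary \ref{c:LX equi-singular} follow by restricting the results of Proposition \ref{slice spherical} from $LX$ to the closed subscheme $\bLX_\mu$, using the closure relation from Theorem \ref{t:sym var orbit closure} and the placidness results from Theorem \ref{Placidness of orbits}. The key observation is that $\bLX_\mu$ is $L^+G$-stable, so the multiplication map $m$ and the contracting $\bGm$-action both preserve it.

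For $(i)$, the multiplication map $m: L^+G \times S^\lambda \to LX$ is formally smooth by Proposition \ref{slice spherical}$(iii)$. Because $\bLX_\mu \hookrightarrow LX$ is a finitely presented closed embedding (Theorem \ref{Placidness of orbits}) and $\bLX_\mu$ is $L^+G$-stable, the preimage of $\bLX_\mu$ under $m$ is exactly $L^+G \times S^\lambda_{\leq\mu}$; base change of a formally smooth morphism along a closed embedding is again formally smooth, which gives $(i)$.

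For $(ii)$, Proposition \ref{slice spherical}$(v)$ tells us that the $\bGm$-action $\phi_s^\lambda$ preserves every $L^+G$-orbit $LX_{\lambda'}$, hence preserves their disjoint union $\bLX_\mu = \bigsqcup_{\lambda' \le \mu} LX_{\lambda'}$ appearing in Theorem \ref{t:sym var orbit closure}. Thus $\phi_s^\lambda$ restricts to $S^\lambda_{\leq\mu} = S^\lambda \cap \bLX_\mu$, fixes the point $x_\lambda$, and inherits the contraction property from the corresponding action on $S^\lambda$ established in Proposition \ref{slice spherical}$(ii)$.

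For $(iii)$, the forward direction uses the contracting action from $(ii)$: given any $y \in S^\lambda_\mu$, Proposition \ref{slice spherical}$(v)$ says $\phi_s(y) \in S^\lambda_\mu$ for all $s \in \bGm$, and taking $s \to 0$ gives $x_\lambda \in \overline{S^\lambda_\mu} \subset \bLX_\mu$, which by Theorem \ref{t:sym var orbit closure} forces $\lambda \leq \mu$. For the converse, $(i)$ provides a formally smooth map $m: L^+G \times S^\lambda_{\leq\mu} \to \bLX_\mu$ whose source is nonempty (it contains $(e, x_\lambda)$) and whose target is irreducible by Proposition \ref{placid of LX}; a formally smooth morphism has open image, and any nonempty open subset of $\bLX_\mu$ must meet the open dense orbit $LX_\mu$, so by $L^+G$-equivariance of $m$ we conclude $S^\lambda_\mu \neq \emptyset$. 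The main technical point to verify carefully is that a formally smooth morphism between the placid ind-schemes in question has open image on underlying topological spaces, so that the last step of the converse in $(iii)$ goes through cleanly; everything else is a direct inheritance from Proposition \ref{slice spherical} via the closure stratification.
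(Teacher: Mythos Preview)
Your proposal is correct and follows essentially the same approach as the paper. The one technical point you flagged---that a formally smooth morphism between placid schemes has open image---is exactly what the paper addresses: rather than proving openness directly, it invokes Lemma~\ref{formally smooth} (formally smooth $\Rightarrow$ placid) together with Lemma~\ref{quasi-flatness} (a placid morphism from a scheme with placid presentation having surjective projections meets every dense fp-open subset of the target), which suffices since $LX_\mu$ is open dense in $\bLX_\mu$.
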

\begin{proof}
The map in (i) is the base-change of the formally smooth map
$L^+G\times S^\lambda\to LX$ in Proposition \ref{slice spherical} (iii) along the embedding $\overline{LX}_{\mu}$ and hence is formally smooth.
Part (ii) follows from Proposition \ref{slice spherical} (ii).

Part (iii).
Assume $S^\lambda_\mu=S^\lambda\cap LX_\mu\neq\varnothing$. 
	Recall that we have $\bGm$-action $\phi_s$(depended on $\lambda$) 
	on $S^\lambda$. 
	This action also acts on all the $L^+G$-orbits $LX_\mu$. 
	The contraction of $S^\lambda\cap LX_\mu$, 
	which gives $x_\lambda$, 
	must be contained in $\bLX_\mu$. 
	Thus $\lambda\leq\mu$.
	
	Conversely, by Theorem \ref{t:sym var orbit closure} 
	we have $x_\lambda\in S^\lambda_{\leq\mu}=S^\lambda\cap\bLX_\mu\neq\emptyset$.
    Since  $m:L^+G\times (S^\lambda_{\leq\mu}\cap\bLX_\mu)\to \overline{LX}_\mu$ is 
    formally smooth and $L^+G\times S^\lambda_{\leq\mu}$ admits a placid 
    presentation $L^+G\times S^\lambda_{\leq\mu}\cong\lim_i ((L^+G)_i\times S^\lambda_{\leq\mu})$
	with surjctive transition maps, 
    Lemma \ref{formally smooth} and 
    Lemma \ref{quasi-flatness}
    imply that the 
    image of $m$
    has nonempty intersection with the open dense subset $LX_\mu$. 
    It follows that 
    $m^{-1}(LX_\mu)=L^+G\times S^\lambda_\mu\neq\emptyset$.
    Part (iii) follows.
\end{proof}

The corollary  implies that  $S^\lambda_{\leq\mu}$
defines a contracting slice for $LX_{\lambda}$ inside $\overline{LX}_{\mu}$, see Definition \ref{def of slice}).

\begin{rem}
Unlike the case of affine Grassmannian slices $W^\lambda_{\leq\mu}$
and $W^\lambda_\mu$,
the slices $S^\lambda_{\leq\mu}$ 
might be reducible and non-normal, 
and $S^\lambda_\mu$ 
might be disconnected 
(see Example \ref{non-normal}).

\end{rem}

\subsection{Lusztig embedding}\label{Lusztig}

Let $G=\GL_n$.
Consider the classical groups 
$K=\mathrm O_n$ and $\Sp_{n}$(when $n$ is even) of orthogonal group and symplectic group 
corresponding to the involution 
$\theta(g)=(g^t)^{-1}$ and $\theta(g)=J^{-1}(g^t)^{-1}J$ respectively.
Here
$J=\begin{pmatrix}0&I_m\\
-I_m&0\end{pmatrix}$ 
with $n=2m$.
We have 
$\fg=\mathfrak{gl}_{n}$, 
$d\theta(g)=-g^t$ (resp. $d\theta(g)=-J^{-1}g^tJ$), 
$\fk=\fg^{d\theta}=\mathfrak{o}_n$ (resp. $\fk=\fsp_{n}$),
$\fp=\fg^{-d\theta}=\{g\in\mathfrak{gl}_n\ |\ g^t=g\}$ 
(resp. $\fp=\{g\in\mathfrak{gl}_n\ |\ (gJ)^t=-gJ\}$).

Let $\sN\subset\mathfrak g$ be the nilpotent cone
and $\sN_{\fp}=\sN\cap\mathfrak p$ be the corresponding symmetric nilpotent cone.
The $G$-orbits on $\sN$, called the nilpotent orbits, are parametrized by partitions 
$\cP_n=\{(\lambda_1\geq \lambda_2\geq\cdot\cdot\cdot\geq \lambda_n\geq0|\sum \lambda_i=n\}$.
Let $K^\circ\subset K$ be the identity component of $K$.
The $K^\circ$-orbits on $\sN_\fp$, called the symmetric nilpotent orbits, are parametrized by 
$\cP_n$ in the case of $K^\circ=\SO_n$ and 
$\cP_{m}$ in the case of $K^\circ=K=\Sp_{2m}$. 
We will write $\cO_\lambda$ and $\cO_{\fp,\lambda}$  for the corresponding nilpotent orbits
and symmetric nilpotent orbits
and $\overline\cO_\lambda$ and 
$\overline\cO_{\fp,\lambda}$ the orbit closures.
 
We will identify the set $\Lambda_T^+$ with 
$\Lambda_T^+=\{(\lambda_1\geq \lambda_2\geq\cdot\cdot\cdot\geq \lambda_n)\}$
so that 
\[LG_\lambda=L^+G\on{diag}(t^{\lambda_1-1},...,t^{\lambda_n-1})L^+G\]
for $\lambda\in\Lambda_T^+$.
Recall the Lusztig embedding 
\begin{equation}\label{eq:GLn Lusztig embed iota_n}
	\iota:\sN\hookrightarrow W^0=L^{<0}G,\quad x\mapsto(I_n-t^{-1}x).
\end{equation}
It induces an isomorphism 
\[\sN\cong W^0_{\leq\omega_n}=L^{<0}G\cap \overline{LG}_{\omega_n}\]
where $\omega_n=(n\geq0\geq\cdots\geq0)$. In addition, for $\lambda\in\cP_n$, the above isomorphism 
restricts to isomorphisms
\[\cO_\lambda\cong W^0_\lambda,\ \ \ \ \ \ \ 
\overline{\cO}_\lambda\cong W^0_{\leq\lambda}.\]
 
Note that the Lusztig embedding intertwines the 
involution 
$-d\theta$ on $\sN$ with $\on{inv}\circ\theta$ on $W^0$
and hence induces isomorphisms
\[\sN_\fp\cong (W^0_{\leq\omega_n})^{\on{inv}\circ\theta}\cong S^0_{\leq\omega_n},\]
\[\cO_{\fp,\lambda}\cong (W^0_{\lambda})^{\on{inv}\circ\theta}\cong S^0_{\lambda},\ \ \ \ \
\overline\cO_{\fp,\lambda}\cong (W^0_{\leq\lambda})^{\on{inv}\circ\theta}\cong S^0_{\leq\lambda}\]
on the fixed points.
In view of Proposition \ref{slice spherical}, we obtain
\begin{prop}\label{slice=nilp}\mbox{}
There is a $\mathbb G_m$-equivariant isomorphism
		$\overline\cO_{\fp,\lambda}\cong S^0_{\leq\lambda}$
		between symmetric nilpotent orbit closures and transversal slices 
		in loop spaces of classical symmetric varieties. For any $\lambda\leq\mu\leq\omega_n\in\cP_n$, the 
		variety $S^\lambda_{\leq\mu}$
		defines a Slodowy slice to the 
		symmetric nilpotent orbit 
		$\cO_{\fp,\lambda}$ inside $\overline\cO_{\fp,\mu}$.
	
\end{prop}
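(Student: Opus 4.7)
The plan is to deduce both claims from the Lusztig embedding~\eqref{eq:GLn Lusztig embed iota_n} together with the classical Mirkovi\'c--Vybornov identification of affine Grassmannian slices with Slodowy slices. The isomorphism $\overline\cO_{\fp,\lambda}\cong S^0_{\leq\lambda}$ is already furnished by the discussion immediately preceding the statement: one restricts the Lusztig identification $\overline\cO_\lambda\cong W^0_{\leq\lambda}$ to the fixed locus of the intertwining involutions $-d\theta$ on $\sN$ and $\on{inv}\circ\theta$ on $W^0$. The remaining task for the first assertion is to check $\bGm$-equivariance. By Lemma~\ref{G_m action on Aff slices} the $\bGm$-action on $W^0_{\leq\lambda}$ is rotation $\rho_s(\gamma(t))=\gamma(s^{-2}t)$, and a one-line computation
\[
\iota(x)(s^{-2}t)=I_n-(s^{-2}t)^{-1}x=I_n-s^2t^{-1}x=\iota(s^2x)
\]
shows that under $\iota$ this action corresponds to the square of the standard scaling $x\mapsto sx$ on $\sN$. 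Since scaling preserves $\fp$ and contracts $\overline\cO_{\fp,\lambda}$ to $0$, the $\bGm$-equivariance follows, with $x_\lambda=0\in\overline\cO_{\fp,\lambda}$ matching $t^\lambda$ on the slice side via $\iota$.

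For the Slodowy slice assertion, I would first invoke the Mirkovi\'c--Vybornov theorem: for $G=\GL_n$ and $\lambda\leq\mu\leq\omega_n$ the affine Grassmannian slice $W^\lambda_{\leq\mu}$ is isomorphic to the Slodowy slice $(e_\lambda+\fg^{f_\lambda})\cap\overline\cO_\mu$ in the nilpotent cone of $\mathfrak{gl}_n$, where $(e_\lambda,h_\lambda,f_\lambda)$ is an $\mathfrak{sl}_2$-triple with $e_\lambda\in\cO_\lambda$ and $\fg^{f_\lambda}$ denotes the centralizer of $f_\lambda$ in $\fg$. By the Kostant--Rallis theorem, since $\cO_\lambda\cap\fp=\cO_{\fp,\lambda}$ is nonempty one may choose this triple to be \emph{normal} with respect to $\theta$, meaning $e_\lambda,f_\lambda\in\fp$ and $h_\lambda\in\fk$. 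For such a normal triple, the involution $-d\theta$ preserves both $e_\lambda$ and the centralizer $\fg^{f_\lambda}$, hence preserves the Slodowy slice; correspondingly the Mirkovi\'c--Vybornov isomorphism intertwines $-d\theta$ on the Slodowy slice with $\on{inv}\circ\theta$ on $W^\lambda_{\leq\mu}$.

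Finally, passing to the fixed points of these compatible involutions identifies
\[
S^\lambda_{\leq\mu}=(W^\lambda_{\leq\mu})^{\on{inv}\circ\theta}\cong (e_\lambda+\fp^{f_\lambda})\cap\overline\cO_{\fp,\mu},
\]
which is by definition the symmetric (Kostant--Rallis) Slodowy slice to $\cO_{\fp,\lambda}$ inside $\overline\cO_{\fp,\mu}$. Transversality and the contracting $\bGm$-structure are then inherited from Corollary~\ref{c:LX equi-singular}, Proposition~\ref{slice spherical}, and the same Lusztig computation as in the first paragraph. The main technical obstacle is verifying that the Mirkovi\'c--Vybornov identification can be rendered $\theta$-equivariant; as indicated, this ultimately reduces to the existence of a normal $\mathfrak{sl}_2$-triple inside $\cO_\lambda$ and the $\theta$-stability of the resulting transverse slice, which are precisely what Kostant--Rallis provides.
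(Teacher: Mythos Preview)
Your treatment of the first assertion is correct and matches the paper: the isomorphism $\overline\cO_{\fp,\lambda}\cong S^0_{\leq\lambda}$ is exactly the fixed-point version of the Lusztig identification, and your computation $\iota(x)(s^{-2}t)=\iota(s^2x)$ makes the implicit $\bGm$-equivariance explicit.

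For the second assertion you take a substantially different and more ambitious route than the paper. The paper does not invoke Mirkovi\'c--Vybornov at all; it simply cites Proposition~\ref{slice spherical} (together with Corollary~\ref{c:LX equi-singular}). The phrase ``defines a Slodowy slice'' is used in the generalized sense of Definition~\ref{def of slice}: $S^\lambda_{\leq\mu}$ is a transversal contracting slice to $LX_\lambda$ inside $\overline{LX}_\mu$, and since the Lusztig embedding identifies the stratification $\{S^0_\nu\}_{\nu\leq\mu}$ of $S^0_{\leq\mu}$ with the stratification $\{\cO_{\fp,\nu}\}_{\nu\leq\mu}$ of $\overline\cO_{\fp,\mu}$, the variety $S^\lambda_{\leq\mu}$ models the singularity of $\overline\cO_{\fp,\mu}$ along $\cO_{\fp,\lambda}$. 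That is all the paper claims.

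Your approach would yield the stronger statement $S^\lambda_{\leq\mu}\cong(e_\lambda+\fp^{f_\lambda})\cap\overline\cO_{\fp,\mu}$, but the gap you flag is genuine and not closed by what you write. The existence of a normal $\mathfrak{sl}_2$-triple guarantees only that the \emph{target} $(e_\lambda+\fg^{f_\lambda})\cap\overline\cO_\mu$ is $(-d\theta)$-stable; it says nothing about whether the Mirkovi\'c--Vybornov \emph{map} intertwines $\on{inv}\circ\theta$ on $W^\lambda_{\leq\mu}$ with $-d\theta$ on the Slodowy side. That compatibility depends on the explicit construction (e.g.\ the lattice description or the generalized Lusztig embedding), and would have to be checked directly against the chosen involution $\theta(g)=(g^t)^{-1}$ or $J^{-1}(g^t)^{-1}J$. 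Since the paper's intended statement does not require this identification, you can safely drop the Mirkovi\'c--Vybornov detour and keep only the last sentence of your argument, which is exactly what the paper does.
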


\begin{exam}\label{non-normal}
For $X=\GL_2/\mathrm O_2$, the slice $S^0_{\leq\omega_2}$
is isomorphic to 
$S^0_{\leq\omega_2}\cong\sN_\fp\cong\{xy=0\}\subset k^2$, which is reducible and non-normal.

\end{exam}

\subsection{Slices for Iwahori orbits}

\subsubsection{Involutions on affine flag slices}
Let $T_0\subset B_0\subset G$ and $I_0=B_0(L^{++}G)$ be
$\theta$-stable 
 maximal torus, Borel subgroup, and Iwahori of $LG$ as before.
Denote the opposite Iwahori by $I_0^-:=B_0^-L^{<0}G$, 
where $B_0^-$ is the Borel opposite to $B_0$.
Let $I_0^{--}=U_0^-L^{<0}G$, $U_0^-$ is the unipotent radical of $B_0^-$.
For any  $w\in\widetilde{\mathrm W}=N_{LG}(LT_0)/L^+T_0$ 
 we denote by $\Fl_w\subset\Fl=LG/I_0$ the corresponding $I_0$-orbits on the affine flag variety 
 and $\overline{\Fl}_w$ its closure.
 We denote by 
$LG_w,\overline{LG}_w\subset LG$ the pre-images of $\Fl,\overline{\Fl}_w$
 in $LG$.

Let $n\in N_{LG}(LT_0)$ be a lifting of $w$.
The affine flag slice of $LG_w$ in $LG$ at $n$ is defined as 
\[
W^{n}=(I_0^{--}\cap\on{Ad}_{n} I_0^{--})n\subset LG.
\]
For any pair $n,n'\in N_{LG}(LT_0)$ with images $w,w'\in\widetilde W$, 
we define
$W^{n}_{n'}=W^n\cap LG_{w'}$ and $W^n_{\leq n'}=W^n\cap \overline{LG}_{w'}$.
We have the following well-known facts:
\begin{lem}\label{p:W^lambda_mu}
\begin{itemize}
\item [(i)] $W^n$ is an ind-scheme of ind-finite type and formally smooth.
\item [(ii)] $W^n_{n'}$ is non-empty if and only if $w\leq w'$.
\item [(iii)]  $W^n_{n'}$ is smooth and connected (if non-empty).
\item [(iv)]  $W^n_{\leq n'}$ is of finite type, affine, and normal.
\item [(v)]  $\dim W^n_{n'}=\dim W^n_{\leq n'}=l(w')-l(w)$.
\end{itemize}
\end{lem}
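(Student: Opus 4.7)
The plan is to proceed in close analogy with the proof of Lemma \ref{p:W^lambda_mu} for affine Grassmannian slices, replacing the pair $(L^+G, L^{<0}G)$ by the Iwahori pair $(I_0, I_0^{--})$ and using Schubert geometry in $\Fl$ in place of spherical Schubert geometry in $\Gr$. Concretely, the key geometric input is the product decomposition: multiplication induces an open immersion $I_0 \times I_0^{--} \hookrightarrow LG$ onto an open neighborhood of the identity. Translating by $n$ and intersecting with $\on{Ad}_n I_0^{--}$ gives that the multiplication map
\[
m_n : I_0 \times W^n \longrightarrow LG
\]
is an open immersion onto an open neighborhood of the Schubert cell $LG_w = I_0 n I_0$, with $W^n$ mapping isomorphically to a transversal slice to $LG_w$ at $n$.

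For (i), observe that $I_0^{--} \cap \on{Ad}_n I_0^{--}$ is a pro-unipotent subgroup of the pro-unipotent $I_0^{--}$: writing both as limits of their Moy--Prasad quotients, the intersection is the limit of finite-dimensional unipotent groups, hence a formally smooth ind-scheme of ind-finite type, and so is $W^n$ by the translation. For (ii), (iii) and (v), I would use the transversality provided by $m_n$: intersecting with the Schubert stratum $LG_{w'}$ (resp.\ $\overline{LG}_{w'}$) and using that $\Fl_{w'} = I_0 \cdot n'$ has dimension $\ell(w')$, we get that $W^n_{n'} = W^n \cap LG_{w'}$ is smooth whenever non-empty, with dimension $\ell(w')-\ell(w)$, and is non-empty exactly when $\Fl_w \subset \overline{\Fl}_{w'}$, i.e.\ $w \leq w'$. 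Connectedness of $W^n_{n'}$ follows, as in the spherical case, from the $\bGm$-action $\rho_s(x) = s^{\check{2\rho}} r_s(x) s^{-\check{2\rho}} \cdot (\cdots)$ contracting $W^n$ to $n$ combined with the irreducibility of the Schubert cell $\Fl_{w'}$.

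For (iv), finite type and affineness of $W^n_{\leq n'}$ follow because $\overline{LG}_{w'}/I_0 = \overline{\Fl}_{w'}$ is a projective variety and $W^n_{\leq n'}$ is a closed subscheme of the affine ind-scheme $W^n$ cut out by the finitely many Plücker-type equations defining $\overline{\Fl}_{w'}$. Normality is the most delicate point: I would deduce it from normality of affine Schubert varieties (which is classical, e.g.\ via Frobenius splitting / global $F$-regularity, valid in our characteristic $\neq 2$ setting), together with the fact that the smooth morphism $m_n$ exhibits an open neighborhood of $n$ in $\overline{LG}_{w'}$ as $I_0 \times W^n_{\leq n'}$; normality descends from $\overline{LG}_{w'}$ to the slice because $I_0$ is pro-smooth with geometrically integral fibers.

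The main obstacle will be (iv), specifically the passage from normality of $\overline{\Fl}_{w'}$ (a statement in the finite-type Schubert world, where Frobenius splitting is available) to normality of the slice $W^n_{\leq n'}$; this requires carefully justifying faithfully flat descent of normality through the pro-smooth presentation of $m_n$, exactly as in the spherical slice case. Everything else is a routine transcription of the affine Grassmannian slice arguments to the affine flag setting.
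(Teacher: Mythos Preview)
The paper's own proof is a two-line citation: it defers (i)--(v) entirely to Faltings \cite[Theorems 7 and 8]{F} and Kashiwara--Tanisaki \cite[Propositions 1.3.1 and 1.3.2]{KT}. Your proposal, by contrast, reconstructs the arguments that those references carry out, so in substance you are not diverging from the paper but rather unpacking what it cites. The transversality via the open immersion $m_n: I_0 \times W^n \hookrightarrow LG$, the deduction of smoothness and dimension from Schubert geometry, and the descent of normality from $\overline{\Fl}_{w'}$ through the pro-smooth factor $I_0$ are exactly the ingredients in Kashiwara--Tanisaki and Faltings.

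Two small imprecisions are worth flagging. First, $I_0^{--}$ and $I_0^{--}\cap \Ad_n I_0^{--}$ are ind-schemes (colimits of finite-dimensional unipotent pieces indexed by the affine roots they contain), not pro-unipotent limits; your conclusion that $W^n$ is formally smooth of ind-finite type is correct, but the mechanism is the ind-presentation, not a Moy--Prasad limit. Second, your argument for connectedness of the open stratum $W^n_{n'}$ via a contracting $\bGm$-action is both vague and unnecessary: the product decomposition already gives that $I_0 \times W^n_{n'}$ is open in the irreducible variety $LG_{w'}$, hence irreducible, hence $W^n_{n'}$ is irreducible. The $\bGm$-action you gesture at (which in the Iwahori setting is more delicate than the simple $s^{2\check\rho}$ formula; compare the later Lemma~\ref{G_m action on AF slices}) is only needed for the closure $W^n_{\leq n'}$, and even there connectedness follows more cheaply from irreducibility of $\overline{LG}_{w'}$.
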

\begin{proof}
  This follows from 
  \cite[Theorem 7 and 8]{F} and 
  \cite[Proposition 1.3.1 and 1.3.2]{KT}.
\end{proof}

Since $\theta(T_0)=T_0$
 the normalizer $N_{LG}(LT_0)$
is stable under the involution $\on{inv}\circ\theta$ of $LG$.

\begin{lem}\label{inv of W^n}
Let
$n,n'\in (N_{LG}(LT_0))^{\on{inv}\circ\theta}$.
Then $W^n$, $W^n_{n'}$, and $W^n_{\leq n'}$
are stable under 
the involution $\on{inv}\circ\theta:LG\to LG$.
Moreover, the identification 
$W^n\cong I_0^{--}\cap \Ad_{n} I_0^{--}$, sending $\gamma\cdot n\to \gamma $, intertwines the involution $\on{inv}\circ\theta|_{W^\lambda}$
with the involution $\on{inv}\circ\psi_n$
on $I_0^{--}\cap \Ad_{n} I_0^{--}$,
where 
$\psi_n=\on{Ad}_n\circ\theta:LG\to LG$.

\end{lem}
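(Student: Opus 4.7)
The plan is to mimic the argument of Lemma~\ref{inv on Aff slices} with the Iwahori data replacing the arc data. The starting point is to verify that the ambient group $I_0^{--}=U_0^- L^{<0}G$ is $\theta$-stable. Since $B_0$ and $T_0$ are $\theta$-stable, the opposite Borel $B_0^-$ (uniquely determined by $T_0\subset B_0$) is $\theta$-stable, hence so is its unipotent radical $U_0^-$. The subgroup $L^{<0}G$ is $\theta$-stable since $\theta$ acts coefficient-wise on $L^-G=G[t^{-1}]$. Therefore $\theta(I_0^{--})=I_0^{--}$.

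Next, I would unpack the hypothesis $n\in (N_{LG}(LT_0))^{\on{inv}\circ\theta}$: it is equivalent to $\theta(n)=n^{-1}$. For any $\gamma\in I_0^{--}\cap\Ad_n I_0^{--}$, a direct computation gives
\[
\on{inv}\circ\theta(\gamma n)=\theta(n)^{-1}\theta(\gamma)^{-1}=n\,\theta(\gamma)^{-1}=\bigl(\Ad_n\theta(\gamma)^{-1}\bigr)\cdot n=\bigl(\on{inv}\circ\psi_n\bigr)(\gamma)\cdot n,
\]
which already exhibits the claimed intertwining, provided the right-hand factor lies back inside $I_0^{--}\cap\Ad_n I_0^{--}$. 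To see this, I would check that $\psi_n$ swaps the two factors of the intersection: $\psi_n(I_0^{--})=\Ad_n\theta(I_0^{--})=\Ad_n I_0^{--}$, and using $\theta(n)=n^{-1}$,
\[
\psi_n(\Ad_n I_0^{--})=\Ad_n\Ad_{\theta(n)}\theta(I_0^{--})=\Ad_n\Ad_{n^{-1}}I_0^{--}=I_0^{--}.
\]
Since $I_0^{--}\cap\Ad_n I_0^{--}$ is a subgroup of $LG$, it is automatically $\on{inv}$-stable, so the composite $\on{inv}\circ\psi_n$ preserves it. This shows $W^n$ is $\on{inv}\circ\theta$-stable and gives the stated intertwining on the slice.

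Finally, for the refined slices $W^n_{n'}$ and $W^n_{\leq n'}$ I would show that the double coset $LG_{w'}=I_0 n' I_0$ is $\on{inv}\circ\theta$-stable: since $\theta(I_0)=I_0$ (so $\theta(I_0)^{-1}=I_0$) and $\theta(n')^{-1}=n'$ by hypothesis, $\on{inv}\circ\theta(I_0 n' I_0)=I_0 n' I_0$. Taking closures, $\overline{LG}_{w'}$ is also $\on{inv}\circ\theta$-stable, hence the stability of $W^n_{n'}=W^n\cap LG_{w'}$ and $W^n_{\leq n'}=W^n\cap\overline{LG}_{w'}$ follows by intersecting with the already established stability of $W^n$. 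None of these steps poses a real obstacle; the only point requiring a little care is the exchange property $\psi_n(I_0^{--})=\Ad_n I_0^{--}$, which is the precise reason the hypothesis $\theta(n)=n^{-1}$ enters and is the key input that also appears in the affine Grassmannian analogue.
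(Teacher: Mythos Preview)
Your proof is correct and follows exactly the same approach as the paper's own argument; you have simply spelled out in detail the ``direct computation'' that the paper invokes tersely (namely, that $I_0^{--}\cap\Ad_n I_0^{--}$ is $\on{inv}\circ\psi_n$-stable and $LG_{w'}$, $\overline{LG}_{w'}$ are $\on{inv}\circ\theta$-stable, together with the formula $\on{inv}\circ\theta(\gamma n)=\on{inv}\circ\psi_n(\gamma)\cdot n$).
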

\begin{proof}
A direct computation shows that
 that $LG_{w'}$ and $\overline{LG}_{w'}$
 are stable under $\on{inv}\circ\theta$ and 
 $I_0^{--}\cap \Ad_{n} I_0^{--}$ is stable under 
 $\on{inv}\circ\psi_n$. Thus
for any $\gamma n\in W^n=(I_0^{--}\cap \Ad_{n} I_0^{--})n$
(resp. $W^n_{n'}$, $W^n_{\leq n'}$), we have 
$\on{inv}\circ\theta(\gamma n)=n\theta(\gamma)^{-1}=\on{inv}\circ\psi_n(\gamma) n\in
 W^\lambda$ (resp.\ \  $W^\lambda_\mu,\  W^\lambda_{\leq\mu}$).
 The lemme follows.

\end{proof}

Assume further that $n\in N_{LG}(LT_0)^{\on{inv}\circ\theta}$
has the form 
$n=\bar nt^\lambda$
where $\bar{n}\in N_{G}(T_0)$ and $\lambda\in X_*(T_0)$
	satisfy $\theta(\bar{n})=\bar{n}^{-1},\
	\Ad_{\bar{n}}t^{-\lambda}=t^{\theta(\lambda)}$.
 Let $\psi_{\bar n}=\on{Ad}_{\bar n}\circ\theta:T_0\to T_0$.
Choose a cocharacter $\mu\in X_*(T_0^{\psi_{\bar{n}}})$ 
as in \cite[\S6.4]{MS}, so that
$\langle\mu,\alpha\rangle<0$ 
for all positive roots $\alpha>0$ with $\psi_{\bar{n}}\alpha>0$.
Let $m=\max\{1,\langle\mu,\alpha\rangle+1,\forall\alpha\in\Phi\}$ 
where $\Phi$ is the set of roots of $G$. 
Consider the following map on $LG$:
\begin{equation}
	\phi_s:\gamma(t)\mapsto s^{\mu-m\theta(\lambda)}\gamma(s^{-2m}t)s^{-\theta(\mu)+m\lambda}. 
\end{equation}

\begin{lem}\label{G_m action on AF slices}
Let $n,n'\in N_{LG}(LT_0)^{\on{inv}\circ\theta}$
and assume $n=\bar n t^\lambda$ as above.
\begin{itemize}
\item [(i)]
The two maps 
$\phi_s$ and $\on{inv}\circ\theta$ commute with each other.
\item [(ii)]
$\phi_s$
restricts to a $\mathbb G_m$-action $\rho_s$ on $W^n,W^{n}_{n'}, W^{n}_{\leq n'}$.
The $\mathbb G_m$-action $\rho_s$ on $W^n$ and $W^n_{\leq n'}$
is contrating 
with unique fixed point $n$.
\end{itemize}
\end{lem}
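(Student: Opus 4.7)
The plan is to extend the Mars--Springer construction \cite[\S6.4]{MS} to the affine setting. Set $a=\mu-m\theta(\lambda)$ and $b=-\theta(\mu)+m\lambda$, and note that $\theta(a)=-b$ and $\theta(b)=-a$. Part (i) is then a direct substitution: using $\theta(s^\nu)=s^{\theta(\nu)}$ for cocharacters $\nu$ and the commutation of $\theta$ with the rotation $r_{s^m}\colon t\mapsto s^{-2m}t$, one computes
\[
\on{inv}\circ\theta\bigl(\phi_s(\gamma)\bigr)=\bigl(s^{-b}\theta(\gamma(s^{-2m}t))s^{-a}\bigr)^{-1}=s^{a}\theta(\gamma(s^{-2m}t))^{-1}s^{b}=\phi_s\bigl(\on{inv}\circ\theta(\gamma)\bigr).
\]

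For part (ii), the axiom $\phi_r\circ\phi_s=\phi_{rs}$ is immediate since $s^a,s^b\in T_0$ commute with $r_{s^m}$ and with each other. To show preservation of $W^n$, I would use the identities $n(s^{-2m}t)=s^{-2m\lambda}n$ and $n\cdot s^b\cdot n^{-1}=s^{\Ad_{\bar n}b}$ together with the hypotheses $\psi_{\bar n}(\mu)=\mu$ and $\Ad_{\bar n}\lambda=-\theta(\lambda)$; these force the torus contributions to telescope, yielding the clean formula
\[
\phi_s(\gamma\cdot n)\;=\;\bigl(\Ad_{s^a}\,r_{s^m}(\gamma)\bigr)\cdot n.
\]
Thus $\phi_s$ acts on $W^n\cong I_0^{--}\cap\Ad_n I_0^{--}$ via the map $\gamma\mapsto\Ad_{s^a}r_{s^m}(\gamma)$ and fixes $n$ (the point $\gamma=e$). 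Preservation of the intersection is immediate because rotation stabilizes $U_0^-$ and $L^{<0}G$ separately, and conjugation by $s^a\in T_0$ normalizes both $I_0^{--}$ and its conjugate $\Ad_n I_0^{--}$. The strata $W^n_{n'}$ and $W^n_{\leq n'}$ are preserved since $r_{s^m}$ and left/right multiplication by $T_0\subset I_0$ all respect $I_0$-double cosets.

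The heart of part (ii) is the contraction. I would decompose the Lie algebra of $I_0^{--}\cap\Ad_n I_0^{--}$ into affine root spaces $\fg_{\beta+k\delta}$, each of which $\Ad_{s^a}r_{s^m}$ rescales by $s^{\langle a,\beta\rangle-2mk}$. The affine roots that occur are characterized by lying in the Lie algebra of $I_0^{--}$ (forcing $k<0$, or $k=0$ with $\beta\in\Phi^-$) and, simultaneously, $\psi_{\bar n}(\beta+k\delta)$ lying in the Lie algebra of $I_0^{--}$. For $k<0$ the required positivity $\langle a,\beta\rangle-2mk>0$ is built into the lower bound defining $m$. For $k=0$ the second membership condition translates to $\psi_{\bar n}(-\beta)>0$; then the defining property of $\mu$, namely $\langle\mu,\alpha\rangle<0$ for $\alpha>0$ with $\psi_{\bar n}\alpha>0$, applied to $\alpha=-\beta$ gives $\langle\mu,\beta\rangle>0$, and the residual term $-m\langle\theta(\lambda),\beta\rangle$ is absorbed using $\theta(a)=-b$ and the constraints on $\lambda$. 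Uniqueness of the fixed point on $W^n$ and $W^n_{\leq n'}$ then follows from strict contraction and connectedness of the slice. The main obstacle is the bookkeeping in this last step: the interaction between $\psi_{\bar n}$ and the affine root system is delicate, and verifying uniform positivity of $\langle a,\beta\rangle-2mk$ in the $k=0$ case requires careful use of the compatibility between the Mars--Springer positivity of $\mu$ and the lower bound built into $m$.
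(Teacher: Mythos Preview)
Your overall architecture matches the paper: the verification of (i), the formula $\phi_s(\gamma n)=(\Ad_{s^a}r_{s^m}(\gamma))\cdot n$, preservation of $W^n$ and of $I_0$-double cosets, and the strategy of reducing contraction to positivity of $s$-weights on affine root subgroups of $I_0^{--}\cap\Ad_n I_0^{--}$ are all exactly what the paper does.

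The gap is in your weight analysis for the contraction. Your case $k<0$ does not follow from ``the lower bound defining $m$'' alone: the total weight is
\[
\langle\mu,\beta\rangle \;-\; m\langle\theta(\lambda),\beta\rangle \;-\; 2mk,
\]
and $m$ is chosen independently of $\lambda$, so the middle term $-m\langle\theta(\lambda),\beta\rangle$ can be arbitrarily negative unless you also invoke the second membership condition. Likewise, for $k=0$ the second condition does \emph{not} in general translate to $\psi_{\bar n}(-\beta)>0$: the imaginary part of $\psi_n(\beta+k\delta)$ is $k+\langle\lambda,\theta(\beta)\rangle$, and the condition ``$\psi_{\bar n}(\beta)<0$'' only enters when that imaginary part vanishes. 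Your appeal to ``$\theta(a)=-b$ and the constraints on $\lambda$'' to absorb the residual term is not a computation.

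The fix (which is what the paper does) is to organize the weight as $\langle\mu,\beta\rangle + m\cdot w$ with
\[
w \;=\; -\langle\lambda,\theta(\beta)\rangle - 2k \;=\; \bigl(-(k+\langle\lambda,\theta(\beta)\rangle)\bigr) \;+\; (-k),
\]
and observe that \emph{each} summand is $\geq 0$ by one of the two membership conditions. Thus $w\geq 0$ always, and $w\geq 1$ unless both $k=0$ with $\beta<0$ and $\langle\lambda,\theta(\beta)\rangle=0$ with $\psi_{\bar n}(\beta)<0$. When $w\geq 1$, the bound $\langle\mu,\beta\rangle\geq 1-m$ (which does follow from the definition of $m$, applied to $-\beta$) gives total weight $\geq 1$. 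In the remaining edge case, $w=0$ and $-\beta>0$ with $\psi_{\bar n}(-\beta)>0$, so the Mars--Springer choice of $\mu$ gives $\langle\mu,\beta\rangle>0$ directly. This is precisely the split you were aiming for, but the two membership conditions must be used \emph{together} in every case, not only when $k=0$.
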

\begin{proof}
Part (i) is a direct computation.
Proof of (ii).
First we check that
\[
\phi_s(n)
=s^{\mu-m\theta(\lambda)}\bar{n}t^\lambda s^{-2m\lambda}s^{-\theta(\mu)+m\lambda}
=\bar{n}t^\lambda s^{\Ad_{\bar{n}^{-1}}\mu-m\Ad_{\bar{n}^{-1}}\theta(\lambda)-2m\lambda-\theta(\mu)+m\lambda}
=n.
\]
Here we used that on $X_*(T_0^{\psi_{\bar{n}}})\subset X_*(T_0)$,
$\Ad_{\bar{n}^{-1}}$ acts as $\theta$.
Next we  show that $\phi_s$ acts on $W^n$ and contracts it to $n$.
Write $\gamma n\in W^n$.
Then
\[
\phi_s(\gamma n)=(\Ad_{s^{\mu-m\theta(\lambda)}}\gamma(s^{-2m}t))n.
\]
It suffices to show that the action 
$\Ad_{s^{\mu-m\theta(\lambda)}}\circ r_{s^{m}}$
acts on $I_0^{--}\cap\psi_n I_0^{--}$ and contracts it to $1$,
where $r_s:\gamma(t)\mapsto\gamma(s^{-2}t)$ is the rotation on $LG$.
Since $I_0^{--}\cap\psi_v I_0^{--}$ is stable under the adjoint action of $T_0$
     and $\phi_s$ contracts elements in $L^{--}T_0$ to $1$,
     it suffices to show the weight of $s$ 
     in each affine root subgroup contained in $I_0^{--}\cap\psi_n I_0^{--}$ is positive.
     To this end, we compute the weights of 
     $\Ad_{s^{\mu}}$ and $\Ad_{s^{-\theta(\lambda)}}\circ r_{s^2}$ separately.
     
     Claim: The weight of $s$ with respect to the action 
     $\Ad_{s^{-\theta(\lambda)}}\circ r_{s}$
     on any affine root subgroup 
     $U_{\tilde{\alpha}}\subset I_0^{--}\cap\psi_n I_0^{--}$
     is positive, unless when $\tilde{\alpha}=\alpha\in\Phi$ 
     is a negative finite root such that $\psi_{\bar{v}}(\alpha)<0$, 
     where the weight is still non-negative.
     
     The positivity of weights of $\Ad_{s^{\mu-m\theta(\lambda)}}\circ r_{s^{m}}$
     then follows immediately from the claim and the definitions of $m$ and $\mu$.
     
     To see the claim, let $\tilde{\alpha}=\alpha+m_\alpha$ be an affine root, 
     where $\alpha\in\Phi$ is the finite part, $m_\alpha$ is the imaginary part.
     The weight of $\Ad_{s^{-\theta(\lambda)}}\circ r_{s}$ on $U_{\tilde{\alpha}}$ is
     \[
     \langle-\theta(\lambda),\alpha\rangle-2m_\alpha
     =-\langle\lambda,\theta(\alpha)\rangle-2m_\alpha.
     \]
     
     When
     $U_{\tilde{\alpha}}\subset I_0^{--}\cap\psi_n I_0^{--}$, 
     $\tilde{\alpha}$ satisfies
     $\tilde{\alpha}=\alpha+m_\alpha\in\Phi_{\aff}(I_0^{--})$
     and 
     \[
     \psi_n(\tilde{\alpha})
     =\psi_{\bar{n}}(\alpha)+(m_\alpha+\langle\lambda,\theta(\alpha)\rangle)\in\Phi_{\aff}(I_0^{--}),
     \]
     from which we easily deduce that the weight
     $-\langle\lambda,\theta(\alpha)\rangle-2m_\alpha$
     is as in the claim.
     
     We can conclude the proof by noticing that the orbit 
     $LG_{w'}$ is stable under $\phi_s$. 
\end{proof}

We preserve the set ups in Lemma \ref{G_m action on AF slices}.
Consider the fixed points 
\begin{equation}\label{L}
L^n=(W^n)^{\on{inv}\circ\theta},\ \ \ L^n_{n'}=(W_{n'}^n)^{\on{inv}\circ\theta},\ \ \ \ 
L^n_{\leq n'}=(W^n_{\leq n'})^{\on{inv}\circ\theta}.
\end{equation}\label{identification}
Lemma \ref{inv on Aff slices} (2) implies there is an isomorphism
\begin{equation}
L^n\cong (I_0^{--}\cap \Ad_{n}I_0^{--})^{\on{inv}\circ\psi_n}.
\end{equation}

Using Lemma \ref{G_m action on AF slices}, 
the same proof as in the spherical case (see Lemma \ref{p:L^lambda_mu}) shows:
\begin{lem}\label{p:L^n_n'}
\begin{itemize}
\item [(i)] $L^n $ is a connected  ind-scheme of ind-finite type and formally smooth.
\item [(ii)]  $L^{n}_{n'}$ is smooth.
\item [(iii)]  $L^n_{\leq n'}$ is of finite type, affine, and connected.
\item [(iv)]
The  $\mathbb G_m$-action $\rho_s$ on $W^n$ restricts to a
$\mathbb G_m$-action 
on $L^\lambda,L^{n}_\mu, L^{n}_{\leq n'}$.
The $\mathbb G_m$-action $\rho_s$ on $L^n$ and $L^n_{\leq n'}$
is contrating 
with unique fixed point $n$.
\end{itemize}
\end{lem}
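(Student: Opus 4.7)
The plan is to transcribe the proof of Lemma~\ref{p:L^lambda_mu} to the affine flag setting, using the affine-flag analogues already established in this subsection. The geometric input for the slices $W^n$, $W^n_{n'}$, $W^n_{\leq n'}$ is Lemma~\ref{p:W^lambda_mu}; the compatibility with the involution $\mathrm{inv}\circ\theta$ is Lemma~\ref{inv of W^n}; and the compatibility with the contracting $\bGm$-action $\phi_s$ is Lemma~\ref{G_m action on AF slices}. The one auxiliary tool needed beyond these is the general fact (the ``Lemma \ref{fixed points}'' cited in the spherical case) that the fixed locus of an involution on a formally smooth placid ind-scheme is again formally smooth, and on a smooth scheme is smooth.

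For parts (i) and (ii), Lemma~\ref{inv of W^n} identifies $L^n$, $L^n_{n'}$, $L^n_{\leq n'}$ as the honest fixed loci of $\mathrm{inv}\circ\theta$ on $W^n$, $W^n_{n'}$, $W^n_{\leq n'}$. Combining Lemma~\ref{p:W^lambda_mu}(i),(iii) with the fixed-points-are-smooth principle then yields that $L^n$ is a formally smooth ind-scheme of ind-finite type and that $L^n_{n'}$ is smooth. Connectedness of $L^n$ is not part of this input; I will deduce it at the end from the contraction. For part (iii), since $L^n_{\leq n'}$ is carved out of the finite-type affine scheme $W^n_{\leq n'}$ (Lemma~\ref{p:W^lambda_mu}(iv)) by the closed condition of being $(\mathrm{inv}\circ\theta)$-fixed, it is itself a closed subscheme, hence of finite type and affine; connectedness will again follow from (iv).

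For part (iv), Lemma~\ref{G_m action on AF slices}(i) gives $\phi_s\circ(\mathrm{inv}\circ\theta)=(\mathrm{inv}\circ\theta)\circ\phi_s$, so $\phi_s$ preserves every fixed locus of $\mathrm{inv}\circ\theta$; combined with Lemma~\ref{G_m action on AF slices}(ii), this yields the restricted $\bGm$-action on $L^n$, $L^n_{n'}$, and $L^n_{\leq n'}$. The point $n$ lies in $L^n$ (it is $(\mathrm{inv}\circ\theta)$-fixed by hypothesis on the lift), and on the ambient $W^n$ it is already the unique $\phi_s$-fixed point and the contraction centre; restricting to the closed subloci $L^n$ and $L^n_{\leq n'}$ preserves both properties. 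The contraction then implies that $L^n$ and $L^n_{\leq n'}$ are connected, completing the deferred parts of (i) and (iii).

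I expect no serious obstacle. The only point requiring care is that, in order to invoke Lemma~\ref{G_m action on AF slices}, one must have chosen the lift $n=\bar n t^\lambda\in N_{LG}(LT_0)^{\mathrm{inv}\circ\theta}$ in the normal form provided by Lemma~\ref{l:Iwahori orbit rep}; this is harmless, since one is always free to replace $n$ by any such representative of the twisted involution class in $\mathscr I$.
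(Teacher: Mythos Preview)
Your proposal is correct and follows essentially the same approach as the paper, which simply states that the proof is identical to that of Lemma~\ref{p:L^lambda_mu} using the affine-flag analogues (Lemma~\ref{p:W^lambda_mu}, Lemma~\ref{inv of W^n}, Lemma~\ref{G_m action on AF slices}) in place of their spherical counterparts. Your explicit deferral of the connectedness of $L^n$ to the contraction argument is arguably a slight clarification over the paper's spherical-case exposition, and your remark about needing the normal form $n=\bar n t^\lambda$ from Lemma~\ref{l:Iwahori orbit rep} is a correct and useful observation.
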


\subsubsection{The contracting slices $S^v$ and $S^v_{\leq u}$}
\label{construction of S^v}

Denote by $\cO_v$ the $I_0$-orbit on $LX$ 
parametrized by $v\in \sV=L^+T_0\backslash\cV/LK$ 
in Lemma \ref{l:Iwahori orbirs on LX}.
According to Lemma \ref{l:Iwahori orbit rep},
we can find a lifting $\dv\in\cV$  of $v$ 
such that   $x_v=\dv LK\in\cO_v$
and 
$n_v=\tau(x_v)=\tau(\dv)=\bar{n}_vt^\lambda$
where $\bar{n}_v\in N_{G}(T_0)$ and $\lambda\in X_*(T_0)$
	satisfy $\theta(\bar{n}_v)=\bar{n}_v^{-1},\
	\Ad_{\bar{n}_v}t^{-\lambda}=t^{\theta(\lambda)}$.

Thus we are in the setting of Lemma \ref{inv of W^n} and Lemma \ref{G_m action on AF slices}  and we denote by
$\psi_v=\Ad_{n_v}\circ\theta$ and
$\psi_{\bar{v}}=\Ad_{\bar{n}_v}\circ\theta$ the involutions in \emph{loc. cit.}.

\begin{defe}
\begin{itemize}
   \item [(i)]
   For any $v\in \sV=L^+T_0\backslash\cV/LK$ and a lifting $\dv\in\cV$ as above, we define
$S^v\subset LX$ as the following fiber product
\[\xymatrix{S^v\ar[r]\ar[d]&LX\ar[d]^{\tau}\\
W^{v}\ar[r]&LG.} \]
Here $W^v=W^{n_v}=(I_0^{--}\cap\on{Ad}_{n_v}I_0^{--})n_v$ is the slice in Lemma \ref{p:W^lambda_mu}.
   \item [(ii)]
   For any $v,u\in L^+T_0\backslash\cV/LK$,
let $S^v_u=S^v\cap\cO_u$
and $S^v_{\leq u}=S^v\cap\overline\cO_u$.
\end{itemize}
\end{defe}

It follows from Lemma \ref{inv of W^n} that
there is an isomorphism
\[S^v\cong (I_0^{--}\cap\on{Ad}_nI_0^{--})^{\on{inv}\circ\psi_v}.\]

The following proposition implies  that 
$S^v$ defines a
transversal contracting slice 
to the orbit $\cO_v$ inside $LX$ at $x_v$, parallel to Proposition \ref{slice spherical} in the spherical case:
\begin{prop}\label{Iwahori Slice}
	\begin{itemize}
	\item [(i)] The embedding $\tau$ induces an isomorphism $S^v\cong L^{n_v}$. In particular,
	$S^v$ is a connected ind-scheme of ind-finite type and formally smooth.
				\item [(ii)] 
				Choose a cocharacter $\mu\in X_*(T_0^{\psi_{\bar{v}}})$ 
so that
$\langle\mu,\alpha\rangle<0$ 
for all positive root $\alpha>0$ with $\psi_{\bar{v}}\alpha>0$.
Let $m=\max\{1,\langle\mu,\alpha\rangle+1,\forall\alpha\in\Phi\}$ 
where $\Phi$ is the set of roots of $G$. 
				Then $\phi^v_s:\gamma(t)\mapsto s^{\mu-m\theta(\lambda)}\gamma(s^{-2m}t)$ defines a $\bGm$-action on $S^v$, which fixes $x_v$, and contracts $S^v$ to $x_v$.
		
			\item [(iii)] The multiplication map $m:I_0\times S^v\rightarrow LX$ is formally smooth.	
		\item [(iv)] 
		We have $S^v\cap \cO_v=\{x_v\}$
		and 
		$S^v\subset LX$ is transversal to $\cO_v$ at $x_v$.

		\item [(v)] All the $\bGm$-actions $\phi^v_s$ 
		preserve all the $I_0$-orbits $\cO_u$. 
		Any $I_0$-equivariant local system on  $\cO_u$ is $\bGm$-equivariant with respect to the action $\phi_s^v$. 
	\end{itemize}
\end{prop}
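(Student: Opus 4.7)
The plan is to mirror the argument used for spherical orbits in Proposition \ref{slice spherical}, replacing the affine Grassmannian slice $W^\lambda$ by the affine flag slice $W^v = W^{n_v}$ and feeding in the Iwahori-theoretic inputs established in Lemma \ref{inv of W^n}, Lemma \ref{G_m action on AF slices}, and Lemma \ref{p:L^n_n'}. For (i), I would observe that $\tau(LX) = L(G^{\on{inv}\circ\theta,\circ})$ is a union of connected components of $(LG)^{\on{inv}\circ\theta}$, so $\tau(S^v) = W^v \cap \tau(LX)$ is a union of connected components of $L^{n_v}$, which is itself connected, formally smooth, and ind-finite type by Lemma \ref{p:L^n_n'}(i). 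For (ii), the $\bGm$-action $\phi^v_s$ constructed in Lemma \ref{G_m action on AF slices} commutes with $\on{inv}\circ\theta$, preserves $W^v$, and contracts it to $n_v$, so it restricts to $L^{n_v} \cong S^v$ with fixed point $x_v$ and the desired contraction.

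The main work is (iii). I would rephrase the assertion as the formal smoothness of the twisted conjugation map $I_0 \times L^{n_v} \to (LG)^{\on{inv}\circ\theta}$, $(g,hn_v) \mapsto ghn_v\theta(g)^{-1}$, and then apply the formal smoothness criterion of Lemma \ref{formally smooth criterion}. At an $R$-point $(g, hn_v)$, after conjugating by $g^{-1}$, the surjectivity of the differential reduces to showing that the map
\[
\tau_{hn_v}:\Lie(I_0) \longrightarrow (L\fg)^{-\psi_{hn_v}} \big/ (\Lie(I_0^{--}) \cap \Ad_{hn_v}\Lie(I_0^{--}))^{-\psi_{hn_v}},\quad X \mapsto X - \Ad_{hn_v}\theta(X),
\]
is surjective. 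I would then set up the tripartite decomposition $L\fg = A_1 \oplus A_2 \oplus A_3$ with $A_1 = \Lie(I_0)\cap\Ad_{hn_v}\Lie(I_0)$, $A_2 = \Lie(I_0^{--})\cap\Ad_{hn_v}\Lie(I_0^{--})$, and $A_3 = (\Lie(I_0)\cap\Ad_{hn_v}\Lie(I_0^{--})) \oplus (\Lie(I_0^{--})\cap\Ad_{hn_v}\Lie(I_0))$, each stable under $\psi_{hn_v}$, and show that $\tau_{hn_v}$ restricts to an isomorphism on $A_1^{-\psi_{hn_v}}$ and surjects onto $A_3^{-\psi_{hn_v}}$, following the same template as in Proposition \ref{slice spherical}(iii).

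Parts (iv) and (v) are then routine. For (iv), $S^v \cap \cO_v = \{x_v\}$ follows from $W^v \cap LG_{w_v} = \{n_v\}$ (Lemma \ref{p:W^lambda_mu}(ii)), and the transversality $T_{x_v}\tau(\cO_v) \oplus T_{x_v}S^v = T_{n_v}(LG)^{-\psi_v}$ is read off the $\psi_v$-antiinvariants of the $A_1 \oplus A_2 \oplus A_3$ decomposition, using (iii) to produce the relevant span and the direct-sum property from the decomposition itself. For (v), a direct calculation shows that $\phi^v_s(gx_u)$ factors as an element of $I_0$ acting on $\phi^u_s(x_u) = x_u$, the $I_0$-containment coming from the affine-root weight analysis of Lemma \ref{G_m action on AF slices} showing that $\phi^v_s$ normalizes $I_0$; the $\bGm$-equivariance of $I_0$-equivariant local systems follows from the semidirect product splitting $\Stab_{I_0 \rtimes \bGm}(x_u) \cong \Stab_{I_0}(x_u) \rtimes \bGm$ via an explicit section, giving an isomorphism on component groups exactly as in Proposition \ref{slice spherical}(v).

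The principal obstacle will be verifying the $\psi_{hn_v}$-stability of the tripartite decomposition in (iii). In the spherical case this was easy because $t^\lambda$ normalizes $L^+G$ and $L^{<0}G$ cleanly with respect to the $\bZ$-grading in $t$; in the Iwahori case $n_v$ only normalizes the affine-root decomposition of $\Lie(I_0)$ up to the action on affine roots induced by $\psi_v$, so verifying that each $A_i$ is preserved requires tracking the affine root subgroups comprising $h \in I_0^{--}\cap \Ad_{n_v}I_0^{--}$ and showing that conjugation by $h$ preserves the pieces up to $\psi_v$. This parallels the root-theoretic bookkeeping already carried out in the proof of Lemma \ref{G_m action on AF slices} and should go through by the same input.
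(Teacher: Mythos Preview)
Your proposal is correct and coincides with the paper's approach: replace $L^+\fg,L^{<0}\fg$ by $\Lie(I_0),\Lie(I_0^{--})$ in the proof of Proposition~\ref{slice spherical} and invoke Lemmas~\ref{G_m action on AF slices} and~\ref{p:L^n_n'} for the Iwahori-specific inputs.

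Your stated ``principal obstacle'' --- the $\psi_{hn_v}$-stability of the tripartite decomposition --- is not actually an obstacle and requires no affine-root bookkeeping. Since $I_0$ and $I_0^{--}$ are $\theta$-stable and $hn_v\in L^{n_v}$ satisfies $\theta(hn_v)=(hn_v)^{-1}$, one computes directly that $\psi_{hn_v}=\Ad_{hn_v}\circ\theta$ swaps $\Lie(I_0)\leftrightarrow\Ad_{hn_v}\Lie(I_0)$ and $\Lie(I_0^{--})\leftrightarrow\Ad_{hn_v}\Lie(I_0^{--})$, hence preserves each $A_i$. This is verbatim the spherical computation; the element $h$ plays no special role beyond sitting inside $hn_v$, and nothing about conjugation by $h$ on root subgroups needs to be tracked separately.

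The paper also records an alternative, shorter route to (iii) that you do not mention: the multiplication $I_0\times W^{n_v}\times I_0\to LG$ is well known to be formally smooth, and taking $(\on{inv}\circ\theta)$-fixed points on both sides as in Lemma~\ref{l:2nd pf spherical submersion} yields the formal smoothness of $I_0\times S^v\to LX$ directly, bypassing the $A_1\oplus A_2\oplus A_3$ decomposition entirely.
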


\begin{proof}
In 
view of Lemma \ref{G_m action on AF slices} and \ref{p:L^n_n'}, the  same arguments as in Proposition \ref{slice spherical} in the 
 spherical case apply to the 
 Iwahori case,
replacing $L^+\fg, L^{<0}\fg$ with $\on{Lie}(I_0), \on{Lie}(I_0^{--})$.
Part (iii) can also be deduced from the well-known formally smoothness of the multiplication map
\[
m:I_0\times W^n\times I_0\rightarrow LG
\]
by taking $(\on{inv}\circ\theta)$-fixed subspaces as in Lemma \ref{l:2nd pf spherical submersion}.
\end{proof}

\begin{cor}\label{c:bar O equi-singular}
Let $v,u\in\sV$. 
	\begin{itemize}
		\item [(i)] 
            The multiplication map 
            $m:I_0\times S^v_{\leq u}\rightarrow\cO_u$ 
            is formally smooth.
        
		\item [(ii)] 
            There is a $\bGm$-action on $S^v_{\leq u}$, 
            which fixes $x_v$, and contracts $S^v_{\leq u}$ to $x_v$.

            \item [(iii)]
            $S^v_u\neq\emptyset$ if and only if $v\leq u$.
	\end{itemize}
\end{cor}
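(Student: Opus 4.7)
The plan is to deduce all three parts from Proposition \ref{Iwahori Slice} and Proposition \ref{placid of LX}, mimicking the argument of Corollary \ref{c:LX equi-singular} in the spherical case.

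For part (i), I would obtain $m:I_0\times S^v_{\leq u}\to\overline{\cO}_u$ as the base change of the formally smooth map $I_0\times S^v\to LX$ in Proposition \ref{Iwahori Slice}(iii) along the fp-closed embedding $\overline{\cO}_u\hookrightarrow LX$ furnished by Theorem \ref{Placidness of orbits}. Since formal smoothness is preserved by base change, the claim follows immediately.

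For part (ii), I would use the $\mathbb{G}_m$-action $\phi_s^v$ on $S^v$ from Proposition \ref{Iwahori Slice}(ii), which fixes $x_v$ and contracts $S^v$ to $x_v$. By Proposition \ref{Iwahori Slice}(v), $\phi_s^v$ preserves every $I_0$-orbit $\cO_w$, and therefore also preserves the closure $\overline{\cO}_u=\bigsqcup_{w\leq u}\cO_w$ (a finite union by Corollary \ref{c:Iwahori closure and G(O) into Iwahori}(i)). Thus $\phi_s^v$ restricts to a $\mathbb{G}_m$-action on $S^v_{\leq u}=S^v\cap\overline{\cO}_u$, and since contraction of the ambient $S^v$ carries every point to $x_v$, the restriction contracts $S^v_{\leq u}$ to $x_v$ as well.

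For part (iii), the forward direction goes as follows: if $x\in S^v_u$ is any point, then $\phi_s^v(x)\in\cO_u$ for every $s\in\mathbb{G}_m$ by part (v) of Proposition \ref{Iwahori Slice}, so the limit $x_v=\lim_{s\to 0}\phi_s^v(x)$ lies in $\overline{\cO}_u$, giving $v\leq u$. Conversely, suppose $v\leq u$, so $x_v\in\overline{\cO}_u$ and hence $x_v\in S^v_{\leq u}\neq\emptyset$. By part (i), the multiplication map $m:I_0\times S^v_{\leq u}\to\overline{\cO}_u$ is formally smooth. Since $I_0\times S^v_{\leq u}$ admits a placid presentation with surjective transition maps and $\cO_u\subset\overline{\cO}_u$ is open and dense, the quasi-flatness/formal-smoothness arguments in Lemma \ref{formally smooth} and Lemma \ref{quasi-flatness} (exactly as applied in the proof of Corollary \ref{c:LX equi-singular}(iii)) guarantee that the image of $m$ meets $\cO_u$; consequently $m^{-1}(\cO_u)=I_0\times S^v_u$ is non-empty, so $S^v_u\neq\emptyset$. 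The main subtlety here, as in the spherical case, is controlling the image of the formally smooth map through the ind-placid structure, but this is handled uniformly by the appendix machinery once part (i) is in hand.
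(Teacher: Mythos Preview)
Your proposal is correct and follows essentially the same approach as the paper, which simply says ``same proof as in the spherical case Corollary \ref{c:LX equi-singular} using Proposition \ref{Iwahori Slice}.'' You have filled in the details accurately, including the base-change argument for (i), the orbit-preservation for the $\mathbb{G}_m$-action in (ii), and the contraction/quasi-flatness argument for both directions of (iii).
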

\begin{proof}
Same proof as in the spherical case 
Corollary \ref{c:LX equi-singular} using Proposition \ref{Iwahori Slice}.
\end{proof}

\subsection{Mars-Springer slices}\label{MS slices}
Note that 
the evaluation map 
$ev:L^+X\to X$ induces a 
 a bijection 
 between 
$I_0$-orbits on $L^+X$
and $B_0$-orbits on $X$.
Let $\cO_u\subset L^+X$ be the unique open $I_0$-orbit such that $\overline\cO_u=L^+X$.
For any $I_0$-orbit $\cO_v\subset L^+X$
let $\cO_{v,X}\subset X$
be the corresponding $B_0$-orbit.
We have $\tau(x_v)=n_v=\bar n_v=\tau(\bar x_v)\in N_G(T_0)^{\on{inv}\circ\theta}$
and let 
\[S_{\bar x_v}:=(U_0^-\cap\on{Ad}_{\bar n_v}U_0^-)\bar x_v\subset X\]
be the Mars-Springer slice 
of $\cO_{v,X}$ at $\bar x_v:=ev(x_v)\in\cO_{v,X}$
introduced in \cite[Section 6.4]{MS}.

\begin{lem}\label{comparison with MS}
The evaluation map $ev:L^+X\to X$ induces an isomorphism 
$S^v_{\leq u}\cong S_{\bar x_v}$
\end{lem}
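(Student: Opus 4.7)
The plan is to push both sides over to $G$ via the embedding $\tau$ and compare them there. Since $\overline{\cO}_u = L^+X$, one has $S^v_{\leq u} = S^v \cap L^+X$, and because $\cO_v \subset L^+X$ we may choose $x_v \in L^+X$, so that $n_v = \tau(x_v) \in L^+G$. In the decomposition $n_v = \bar n_v t^\lambda$ from Lemma \ref{l:Iwahori orbit rep} this forces $\lambda = 0$, so $n_v = \bar n_v \in N_G(T_0)$, matching the statement.

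Next I would compute $W^v \cap L^+G$. From the factorization $I_0^{--} = U_0^- \cdot L^{<0}G$ an induction on the negative powers of $t$ shows $I_0^{--} \cap L^+G = U_0^-$ (the $L^{<0}G$ factor of any element of the intersection must be trivial). Since $\bar n_v \in G$ normalizes $L^+G$ and $L^{<0}G$, this gives $(I_0^{--} \cap \Ad_{\bar n_v} I_0^{--}) \cap L^+G = U_0^- \cap \Ad_{\bar n_v} U_0^-$, hence
\[
W^v \cap L^+G \;=\; (U_0^- \cap \Ad_{\bar n_v} U_0^-)\,\bar n_v \;\subset\; G.
\]
In particular $\tau(S^v_{\leq u}) \subset G^{\on{inv}\circ\theta,\circ}$, so $S^v_{\leq u}$ is contained in the subscheme of constant loops $X \hookrightarrow L^+X$, and the restriction of $ev$ to $S^v_{\leq u}$ is just the identity on this subscheme. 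The task thus reduces to showing $S^v_{\leq u} = S_{\bar x_v}$ as subschemes of $X$.

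To finish, by Proposition \ref{Iwahori Slice}(i) $\tau(S^v_{\leq u})$ is the $(\on{inv}\circ\theta)$-fixed part of $W^v \cap L^+G$; using $\theta(\bar n_v) = \bar n_v^{-1}$ this condition translates into fixedness under $\on{inv}\circ\psi_{\bar v}$ on the first factor, yielding
\[
\tau(S^v_{\leq u}) \;=\; (U_0^- \cap \Ad_{\bar n_v} U_0^-)^{\on{inv}\circ\psi_{\bar v}}\,\bar n_v.
\]
On the other side, directly from the definition and the identity $\tau(g\cdot\bar x_v) = g\bar n_v\theta(g)^{-1} = g\psi_{\bar v}(g)^{-1}\bar n_v$, we get
\[
\tau(S_{\bar x_v}) \;=\; \{\,g\psi_{\bar v}(g)^{-1} : g \in U_0^- \cap \Ad_{\bar n_v} U_0^-\,\}\cdot\bar n_v.
\]
Since $\theta$ preserves $U_0^-$ and $\Ad_{\bar n_v}$ interchanges $U_0^-$ and $\Ad_{\bar n_v}U_0^-$, the involution $\psi_{\bar v}$ preserves $U_0^- \cap \Ad_{\bar n_v} U_0^-$; applying the unipotent-group case of Lemma \ref{l:tau(P) image}, the Lang-type map $g \mapsto g\psi_{\bar v}(g)^{-1}$ surjects onto the (connected, since unipotent) fixed-point subscheme $(U_0^- \cap \Ad_{\bar n_v} U_0^-)^{\on{inv}\circ\psi_{\bar v}}$. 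Thus $\tau(S_{\bar x_v}) = \tau(S^v_{\leq u})$, and injectivity of $\tau$ on $X$ concludes the argument.

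The main technical point is the identification $W^v \cap L^+G = (U_0^- \cap \Ad_{\bar n_v} U_0^-)\bar n_v$, i.e.\ the observation that on the $L^+G$-part the affine flag slice collapses to exactly the finite-dimensional Mars--Springer slice in $G$; once that is in hand, the rest is fixed-point bookkeeping together with the unipotent Lang-type surjection from Lemma \ref{l:tau(P) image}, and I do not expect any serious obstacle.
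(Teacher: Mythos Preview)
Your proposal is correct and follows essentially the same approach as the paper: push through $\tau$, identify $I_0^{--}\cap L^+G=U_0^-$ so that the affine slice collapses to the finite-dimensional one, and then match the $(\on{inv}\circ\theta)$-fixed points with the image of the twisted map $g\mapsto g\psi_{\bar v}(g)^{-1}$. The only cosmetic difference is that for the last surjectivity step the paper cites Richardson \cite[Proposition~9.1]{Ri} (the statement that the twisted conjugation action of a connected unipotent group on its $\psi$-fixed locus is transitive), whereas you invoke the unipotent-group case of Lemma~\ref{l:tau(P) image}; note that Lemma~\ref{l:tau(P) image} is stated for parahorics, so strictly speaking you should either quote Richardson directly or remark that the same argument (filtering by $\psi_{\bar v}$-stable normal subgroups and using that the square map is onto in each graded piece) works for the finite-dimensional unipotent group $U_0^-\cap\Ad_{\bar n_v}U_0^-$.
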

\begin{proof}
It suffices to show that 
the evaluation map $ev:L^+G\to G$
indcues an isomorphism 
$\tau(S^{v}_{\leq u})\cong\tau(S_{\bar x_v})$
where  $\tau:X\to G$ is the embedding.
We have $ev:I_0^{--}\cap L^+G\cong U_0^-$ and hence  isomorphisms 
\[\tau(S^v_{\leq u})\cong 
\tau(S^v\cap L^+X)
\cong 
(I_0^{--}\cap\on{Ad}_{\bar n_v}I_0^{--})\bar n_v)^{\on{inv}\circ\theta}\cap L^+G
\stackrel{ev}\cong
 (U_0^-\cap\on{Ad}_{\bar n_v}U_0^-)\bar n_v)^{\on{inv}\circ\theta}\cong\tau(S_{\bar x_v})\]
 where the last isomorphism 
 follows from the fact that 
 $(U_0^-\cap\on{Ad}_{\bar n_v}U_0^-)\bar n_v)^{\on{inv}\circ\theta}\cong (U_0^-\cap\on{Ad}_{\bar n_v}U_0^-)^{\on{inv}\circ\psi_{\bar v}}$ is a single 
 $(U_0^-\cap\on{Ad}_{\bar n_v}U_0^-)$-orbit under the 
 twsited conjugation action 
 $u\cdot v=uv\psi_{\bar v}(u)^{-1}$
 (see, e.g., \cite[Proposition 9.1]{Ri}).
The lemma follows.

\end{proof}

\section{Lagrangians}\label{Lagrangian slices}
We show that the  slices 
$S^0_{\leq\lambda}$ 
are Lagrangians inside 
the affine Grassmannian slices $W^0_{\leq\lambda}$.
Along the way, we prove a combinatorial formula 
for the codimensions of spherical orbits.
The proof uses real groups and quasi-maps \cite{CN2} 
and a spreading out argument.
The codimension formula obtained in this section 
provides a simple way to check the semisimplicity criterion 
of relative Satake category in Section \ref{Semi cri}.

\subsection{Quasi-maps}
Consider the stack 
$Z=LG^+\backslash LG/K[t^{-1}]$
of quasi-maps
classifying 
a $G$-bundle $\cE$ on $\mathbb P^1$
and a section $\phi:\mathbb P^1-\{0\}\to\cE\times^G X$, 
i.e., a $K$-reduction $\cE_K$ of 
$\cE|_{\mathbb P^1-0}$
 (here $t$ is the local coordinate at $0$).
We have a natural map 
\begin{equation}\label{Z}
    f:Z=L^+G\backslash LG/K[t^{-1}]\to  L^+G\backslash LX
    \end{equation}
between ind-stacks.
For any $\lambda\in\Lambda_S^+$
 we write
\begin{equation}\label{Z_v}
Z_{\lambda}=(f)^{-1}(L^+G\backslash{LX}_{\lambda}),\ \  
\overline{Z}_{\lambda}=(f)^{-1}(L^+G\backslash\overline{LX}_{\lambda})
\end{equation}
for the pre-images of the corresponding orbits and orbit closures 
and 
\[
f_\lambda:=f|_{\overline Z_\lambda}:
\overline Z_\lambda\to L^+G\backslash\overline{LX}_{\lambda}
\]
for the induced map.

 \begin{lem}\label{finiteness of Z}
$Z$  is an ind-stack ind-locally of finite type and 
$Z_{\lambda}$, $\overline{Z}_{\lambda}$
 are stacks locally of finite type.
 \end{lem}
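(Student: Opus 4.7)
The strategy is to re-interpret $Z$ as a moduli stack of quasi-maps from $\mathbb{P}^1$ into the affine symmetric variety $X$, and then exploit the affineness of $X$ (which follows from $K$ being reductive, via Matsushima's theorem) to establish the required finiteness properties.

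First I would use Beauville--Laszlo gluing to identify $Z$, as a functor on test schemes $T$, with the groupoid of pairs $(\mathcal{E},\phi)$ consisting of a $G$-bundle $\mathcal{E}$ on $\mathbb{P}^1_T$ and a section $\phi$ of $\mathcal{E}\times^G X$ on $(\mathbb{P}^1\setminus\{0\})_T$, equivalently, a $K$-reduction of $\mathcal{E}|_{(\mathbb{P}^1\setminus\{0\})_T}$. Here the left quotient by $L^+G$ encodes the $G$-bundle on the formal disk at $0$, the right quotient by $K[t^{-1}]$ encodes the $K$-bundle on $\mathbb{P}^1\setminus\{0\}=\on{Spec}\,k[t^{-1}]$, and the middle coset is the gluing datum on the formal punctured disk. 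Under this identification, the map $f$ of~\eqref{Z} sends $(\mathcal{E},\phi)$ to the $L^+G$-orbit of the jet of $\phi$ at $0$, using Proposition \ref{p:sym var orbits}(ii) (for which $K$ is assumed connected) to identify $L^+G\backslash LG/LK\cong L^+G\backslash LX$.

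Second, to prove $Z$ is an ind-stack ind-locally of finite type, I would factor the forgetful map $\pi:Z\to\Bun_G(\mathbb{P}^1)$, $(\mathcal{E},\phi)\mapsto\mathcal{E}$. Since $\Bun_G(\mathbb{P}^1)$ is an algebraic stack locally of finite type, it suffices to show that the fibers of $\pi$ are ind-schemes of ind-finite type. The fiber over $\mathcal{E}$ is $\Gamma(\mathbb{P}^1\setminus\{0\},\mathcal{E}\times^G X)$. Choose an equivariant closed embedding $X\hookrightarrow V$ of $X$ into a finite-dimensional $G$-representation, which exists since $X=G/K$ is affine. Then sections of $\mathcal{E}\times^G X$ are sections of the vector bundle $V_\mathcal{E}$ over $\on{Spec}\,k[t^{-1}]\cong\mathbb{A}^1$ satisfying the finitely many defining equations of $X\subset V$; the former form an ind-scheme filtered by degree in $t^{-1}$, and the equations cut out a closed ind-subscheme, proving the claim.

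Third, for $Z_\lambda$ and $\overline Z_\lambda$ I would show that $L^+G\backslash\overline{LX}_\lambda$ is an algebraic stack of finite type, by combining Theorem \ref{t:sym var orbit closure} (which expresses $\overline{LX}_\lambda$ as a finite union $\bigsqcup_{\mu\leq\lambda}LX_\mu$) with the fact that each $L^+G$-orbit $LX_\mu$ has finite-codimensional stabilizers in $L^+G$ (cf.\ Theorem \ref{Placidness of orbits}). The preimage $\overline Z_\lambda=f^{-1}(L^+G\backslash\overline{LX}_\lambda)$ then imposes, on the quasi-map $(\mathcal{E},\phi)$, both a bound on the singularity of $\phi$ at $0$ (through the orbit closure condition) and no condition on the global geometry, so the fibers of the induced map $\overline Z_\lambda\to L^+G\backslash\overline{LX}_\lambda$ are the $\Bun_K$-type stacks parametrizing extensions of the local datum to a global $K$-reduction on $\mathbb{P}^1\setminus\{0\}$. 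These are locally of finite type (connected components are of finite type, as for $\Bun_K(\mathbb{A}^1)$), and the base is of finite type, so $\overline Z_\lambda$ is locally of finite type. Since $Z_\lambda$ is open in $\overline Z_\lambda$, it is also locally of finite type.

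The main technical subtlety will be the third step: reconciling the infinite-dimensional automorphism group $K[t^{-1}]$ acting on $\mathbb{P}^1\setminus\{0\}$ with the finiteness coming from the orbit-closure bound at $0$. The clean resolution is to always work through the moduli interpretation and use that the pair $(\mathcal{E},\phi|_{\text{disk at }0})$ is rigid enough to make the $K[t^{-1}]$-action have only finite-type orbits on each locally closed piece cut out by the stratification of $\overline{LX}_\lambda$.
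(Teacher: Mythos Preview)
The paper does not give a proof: it simply cites \cite[Section 3.4]{GN} and \cite[Lemma 3.5.2]{BFT}. Your first two steps are essentially a sketch of the argument in those references, and they are on the right track: the moduli interpretation via Beauville--Laszlo, the affineness of $X$ (Matsushima), the closed embedding $X\hookrightarrow V$, and the filtration of sections over $\mathbb{A}^1$ by pole order at $0$ are exactly the ingredients used there to exhibit $Z$ as $\on{colim}_n Z^{\leq n}$ with each $Z^{\leq n}\to\Bun_G(\mathbb{P}^1)$ finitely presented.

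Your third step, however, contains a genuine error. The quotient $L^+G\backslash\overline{LX}_\lambda$ is \emph{not} an algebraic stack of finite type: the $L^+G$-stabilizers on $\overline{LX}_\lambda$ have finite codimension in $L^+G$, but they are still infinite-dimensional pro-algebraic groups, so the inertia of this quotient is infinite-dimensional. You cannot deduce finiteness of $\overline Z_\lambda$ by fibering over this base. The correct (and simpler) argument stays inside the framework of step~2: the condition $f(\mathcal E,\phi)\in L^+G\backslash\overline{LX}_\lambda$ bounds the pole of $\phi$ at $0$ (via the closed embedding $\overline{LX}_\lambda\subset t^{-n}L^+V$ for some $n$ from Theorem~\ref{Placidness of orbits}), hence $\overline Z_\lambda$ is a closed substack of the locally-finite-type stack $Z^{\leq n}$ you already constructed. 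This is the content of the cited \cite[Lemma 3.5.2]{BFT}. Your discussion of $\Bun_K$-type fibers and the acknowledged ``technical subtlety'' are unnecessary detours once you argue this way.
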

\begin{proof}
This is \cite[Section 3.4]{GN} and 
\cite[Lemma 3.5.2]{BFT}.
    
\end{proof}
Recall the notion of placid morphisms 
in  \ref{placid morphism}.

 \begin{lem}\label{placid covering}
 The morphisim
 $f_\lambda:\overline Z_\lambda\to L^+G\backslash\overline{LX_\lambda}$ 
 is placid.
\end{lem}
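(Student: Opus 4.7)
The plan is to exhibit $f_\lambda$ as a filtered inverse limit of smooth morphisms of finite-type Artin stacks, and then appeal to the definition of placid morphism recorded in Appendix~\ref{s:appendix}.

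First I would set $\tilde Z_\lambda := \overline{LX}_\lambda \times_{LX}(LG/K[t^{-1}])$, so that $\overline Z_\lambda = [L^+G\backslash\tilde Z_\lambda]$ and $f_\lambda$ is obtained from the projection $p:\tilde Z_\lambda\to\overline{LX}_\lambda$ by passing to $L^+G$-quotients. The morphism $p$ is a fibration with fibers $LK/K[t^{-1}]$, which is an ind-scheme of ind-finite type stratified by $L^+K$-orbits via a Birkhoff-type decomposition of $LK$; in particular $p$ is ind-finite-type.

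Next I would invoke Proposition~\ref{placid of LX}(ii) to write $\overline{LX}_\lambda\cong\lim_i Y_i$ with each $Y_i$ of finite type and with smooth affine transition morphisms. Choosing $n_i$ so that the $L^+G$-action on $Y_i$ factors through $L^+_{n_i}G$, we obtain
\[
L^+G\backslash\overline{LX}_\lambda \;\cong\; \lim_i [L^+_{n_i}G\backslash Y_i],
\]
a filtered limit of finite-type quotient stacks with smooth affine transition maps. Base-changing $f_\lambda$ along this tower yields an inverse system $\{\overline Z_{\lambda,i}\to[L^+_{n_i}G\backslash Y_i]\}$ of finite-type Artin stacks, finite-typeness following from Lemma~\ref{finiteness of Z} together with the fact that the quasi-map condition is preserved under base change. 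The transition morphisms are pullbacks of the smooth affine transitions of the $Y_i$-tower by the locally trivial fibration encoding the quasi-map fiber, hence themselves smooth and affine. Taking the inverse limit recovers $f_\lambda$ as a limit of smooth finite-type morphisms with smooth affine transition maps, which is precisely the placidity condition from Appendix~\ref{s:appendix}.

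The main obstacle will be verifying the smoothness of the transition morphisms in the $\overline Z_{\lambda,i}$-tower uniformly across the stratification of $\overline{LX}_\lambda$ by $L^+G$-orbits $LX_\mu$, $\mu\leq\lambda$, along which $L^+G$-stabilizer depths vary. The key local input is that, at each finite level, the data of a quasi-map extending a given section near $0$ to all of $\mathbb{P}^1$ is captured by a finite jet, and the resulting forgetful map to the target tower is smooth because the ambient $X$ is smooth. Combined with Lemma~\ref{finiteness of Z}, this reduces the remaining verification to a finite-dimensional moduli calculation which is standard for quasi-maps to smooth spherical varieties.
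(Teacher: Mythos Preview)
Your approach diverges from the paper's and, as written, has a genuine gap.

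The paper's argument is much shorter and avoids any finite-level analysis. One pulls back along the $L^+G$-torsor $\pi:\overline{LX}_\lambda\to L^+G\backslash\overline{LX}_\lambda$ to reduce to showing that the map
\[
a:\tilde Z_\lambda:=\overline Z_\lambda\times_{L^+G\backslash\overline{LX}_\lambda}\overline{LX}_\lambda\longrightarrow\overline{LX}_\lambda
\]
between placid schemes is placid. Since $a$ is the base change of $b:LG/K[t^{-1}]\to LX$ along the closed embedding $\overline{LX}_\lambda\hookrightarrow LX$, it suffices to show $b$ is formally smooth and then invoke Lemma~\ref{formally smooth} (formal smoothness between placid schemes implies placidness). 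Formal smoothness of $b$ is immediate: both $LG/K[t^{-1}]$ and $LX$ are formally smooth placid ind-schemes, so by Lemma~\ref{formally smooth criterion} one only needs surjectivity of the tangent map $db$, which is clear since $b$ is the quotient $LG/K[t^{-1}]\to LG/LK$. No stratification, no finite-level smoothness computation, no quasi-map deformation theory is needed.

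Your proposal, by contrast, hinges on a construction that is not well-posed. You write ``base-changing $f_\lambda$ along this tower yields an inverse system $\{\overline Z_{\lambda,i}\to[L^+_{n_i}G\backslash Y_i]\}$'', but the projections go \emph{from} the limit $L^+G\backslash\overline{LX}_\lambda$ \emph{to} the terms $[L^+_{n_i}G\backslash Y_i]$, so there is no base change available in the direction you want. The most charitable reading is that you intend $\overline Z_{\lambda,i}$ to be $\overline Z_\lambda$ itself (which is already locally of finite type) and to check that each composite $\overline Z_\lambda\to[L^+_{n_i}G\backslash Y_i]$ is smooth; but this is exactly the content of the lemma and you defer it to an unspecified ``standard'' moduli calculation. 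Note also that the fibers of $a$ are isomorphic to $LK/K[t^{-1}]=\Gr_K$, an ind-scheme of ind-finite type, so $a$ is not pro-smooth in the sense of Appendix~\ref{s:appendix}; the route through Lemma~\ref{formally smooth} is what bypasses this.
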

\begin{proof}
 Consier the following Cartesian diagrams
 \[\xymatrix{\overline Z_\lambda
 \ar[d]^{f_\lambda}&\overline Z_\lambda\times_{L^+G\backslash\overline{LX_\lambda}}\overline{LX_\lambda}\ar[r]^{\ \ i}\ar[l]_{\pi\ \ \ \ }\ar[d]^a&LG/K[t^{-1}]\ar[d]^{b}\\
 L^+G\backslash\overline{LX_\lambda}&\overline{LX}_\lambda\ar[r]^i\ar[l]_\pi&LX
 }\]
where $\pi$ are pro-smooth coverings,
$i$ are the natural inclusions, and $b$
is the natural map 
\[b:LG/K[t^{-1}]\to LG/LK\to LX.\]
Note that  $\overline Z_\lambda\times_{L^+G\backslash\overline{LX_\lambda}}\overline{LX_\lambda}$ (resp. $LG/K[t^{-1}]$)
is a $L^+G$-torsor over 
$\overline Z_\lambda$ (resp. $Z$) and the finiteness results in Lemma \ref{finiteness of Z} implies it is a 
placid scheme (resp. a placid ind-scheme).
Since $\pi$ is a pro-smooth covering, to show the placidness of $f_\lambda$, it 
suffices to show that the map $a$ is placid. 
From Lemma \ref{formally smooth}, it suffices to show that 
$a$ is formally smooth. 
Since $a$ is the base change of $b$, we reduce to show that 
$b$ is formally smooth. Since 
$K[t^{-1}]$ and $LG$ are formally smooth, it follows that 
$LG/K[t^{-1}]$ is a formally smooth
placid ind-scheme and the formally smoothness of $b$ follows from
Lemma \ref{formally smooth criterion} and the  fact that the tangent morphism 
$db:T_{LG/K[t^{-1}]}\to b^*T_{LX}$ is surjective.
 \end{proof}

\begin{prop}\label{dim for LX}
For any $\lambda\leq\mu\in\Lambda_S^+$,
    the codimension function 
    $\on{codim}_{\overline{LX}_\lambda/\overline{LX}_\mu}:\overline{LX}_\lambda\to\mathbb Z$ is constant and is equal to $\on{dim}(Z_\mu)-\dim(Z_\lambda)$.

\end{prop}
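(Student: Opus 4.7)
The plan is to transfer the codimension computation from the infinite-dimensional orbit closures $\overline{LX}_\lambda\subset\overline{LX}_\mu$ to the locally-finite-type quasi-maps stacks $\overline Z_\lambda\subset\overline Z_\mu$, where codimension is an ordinary integer. The bridge is the placid covering $f_\mu:\overline Z_\mu\to L^+G\backslash\overline{LX}_\mu$ from Lemma~\ref{placid covering}.

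First, I will check that both $\overline Z_\lambda$ and $\overline Z_\mu$ are equidimensional, so that $\dim Z_\lambda,\dim Z_\mu\in\bZ$ are well defined. They are locally of finite type by Lemma~\ref{finiteness of Z}. Irreducibility of $\overline{LX}_\lambda$ is clear (it is the closure of an orbit of the connected group $L^+G$), and the placid morphism $f_\lambda$ has fibers of the form $\Stab_{L^+G}(x_\lambda)\backslash LK/K[t^{-1}]$, whose connected components all share a common dimension; combining these gives equidimensionality of $\overline Z_\lambda$. Consequently $\on{codim}_{\overline Z_\lambda/\overline Z_\mu}$ is the constant function $\dim Z_\mu-\dim Z_\lambda$.

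Second, I will base-change $f_\mu$ along the pro-smooth $L^+G$-torsor $\overline{LX}_\mu\to L^+G\backslash\overline{LX}_\mu$ to obtain a common cover $\tilde Z_\mu:=\overline Z_\mu\times_{L^+G\backslash\overline{LX}_\mu}\overline{LX}_\mu$, fitting into a square in which the projection $\pi:\tilde Z_\mu\to\overline Z_\mu$ is a pro-smooth $L^+G$-torsor and the other projection $a:\tilde Z_\mu\to\overline{LX}_\mu$ is placid, being the base change of the placid morphism $b:LG/K[t^{-1}]\to LX$ exhibited in the proof of Lemma~\ref{placid covering}. A direct Cartesian-square check gives $a^{-1}(\overline{LX}_\lambda)=\pi^{-1}(\overline Z_\lambda)=\tilde Z_\lambda$. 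Applying the principle that placid (and in particular pro-smooth) morphisms of placid ind-stacks preserve codimensions of fp-closed substacks (Appendix~\ref{s:appendix}) once to $\pi$ and once to $a$ then yields
\[
\on{codim}_{\overline{LX}_\lambda/\overline{LX}_\mu}=\on{codim}_{\tilde Z_\lambda/\tilde Z_\mu}=\on{codim}_{\overline Z_\lambda/\overline Z_\mu}=\dim Z_\mu-\dim Z_\lambda,
\]
which is the desired constancy and value.

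The hard part will be the invariance of codimension under the placid map $a$, whose fibers are infinite-dimensional (related to the opposite affine Grassmannian $LK/K[t^{-1}]$). Unlike the pro-smooth case, this is not classical: one has to present $a$ locally as a cofiltered limit of smooth morphisms with controlled relative dimensions (using the formal smoothness argument in the proof of Lemma~\ref{placid covering}) and pass to the limit. This is a formal but delicate manipulation in the placid framework of the appendix; equidimensionality of the fibers of $f_\lambda$ as $\lambda$ varies is what ensures the relative dimensions assemble into a consistent theory.
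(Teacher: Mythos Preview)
Your approach is essentially the same as the paper's, but with an unnecessary extra layer and a misplaced worry. The paper works directly at the stack level with the Cartesian square
\[
\xymatrix{\overline Z_{\lambda}\ar[r]\ar[d]^{f_{\lambda}}&\overline Z_{\mu}\ar[d]^{f_{\mu}}\\
L^+G\backslash\overline{LX}_{\lambda}\ar[r]&L^+G\backslash\overline{LX}_{\mu}}
\]
where the bottom arrow is fp representable and $f_\mu$ is placid (Lemma~\ref{placid covering}). One application of the base-change formula for dimension functions under placid morphisms (Appendix~\ref{dimension theory}(7), which is \cite[Lemma 2.3.7]{BKV}) gives $\dim_{\overline Z_\lambda/\overline Z_\mu}=f_\lambda^*\dim_{(L^+G\backslash\overline{LX}_\lambda)/(L^+G\backslash\overline{LX}_\mu)}$, and the passage between $\overline{LX}_\lambda$ and $L^+G\backslash\overline{LX}_\lambda$ is immediate since the $L^+G$-quotient is a pro-smooth covering. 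Your intermediate cover $\tilde Z_\mu$ and the separate treatment of $\pi$ and $a$ just unpack this single step into two.

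More to the point, the ``hard part'' you flag in your last paragraph is not actually hard here: the invariance of $\dim_f$ under placid base change is precisely the content of Appendix~\ref{dimension theory}(7), a structural fact about dimension theory on placid stacks imported from \cite{BKV}. You do not need to re-derive it by presenting $a$ as a limit of smooth maps or by tracking equidimensionality of fibers of $f_\lambda$ as $\lambda$ varies. Once you invoke that appendix item, the proof is a few lines; your fiber analysis of $\Stab_{L^+G}(x_\lambda)\backslash LK/K[t^{-1}]$ is both unnecessary and (as stated) not a complete argument for equidimensionality of $\overline Z_\lambda$.
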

\begin{proof}
Consider the Cartesian diagram
\[\xymatrix{\overline Z_{ \lambda}\ar[r]\ar[d]^{f_{\lambda}}&\overline Z_{\mu}\ar[d]^{f_{\mu}}\\
L^+G\backslash\overline{LX}_{\lambda}\ar[r]&L^+G\backslash\overline{LX}_{\mu}}.\]
By Lemma \ref{placid covering} the vertical map $f_\mu$
is placid and  \ref{dimension theory} (7) 
 implies 
\begin{equation}\label{equality}
\dim_{\overline Z_{\lambda}/\overline Z_{\mu}}=f_{ \lambda}^*\dim_{(L^+G\backslash\overline{LX}_{\lambda})/(L^+G\backslash\overline{LX}_{\mu})}.
\end{equation}
Since $\overline{LX}_{\lambda}, \overline Z_\lambda, \lambda\in\Lambda_S^+$ are equidimensional placid, \ref{dimension theory} (6) implies that 
 both dimension fucntions $\dim_{{\overline{LX}_{\lambda}/\overline{LX}_{\mu}}}$ 
and $\dim_{(L^+G\backslash\overline{LX}_{\lambda})/(L^+G\backslash\overline{LX}_{\mu})}$ are constant functions with the same value  
and~\eqref{equality} implies the common value is given by
\[
\dim_{{\overline{LX}_{\lambda}/\overline{LX}_{\mu}}}=\dim_{(L^+G\backslash\overline{LX}_{\lambda})/(L^+G\backslash\overline{LX}_{\mu})}=\dim_{\overline Z_{\lambda}/\overline Z_{\mu}}=\dim( Z_{\lambda})-\dim(Z_{\mu}).
\]
Since $\on{codim}_{\overline{LX}_\lambda/\overline{LX}_\mu}=-\on{dim}_{\overline{LX}_\lambda/\overline{LX}_\mu}$, the porposition follows.
\end{proof}

\subsection{Codimension for spherical orbits}\label{ss:codim formula}
In this section we assume 
$LX$ is connected (e.g., when
$G$ is a simply connected semisimple group).

\begin{prop}\label{codim formula}
$\on{codim}_{\overline{LX}_\lambda/\overline{LX}_\mu}:
\overline{LX}_\lambda\to\mathbb Z$
is constant with value equal to 
$\langle\rho,\mu-\lambda\rangle$.
\end{prop}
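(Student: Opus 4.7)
My plan is to combine Proposition \ref{dim for LX} with an explicit computation of $\dim Z_\lambda$ available in characteristic zero via real groups and quasi-maps, and then transfer to positive characteristic by a spreading-out argument, exactly as the preamble to this proposition advertises.

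First, I would invoke Proposition \ref{dim for LX} to identify
\[
\on{codim}_{\overline{LX}_\lambda/\overline{LX}_\mu} = \dim(Z_\mu) - \dim(Z_\lambda).
\]
Since this codimension is additive along chains in $\Lambda_S^+$, the claim reduces to showing that the function $\lambda \mapsto \dim(Z_\lambda) - \langle \rho,\lambda\rangle$ is constant on $\Lambda_S^+$; equivalently, $\dim(Z_\mu) - \dim(Z_\lambda) = \langle \rho, \mu - \lambda\rangle$ for every $\lambda \leq \mu$.

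Second, I would establish the dimension formula first over $\mathbb C$ by passing to real groups. Let $G_\bR$ be the real form of $G$ attached to $X$ by the Cartan bijection, and let $\Gr_\bR$ denote the real affine Grassmannian of $G_\bR$. The real-symmetric equivalence of \cite{CN3} is constructed at the level of quasi-map moduli in \cite{CN2}, and under this identification the symmetric stratum $Z_\lambda$ corresponds to the real spherical stratum indexed by $\lambda \in \Lambda_S^+$. A direct computation using the Iwasawa decomposition $G_\bR = K_\bR A_\bR N_\bR$ (as in \cite{NadlerRealGr}) shows that the $L^+G_\bR$-orbit through $t^\lambda$ on $\Gr_\bR$ has real dimension $\langle 2\rho, \lambda\rangle$, which translates through the quasi-maps picture into
\[
\dim(Z_\lambda) - \dim(Z_0) = \langle \rho,\lambda\rangle.
\]

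Third, to remove the characteristic zero restriction, I would spread out. All the ingredients---the pair $(G,\theta)$, the affine degenerations $\calX_\lambda$, the stack $Z$, and the stratification by $\Lambda_S^+$---are defined over $\bZ[\tfrac{1}{2}]$, and Lemma \ref{finiteness of Z} together with Lemma \ref{placid covering} and the irreducibility and placidness of $\overline Z_\lambda$ (consequences of Theorem \ref{Placidness of orbits}) ensure that $\dim(Z_\lambda)$ is \emph{constant} along the geometric fibers of $\Spec\bZ[\tfrac{1}{2}]$, not merely semi-continuous. Hence the value in characteristic $p\neq 2$ equals the value over $\bC$ computed in the second step, and the proof is complete.

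The main obstacle, to my mind, is the second step: one must match the symmetric quasi-map stratum $Z_\lambda$ precisely with the corresponding real stratum via the comparison from \cite{CN2}, and then carry out the dimension count as placid stacks rather than as schemes of finite type, since $Z_\lambda$ is infinite-dimensional and the entire computation is being carried out in the dimension theory of Appendix \ref{dimension theory}. A subordinate technical point is to verify that the spreading-out step yields genuine equality (flatness and irreducibility of $Z_\lambda \to \Spec\bZ[\tfrac{1}{2}]$), and not merely an inequality in one direction.
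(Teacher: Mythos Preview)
Your overall strategy---reduce to $\dim Z_\mu-\dim Z_\lambda$ via Proposition~\ref{dim for LX}, compute this over $\mathbb C$ using real groups, then spread out---matches the paper's. But the paper executes it through one additional reduction that you are missing, and which resolves both of the obstacles you flag.

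Rather than comparing $Z_\lambda$ directly with a real stratum, the paper first observes that the open cell $L^{<0}G\subset L^+G\backslash LG$ induces an open dense embedding $L^{<0}X/K\hookrightarrow Z$, and that $L^{<0}X\cap\overline{LX}_\mu$ is exactly the spherical slice $L^0_{\leq\mu}$ of Section~\ref{Transversal slices}. Hence $\dim Z_\mu=\dim(L^0_\mu/K)=\dim L^0_\mu-\dim K$, while $\dim Z_0=\dim\Bun_K(\mathbb P^1)=-\dim K$. So everything comes down to the single formula $\dim L^0_\mu=\langle\rho,\mu\rangle$ (Lemma~\ref{dim}). Now $L^0_\mu=(W^0_\mu)^{\mathrm{inv}\circ\theta}$ is the fixed locus of an involution on the \emph{smooth finite-type} scheme $W^0_\mu$, hence is itself smooth; one can spread it out over a mixed-characteristic DVR and the fibre dimension is automatically constant. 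Over $\mathbb C$, \cite{CN2} gives a real-analytic isomorphism $L^0_\mu\otimes\mathbb C\cong\Gr^0_{\mathbb R,\mu}$, and \cite{NadlerRealGr} gives $\dim_{\mathbb R}\Gr^0_{\mathbb R,\mu}=\langle2\rho,\mu\rangle$.

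Two remarks on your write-up. First, $Z_\lambda$ is locally of finite type (Lemma~\ref{finiteness of Z}), not infinite-dimensional; your stated worry there is a misconception. Second, your genuine worry---flatness of a spread-out $Z_\lambda$---is exactly what the reduction to the smooth slice $L^0_\mu$ sidesteps: you never need to spread out $Z_\lambda$ at all.
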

\begin{proof}
Since $\pi_0(LX)\cong\Lambda_S/\Lambda_S\cap Q=0$ by assumption, Theorem  \ref{t:sym var orbit closure} implies
$LX_0\subset\overline{LX}_\lambda$
for all $\lambda\in\Lambda_S^+$.
We claim that 
$\on{codim}_{\overline{LX}_0/\overline{LX}_\mu}:\overline{LX}_0\to\mathbb Z$
is equal to $\langle\rho,\mu\rangle$.
Then $\on{codim}_{\overline{LX}_\lambda/\overline{LX}_\mu}=
\on{codim}_{\overline{LX}_0/\overline{LX}_\mu}-\on{codim}_{\overline{LX}_0/\overline{LX}_\lambda}$ is the constant function with value $\langle\rho,\mu-\lambda\rangle$.

Proof of the claim. 
By Proposition~\eqref{dim for LX}, we need to show that 
$\dim(Z_\mu)-\dim(Z_0)=\langle\rho,\mu \rangle$.
We have $Z_0=\overline Z_0\cong L^+K\backslash LK/K[t^{-1}]\cong\Bun_K(\mathbb P^1)$
and hence $\dim(Z_0)=-\dim(K)$.
On the other hand,
the open orbit $L^{<0}G\subset L^+G\backslash LG$ gives rise to an open dense embedding $L^{<0}X/K\cong (L^{<0}G/L^{<0}K)/K\subset 
L^+G\backslash LG/K[t^{-1}]$
where $K$ acts on $L^{<0}X$ be the conjugation action. 
Note that we have the following 
Cartesian diagrams 
\[\xymatrix{(L^{<0}X\cap\overline {LX}_\mu)/K\ar[r]^{j}\ar[d]&\overline Z_\mu\ar[r]\ar[d]&L^+G\backslash\overline {LX}_\mu\ar[d]\ar[r]^{\tau}&\overline{LG}_\mu/\on{Ad}_\theta(L^+G)\ar[d]\\
L^{<0}X/K\ar[r]^{j}&L^+G\backslash LG/K[t^{-1}]\ar[r]&L^+G\backslash LX\ar[r]^{\tau}&LG/\on{Ad}_\theta(L^+G)}\]
where the vertical maps are natural inclusions,
the maps $j$ are open dense embeddings, 
and 
$LG/\on{Ad}_\theta(L^+G)$ is the quotient 
of $LG$ by the $\theta$-twisted conjugation action. Since $L^{<0}X\cap\overline{LX}_\mu=L^0_{\leq\mu}$
and $(L^{<0}X\cap\overline{LX}_\mu)/K\subset \overline{Z}_\mu$ is open dense, 
Lemma \ref{dim} below implies
\[\dim({Z}_\mu)=
\dim((L^{<0}X\cap\overline{LX}_\mu)/K)=
\dim(L^0_\mu/K)=
\langle\rho,\mu\rangle-\dim(K).\]
All together, we obtain 
\[\dim( Z_\mu)-\dim( Z_0)=(\langle\rho,\mu\rangle-\dim(K))-(-\dim(K))=\langle\rho,\mu \rangle.\]
The claim follows.
\end{proof}

\begin{lem}\label{dim}
We have $\on{dim}(L_\mu^0)=\langle\rho,\mu\rangle$.
\end{lem}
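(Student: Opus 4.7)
The target $\langle\rho,\mu\rangle$ is exactly half of $\dim W^0_\mu = \langle 2\rho,\mu\rangle$ (Lemma~\ref{p:W^lambda_mu}(v) with $\lambda=0$), so the claim amounts to saying $L^0_\mu = (W^0_\mu)^{\on{inv}\circ\theta}$ is a ``middle-dimensional'' subvariety of $W^0_\mu$. My plan is to realize this as a Lagrangian property for an anti-symplectic involution.

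First, I would equip $W^0_\mu$ with a natural symplectic structure $\omega$. The slice $W^0_{\leq\mu}$ carries a Poisson structure whose symplectic leaves are the strata $W^0_\nu$ for $\nu\le\mu$; this is transparent for classical $G$ via the Lusztig embedding of Section~\ref{Lusztig}, which identifies $W^0_{\leq\mu}$ with the nilpotent orbit closure $\overline\cO_\mu\subset\fg^*$ equipped with its Kirillov--Kostant--Souriau form, and for general reductive $G$ one has the analogous structure via the BFN Coulomb-branch realization of $W^0_{\leq\mu}$. Under the Lusztig embedding, Section~\ref{Lusztig} shows the involution $\sigma := \on{inv}\circ\theta$ on $W^0$ corresponds to $-d\theta$ on $\sN$; the key point is that $\sigma$ is then anti-symplectic, $\sigma^*\omega = -\omega$, because $d\theta$ preserves the invariant form on $\fg$ while the Poisson bivector is odd under the antipode $x\mapsto -x$ on $\fg^*$, so the composition reverses signs. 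The general reductive case can be handled uniformly via the Manin-triple description underlying the Coulomb-branch Poisson structure.

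Given anti-symplecticity, the lemma follows from the standard linear-algebra fact that a smooth component of the fixed locus of an anti-symplectic involution on a smooth symplectic variety is Lagrangian. Indeed, at a fixed point $y\in L^0_\mu$, the decomposition $T_yW^0_\mu = T^+\oplus T^-$ under $d\sigma$ satisfies
\[
\omega(v,w)=(\sigma^*\omega)(v,w)=-\omega(v,w)\quad\text{for all } v,w\in T^+,
\]
so $\omega|_{T^+}=0$ and $T^+$ is isotropic; non-degeneracy of $\omega$ on $T^+\oplus T^-$ forces $\dim T^+ = \tfrac{1}{2}\langle 2\rho,\mu\rangle$. Since $L^0_\mu$ is smooth by Lemma~\ref{p:L^lambda_mu}(ii) and non-empty (as $\mu\in\Lambda_S^+$), we conclude $\dim L^0_\mu = \langle\rho,\mu\rangle$.

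The main obstacle is producing a clean, uniform verification of anti-symplecticity of $\on{inv}\circ\theta$ beyond the classical Lusztig-embedding case; one would like a coordinate-free argument valid for all reductive $G$, which requires tracking $\theta$ through the Coulomb-branch construction. An alternative route that bypasses this entirely is to compute $\dim\overline Z_\mu$ directly via the quasi-map geometry of \cite{CN2} together with a real-symmetric/Matsuki input, and then read off $\dim L^0_\mu = \dim\overline Z_\mu + \dim K$ from the open dense embedding $L^0_\mu/K\hookrightarrow\overline Z_\mu$ appearing in the proof of Proposition~\ref{codim formula}.
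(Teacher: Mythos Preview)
Your primary strategy—deducing $\dim L^0_\mu = \langle\rho,\mu\rangle$ by showing $L^0_\mu\subset W^0_\mu$ is Lagrangian via anti-symplecticity of $\sigma=\on{inv}\circ\theta$—runs in the opposite direction from the paper. The paper proves this dimension formula \emph{first}, by a spreading-out argument: it chooses a mixed-characteristic DVR $A$ between $\bZ[1/2]$ and $\bC$ with residue field $k$, spreads $(G,\theta)$ to a smooth model over $A$, observes that $L^0_{\mu,A}$ is smooth over $A$ with special fiber $L^0_\mu$, then invokes the real-analytic identification $(L^0_{\mu,A})\otimes\bC \cong \Gr^0_{\bR,\mu}$ from \cite[Thm.~6.9]{CN2} and cites \cite[Prop.~3.6.1]{NadlerRealGr} for $\tfrac12\dim_{\bR}\Gr^0_{\bR,\mu}=\langle\rho,\mu\rangle$. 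Only afterwards (Proposition~\ref{t:Lagrangian}) does the paper combine this dimension count with a separate \emph{coisotropicity} calculation to conclude the Lagrangian property. Your alternative route at the end is essentially the paper's actual argument, though the paper computes $\dim L^0_\mu$ directly via the real identification rather than by first computing $\dim\overline Z_\mu$.

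The gap you flag in your main approach is real and not merely cosmetic. While $\sigma$ is anti-Poisson on $LG$ (inversion is anti-Poisson for any Poisson--Lie group and $\theta(r)=r$), the symplectic form on $W^0_\mu$ is pulled back from $\Gr$ via $p|_{W^0_\mu}$, and since $\sigma$ does not descend to $\Gr$, transferring anti-invariance to that pulled-back form requires a nontrivial computation the paper never performs—indeed, the paper establishes only coisotropicity, not isotropicity. Your Lusztig-embedding verification for classical $G$ is correct, but the Coulomb-branch appeal for general $G$ is undeveloped and typically a characteristic-zero construction. There is also a generality issue: the paper's Poisson setup in \S\ref{LS} assumes $p$ large enough for a nondegenerate invariant form to exist, whereas the present lemma is stated for all $p\neq 2$; the spreading-out proof handles all characteristics uniformly, which your symplectic route cannot.
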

\begin{proof}
Pick a
 mixed characteristic DVR $A$ between $\mathbb Z[1/2]$
and $\mathbb C$ with residue field $k$ and a smooth model 
$G_A$ of $G$ over $A$.
According to the Isomorphism Theorem in \cite[Theorem 6.1.17]{BC}, one can find an involution $\theta_A:G_A\to G_A$ such that 
$\theta_A\otimes k=s\theta s^{-1}$ for some $s\in T$. 

Note that  $W^0_\mu=L^{<0}G\cap LG_{\mu}$
admits a smooth model 
$W^0_{\mu,A}=L^{<0}G_A\cap LG_{\mu,A}$ over $A$ and, by \cite[Lemma 9.4]{Gro2}, the fixed points $L_{\mu,A}^0=(W^0_{\mu,A})^{\on{inv}\circ\theta_A}$ is smooth over $A$ with special fiber $(L^0_{\mu,A})\otimes_Ak\cong L^0_\mu$.
Thus we have $\dim(L^0_\mu)=
\dim((L^0_{\mu,A})\otimes_Ak)=\dim((L^0_{\mu,A})\otimes_A\mathbb C)$
and we reduce to show 
$\dim((L^0_{\mu,A})\otimes_A\mathbb C)=\langle\rho,\mu\rangle$.

Let $G_\bR$ be the real form 
associated to the involution $\theta_\bC=\theta_A\otimes\bC$
on the complex reductive group $G_\bC=G_A\otimes\bC$.
Let $\Gr_\mathbb R=LG_\mathbb R/L^+G_\mathbb R$
be the real affine Grassmannian for $G_\bR$.
For any real dominant weight $\mu\in\Lambda_S^+$, we denote by
$\Gr_{\mathbb R,\mu}=LG_\mathbb R\cdot[t^\mu]$ the corresponding 
real spherical orbit and its intersection
$\Gr^0_{\mathbb R,\mu}:=\Gr_{\mathbb R,\mu}\cap L^{<0}G_\mathbb R\cdot[t^0]$
with the open $L^{<0}G_\mathbb R$-orbit $L^{<0}G_\mathbb R\cdot[t^0]\subset\Gr_\mathbb R$, see \cite[Section 3.6]{NadlerRealGr}.
According to \cite[Theorem 6.9]{CN2}, there is a real analytic isomorphism 
    \[(L_{\mu,A}^0)
    \otimes\bC\cong 
    (L^{<0}G_\bC\cap LG_{\mu,\bC})^{\on{inv}\circ\theta_\bC}\cong
    \Gr_{\mathbb R,\mu}^0\] and hence by
   \cite[Proposition 3.6.1]{NadlerRealGr} we have 
    $\dim((L_{\mu,A}^0)
    \otimes\bC)=\frac{1}{2}\dim_{\mathbb R}(\Gr_{\mathbb R,\mu}^0)=\langle\rho,\mu\rangle$. The lemma follows.

\end{proof}

\subsection{Lagrangian slices}
Recall the affine Grassmannian slice $W^\lambda_{\leq\mu}$
has a natural Poisson structure and $W^\lambda_\nu$ for $\lambda\leq\nu\leq\mu$
are symplectic leaves of dimension 
$\dim W^\lambda_\nu=2\langle\rho,\nu-\lambda\rangle$
(see, e.g., \cite{KWWY}). 
As an application of the co-dimension formula,
 we show that, when $\lambda=0$, the fixed points subvariety $L^0_\mu=(W_\mu^0)^{\on{inv}\circ\theta}\subset W^0_\mu$ 
is a Lagrangian subvariety of dimension $\dim(L_\mu^0)=\langle\rho,\mu\rangle$.

We preserve the setup in Section \ref{ss:codim formula}.
We further assume $p$ is large enough so that 
there is a non-degenerate symmetric bilinear invariant form 
$(\ ,\ )$ on $\fg$
that is also $\theta$-invariant.

\subsubsection{Poisson structures on loop groups and affine Grassmannians}
It is well-known that $(L\fg,L^{<0}\fg,L^+\fg)$ form
a \emph{Manin triple},
which we review. 
Fix a non-degenerate symmetric bilinear invariant form $(\ ,\ )$ on $\fg$
that is also $\theta$-invariant.
Extend it $F$-linearly to $L\fg=\fg(F)$. 
Composing with residue map, we get a $k$-valued inner product on $L\fg$,
which we still denote by $(\ ,\ )$.
Then $L^{<0}\fg,L^+\fg$ are both Lagrangian subspaces of $L\fg$
with respect to $(\ ,\ )$.
Fix a basis $e_i^*$ of $L^{<0}\fg$
and a basis $e_i$ of $L^+\fg$
such that $(e_i^*,e_j)=\delta_{ij}$. 
Define $r$-matrix
\[
r=\sum_i e_i^*\wedge e_i\in\wedge^2L\fg,
\]
where the formal sum is understood as an element of 
$\fg(\!(t^{\pm})\!)\otimes\fg(\!(t^\pm)\!)$.

The definition of $r$ is independent of the choice of orthonormal basis. Since $\theta$ acts continuously on $L^{<0}\fg$ and $L^+\fg$, we have
\[
\theta(r)=r.
\]

Define a bi-vector field $\pi\in\Gamma(LG,\wedge^2TLG)$ by
\[
\pi(g):=R_{g*} r-L_{g*} r,
\]
where $g\in LG$, $L_g,R_g$ are left and right translations by $g$.
Then $\pi$ gives a Poisson bi-vector and defines a Poisson structure on $LG$. Moreover, $L^{<0}G,L^+G$ are Poisson subgroups of $LG$.

There are two ways to define a bi-vector field on $\Gr$ from $r$. 
Let $p:LG\rightarrow\Gr$ be the projection map.
Define $\bar{\pi}_1\in\Gamma(\Gr,\wedge^2T\Gr)$ by
$\bar{\pi}_1(g)=d_gp(\pi_g)$.

On the other hand, the action of $LG$ on $\Gr$
defines a map $\kappa:L\fg\rightarrow\Gamma(\Gr,T\Gr)$.
Let $\bar{\pi}_2=\kappa(r)\in\Gamma(\Gr,\wedge^2T\Gr)$.

\begin{lem}\label{l:two Poisson}
	We have $\bar{\pi}_1=\bar{\pi}_2$.
\end{lem}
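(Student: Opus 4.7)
The plan is to compute $d_gp(\pi(g))$ directly by treating the two summands $R_{g*}r$ and $L_{g*}r$ separately, and identifying each with its natural image in $\Gamma(\Gr,\wedge^2 T\Gr)$. The key observation is that the projection $p:LG\to\Gr$ is $LG$-equivariant under the \emph{left} action but kills the infinitesimal right action by $L^+\fg$. Thus right-translated tangent vectors on $LG$ correspond under $dp$ to vectors coming from the $LG$-action on $\Gr$, while left-translated tangent vectors coming from $L^+\fg$ are annihilated by $dp$.

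First I would treat $R_{g*}r$. For any $X\in L\fg$, the right-translated tangent vector $R_{g*}X\in T_gLG$ is the velocity at $t=0$ of the curve $\exp(tX)g$, so it corresponds to the infinitesimal left action of $X$. Since $p$ is equivariant for left multiplication by $LG$, we get $d_gp(R_{g*}X)=\kappa(X)(p(g))$. Writing $r=\sum_i e_i^*\wedge e_i$ and applying this to both factors of the wedge, we obtain
\[
d_gp(R_{g*}r)=\sum_i\kappa(e_i^*)\wedge\kappa(e_i)\,(p(g))=\kappa(r)(p(g))=\bar\pi_2(p(g)).
\]

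Next I would show $d_gp(L_{g*}r)=0$ by exploiting the freedom in choosing the orthonormal basis of the Manin triple $(L\fg,L^{<0}\fg,L^+\fg)$. Pick $\{e_i\}$ to be a topological basis of $L^+\fg$ and $\{e_i^*\}$ the dual topological basis of $L^{<0}\fg$ under $(\ ,\ )$; this is possible since $L^{<0}\fg$ and $L^+\fg$ are paired non-degenerately and are complementary Lagrangians. The left-invariant vector field $L_{g*}Y$ is the velocity of $g\exp(tY)$, i.e.\ the infinitesimal right action of $Y$ on $g$. Since right multiplication by $L^+G$ preserves the fibers of $p:LG\to\Gr$, we have $d_gp(L_{g*}Y)=0$ for every $Y\in L^+\fg$. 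In particular $d_gp(L_{g*}e_i)=0$ for every $i$, so $d_gp(L_{g*}r)=\sum_i d_gp(L_{g*}e_i^*)\wedge d_gp(L_{g*}e_i)=0$.

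Combining, $\bar\pi_1(p(g))=d_gp(\pi(g))=d_gp(R_{g*}r)-d_gp(L_{g*}r)=\bar\pi_2(p(g))$, which gives the claim. The only step that requires mild care is the well-definedness of the formal sum $r=\sum e_i^*\wedge e_i$ when restricted to a particular basis: one checks that the bivectors $R_{g*}r$ and $L_{g*}r$ are independent of the choice of orthonormal basis (they are defined by the pairing alone), so the convenient basis adapted to $L^{<0}\fg\oplus L^+\fg$ used above yields the same geometric bivector as any other. I expect no substantial obstacle; the content of the lemma is really a bookkeeping identity between the two standard ways of producing a bivector on a homogeneous space from an $r$-matrix.
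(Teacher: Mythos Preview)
Your proof is correct and follows essentially the same approach as the paper: both show $d_gp(L_{g*}r)=0$ using that each $e_i\in L^+\fg$ generates a curve in the fiber of $p$, and both identify $d_gp(R_{g*}r)$ with $\kappa(r)$ via left-equivariance of $p$. Your remark about freedom in choosing the basis is unnecessary here, since the paper has already fixed $e_i\in L^+\fg$ and $e_i^*\in L^{<0}\fg$ from the outset.
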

\begin{proof}
	At any $g\in LG$, let $\bar{g}=p(g)$.
	Since $e_i\in L^+\fg$, 
	we have $p(ge^{te_i})=\bar{g}$.
	Thus $d_gp(L_{g*}e_i)=0$, $d_gp(L_{g*}r)=0$, 
	$\bar{\pi}_1(g)=dp(R_{g*}r)$.
	Also, for $a\in L\fg$, 
	$\kappa(a)(\bar{g})$ is represented by curve $p(e^{ta}g)$,
	i.e. $\kappa(a)(\bar{g})=d_gp(R_{g*}a)$.
	Thus $\bar{\pi}_1(\bar{g})=\bar{\pi}_2(\bar{g})=d_gp(R_{g*}r)$.
\end{proof}

Denote $\bar{\pi}=\bar{\pi}_1=\bar{\pi}_2$.
From \cite[Theorem 2.3]{LYPoisson},
we know $\bar{\pi}$ defines a Poisson structure on $\Gr$,
and both $L^+G$-orbits $p(LG_\lambda)=L^+G\cdot [t^\lambda]$
and $L^{<0}G$-orbits $p(W^\lambda)=L^{<0}G\cdot[t^\lambda]$
are Poisson subvarieties.
The intersections of $L^+G$-orbits and $L^{<0}G$-orbits, 
when nonempty, 
are symplectic leaves,
c.f. \cite[Corollary 2.9]{LYPoisson}, \cite[Theorem 2.5]{KWWY}.
 
\subsubsection{Lagrangian slices}\label{LS}
The projection $p:LG\rightarrow\Gr=LG/L^+G$ 
restricts to an embedding on $W^\lambda$, thus
$p:W^\lambda_\mu\xrightarrow{\sim}L^{<0}G\cdot[t^\lambda]\cap L^+G\cdot [t^\mu]$.
The pullback induces a symplectic structure on $W^\lambda_\mu$.

\begin{lem}
$L^\lambda_\mu\subset W^\lambda_\mu$ is a coisotropic subvariety.
\end{lem}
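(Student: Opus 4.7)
The plan is to derive coisotropicity from the fact that the involution $\sigma = \on{inv} \circ \theta$ on $LG$ restricts to an anti-symplectic involution on each symplectic leaf $W^\lambda_\mu$ of $\Gr$, whose fixed locus is exactly $L^\lambda_\mu$.

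The first step is to verify that $\sigma$ is anti-Poisson on $(LG, \pi)$. The $\theta$-invariance of the bilinear form on $\fg$ and the $\theta$-stability of $L^+\fg$ and $L^{<0}\fg$ yield $\theta(r) = r$, so $\theta$ is a Poisson automorphism of $LG$. On the other hand, $\on{inv}$ is the standard anti-Poisson involution on any Poisson--Lie group: the identity $d\,\on{inv}_g(R_{g*}X) = -L_{g^{-1}*}X$ directly gives $\on{inv}_* \pi = -\pi\circ\on{inv}$. Composing yields $\sigma_* \pi = -\pi\circ\sigma$.

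The second step is to transfer this to an anti-symplectic statement on the symplectic leaf $W^\lambda_\mu \subset \Gr$. Since $\lambda \in \Lambda_S^+$ satisfies $\theta(\lambda) = -\lambda$, we have $\sigma(t^\lambda) = t^\lambda$ and $\sigma$ preserves both $L^{<0}G$ and $LG_\mu$, so $\sigma$ acts on $W^\lambda$ and $W^\lambda_\mu$ with fixed loci $L^\lambda$ and $L^\lambda_\mu$. Using Lemma~\ref{l:two Poisson} to identify $\bar\pi$ on $\Gr$ as the $p$-pushforward of $\pi$, the anti-Poisson property translates into $\sigma^*\omega = -\omega$ on $W^\lambda_\mu$, where $\omega$ is the leaf's symplectic form.

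The conclusion then follows from the elementary fact that the fixed locus of an anti-symplectic involution is coisotropic. At any $x \in L^\lambda_\mu$, one writes $T_x W^\lambda_\mu = T^+ \oplus T^-$ as the $\pm1$-eigenspace decomposition of $d\sigma$, with $T_x L^\lambda_\mu = T^+$; the identity $\sigma^*\omega = -\omega$ makes both $T^+$ and $T^-$ isotropic, and non-degeneracy of $\omega$ then forces $(T^+)^\perp = T^+$. The main obstacle is Step~2: $\sigma$ does not extend to an involution of all of $\Gr$, because passing to the right quotient by $L^+G$ breaks the symmetry of $\on{inv}$. Hence the anti-symplectic property cannot be obtained by restricting a globally defined involution of $\Gr$; the transfer must instead be carried out directly along the embedding $W^\lambda\hookrightarrow\Gr$, using the explicit description of $\bar\pi$ in Lemma~\ref{l:two Poisson} and a tangent-space computation at points of $W^\lambda_\mu$.
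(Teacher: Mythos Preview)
Your strategy is appealing but leaves the essential step undone. Step~1 is fine (though note the sign: on bivectors $d\iota_g(R_{g*}r)=L_{g^{-1}*}r$, so the anti-Poisson sign comes from the swap $R\leftrightarrow L$, not from the $-1$ on vectors). The gap is entirely in Step~2. You correctly observe that $\sigma$ does not descend to $\Gr$, so the anti-Poisson property of $\sigma$ on $LG$ does not automatically yield $\tilde\sigma^*\omega=-\omega$ on the leaf $W^\lambda_\mu$. You then say ``the transfer must be carried out directly \ldots\ via a tangent-space computation'' and stop. But that computation \emph{is} the proof: one must show, at each $g\in W^\lambda_\mu$, that the pullback bivector $\pi'_g=(dp|_{T_gW^\lambda})^{-1}\bar\pi(\bar g)$ satisfies $d\sigma_g(\pi'_g)=-\pi'_{\sigma(g)}$. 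Since $R_{g*}r$ is not tangent to $W^\lambda$ and $\ker dp_g$ is not complementary to $T_gW^\lambda$, extracting $\pi'_g$ explicitly is nontrivial, and you have not done it.

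The paper sidesteps this descent problem entirely. Instead of proving an anti-symplectic property, it verifies coisotropicity directly: at $g=h\theta(h)^{-1}\in L^\lambda$ it identifies the conormal space as $N_g=R_{g*}(L^+\fg\cap\Ad_{t^\lambda}L^+\fg\cap\Ad_hL\fk)$ and checks $(\pi_g,x\wedge y)=0$ for $x,y\in N_g$. The computation reduces to $(\Ad_{h^{-1}}r-\Ad_{\theta(h^{-1})}r,\,a\wedge b)=0$ for $a,b\in L\fk$, which follows from $\theta(r)=r$, $\theta(a)=a$, $\theta(b)=b$, and $\theta$-invariance of the form. This uses the same ingredient you invoke ($\theta(r)=r$) but applies it at the level of the conormal pairing rather than trying to push the involution through the quotient.

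One further remark: your Step~3 actually proves $(T^+)^\perp=T^+$, i.e.\ Lagrangian, not merely coisotropic. If your Step~2 could be established cleanly, you would obtain Proposition~\ref{t:Lagrangian} directly and bypass the dimension count in Lemma~\ref{dim} (which goes through real groups and a spreading-out argument). That would be a genuine simplification---but it hinges entirely on the missing verification.
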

\begin{proof}
This is equivalent to that 
the ideal of $L^\lambda_\mu$ in $W^\lambda_\mu$ is closed under Poisson bracket.
For this, it suffices to prove for the embedding 
$L^\lambda\subset W^\lambda$,
where the Poisson structure on $W^\lambda$ 
is pullback from $p:W^\lambda\xrightarrow{\sim}L^{<0}G\cdot[t^\lambda]\subset\Gr$.

Denote $\sigma=\on{inv}\circ\theta$, $L^\lambda=(W^\lambda)^\sigma$.
For any $g\in L^\lambda$, 
$g\in\tau(LX)=\tau(LG/LK)$,
so we can write $g=h\theta(h)^{-1}$ for some $h\in LG$.
For $x\in L\fg$,
if $e^{tx}g\in(LG)^\sigma$,
then $x=-\Ad_g(d\theta(x))$,
$d\theta(\Ad_{h^{-1}}x)=-\Ad_{h^{-1}}x$,
$x\in\Ad_h L\fp$.
Thus 
\[
T_g(L^\lambda)=R_{g*}(L^{<0}\fg\cap\Ad_{t^\lambda}L^{<0}\fg\cap\Ad_hL\fp).
\]
Under the inner product $(\ ,\ )$, 
the conormal space
$(N^*_{L^\lambda}W^\lambda)(g)=\{\phi\in T^*_gW^\lambda\ |\ \phi(T_gL^\lambda)=0\}$ 
can be identified with $N_g:=R_{g*}(L^+\fg\cap\Ad_{t^\lambda}L^+\fg\cap\Ad_hL\fk)$.

In view of Lemma \ref{l:two Poisson}, to show the coisotropicity, 
we need to show that the contraction of conormal bundle with bivector $\pi$
\[
N^*_{L^\lambda}W^\lambda\hookrightarrow T^*W^\lambda|_{L^\lambda}\xrightarrow{\pi}TW^\lambda|_{L^\lambda}
\]
lands inside $TL^\lambda$. 
Since 
$T_gL^\lambda=\{x\in T_gW^\lambda\subset T_gLG\ |\ (x,N_g)=0\}$,
it remains to show that for any 
$g=h\theta(h)^{-1}\in L^\lambda$
and $x,y\in N_g$,
$(\pi_g,x\wedge y)=0$. 
Write $x=R_{g*}\Ad_h a$, $y=R_{g*}\Ad_h b$, 
where $a,b\in L\fk$. 
Since $(\ ,\ )$ is $LG$-invariant, we have
\[
(\pi_g,x\wedge y)
=(R_{g*}r-L_{g*}r,R_{g*}\Ad_h(a\wedge b))
=(\Ad_{h^{-1}}r-\Ad_{\theta(h^{-1})}r,a\wedge b).
\]

Recall $\theta(r)=r$. 
Note that $\theta$ acts trivially on $a,b\in L\fk$ 
and $(\ ,\ )$ is $\theta$-invariant. 
Thus
\[
(\Ad_{\theta(h^{-1})}r,a\wedge b)=(\Ad_{h^{-1}}r,a\wedge b),
\]
which completes the proof.
\end{proof}

\begin{prop}\label{t:Lagrangian}
$L^0_\mu\subset W^0_{\mu}$
is a Lagrangian subvariety.
\end{prop}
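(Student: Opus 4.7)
The plan is to combine the coisotropicity established in the preceding lemma with the dimension count coming from Lemma \ref{dim}, and then invoke a dimension-counting argument from linear symplectic algebra. The key observation is that $W^0_\mu$ is a smooth symplectic variety of dimension $\langle 2\rho, \mu\rangle$ by Lemma \ref{p:W^lambda_mu}(v), while $L^0_\mu$ is smooth of dimension exactly half of this, namely $\langle\rho,\mu\rangle$, by Lemma \ref{p:L^lambda_mu}(ii) and Lemma \ref{dim}.

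Given these inputs, the proof is essentially immediate. By the preceding lemma, at every point $g\in L^0_\mu$ the symplectic orthogonal $(T_g L^0_\mu)^{\perp}\subset T_g W^0_\mu$ is contained in $T_g L^0_\mu$. Since $\dim T_g W^0_\mu = 2\langle\rho,\mu\rangle$ and $\dim T_g L^0_\mu = \langle\rho,\mu\rangle$, the orthogonal $(T_g L^0_\mu)^{\perp}$ also has dimension $\langle\rho,\mu\rangle$. The inclusion $(T_g L^0_\mu)^{\perp}\subset T_g L^0_\mu$ between subspaces of equal dimension must be an equality, and hence $T_g L^0_\mu$ is Lagrangian in $T_g W^0_\mu$ at every point. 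This is exactly the statement that $L^0_\mu\subset W^0_\mu$ is a Lagrangian subvariety.

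The only nontrivial step is to have both inputs in place simultaneously. The coisotropicity was just verified using the Manin triple structure together with the $\theta$-invariance of the $r$-matrix. The dimension equality is the more substantial input: its proof goes through the codimension formula for spherical orbits (Proposition \ref{codim formula}), which in turn relies on the quasi-map stack $Z$, its placidness over $L^+G\backslash LX$, and a transfer to the real affine Grassmannian via \cite{CN2} and \cite{NadlerRealGr}. Given these, the Lagrangian property is a purely formal consequence, so I do not expect any hidden obstacle in writing out the proof beyond what has already been established.
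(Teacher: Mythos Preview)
Your proposal is correct and matches the paper's own proof essentially verbatim: the paper also combines the coisotropicity lemma with the dimension equality $\dim(L^0_\mu)=\langle\rho,\mu\rangle=\tfrac{1}{2}\dim(W^0_\mu)$ from Lemma~\ref{dim} to conclude Lagrangianness. Your additional spelling-out of the tangent-space linear algebra is fine but not needed beyond what the paper says.
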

\begin{proof}
    From Lemma \ref{dim} and Lemmra \ref{t:Lagrangian}, we see that 
    $L^0_\mu\subset W^0_{\mu}$ is coisotropic subvarities of 
    dimension $\dim(L^0_\mu)=\frac{1}{2}\dim(W^\lambda_\mu)=\langle\rho,\mu\rangle$, and hence is a
    Lagrangian.
\end{proof}
	
\begin{rem}	
Specializing Proposition \ref{t:Lagrangian} to 
the case in Proposition \ref{slice=nilp}, 
we obtain (special case of) a well-known result 
of Kostant-Rallis saying that 
the symmetric nilpotent orbits $\cO_{\fp,\lambda}$
are Lagrangian subvarieties of the nilpotent orbits 
$\cO_\lambda$ with respect to the Kostant-Kirillov symplectic forms.
\end{rem}

\section{Affine Hecke modules}\label{Affine Hecke modules}
Let $\sD$ be the set of pairs 
$(\xi,v)$ where $v\in\sV$ and 
$\xi$ is an $I_0$-equivariant local system on 
the  $I_0$-orbit $\cO_v\subset LX$.
Let $M$ be the free $\mathbb Z[q^{\frac{1}{2}},q^{\frac{-1}{2}}]$-module with basis 
indexed by $\sD$.
In this section we endow 
$M$ with a module structure over the affine Hecke algebra of $G$
and an anti-linear involution $D_\delta:M\to M$ 
compatibile with the module structure. 
The construction is geometric 
and relies on the $\ell$-adic sheaf theory
on placid (ind)schemes,
where $\ell$ is a prime number $p\neq\ell$ 
(see Appendix \ref{dimension theory} for a review).
 
For the rest of the paper, we will fix 
a dimension theory $\delta$ on $LX$ (see Proposition \ref{dim for LX}).
Note that, 
for any $v\in\sV$ (resp. $\lambda\in\Lambda_S^+$), 
the function $\delta_{\overline\cO_v}:\overline\cO_v\to\mathbb Z$
(resp. $\delta_{\overline{LX}_\lambda}:
\overline{LX}_\lambda\to\mathbb Z$)
is constant and 
we will write
$\delta(v)$ (resp. $\delta(\lambda)$)
for its value 
to be called the dimension of 
$\cO_v$ (resp. $LX_\lambda$).


\subsection{Affine Hecke module categories}
\subsubsection{
Equivariant sheaves on $LG$ and $LX$}\label{category of sheaves}
Recall the Iwahori decomposition
$LG=\bigsqcup_{w\in\widetilde{\mathrm{W}}} I_0wI_0$ of the loop group into $I_0\times I_0$-orbits. Here $\widetilde{\mathrm{W}}\simeq N_{LG}(LT_0)/L^+T_0$ is the extended Weyl group.
Write $LG_w=I_0wI_0$ and $\overline{LG}_w$ be the 
orbit closure.
We denote by 
$D(I_0\backslash LG/I_0)$ the dg category of  
$I_0\times I_0$-equivariant  constructible complexes on $LG$, known as the  affine Hecke category.
It has a Verdier duality functor $\mathbb D:D(I_0\backslash LG/I_0)\cong D(I_0\backslash LG/I_0)^{op}$
and a monoidal structure given by the convolution product:
consider the maps 
\[LG/I_0\times  LG/I_0\stackrel{p\times\on{id}}\leftarrow  LG\times LG/I_0\stackrel{q}\to  LG\times^{I_0}LG/I_0\stackrel{m}\to  LG/I_0\]
where $p$ and $q$ are the natural quotient maps
and $m$ is the multiplication map.
Given $\mathcal M_1,\mathcal M_2\in D^{}(I_0\backslash LG/I_0)$  we have 
\begin{equation}\label{Hecke action}
	\mathcal M_1\star\mathcal M_2:=m_!(\mathcal M_1\tilde\boxtimes\mathcal M_2)
\end{equation}
where $\mathcal M_1\tilde\boxtimes\mathcal M_2\in D^{}(I_0\backslash LG\times^{I_0}LG/I_0)$ is the unique complex 
such that 
$({p\times\on{id}})^*(\mathcal M_1\boxtimes\mathcal M_2)\cong q^*(\mathcal M_1\tilde\boxtimes\mathcal M_2)$.

We denote by
\[D(I_0\backslash LX)\cong
\on{colim}_{\underline v,!}D(I_0\backslash\overline\cO_{\underline v})\] 
the dg category $I_0$-equivariant 
constructible complexes on the $I_0$-placid ind-scheme $LX\cong\on{colim}_{\underline v} \overline\cO_{\underline v}$.
We have the affine Hecke action, denoted by $\star$, of the  affine Hecke category  $D(I_0\backslash LG/I_0)$ on $D(I_0\backslash LX)$ defined as follows.
Consider the maps 
\[LG/I_0\times LX\stackrel{p\times\on{id}}\leftarrow LG\times LX\stackrel{q}\to LG\times^{I_0}LX\stackrel{a}\to LX\]
where $a$ is the action map.
Given $\mathcal M\in D^{}(I_0\backslash LG/I_0)$ and $\cF\in D^{}(I_0\backslash LX)$ we have 
\begin{equation}\label{Hecke action}
	\mathcal M\star\cF:=a_!(\mathcal M\tilde\boxtimes\cF)
\end{equation}
where $\mathcal M\tilde\boxtimes\cF\in D^{}(I_0\backslash LG\times^{I_0}LX)$ is the unique complex 
such that 
$({p\times\on{id}})^*(\mathcal M\boxtimes\cF)\cong q^*(\mathcal M\tilde\boxtimes\cF)$.

The affine Hecke category $D(I_0\backslash LG/ I_0)$
has a standard $t$-structure with abelian heart 
$\on{Shv}(I_0\backslash LG/ I_0)$ the abelian category of $I_0\times I_0$-equivariant
constructible sheaves on $LG$ and a perverse $t$-structure with abelian heart 
$\on{Perv}(I_0\backslash LG/ I_0)$ the abelian category $I_0\times I_0$-equivariant perverse sheaves
on $LG$. We denote by $\sH^n:D(I_0\backslash LG/ I_0)\to\on{Shv}(I_0\backslash LG/ I_0)$
and ${^p}\sH^n:D(I_0\backslash LG/ I_0)\to\on{Perv}(I_0\backslash LG/ I_0)$ the corresponding cohomological functors.

The category $D(I_0\backslash LX)$
also has a standard $t$-structure with abelian 
heart  $\on{Shv}(I_0\backslash LX)$
the $I_0$-equivariant constructible sheaves on $LX$.
According to \ref{t-structures}, the dimension theory $\delta$ on $LX$
gives rise to
a $*$-adapted perverse $t$-structure on $D(I_0\backslash LX)$
with abelian heart $\on{Perv}_\delta(I_0\backslash LX)$ called the abelian category of $I_0$-equivariant perverse on $LX$.
We denote by $\sH^n:D(I_0\backslash LX)\to\on{Shv}(I_0\backslash LX)$
and ${^p}\sH^n_\delta:D(I_0\backslash LG/ I_0)\to\on{Perv}_\delta(I_0\backslash LX)$ the corresponding cohomological functors.

\subsubsection{Verdier duality}
Denote by $\mathbb D:D(I_0\backslash LG/I_0)\cong D(I_0\backslash LG/I_0)^{op}$
the Verdier duality functor.
By Lemma \ref{Verdier duality}, the dimension theory $\delta$ gives rise to a Verdier duality functor
\[\mathbb D_\delta: D(I_0\backslash LX)\cong D(I_0\backslash LX)^{op}\]
which is $t$-exact with respect to the $*$-adapted
perverse $t$-structure above and 
satisfying $\mathbb D_\delta^2\cong\on{Id}$.

\begin{prop}\label{decomposition theory}
\begin{enumerate}
    \item [(i)]
    For any $\mathcal M\in D(I_0\backslash LG/I_0)$
and $\cF\in D(I_0\backslash LX)$, 
there is an isomorphism
	$\mathbb D_\delta(\mathcal M\star\cF)\cong\mathbb D(\mathcal M)\star\mathbb D_\delta(\cF)$
    \item [(ii)]
    Assume both $\mathcal M\in D(I_0\backslash LG/I_0)$ 
and $\cF\in D(I_0\backslash LX)$
are semi-simple complexes. 
Then the convolution $\mathcal M\star\cF$ is semi-simple.
\end{enumerate}
\end{prop}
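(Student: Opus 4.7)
The plan is to reduce both parts to the finite-dimensional situation by exploiting the ind-placid presentations of Proposition \ref{placid of LX} together with the fact that the action map is proper on supports. I would first set up the reduction: since every object of $D(I_0\backslash LG/I_0)$ is a homotopy colimit of complexes supported on a single $\overline{LG_w}$ (which is finite-dimensional and of finite type inside $\Fl$) and every object of $D(I_0\backslash LX)$ is a colimit of complexes supported on some $\overline\cO_{\underline{v}}$, both assertions follow once they are established after restricting support. So fix $\cM$ supported on $\overline{LG_w}$ and $\cF$ supported on $\overline\cO_v$. Then $\cM\tilde\boxtimes\cF$ is supported on the placid ind-scheme $Y:=\overline{LG_w}\times^{I_0}\overline\cO_v$, and by Corollary \ref{c:Iwahori closure and G(O) into Iwahori}(i) together with Lemma \ref{l:finite fiber} the action map sends $Y$ into a finite union $\overline\cO_{\underline{v}'}$ of orbit closures, giving a map $a_Y: Y\to\overline\cO_{\underline{v}'}$. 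This restricted action map is fp-proper: it is obtained by twisting the projection $\overline{LG_w}/I_0\to\mathrm{pt}$ along the $I_0$-torsor $LG\to LG/I_0$ and base changing to $\overline\cO_{\underline{v}'}$.

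For part (i), properness of $a_Y$ yields $a_{Y,!}\cong a_{Y,*}$, so using the Verdier duality functor $\mathbb D_\delta$ (via Lemma \ref{Verdier duality} and its compatibility with $*$-pushforward along fp-proper morphisms) we obtain
\[
\mathbb D_\delta(\cM\star\cF)=\mathbb D_\delta\, a_{Y,*}(\cM\tilde\boxtimes\cF)\cong a_{Y,*}\mathbb D_{\delta_Y}(\cM\tilde\boxtimes\cF),
\]
where $\delta_Y$ is the dimension theory on $Y$ pulled back from $\delta$ via the quotient presentation. It remains to identify $\mathbb D_{\delta_Y}(\cM\tilde\boxtimes\cF)$ with $\mathbb D(\cM)\tilde\boxtimes\mathbb D_\delta(\cF)$. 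This is a Künneth-type assertion: the identity holds on $LG\times LX$ (external product commutes with duality up to a shift accounted for in the definition of $\delta_Y$), and descends along the pro-smooth surjection $q:LG\times LX\to LG\times^{I_0}LX$ by $I_0$-equivariant descent, since $\mathbb D$ on equivariant sheaves is compatible with the smooth pullback $q^*$ up to the relative dimension of $q$.

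For part (ii), since semisimplicity is preserved under direct sums and shifts, it suffices to treat the case $\cM=\IC(LG_w,\cL)$ and $\cF=\IC_{\xi,v}$. By Theorem \ref{main 4}(i) and its classical analogue for the affine flag variety, both are pointwise pure. Under the pro-smooth descent $q$ the product $\cM\tilde\boxtimes\cF$ inherits pointwise purity (shifted by the relative dimension of $q$). Since $a_Y$ is fp-proper, Deligne's stability of purity under proper pushforward — applied to each of the finite-type truncations of the placid presentation of $\overline\cO_{\underline{v}'}$ — shows that $\cM\star\cF=a_{Y,*}(\cM\tilde\boxtimes\cF)$ is pointwise pure. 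The BBD decomposition theorem for pure complexes, transported through the placid presentation of $\overline\cO_{\underline{v}'}$, then implies that $\cM\star\cF$ is a direct sum of shifts of $\IC$-complexes $\IC_{\eta,u}$, hence semisimple in the sense compatible with $\on{Perv}_\delta(I_0\backslash LX)$.

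The main technical obstacle will be part (ii): making rigorous the transport of Deligne's purity theorem and BBD through the ind-placid setting. Concretely, one needs that pointwise purity of a complex on $\overline\cO_{\underline{v}'}$, as tested against the $*$-adapted perverse $t$-structure defined by $\delta$, descends consistently through the inverse system of finite-type truncations, and that the resulting decomposition respects the placid Verdier duality $\mathbb D_\delta$. The relevant formalism is the content of Appendix \ref{s:appendix}; the crux is verifying compatibility of proper pushforward with the chosen $t$-structure and with fp-base change, so that the finite-dimensional BBD statement on each truncation assembles to a semisimple decomposition on $\overline\cO_{\underline{v}'}$, independently of the chosen presentation.
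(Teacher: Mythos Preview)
Your outline for part (i) matches the paper's strategy in spirit: both reduce to finite-type schemes and exploit compatibility of Verdier duality with proper pushforward and with the twisted external product. The paper carries this out explicitly through the \v Cech nerve of \'etale covers $U_{v'}\to\overline\cO_{v'}$ and $W\to\overline{LG}_w\times\overline\cO_v$ admitting placid presentations, and the heart of the argument is the dimension-theoretic identity
\[
\dim_a \;=\; \dim(V_j^{[n]})-\dim(U_{v',j}^{[n]}) \;=\; \delta_v+\ell(w)-\delta_{v'},
\]
which is exactly what justifies your ``K\"unneth-type assertion'' that $\mathbb D_{\delta_Y}(\cM\tilde\boxtimes\cF)\cong\mathbb D(\cM)\tilde\boxtimes\mathbb D_\delta(\cF)$. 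Your sketch hides this bookkeeping inside the phrase ``the dimension theory on $Y$ pulled back from $\delta$,'' which is not quite right as stated (a dimension theory does not simply pull back); making this precise forces you into the same shift computation the paper does.

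Your treatment of part (ii), however, contains a genuine confusion. You invoke Theorem \ref{main 4}(i), which asserts \emph{pointwise} purity of $\IC_{\xi,v}$, and then appeal to ``Deligne's stability of purity under proper pushforward'' and ``the BBD decomposition theorem for pure complexes.'' But Deligne's theorem and BBD operate with purity in the sense of weights (i.e.\ $\cF$ has weights $\leq w$ and $\mathbb D\cF$ has weights $\leq -w$), not pointwise purity; the latter is neither what Deligne preserves nor what BBD requires. For $\IC$-complexes these notions happen to coincide, but then there is no need to cite Theorem \ref{main 4}(i) at all --- the fact that an $\IC$-extension of a pure local system is pure in the BBD sense is immediate and logically prior. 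Moreover, invoking Theorem \ref{main 4}(i) creates a forward reference to Section \ref{main results}, which is stylistically awkward even though not circular.

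The paper's argument for (ii) is both simpler and avoids purity altogether: one observes that $\cM\tilde\boxtimes\cF$ is semisimple, hence so is each descended complex $\cF^{[n]}_{V,j}$ on the finite-type scheme $V^{[n]}_j$; the maps $a^{[n]}_j:V^{[n]}_j\to U^{[n]}_{v',j}$ are proper (by Noetherian descent, \cite[Theorem 8.10.5]{EGA IV}); and the ordinary decomposition theorem on finite-type schemes gives that each $a^{[n]}_{j,!}(\cF^{[n]}_{V,j})$ is semisimple. This is precisely the ``transport of BBD through the ind-placid setting'' you flag as the main obstacle, and the \v Cech/placid machinery handles it directly with no appeal to purity of the input.
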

\begin{proof}
There are  $w\in\widetilde{\mathrm{W}}$, $v\leq v'\in\sV$ such that 
$\mathcal M\in D(I_0\backslash\overline{LG}_w/I_0)$, $\cF\in D(I_0\backslash\overline{\cO}_v)$,
and $\mathcal M\star\cF\in D(I_0\backslash\overline\cO_{v'})$.
One can find \'etale covers
$U_{v'}\to\overline\cO_{v'}$ 
(resp. $W\to \overline{LG}_w\times\overline\cO_v$) 
admitting placid presentations, 
fitting into the 
following commutative diagram
\[
\xymatrix{\overline{LG}_w/I_0\times U_v\ar[d]&W\ar[r]^q\ar[d]\ar[l]_p&V\ar[r]^{a}\ar[d]&U_{v'}\ar[d]\\
\overline{LG}_w/I_0\times\overline{\cO}_v&\overline{LG}_w\times\overline{\cO}_v\ar[l]_p\ar[r]^q&\overline{LG}_w\times^{I_0}\overline{\cO}_v\ar[r]^a&\overline{\cO}_{v'}}.
\]
Here $V=U'\times_{\overline{\cO}_{v'}}(\overline{LG}_w\times^{I_0}\overline\cO_v)\to \overline{LG}_w\times^{I_0}\overline\cO_v$ and 
$U_v=U_{v'}\times_{\overline\cO_{v'}}\overline\cO_v\to\overline\cO_v$ are the \'etale covers obtained by the base changes.
Let $a^{[n]}:U^{[n]}_{v'}\to V^{[n]}$ be the map between the 
the \v Cech nerves associated to the smooth covers 
$U_{v'}\to I_0\backslash\overline\cO_{v'}$
and $V\to I_0\backslash \overline{LG}_w\times^{I_0}\overline\cO_v$.
Then by~\eqref{Placid schemes} (4) there are placid presentations $U^{[n]}_{v'}\cong\lim_j U^{[n]}_{v',j}$,
$U^{[n]}_{v}\cong\lim_j U^{[n]}_{v,j}$,
$V^{[n]}\cong\lim_j  V^{[n]}_j$, 
such that  $U_v^{[n]}\to U^{[n]}_{v'}$
and $a^{[n]}: V^{[n]}\to U_{v'}^{[n]}$
are base changes from $U^{[n]}_{v,j_0}\to U^{[n]}_{v',j_0}$
and $a^{[n]}_{j}:V^{[n]}_{j}\to U^{[n]}_{v',j}$ for some $j$.

Proof of (i).
We have 
\begin{equation}\label{boxtimes}
    \mathcal M\tilde\boxtimes\mathcal F\cong \on{lim}^*_{[n]}\on{colim}^*_j\cF_{V,j}^{[n]}\in
D(I_0\backslash\overline{LG}_w\times^{I_0}\overline\cO_v)\cong\on{lim}_{[n]}^*\on{colim}_j^* D(V^{[n]}_j)
\end{equation}
and it follows that 
   \begin{equation}\label{formula}
       \mathcal M\star\cF=a_!\mathcal M\tilde\boxtimes\mathcal F\cong a_!(\on{lim}^*_{[n]}\on{colim}^*_j\cF_{V,j}^{[n]})\cong
    \on{lim}_{[n]}^*\on{colim}_j^*a^{[n]}_{j,!}(\cF_{V,j}^{[n]}) 
    \end{equation}
    \[\in
D(I_0\backslash\overline\cO_v)\cong\on{lim}_{[n]}^*\on{colim}_j^* D(U^{[n]}_{v',j}).
\]
From the construction of Verdier duality in~\eqref{Verdier duality}, we have  
\[\mathbb D_\delta(\mathcal M\star\mathcal F)\stackrel{\eqref{formula}}\cong\mathbb D_\delta(\on{lim}_{[n]}^*\on{colim}_j^*a^{[n]}_{j,!}(\cF_{V,j}^{[n]}))
\cong\on{lim}_{[n]}^*\on{colim}_j^*(\mathbb Da^{[n]}_{j,!}((\cF_{V,j}^{[n]})\langle-2\dim(U_{v',j}^{[n]}+2\delta_{v'})\rangle))
\cong\]
\[\cong\on{lim}_{[n]}^*\on{colim}_j^*(a^{[n]}_{j,!}(\mathbb D(\cF_{V,j}^{[n]})\langle-2\dim(U_{v',j}^{[n]}+2\delta_{v'})\rangle))
\cong\]
\[\cong a_!(\on{lim}_{[n]}^*\on{colim}_j^*(\mathbb D(\cF_{V,j}^{[n]})\langle-2\dim(U_{v',j}^{[n]}+2\delta_{v'})\rangle))).\]
We claim that there is an isomorphism
\[\mathbb D\mathcal M\tilde\boxtimes\mathbb D_\delta\cF\cong
\on{lim}_{[n]}^*\on{colim}_j^*(\mathbb D(\cF_{V,j}^{[n]})\langle-2\dim(U_{v',j}^{[n]}+2\delta_{v'})\rangle))\in D(I_0\backslash\overline{LG}_w\times^{I_0}\overline\cO_v)\]
and it follows that 
\[\mathbb D_\delta(\mathcal M\star\mathcal F)\cong a_!(\mathbb D\mathcal M\tilde\boxtimes\mathbb D_\delta\cF)\cong \mathbb D\mathcal M\star\mathbb D_\delta\cF.\]
Part (i) follows.

Proof of the claim. 
By definition the complex $\mathcal M\tilde\boxtimes\cF$ is characterized by the isomorphism  $p^*(\mathcal M\boxtimes\cF)\cong q^*(\mathcal M\tilde\boxtimes\cF)$.
There are isomorphisms 
\begin{equation}\label{p=q Cech}
    (p^{[n]}_j)^*(\mathcal M\boxtimes\cF^{[n]}_j)\cong(q^{[n]}_j)^*(\cF_{V,j}^{[n]})
\end{equation}
inducing 
\[p^*(\mathcal M\boxtimes\cF)\cong\on{lim}^*_{[n]}\on{colim}^*_j (p^{[n]}_j)^*(\mathcal M\boxtimes\cF^{[n]}_j)\cong
\on{lim}^*_{[n]}\on{colim}^*_j (q^{[n]}_j)^*(\cF_{V,j}^{[n]})\cong q^*(\mathcal M\tilde\boxtimes\cF)\]
where 
$\mathcal F\cong\on{lim}_{[n]}^*\on{colim}^*_j\cF^{[n]}_j\in D(I_0\backslash\overline\cO_v)\cong \on{lim}_{[n]}^*\on{colim}^*_j D(U_{v,j}^{[n]})$
and 
\begin{align*}
p^{[n]}\cong \lim_jp^{[n]}_j&:W^{[n]}\cong W^{[n]}_j\to
\overline{LG}_w/I_0\times U_{v}^{[n]}\cong\lim_j
\overline{LG}_w/I_0\times U_{v,j}^{[n]},\\
q^{[n]}\cong\lim_jq^{[n]}_j&:W^{[n]}\cong \lim_jW^{[n]}_j\to V^{[n]}\cong\lim_jV^{[n]}_j
\end{align*}
are the maps between \v Cech nerves.
Since $f^*\mathbb D\langle-2\dim(X)\rangle\cong \mathbb D\langle-2\dim(Y)\rangle f^*$ for a smooth map $f:X\to Y$
between finite type schemes, we conclude from~\eqref{p=q Cech} that 
\begin{equation}\label{p=q}
    p^{[n]*}_j(\mathbb D\mathcal M\boxtimes\mathbb D\cF_{j}^{[n]}\langle-2\dim(U_{v,j}^{[n]})-2l(w)\rangle)\cong q^{[n],*}_j(\mathbb D\cF_{V,j}^{[n]})\langle-2\dim(V^{[n]}_j)\rangle.
\end{equation}
On the other hand,
the discussion of dimension functions in~\eqref{dimension theory}
implies that
\begin{equation}\label{dim_a}
  \dim_a=\dim(V_j^{[n]})-\dim(U^{[n]}_{v',j})=\delta_v+l(w)-\delta_{v'}. 
\end{equation}
All together we get
\begin{align*}
p^*(\mathbb D\mathcal M\boxtimes\mathbb D_\delta\cF)
&\ \cong
\on{lim}_{[n]}^*\on{colim}^*_j(p^{[n]*}(\mathbb D\mathcal M\boxtimes\mathbb D\cF_{j}^{[n]}\langle-2\dim(U_{v,j}^{[n]})+2\delta_v\rangle))\\
&\stackrel{~\eqref{p=q}}\cong 
\on{lim}_{[n]}^*\on{colim}^*_j(q^{[n],*}(\mathbb D\cF_{V,j}^{[n]})\langle-2\dim(V^{[n]}_j)+2\delta_v+2l(w)\rangle)\\
&\stackrel{~\eqref{dim_a}}\cong 
\on{lim}_{[n]}^*\on{colim}^*_j(q^{[n],*}(\mathbb D\cF_{V,j}^{[n]})\langle-2\dim(U^{[n]}_{v',j})+2\delta_{v'}\rangle)\\
&\ \cong 
q^*(\on{lim}_{[n]}^*\on{colim}^*_j(\mathbb D\cF_{V,j}^{[n]})\langle-2\dim(U^{[n]}_{v',j})+2\delta_{v'}\rangle).
\end{align*}
The desried claim follows.

Proof of (ii).
To show that $\mathcal M\star\mathcal F$ is semi-simple it suffices to check that each 
$a^{[n]}_{j,!}(\cF_{V,j}^{[n]})$
in~\eqref{formula}
is semi-simple.
Note that since $\mathcal M\tilde\boxtimes \cF$ is semi-simple it implies 
each $\cF^{[n]}_{V,j}\in D(V_j^{[n]})$
is semi-simple. 
On the other hand,
since $a$ proper it follows from 
\cite[Theorem 8.10.5]{EGA IV}
that each $a_j^{[n]}$ is proper, 
and the decomposition theorem implies each
$a^{[n]}_{j,!}(\cF_{V,j}^{[n]})\in D(U_{v',j}^{[n]})$ is semi-simple.
Part (ii) follows.

\end{proof}

\subsubsection{Equivariant local systems}
We shall give a description of 
equivariant local systems on 
the orbits $LG_w$ and $\cO_v$.
Denote by $\pi_{T_0}:I_0\rightarrow I_0/I_0^+\simeq T_0$ the quotient map.
For an orbit $\cO_v$,
note that $T_0$ is stable under involution $\psi_v$.
Thus $\pi_{T_0}(I_0\cap(LG)^{\psi_v})=T_0^{\psi_v}$.
Denote $T_v:=T_0^{\psi_v}$.
The $T_0$-equivariant rank one local systems on $T_0/T_v$ are given by
characters $X(\pi_0(T_v))$.
For $\xi\in X(\pi_0(T_v))$,
denote the corresponding local system on $T_0/T_v$ by $\cL_\xi$.

\begin{lem}\label{l:equiv local systems}
\begin{itemize}
\item [(i)]
 Any 
$I_0\times I_0$-equivariant local system $\cL_w$ on the $I_0\times I_0$-orbit $LG_w$ is isomorphic to the constant local system 
$\cL_w\cong\overline{\mathbb Q}_{\ell,LG_w}$ with the canonical equivariant structure.
    
  \item [(ii)] Any $I_0$-equivariant local system on an $I_0$-orbit $\cO_v$
	is a direct sum of rank one local systems of the form
	$\cL_{\xi,v}:=\pi_v^*\cL_\xi$, $\xi\in X(\pi_0(T_v))$,
	where 
	$\pi_v:\cO_v=I_0x_v\simeq I_0/I_0\cap(LG)^{\psi_v}
	\rightarrow T_0/T_v$.
        
\end{itemize}
\end{lem}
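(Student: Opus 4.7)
For part (i), the plan is to identify $LG_w$ as the $(I_0\times I_0)$-orbit through any lift $n_w\in N_{LG}(LT_0)$ of $w$, and to show that the point stabilizer is connected. The stabilizer is
\[
\Delta_w = \{(g_1,g_2)\in I_0\times I_0\mid g_1 n_w g_2^{-1} = n_w\}\cong I_0\cap\Ad_{n_w}I_0
\]
via $(g, n_w^{-1}gn_w)\leftrightarrow g$. I will decompose this intersection as $T_0\ltimes(I_0^+\cap\Ad_{n_w}I_0^+)$, using that $n_w\in N_{LG}(LT_0)$ satisfies $\Ad_{n_w}(T_0)=T_0$ (the $t^\lambda$-part of $n_w$ acts trivially on $T_0\subset T$, and the finite part lies in $N_G(T_0)$), so that $T_0\subset I_0\cap\Ad_{n_w}I_0$; meanwhile $I_0^+\cap\Ad_{n_w}I_0^+$ is pro-unipotent as the intersection of two pro-unipotent subgroups of $LG$. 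Connectedness of $\Delta_w$ then follows, and since $(I_0\times I_0)$-equivariant local systems on $LG_w$ correspond to continuous $\bQl$-representations of $\pi_0(\Delta_w)=1$, only the constant one occurs.

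For part (ii), I will prove $\pi_0(H_v)\cong\pi_0(T_v)$, where $H_v=I_0\cap(LG)^{\psi_v}$ is the stabilizer of $x_v$ in $I_0$. The projection $I_0\twoheadrightarrow I_0/I_0^+\cong T_0$ sends $H_v$ into $T_0^{\psi_v}=T_v$ (using that $\psi_v$ preserves $T_0$, as verified in Section \ref{construction of S^v}), and this map is split by the inclusion $T_v\hookrightarrow T_0\subset I_0$ (which lands in $H_v$ because $\psi_v$ fixes each element of $T_v$). This gives a split short exact sequence
\[
1\to K_v\to H_v\to T_v\to 1,\qquad K_v := I_0^+\cap(LG)^{\psi_v}.
\]
It then suffices to prove $K_v$ is connected.

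To this end, I note that $K_v=(I_0^+\cap\Ad_{n_v}I_0^+)^{\psi_v}$: since $\psi_v=\Ad_{n_v}\circ\theta$, $\theta(I_0^+)=I_0^+$, and $\psi_v^2=1$, we have $\psi_v(I_0^+)=\Ad_{n_v}I_0^+$, so $g\in K_v$ forces $g\in I_0^+\cap\Ad_{n_v}I_0^+$. The group $I_0^+\cap\Ad_{n_v}I_0^+$ is pro-unipotent (intersection of two pro-unipotent subgroups of $LG$) and $\psi_v$-stable (using $\theta(n_v)=n_v^{-1}$ from Lemma \ref{l:Iwahori orbit rep}). Since $\mathrm{char}(k)\neq 2$, the fixed points of an involution on a pro-unipotent group are pro-unipotent, in particular connected. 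Hence $\pi_0(H_v)\cong\pi_0(T_v)$, a finite elementary abelian $2$-group. Continuous $\bQl$-representations of this finite abelian group decompose as direct sums of characters $\xi\in X(\pi_0(T_v))$, yielding the desired decomposition of any $I_0$-equivariant local system on $\cO_v\cong I_0/H_v$ into rank one summands.

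Finally, the $I_0$-equivariant map $\pi_v\colon\cO_v\to T_0/T_v$ induced by $I_0\twoheadrightarrow T_0$ (where $I_0$ acts on $T_0/T_v$ through this quotient) identifies the rank one summand associated to $\xi$ with the pullback $\pi_v^*\cL_\xi$ of the $T_0$-equivariant local system $\cL_\xi$ on $T_0/T_v$ attached to $\xi$. The main obstacle is the connectedness of $K_v$, which hinges on the $\psi_v$-stability of the pro-unipotent intersection $I_0^+\cap\Ad_{n_v}I_0^+$ and on the characteristic assumption $\mathrm{char}(k)\neq 2$ so that involutions on pro-unipotent groups have connected fixed loci.
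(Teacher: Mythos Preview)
Your proof is correct and follows essentially the same approach as the paper's: both identify the $I_0$-stabilizer $H_v$ of $x_v$, project to $T_0$, and show the kernel $K_v=I_0^+\cap(LG)^{\psi_v}$ is connected, so that $\pi_0(H_v)\cong\pi_0(T_v)$. Your argument for the connectedness of $K_v$ is in fact more careful than the paper's (which simply says ``since $I_0^+$ is pro-unipotent''): you correctly observe that $K_v=(I_0^+\cap\Ad_{n_v}I_0^+)^{\psi_v}$ is the fixed locus of an involution on a $\psi_v$-stable pro-unipotent group, and invoke $\mathrm{char}(k)\neq 2$. One small point: your justification that $\pi_{T_0}(H_v)\subset T_v$ (``using that $\psi_v$ preserves $T_0$'') is incomplete as stated, since $\psi_v$ need not preserve $I_0^+$; but your own observation that $H_v\subset I_0\cap\Ad_{n_v}I_0=T_0\ltimes(I_0^+\cap\Ad_{n_v}I_0^+)$, on which $\psi_v$ does preserve both factors, immediately fills this gap.
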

\begin{proof}
Part (i) follows from the  standard fact
that points in $LG_w$ have connected 
$I_0\times I_0$-stabilizers. 
Proof of (ii).
	Let $C_v=I_0\cap(LG)^{\psi_v}$, $D_v=\pi_{T_0}(C_v)=T^{\psi_v}$.
	Denote their neutral components by $C_v^0,D_v^0$.
	The map $C_v\rightarrow D_v$
	has fibers isomorphic to $I_0^+\cap(LG)^{\psi_v}$.
	Since $I_0^+$ is pro-unipotent, 
	$I_0^+\cap(LG)^{\psi_v}$ is connected.
	Thus the induced map $C_v/C_v^\circ\rightarrow D_v/D_v^\circ$
	is an isomorphism.
	As a consequence, $C_v/C_v^0$ is a commutative finite group.
	Its representations give the equivariant local systems.
\end{proof}

\begin{defe}\label{the set sD}
Let $\sD$ be the set of pairs 
$(\xi,v)$ where $v\in\sV$ and 
$\xi$ is an $I_0$-equivariant local system on 
the  $I_0$-orbit $\cO_v\subset LX$.
\end{defe}

\subsection{Affine Lusztig-Vogan modules}\label{Affine LV modules}
We assume $G$, $B_0$, $T_0$, $\theta$ are defined over 
a finite field $\mathbb F_q$ and $T_0$ is split 
over $\mathbb F_q$. 
There are Frobenius morphisms
$F:LG\to LG$, $F:LX\to LX$, etc.
Consider the group $A=\overline{\mathbb Q}_\ell^\times/\mu$ where
$\mu\subset\overline{\mathbb Q}_\ell^\times$
is the subgroup of roots of $1$.
Let $\mathbb Z[A]$ be the group ring of $A$.
We fix a square root $q^{\frac{1}{2}}$
of $q$ and 
identify $\mathbb Z[q^{\frac{1}{2}},q^{\frac{-1}{2}}]\subset \mathbb Z[A]$ sending $q^{\frac{1}{2}}$ to its image in $A$.

\subsubsection{The affine Hecke module $M$}
We follow closely the discussions in \cite{L,LV,MS}.
For any positive integer $n$,
let $\on{Shv}(I_0\backslash LX)^{F^n}$
be the abelian category consisting of pairs
$(\cF,\Phi_n)$
where $\cF$
is an $I_0$-equivariant sheaf on $LX$
and 
$\Phi_n:(F^n)^*\cF\cong\cF$ is an isomorphism.
For any positive integer $m$ divisible by $n$, we have a natural exact functor 
$\on{Shv}(I_0\backslash LX)^{F^n}\to\on{Shv}(I_0\backslash LX)^{F^m}$ sending 
$(\cF,\Phi_n)$ to $(\cF,(\Phi_n)^{m/n})$
and we define 
\[
\on{Shv}(I_0\backslash LX)^{\text{Weil}}
:=\on{colim}_{n}\on{Shv}(I_0\backslash LX)^{F^n}.
\]
Thus $\on{Shv}(I_0\backslash LX)^{\text{Weil}}$ is an abelian category with objects consisting of pairs
$(\cF,\Phi)$
where $\Phi$,
to be called a Weil structure on $\cF$,
is a system of isomorphism 
$\Phi_n:(F^n)^*\cF\cong\cF$
for sufficiently divisible $n$
such that $(\Phi_n)^{m/n}=\Phi_{m}$
and two systems 
$(\cF,\Phi)$ and $(\cF,\Phi')$
 are isomorphic if 
there exists a multiple $m$ of $n_0$
and $n_0'$ such that $\Phi_m=\Phi'_m$.
Given an object $(\cF,\Phi)\in\on{Shv}(I_0\backslash LX)^{\text{Weil}}$
and a point $\gamma\in LX$, we have 
$\gamma\in (LX)^{F^n}$ for sufficiently divisible $n$
and hence  an endomorphism $\Phi_n:\cF_x\cong\cF_x$
of the stalk $\cF_x$ of $\cF$ at $x$. We can define the 
Frobenius eigenvalues of $\Phi$ on $\cF_x$,
as elements in the group $A=\overline{\mathbb Q}_\ell^\times/\mu$,
as being the $n$-th roots of the eigenvalues of $\Phi_{n}$
for sufficiently divisible $n$.
We define $\cC_{LX}\subset \on{Shv}(I_0\backslash LX)^{\text{Weil}}$ to be the full subcategory 
consisting of $(\cF,\Phi)$ such that 
the eigenvalues of $\Phi$ lands in the subgroup $q^{\frac{\mathbb Z}{2}}\subset A$. We denote by $M=\K(\cC_{LX})$
the Grothendieck group of $\cC_{LX}$.

For any equivariant local system 
$\cL_{\xi,v}$ on the orbit $\cO_v$
we have 
the extension by zero sheaf (i.e.,  the standard object)
\[j_{v,!}\cL_{\xi,v}\in\on{Shv}(I_0\backslash LX)\]
where $j_v:\cO_v\to LX$
is the inclusion map (see \eqref{standard-costandard}).
Since $\xi$ is a finite order character,
for a sufficiently divisible $n$ we have an isomorphism  $(F^n)^*j_{v,!}\cL_{\xi,v}\cong j_{v,!}\cL_{\xi,v}$ and hence 
 a canonical Weil structure $\Phi_{}$
 on $j_{v,!}\cL_{\xi,v}$ such that the
Frobenius eigenvalue of $\Phi_{}$ on non-zero stalks is equal to $1$
(note that $\cL_{\xi,v}$ is of rank $1$). 
We denote by $[\cL_{\xi,v}]=[(j_{v,!}\cL_{\xi,v},\Phi)]\in M$
the class of $(j_{v,!}\cL_{\xi,v},\Phi)\in\cC_{LX}$.
Then $M$ is a free $\mathbb Z[q^{\frac{1}{2}},q^{\frac{-1}{2}}]$-module with basis 
$[\cL_{\xi,v}], (\xi,v)\in\sD$ where the multiplication of $q^{\frac{1}{2}}$ on $[\cL_{\xi,v}]$
is given by 
$q^{\frac{1}{2}}[\cL_{\xi,v}]=[(j_{v,!}\cL_{\xi,v},q^{\frac{1}{2}}\Phi)]$ where $q^{\frac{1}{2}}\Phi$
is the Weil structure on $j_{v,!}\cL_{\xi,v}$ with eigenvalue 
$q^{\frac{1}{2}}$ on the non-zero stalks of $j_{v,!}\cL_{\xi,v}$. 
Note that 
$(j_{v,!}\cL_{\xi,v},q^{-n}\Phi)$ can be identified with the Tate twist 
$\cL_{\xi,v}(n)$ and hence 
we have 
$q^{-n}[\cL_{\xi,v}]=[\cL_{\xi,v}(n)]$.

We similarly define abelian category 
$\on{Shv}(I_0\backslash LG/I_0)^{\on{Weil}}=
\on{colim}_n\on{Shv}(I_0\backslash LG/I_0)^{F^n}$ 
and the full subcategory 
$\cC_{LG}\subset \on{Shv}(I_0\backslash LG/I_0)^{\on{Weil}}$.
We denote by $H=\K(\cC_{LG})$ the Grothendieck group of $\cC_{LG}$.
For any orbit $LG_w$, we have the extension by zero 
$j_{w,!}\cL_w$ of the constant local system $\cL_w$ on $j_w:LG_w\to LG$
and $H$ is a free $\mathbb Z[q^{\frac{1}{2}},q^{\frac{-1}{2}}]$-module with basis 
$[\cL_w], w\in\widetilde{\mathrm{W}}$.

The convolution map makes $H$ into 
an algebra over $\mathbb Z[q^{\frac{1}{2}},q^{\frac{-1}{2}}]$, known as the affine Hecke algebra of $G$, and 
the convolution action 
makes $M$ into an $H$-module:
\begin{equation}
	[\cL_w][\cL_{\xi,v}]
	=\sum_i(-1)^i[\sH^i(\cL_w\star\cL_{\xi,v})].
\end{equation} 
Finally, the Verdier duality functors induce involutions 
$D$ and $D_\delta$ on $H$ and $M$
given by
\[
D[\mathcal H]=\sum_i(-1)^i[\sH^i\mathbb D(\mathcal H)],\ \ \ 
D_\delta[\mathcal M]=\sum_i(-1)^i[\sH^i\mathbb D_\delta(\mathcal M)],
\]
and it follows from Proposition \ref{decomposition theory} that 
\[D_\delta([\mathcal H][\mathcal M])=D([\mathcal H])D_{\delta}([\mathcal M]).\]

\subsubsection{The affine Hecke module $M'$}
For any positive integer $n$, 
we define $\on{Perv}(I_0\backslash LX)^{F^n}$
to be the abelian category consisting of pairs
$(\cP,\Phi_n)$
where $\cP\in\on{Perv}_\delta(I_0\backslash LX)$ and 
$\Phi_n:(F^n)^*\cF\cong\cF$ is an isomorphism.
For any positive integer $m$ divisible by $n$,
we have a natural exact functor 
$\cA_{LX,n}\to\cA_{LX,m}$ sending 
$(\cP,\Phi_n)$ to $(\cP,(\Phi_n)^{m/n})$
and we define \[\on{Perv}_\delta(I_0\backslash LX)^{\on{Weil}}:=\on{colim}_{n}\on{Perv}_\delta(I_0\backslash LX)^{F^n}.\]
Thus $\on{Perv}_\delta(I_0\backslash LX)^{\on{Weil}}$ is an abelian category with objects consisting of pairs
$(\cP,\Phi)$
where $\Phi$,
to be called a Weil structure on $\cP$,
is a system of isomorphism 
$\Phi_n:(F^n)^*\cP\cong\cP$
for sufficiently divisible $n$
such that $(\Phi_n)^{m/n}=\Phi_{m}$
and two systems 
$(\cP,\Phi)$ and $(\cP,\Phi')$
 are isomorphic if 
there exists a multiple $m$ of $n_0$
and $n_0'$ such that $\Phi_m=\Phi'_m$.
Given an object $(\cF,\Phi)\in\on{Perv}_\delta(I_0\backslash LX)^{\on{Weil}}$
and a point $\gamma\in LX$, we have 
$\gamma\in (LX)^{F^n}$ for sufficiently divisible $n$
and hence  an endomorphism $\sH^i\Phi_{n}:\sH^i\cP_x\cong\sH^i\cP_x$
of the stalk $\sH^i\cP_x$ of $\sH^i\cP$ at $x$. We can define the 
Frobenius eigenvalues of $\Phi$ on $\sH^i\cP_x$,
as elements in the group $A=\overline{\mathbb Q}_\ell^\times/\mu$,
as being the $n$-th roots of the eigenvalues of $\sH^i\Phi_{n,x}$
for sufficiently divisible $n$.
We define $\cA_{LX}\subset \on{Perv}_\delta(I_0\backslash LX)^{\on{Weil}}$ to be the full subcategory consisting of $(\cP,\Phi)$ such that the eigenvalues of $\Phi$ lands in $q^{\frac{\mathbb Z}{2}}\subset A$.
We denote by $M'=\K(\cA_{LX})$ the Grothendieck group of 
$\cA_{LX}$.

For any equivariant local system 
$\cL_{\xi,v}$ on the orbit $\cO_v$
we have the IC-extension in \eqref{shifted IC}:
\[\on{IC}_{\xi,v}=(j_{v})_{!,*}\cL_{\xi,v}[\delta(v)]\in\on{Perv}_\delta(I_0\backslash LX)\]
where $j_v:\cO_v\to LX$
is the inclusion map
and $\delta(v)$ is the dimension function of $\cO_v$ 
fixed at the beginning of \S\ref{Affine Hecke modules}.
There is a 
 canonical Weil structure $\Phi$
 on $\IC_{\xi,v}$ such that the
Frobenius eigenvalue of $\sH^{-\delta( v)}\Phi_{\xi,v}$ on the  stalks of $\sH^{-\delta(v)}(\IC_{\xi,v}|_x)
\cong\cL_{\xi,v}|_x, x\in\cO_v$ is equal to $1$. 
It follows from \cite[Section 5]{BBDG}
that $(\IC_{\xi,v},\Phi)\in\cA_{LX}$
and we 
denote by $[\IC_{\xi,v}]=[(\IC_{\xi,v},\Phi_{})]\in M'$.
Then $M'$ is a free $\mathbb Z[q^{\frac{1}{2}},q^{\frac{-1}{2}}]$-module with basis 
$[\IC_{\xi,v}]$ where the multiplication of $\mathbb Z[q^{\frac{1}{2}},q^{\frac{-1}{2}}]$ on $[\IC_{\xi,v}]$
is given by 
$q^{\frac{1}{2}}[\IC_{\xi,v}]=[(\IC_{\xi,v},q^{\frac{1}{2}}\Phi]$ where $q^{\frac{1}{2}}\Phi$
is the Weil structure on $\IC_{\xi,v}$ with eigenvalue 
$q^{\frac{1}{2}}$ on the stalks of $\sH^{-\delta(v)}(\IC_{\xi,v}|_x), x\in\cO_v$. 
Note that 
$(\IC_{\xi,v},q^{-n}\Phi_{})$ can be identify with the Tate twist 
$\IC_{\xi,v}(n)$ and hence 
we have 
$q^{-n}[\IC_{\xi,v}]=[\IC_{\xi,v}(n)]$.

We have similarly defined category 
$\cA_{LG}\subset\on{Perv}(I_0\backslash LG/I_0)$ 
and denote by $H'=\K(\cA_{LG})$ the Grothendieck group of $\cA_{LG}$.
For any orbit $LG_w$, we have the IC-extension
$\IC_w$ of the constant local system $\cL_w$ on $j_w:LG_w\to LG$
and $H'$ is a free $\mathbb Z[q^{\frac{1}{2}},q^{\frac{-1}{2}}]$-module with basis 
$[\IC_w]$.

The convolution map makes $H'$ into 
an algebra over $\mathbb Z[q^{\frac{1}{2}},q^{\frac{-1}{2}}]$ and 
the convolution action 
makes $M'$ an $H'$-module:
\begin{equation}
	[\IC_w][\IC_{\xi,v}]
	=\sum_i(-1)^i[{^p}\sH_\delta^i(\IC_w\star\IC_{\xi,v})].
\end{equation} 
Finally, the Verdier duality functors induce involutions 
$D$ and $D_\delta$ on $H'$ and $M'$
given by
\[
D[\mathcal H]=\sum_i(-1)^i[{^p}\sH^i\mathbb D(\mathcal H)],\ \ \ 
D_\delta[\cP]=\sum_i(-1)^i[{^p}\sH^i_\delta\mathbb D_\delta(\cP)],
\]
and it follows from Lemma \ref{Hecke action} that 
\[D_\delta([\mathcal H][\cP])=D([\mathcal H])D_{\delta}([\cP]).\]

\subsubsection{Two bases}
Consider the map 
$h:M'\to M$
sending $[\cA]\to \sum_{i\in\mathbb Z} (-1)^i[\sH^i(\cA)]$, 
where each $\sH^i(\cA)$ is equipped with the induced Weil structure $\sH^i\Phi_n:\sH^i(F^n)^*(\cA)\cong \sH^i(\cA)$).
Since  
\[h([\IC_{\xi,v}])\in(-1)^{\delta(v)}[\cL_{\xi,v}]+\sum_{\bar u<\bar v}\mathbb Z[q^{\frac{1}{2}},q^{\frac{-1}{2}}][\cL_{\eta,u}]\]
it follows that 
$h:M'\cong M$ is bijective.
Similarly, we have a bijection
$h_G:H'\cong H$
sending $h_G([\cA])=\sum_{i\in\mathbb Z} (-1)^i[\sH^i(\cA)]$.
\begin{lem}\label{compatibility of h and D}
(i)  For any $a,b\in H'$ 
we have 
$h_G(ab)=h_G(a)h_G(b)$.
(ii) For any $a\in H'$ and $ m\in M'$,
we have 
$h(am)=h_G(a)h(m)$.
(iii)
$D_\delta\circ h=h\circ D$.

\end{lem}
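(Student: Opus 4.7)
The plan is to reduce all three assertions to a single observation: both $H$ and $H'$ are naturally isomorphic to the Grothendieck group of a single triangulated category (and similarly for $M$ and $M'$), with $h_G$ and $h$ induced by these identifications. Once this is set up, (i)--(iii) become formal consequences of the fact that convolution and Verdier duality are (bi)exact functors between triangulated categories.

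First I would introduce the full triangulated subcategories $\mathcal{D}_{LG} \subset D(I_0\backslash LG/I_0)^{\on{Weil}}$ and $\mathcal{D}_{LX} \subset D(I_0\backslash LX)^{\on{Weil}}$ of complexes whose Frobenius eigenvalues all lie in $q^{\mathbb{Z}/2} \subset A$, and let $\mathcal{K}_{LG}$, $\mathcal{K}_{LX}$ be their Grothendieck groups. These subcategories are stable under $!$-pushforward, $*$-pullback, external tensor product, and Verdier duality, since products of $q^{\mathbb{Z}/2}$-eigenvalues stay in $q^{\mathbb{Z}/2}$ and Verdier duality only twists eigenvalues by integer powers of $q$.

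Next I would invoke the standard $t$-structure argument: both the standard $t$-structure (with heart $\cC_{LG}$) and the perverse $t$-structure (with heart $\cA_{LG}$) are bounded on $\mathcal{D}_{LG}$, so the inclusions of hearts induce isomorphisms $H = K(\cC_{LG}) \xrightarrow{\sim} \mathcal{K}_{LG} \xleftarrow{\sim} K(\cA_{LG}) = H'$, with inverses given by $[\mathcal{A}] \mapsto \sum_i (-1)^i[\sH^i\mathcal{A}]$ and $[\mathcal{A}] \mapsto \sum_i (-1)^i[{}^p\sH^i\mathcal{A}]$ respectively. Tracing the definitions, the resulting isomorphism $H' \to H$ is exactly $h_G$. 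The same argument yields $M' \xrightarrow{\sim} \mathcal{K}_{LX} \xrightarrow{\sim} M$ equal to $h$; here boundedness of the $\delta$-adapted perverse $t$-structure is reduced, via the $I_0$-placid presentation $LX = \on{colim}_{\underline v} \overline{\cO}_{\underline v}$ of Proposition \ref{placid of LX}, to the finite-dimensional case on each $\overline{\cO}_{\underline v}$, where it follows from Appendix \ref{s:appendix}.

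With this machinery in hand, (i)--(iii) fall out immediately. The convolution $\star$ is built from triangulated functors, hence descends to a bilinear operation $\mathcal{K}_{LG} \times \mathcal{K}_{LG} \to \mathcal{K}_{LG}$ and to a $\mathcal{K}_{LG}$-action on $\mathcal{K}_{LX}$; unwinding definitions, these coincide with the products on $H, H'$ (resp.\ the actions on $M, M'$), giving (i) and (ii). For (iii), Verdier duality $\mathbb{D}_\delta$ is a triangulated equivalence $\mathcal{D}_{LX}^{\on{op}} \xrightarrow{\sim} \mathcal{D}_{LX}$, so induces an involution on $\mathcal{K}_{LX}$ that restricts to $D$ on $M'$ and to $D_\delta$ on $M$, yielding $D_\delta \circ h = h \circ D$. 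The main obstacle is the technical verification of boundedness of the $\delta$-adapted perverse $t$-structure on $\mathcal{D}_{LX}$, since $LX$ is ind-infinite-dimensional; the reduction to the finite-dimensional case via placid presentations is routine but requires careful bookkeeping with the dimension theory $\delta$.
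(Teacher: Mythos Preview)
Your approach is correct and is essentially the same as the paper's, which simply cites the finite-dimensional argument in \cite[Propositions 3.2.8 and 3.3.2]{MS}. The Mars--Springer argument is precisely the triangulated-category identification you describe: both hearts sit inside a common triangulated category with bounded $t$-structures, so their Grothendieck groups coincide and the induced maps $h_G$, $h$ automatically intertwine anything built from exact (triangulated) functors --- convolution for (i), (ii) and Verdier duality for (iii). Your observation that the only genuinely new point in the affine setting is the boundedness of the $\delta$-adapted perverse $t$-structure on $\mathcal{D}_{LX}$, and that this reduces via the $I_0$-placid presentation $LX=\on{colim}_{\underline v}\overline\cO_{\underline v}$ to the finite-type situation, is exactly right and is implicit in the paper's one-line citation.
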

\begin{proof}
The proof is the same as in the finite dimensional case \cite[Proposition 3.2.8 and 3.3.2]{MS}.
    
\end{proof}

The collection $\{h_G[\IC_w]\}_{w\in\widetilde{\mathrm W}}$
forms a new basis of $H$, known as the Kazhdan-Lusztig basis of $H$, and the transition matrix between the new basis and 
the standard basis $\{[\cL_w]\}_{w\in\widetilde{\mathrm W}}$
is given by the Affine Kazhdan-Lusztig polynomials for the extended affine Weyl group $\widetilde{\mathrm W}$.

Similarly, the  collection $\{h[\IC_{\xi,v}]\}_{(\xi,v)\in\sD}$
forms a new basis of $M$
and 
the
 transition matrix between the new basis 
and the standard basis $\{[\cL_{\xi,v}]\}_{(\xi,v)\in\sD}$ is given by 
a certain class of polynomials 
to be called the 
\emph{Affine Kazhdan-Lusztig-Vogan polynomials}.
In Section \ref{main results}, we will provide a characterization 
of   those polynomials 
and an algorithm to compute them.

\subsection{Simple reflection actions}
For each affine simple root $\alpha$, 
we provide an explicit formula for the action of the simple reflection
$[\cL_{s_\alpha}]\in H$ on $M$.

To state the results, we collect some notations from \cite{MS}.
First, note that the action of $s_\alpha$ preserves $T_0$.
For $\xi\in X(\pi_0(T_0^{\psi_0}))$,
$s_\alpha\xi\in X(\pi_0(s_\alpha\cdot T_0^{\psi_0}))$.
Next, we need a technical construction 
when $v$ is of type IIIb, IVb for $s_\alpha$.
For any torus $T$, define 
$\hat{X}(T)=X^*(T)\otimes_\bZ(\bZ_{(p)}/\bZ)$ where $\bZ_{(p)}$ is the localization at $(p)$.
Then $\hat{X}(T)$ is isomorphic to
the group of tame rank one local systems on $T$
via Kummer local systems.
For $\xi\in\hat{X}(T)$, denote the associated local system by $\cL_\xi$.
For a surjective map $\phi:T\rightarrow T'$ between tori, 
by Lemma 2.1.5 of \emph{loc. cit.} we have exact sequence
\[0\rightarrow X(\pi_0(\ker(\phi)))\rightarrow\hat{X}(T')\xrightarrow{\hat{\phi}}
\hat{X}(T)\rightarrow\hat{X}(\ker(\phi)^0)\rightarrow 0\]
where $\ker(\phi)^\circ$ is the neutral component of $\ker(\phi)$.
For examples, if $T=T_0$, $T'=T_0/T_v$,
$\xi\in X(\pi_0(T_v))$,
then $\hat{\phi}_v\xi=0$.
Thus since we restrict ourselves to 
$I_0$-equivariant local systems without weights,
we always have $\hat{\phi}_v\xi=0$.
In the case of $\cO_v$ being of type IIIb or IVb for $s_\alpha$,
let $\cO_{v'}\neq \cO_v$ be the closed orbit in $P_\alpha x_v$
as in Lemma \ref{l:orbit type}. 
From \S4.2.3, \S4.2.4 of \cite{MS} 
we see $T_0=T_{v'}\ker(\alpha)$ in type IIIb
(resp. $\mathrm{Im}(\check{\alpha})\subset T_v$ in type IVb),
so that $T_{v'}/T_{v'}\cap\ker(\alpha)\xrightarrow{\alpha}\bGm$
is an isomorphism.
In particular, $T_v\cap\ker(\alpha)$ is connected.
Also, in both two types 
$T_v\cap\ker(\alpha)=T_{v'}\cap\ker(\alpha)$.
Applying Lemma 2.1.5 of \emph{loc. cit.} to surjection
$T_0/T_{v'}\rightarrow T_0/T_{v'}\cap\ker(\alpha)$,
we get
\[
0\rightarrow\hat{X}(T_0/T_{v'})\rightarrow
\hat{X}(T_0/T_v\cap\ker(\alpha))\rightarrow
\hat{X}(\bGm)\rightarrow 0.
\]
As in \S4.2.7 of \emph{loc. cit.},
denote by $a_{v'}$ the composition
\[
a_{v'}:
\hat{X}(T_0/T_v)\rightarrow
\hat{X}(T_0/T_v\cap\ker(\alpha))\rightarrow
\hat{X}(\bGm).
\]
In type IIIb, similarly define $a_{v''}$ which will satisfy
$a_{v''}(\xi)=-a_{v'}(s\xi)$.
Since we are in the situation $\hat{\phi}_v\xi=0$,
from \S4.2.7 we always have
$a_{v'}(\xi)=-a_{v''}(\xi)$ in type IIIb
and $2a_{v'}(\xi)=0$ in type IVb.

By the same discussion as in \S4.1.3 of \emph{loc. cit.}, we have
\begin{prop}\label{construction of M}
	For $\alpha$ any simple affine root, the action of $[\cL_{s_\alpha}]\in H$
	on $M$ is given by the following formula.
\begin{equation}\label{eq:aff Hecke act}
	[\cL_{s_\alpha}][\cL_{\xi,v}]=
	\begin{cases}
		q[\cL_{\xi,v}],\hspace{5.9cm}
		\cO_v\text{ is of type I for }s_\alpha,\\
		[\cL_{s_\alpha\xi,v'}],\hspace{5.68cm}
		\cO_v\text{ is of type IIa for }s_\alpha,\\
		(q-1)[\cL_{\xi,v}]+q[\cL_{s_\alpha\xi,v'}],\hspace{2.8cm}
		\cO_v\text{ is of type IIb for }s_\alpha,\\
		[\cL_{s_\alpha\xi,v'}]+[\cL_{s_\alpha\xi,v''}],\hspace{3.73cm}
		\cO_v\text{ is of type IIIa for }s_\alpha,\\
		(q-2)[\cL_{\xi,v}]+(q-1)([\cL_{\xi,v'}]+[\cL_{\xi,v''}]),\
		\cO_v\text{ is of type IIIb for }s_\alpha,\ a_{v'}(\xi)=0,\\
		-[\cL_{\xi,v}],\hspace{5.8cm}
		\cO_v\text{ is of type IIIb for }s_\alpha,\ a_{v'}(\xi)\neq0,\\
		[\cL_{\xi,v}]+[\cL_{\xi_1,v'}]+[\cL_{\xi_2,v'}],\hspace{2.8cm}
		\cO_v\text{ is of type IVa for }s_\alpha,\\
		(q-1)[\cL_{\xi,v}]-[\cL_{\xi',v}]+(q-1)[\cL_{\bar{\xi},v'}],\hspace{0.6cm}
		\cO_v\text{ is of type IVb for }s_\alpha,\ a_{v'}(\xi)=0,\\
		-[\cL_{\xi,v}],\hspace{5.85cm}
		\cO_v\text{ is of type IVb for }s_\alpha,\ a_{v'}(\xi)\neq0.
	\end{cases}
\end{equation}
\end{prop}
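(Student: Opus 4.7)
The plan is to mimic the case-by-case analysis of \cite[\S4.1.3]{MS} in the affine setting, using the reduction to a geometric computation over $\bP^1$ afforded by the parahoric $P_\alpha$. First, I would unfold the definition: the action of $[\cL_{s_\alpha}]$ is computed via the convolution
\[
[\cL_{s_\alpha}][\cL_{\xi,v}] = \sum_i (-1)^i [\sH^i(\cL_{s_\alpha}\star\cL_{\xi,v})],
\]
and the convolution diagram shows that $\cL_{s_\alpha}\star\cL_{\xi,v}$ is the pushforward along
$m: P_\alpha\times^{I_0}\cO_v \to P_\alpha\cdot\cO_v\subset LX$ of the twisted external product built from the constant sheaf on $\Fl_{s_\alpha}\cong\bP^1$ and $\cL_{\xi,v}$. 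Since $P_\alpha/I_0\cong\bP^1$ is one-dimensional and $P_\alpha\cdot\cO_v$ is a union of finitely many $I_0$-orbits by Lemma \ref{l:orbit type}, the entire computation localizes to a finite-dimensional situation modulo the pro-unipotent radical, so the placidness considerations from Section \ref{placid} cause no difficulty beyond ensuring that the Weil structures track correctly.

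Next, I would pass to the quotient presentation $I_0\backslash P_\alpha\cO_v\cong\bP^1/H_{\alpha,v}$ from the discussion preceding Lemma \ref{l:orbit type}. The map $m$ becomes the $\bP^1$-bundle
$p:\bP^1\times^{H_{\alpha,v}}\cO_v\to P_\alpha\cO_v$ in which we may explicitly identify the preimages of each $I_0$-orbit appearing in $P_\alpha\cO_v$. Combined with Lemma \ref{l:equiv local systems} (which describes the possible $I_0$-equivariant local systems on each orbit in terms of characters of $\pi_0(T_v)$), this reduces each of the seven cases to a finite computation of $p_!$ applied to a rank-one local system on a configuration of points and $\bGm$'s inside $\bP^1$. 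The relevant intermediate data $\xi', \xi_1,\xi_2, \bar\xi$, and the obstruction classes $a_{v'}(\xi)$ in the types IIIb and IVb are defined exactly as in \cite[\S4.2.3--\S4.2.7]{MS}, and their construction depends only on the action of $\psi_v$ on $T_0$ and on the affine root subgroups $U_{\pm\alpha}$, which is the same kind of finite data used in loc.\ cit.

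With these reductions, the seven formulas follow case-by-case. In types I and IIa/IIb, the stabilizer $H_{\alpha,v}$ preserves the orbit or exchanges two orbits in a trivial way, and one reads off either the scalar action $q$ or the translation to a neighboring orbit by computing $p_!\overline{\bQ}_\ell$ over $\bP^1$ and $\bA^1$. In types IIIa/IIIb and IVa/IVb, the stabilizer acts as a torus or its normalizer, producing a sum of contributions from three or two strata and giving rise to the alternatives depending on whether the character $\xi$ descends to the smaller torus (equivalently $a_{v'}(\xi)=0$) or not. The signs $(-1)^i$ from the various $H^i$ combine with the dimension-theory shifts $\delta(v),\delta(v')$---which, by the results of Section \ref{placid}, differ by $1$ when $\cO_{v'}$ is an adjacent orbit in $P_\alpha\cO_v$---to produce exactly the powers of $q$ appearing in \eqref{eq:aff Hecke act}.

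The one place that requires care is checking that the Weil structures on $j_{v,!}\cL_{\xi,v}$ produced by convolution match those used to define the basis $[\cL_{\xi,v}]\in M$ (rather than being off by a Tate twist). The main obstacle I expect, compared with the finite-dimensional proof, is therefore bookkeeping of dimension shifts: although each individual case reduces to the Mars--Springer computation on $\bP^1$, one needs Proposition \ref{dim for LX} and the additive property in \ref{dimension theory}(6) to confirm that the value of $\delta(v')-\delta(v)\in\{0,\pm 1\}$ is consistent with the finite-dimensional dimension comparison Mars--Springer use implicitly. Once this is verified, the seven formulas of \eqref{eq:aff Hecke act} follow verbatim from the argument of \cite[\S4.1.3]{MS}.
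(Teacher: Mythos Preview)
Your approach is essentially the same as the paper's: both reduce to the Mars--Springer computation by observing that the convolution map $I_0s_\alpha I_0\times^{I_0}\cO_v\to P_\alpha\cO_v$ depends only on the Levi of $P_\alpha$ and the stabilizer image $H_{\alpha,v}\subset\PGL_2$, so the case analysis of \cite[\S4.1.3]{MS} transfers verbatim. The paper in fact writes out type~IIb explicitly in this fashion, identifying the convolution space with a subvariety of $k\times P_\alpha\tilde v$ and computing $\mu_!$ over the two strata to obtain $(q-1)[\cL_{\xi,v}]+q[\cL_{s_\alpha\xi,v'}]$.

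One correction: your stated ``main obstacle'' is a red herring. The Hecke action on $M$ is defined through the \emph{standard} $t$-structure, $[\cL_{s_\alpha}][\cL_{\xi,v}]=\sum_i(-1)^i[\sH^i(\cL_{s_\alpha}\star\cL_{\xi,v})]$, and the basis elements $[\cL_{\xi,v}]$ are extensions by zero with the canonical Weil structure of eigenvalue~$1$; no shift by $\delta(v)$ enters anywhere in this formula or in the comparison with the Mars--Springer basis. The powers of $q$ in \eqref{eq:aff Hecke act} arise purely from the compactly supported cohomology of $\bA^1$, $\bGm$, or a point in each fiber, exactly as in the finite case, and the dimension theory $\delta$ plays no role until Section~\ref{polynomials b and c} where the duality $D_\delta$ is studied. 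So you can drop the bookkeeping concern about $\delta(v')-\delta(v)$ entirely.
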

In the above, 
in type IVa, $\xi_1,\xi_2$ are preimages of 
the image of $\xi$ in $\hat{X}(T_0/T_v\cap\ker(\alpha))$, 
c.f. \S4.2.6 of \emph{loc. cit.};
in type IVb where $a_{v'}(\xi)$, $\xi'$ is the element other than $\xi$
that has the same image as $\xi$ in $\hat{X}(T_0/T_v\cap\ker(\alpha))$,
and $\bar{\xi}\in\hat{X}(T_0/T_{v'})$ is the unique element
that maps to the image of $\xi$ in $\hat{X}(T_0/T_v\cap\ker(\alpha))$.

\begin{proof}
	It follows essentially from the results in \cite[\S4.1.3]{MS}.
    The key observation here is that the
    computation of simple Hecke action 
    in \emph{loc. cit.}
	  only uses information
	about the Levi subgroup of the parahoric subgroup
	and the stabilizer of the orbit.
	We demonstrate this by carrying it out 
	when $\cO_v$ is of type IIb for $s_\alpha$,
	following \S4.3.7 and \S4.3.9 of \emph{loc. cit.}.
	
	We chosen an isomorphism $u_{\pm\alpha}:k\simeq U_{\pm\alpha}$
	and defined element $n_\alpha=u_\alpha(1)u_{-\alpha}(-1)u_\alpha(1)$ as in \S4.1.1 of \emph{loc. cit.},
	 representing $s_\alpha$.
In the following, we choose orbit representatives $\tv\in\cO_v$
such that the stabilizers $H_{\alpha,\tv}$
are exactly as in Lemma \ref{l:orbit type}.
Note that they are not necessarily the same as 
the representatives $\dot{v}$ we used earlier.
In the case of $\cO_v$ being type IIb for $s_\alpha$,
	$P_\alpha\tv=\cO_v\sqcup\cO_{v'}=I_0\tv\sqcup I_0\tv'$
	in which $I_0\tv$ is open and $I_0\tv'$ has codimension one.
	Also, $n_\alpha \tv\in I_0\tv'$
	and $n_\alpha u_\alpha(x)\tv\in I_0\tv$ if $x\neq 0$.
	Here we also choose $\tv'=n_\alpha \tv$.
	Thus $I_0s_\alpha I_0\tv=P_\alpha \tv$.
	We have isomorphisms
	\begin{align*}
	I_0s_\alpha I_0\times^{I_0}I_0\tv
	&\simeq
	\{(pI_0,y)\in I_0s_\alpha I_0/I_0\times P_\alpha \tv\mid p^{-1}y\in I_0\tv\}\\
	&\simeq
	\{(x,y)\in k\times P_\alpha \tv\mid n_\alpha^{-1}u_\alpha(-x)y\in I_0\tv\},
	\end{align*}
    where the first isomorphism is given by $(p,a)\mapsto(pI_0,pa)$,
    the second is given by writing $pI_0=u_\alpha(x)n_\alpha I_0$
    and preserving $y$.
    The multiplication map $\mu$ becomes projection $(x,y)\mapsto y$.
    By \cite[Lemma 4.1.5]{MS}, 
    $U_{-\alpha}\tv\subset P_\alpha^+ \tv$.
    From the above discussions we can see 
    $n_\alpha^{-1}u_\alpha(-x)y\in I_0\tv$
    if and only if either $y\in I_0\tv'$
    or $y\in u_\alpha(k\backslash\{x\})P_\alpha^+ T_0\tv$.
    
    Now given $\cL_{\xi,v}$ on $I_0\tv$,
    denote by $\widetilde{\cL}$ 
    the local system on $I_0s_\alpha I_0\times^{I_0}I_0\tv$ 
    whose pull back to $I_0s_\alpha I_0\times I_0\tv$ is
    $\underline{\bQl}\boxtimes\cL_{\xi,v}$.
    Then
    \[
    [\cL_{s_\alpha}][\cL_{\xi,v}]=
    \sum_i(-1)^i[\sH^i(\mu_!\widetilde{\cL}|_{I_0v})]
    +\sum_i(-1)^i[\sH^i(\mu_!\widetilde{\cL}|_{I_0v'})].
    \]
    
    We first compute the first sum.
    Here $\mu^{-1}(I_0\tv)=\{(x,u_\alpha(x')P_\alpha^+T_0\tv)\mid x\neq x'\}$.
    By the definition of type IIb, we have
    $\Stab_{I_0}(\tv)\subset I_0\cap\Ad_{n_\alpha}I_0=T_0P_\alpha^+$.
    Thus we have isomorphism
    $k\times T_0P_\alpha^+\tv\simeq I_0\tv$
    given by $(x',y')\mapsto u_\alpha(x')y'$.
    Denote $z=x-x'$, then
    $\mu^{-1}(I_0\tv)\simeq k^\times\times I_0\tv$
    on which $\mu(z,y)=y$.
    Denote by $\mathrm{pr}_2$ the projection to second factor.
    Then $\widetilde{\cL}|_{\mu^{-1}(I_0\tv)}\simeq\mathrm{pr}_2^*\cL_{\xi,v}$.
    We conclude
    \begin{align*}
    \sum_i(-1)^i[\sH^i(\mu_!\widetilde{\cL}|_{I_0\tv})]
    =\sum_i(-1)^i[\sH^i(\mathrm{pr}_{2,!}\mathrm{pr}_2^*\cL_{\xi,v})]
    =\sum_i(-1)^iH_c^i(\bGm)[\cL_{\xi,v}]
    =(q-1)[\cL_{\xi,v}].
    \end{align*}

    Next we compute the second sum over $I_0\tv'$.
    Here $x\in k, y\in I_0\tv'$.
    We have $\mu^{-1}(I_0\tv')\simeq k\times I_0\tv'$.
    Denote by $\mathrm{pr}_2$ the projection to the second factor.
    Write $y=tu\tv'$ where $t\in T_0$, $u\in I_0^+$.
    Then $n_\alpha^{-1}u_\alpha(-x)y\in P_\alpha^+\Ad_{n_\alpha}(t)\tv$.
    The image of $(x,y)$ in $T_0/T_v$ is $\Ad_{n_\alpha}(t)$.
    Thus the restriction of $\widetilde{\cL}$ to
    $\mu^{-1}(I_0\tv')\simeq k\times I_0\tv'$
    is $\mathrm{pr}_2^*\cL_{s_\alpha\xi,v'}$.
    We obtain
    \[
    \sum_i(-1)^i[\sH^i(\mu_!\widetilde{\cL}|_{I_0\tv'})]
    =\sum_i(-1)^i[\sH^i(\mathrm{pr}_{2,!}\mathrm{pr}_2^*\cL_{s_\alpha\xi,v'})]
    =\sum_i(-1)^iH_c^i(\bA^1)[\cL_{s_\alpha\xi,v'}]
    =q[\cL_{s_\alpha\xi,v'}].
    \]
    This completes the computation for type IIb.
\end{proof}

To completely determine the Hecke of $H$ on $M$,
we also need to describe the action of 
$[\cL_w]$
for $w\in\Omega$.

\begin{lem}\label{l:Omega Hecke action}
    For $w\in \Omega$, 
    $[\cL_w][\cL_{\xi,v}]=[\cL_{w\xi,wv}]$,
    where $w$ acts on $\xi$ via conjugation on $\hat{X}(T_0)$
    and on $\sV$ via left multiplication.
\end{lem}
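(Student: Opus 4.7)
The plan is a direct geometric computation of the convolution, exploiting the fact that length-zero elements normalize $I_0$. By the isomorphism $\Omega \cong N_{LG}(I_0)/I_0$ (and Lemma \ref{l:orbit type b}), for $w \in \Omega$ the $I_0$-double coset $LG_w = I_0 w I_0$ equals the single left-coset $w I_0 = I_0 w$, so $LG_w$ is an $I_0$-torsor. Hence the restriction of the action map $a: LG \times^{I_0} LX \to LX$ to $LG_w \times^{I_0} \cO_v$ is an isomorphism onto its image $w \cdot \cO_v$, with no cohomological shift. Moreover, the sheaf $\cL_w$ is the constant sheaf with canonical (weight-zero) Weil structure by Lemma \ref{l:equiv local systems}(i), so no Tate twist appears. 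Thus $\cL_w \star \cL_{\xi,v}$ will be concentrated in degree zero and equal, up to identification, to the pushforward of $\cL_{\xi,v}$ along the translation-by-$w$ isomorphism.

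First I would check that $wv$ makes sense as an element of $\sV$ and that $w \cdot \cO_v = \cO_{wv}$. Choosing a representative $\dv \in \cV$ of $v$ as in Lemma \ref{l:Iwahori orbit rep}, the product $w\dv \in LG$ satisfies
\[
\tau(w\dv) = w\dv\,\theta(w\dv)^{-1} = w\,\tau(\dv)\,\theta(w)^{-1},
\]
which lies in $N_{LG}(LT_0)$ since $\tau(\dv), w, \theta(w) \in N_{LG}(LT_0)$; here we use that $\Omega$ is $\theta$-stable (because $\theta$ preserves $I_0$ and hence the length function on $\widetilde{\mathrm W}$). Thus $w\dv \in \cV$ and it descends to a well-defined class $wv \in \sV = L^+T_0\backslash\cV/LK$ whose orbit equals $I_0 w x_v = w\cdot\cO_v$.

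Second I would identify the translated local system with $\cL_{w\xi, wv}$. A direct computation using $n_{wv} = w n_v \theta(w)^{-1}$ yields
\[
\psi_{wv}\big|_{T_0} = \on{Ad}_w \circ \psi_v \circ \on{Ad}_{w^{-1}},
\]
so $T_{wv} = \on{Ad}_w(T_v)$, and conjugation by $w$ induces a $T_0$-equivariant isomorphism $T_0/T_v \xrightarrow{\sim} T_0/T_{wv}$ intertwining the two projections $\pi_v$ and $\pi_{wv}$ with the translation $\cO_v \to \cO_{wv}$, $y \mapsto wy$. Transporting $\xi \in X(\pi_0(T_v))$ through this isomorphism yields $w\xi \in X(\pi_0(T_{wv}))$, identifying the pushed-forward local system with $\pi_{wv}^* \cL_{w\xi} = \cL_{w\xi, wv}$. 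Combining the three steps, $\cL_w \star \cL_{\xi,v} \simeq \cL_{w\xi, wv}$ as $I_0$-equivariant sheaves with Weil structure, proving the identity in $M$.

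The main (modest) obstacle is the bookkeeping around the conjugation-by-$w$ action: one must verify that the induced action of $w \in \Omega$ on $T_0 \cong I_0/I_0^+$ coincides with its action as an element of $\widetilde{\mathrm W}$, so that the notation ``$w\xi$'' is unambiguous, and that the equivariant structures match under the translation isomorphism. Everything else is a routine tracing of definitions, and no parahoric-orbit case analysis (as in Proposition \ref{construction of M}) is needed because $w$ acts by an invertible translation rather than generating a nontrivial $\mathbb{P}^1$-fibration.
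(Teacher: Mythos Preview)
Your proposal is correct and takes essentially the same approach as the paper: both exploit that $w\in\Omega$ normalizes $I_0$, so $I_0wI_0\times^{I_0}\cO_v \cong wI_0x_v = I_0wx_v$ and the convolution reduces to the direct image under left translation by $w$, with the action on $\xi$ coming from conjugation on $T_0\cong I_0/I_0^+$. Your write-up is in fact more detailed than the paper's (you explicitly verify $wv\in\sV$ and compute $T_{wv}=\on{Ad}_w(T_v)$), but the underlying argument is identical.
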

\begin{proof}
    Let $\dot{w}\in N_{LG}(LT_0)\cap N_{LG}(I_0)$  
    be a lift of $w$.
    Note that
    \[
    I_0\dot{w}I_0\times^{I_0}\cO_v=
    \dot{w}\times^{I_0}I_0x_v
    \simeq \dot{w}I_0x_v=I_0\dot{w}x_v.
    \]
    Thus the convolution action is just
    the direct image of the left multiplication by $\dot{w}$.

    The above last equality $\dot{w}I_0x_v=I_0\dot{w}x_v$
    induces a conjugation action of $\dot{w}$ on $I_0$,
    thus an action on the abelian quotient $T_0\simeq I_0/I_0^+$.
    So the action of $w$ on $\xi$ is the conjugation action 
    on a character of $T_0$ that represents $\xi$.
\end{proof}

\subsection{The polynomials $b_{\eta,u;\xi,v}$ and $c_{\eta,u;\xi,v,i}$}\label{polynomials b and c}

Consider the partial ordering on $I_0$-orbits by closure relation
and write $u<v$ if $\cO_{u}\subset\overline{\cO}_{v}$
and $u\neq v$.

According to~\eqref{D(IC)}, we have 
$\mathbb D_\delta(\IC_{\xi,v})\cong\IC_{-\xi,v}(\delta(v))$
and $\mathbb D_\delta(j_{v,!}\cL_{\xi,v})\cong j_{v,*}\cL_{-\xi,v}[2\delta(v)](\delta(v))$.
It follows that
\begin{equation}\label{eq:b coeff}
	D_\delta[\cL_{\xi,v}]=q^{-\delta(v)}[\cL_{-\xi,v}]+
	\sum_{u<v}b_{\eta,u;\xi,v}[\cL_{\eta,u}]\in M
\end{equation}
and 
\begin{equation}\label{eq:c coeff}
	[\sH^i\IC_{\xi,v}]=\delta_{i,-\delta(v)}[\cL_{\xi,v}]+
	\sum_{u<v}c_{\eta,u;\xi,v,i}[\cL_{\eta,u}]\in M
\end{equation}
where $b_{\eta,u;\xi,v},c_{\eta,u;\xi,v,i}\in\mathbb Z[q^{\frac{1}{2}},q^{-\frac{1}{2}}]$.
The polynomials $b_{\eta,u;\xi,v}$ can be viewed as an affine analog of 
the $R_{\gamma,\delta}$ in \cite{LV}
and the $c_{\eta,u;\xi,v,i}$ are related to the 
Affine Kazhdan-Lusztig-Vogan polynomials in Section \ref{AKLV}.

Apply the bijection $h:M'\cong M$ to equality
$D[\IC_{\xi,v}]=q^{-\delta(v)}[\IC_{-\xi,v}]$
and plug in the relations \eqref{eq:b coeff}, \eqref{eq:c coeff}.
By comparing coefficients of $[\cL_{\eta,u}]$ for $u<v$, we obtain
\begin{equation}\label{eq:b c coeff relation}
	\begin{split}
		&\sum_i(-1)^ic_{\eta,u;-\xi,v,i}
		-q^{\delta(v)-\delta(u)}
		\sum_i(-1)^i\bar{c}_{-\eta,u;\xi,v,i}\\
		&=(-1)^{\delta(v)}q^{\delta(v)}b_{\eta,u;\xi,v}
		+q^{\delta(v)}\sum_{u<z<v}b_{\eta,u;\zeta,z}
		\sum_i(-1)^i\bar{c}_{\zeta,z;\xi,v,i}
	\end{split}
\end{equation}
where 
$\bar{c}\in\mathbb Z[q^{\frac{1}{2}},q^{\frac{-1}{2}}]$ means inverting $q^{\frac{1}{2}}$.

\begin{lem}\label{l:c coeff}
Assume $c_{\eta,u;\xi,v,i}\in\bN q^{\frac{1}{2}(i+\delta(v))}$.
	Then the coefficients $c_{\eta,u;\xi,v,i}$'s
		can be expressed as a product $a\prod_jb_j$ 
		where $a\in\bZ q^\bZ$,
		$b_j$ or $b_j^{-1}$ is a monomial of $q^{\frac{1}{2}}$
		in some $b_{\eta,u;\xi,v}$.
	
\end{lem}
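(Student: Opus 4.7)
The plan is to invert the recursion \eqref{eq:b c coeff relation} by induction on $d := \delta(v) - \delta(u) \geq 1$. Under the hypothesis, each nonzero $c_{\eta,u;\xi,v,i}$ equals a single monomial $n_i q^{(i+\delta(v))/2}$ with $n_i \in \bN$, nonzero only for $i+\delta(v)\in\{0,1,\ldots,d-1\}$. Setting $x := q^{1/2}$, the alternating sum $\sum_i (-1)^i c_{\eta,u;\xi,v,i}$ is then supported in $x$-degrees $\{0,1,\ldots,d-1\}$, while $q^d \sum_i (-1)^i \bar c_{\eta,u;\xi,v,i}$ is supported in $\{d+1,\ldots,2d\}$. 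The disjointness of these two ranges is the decoupling that makes the recursion invertible.

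Projecting \eqref{eq:b c coeff relation} onto $x$-degrees $\leq d-1$ yields
\[
\sum_i(-1)^i c_{\eta,u;-\xi,v,i}
=
\pi_{\leq d-1}\!\left((-1)^{\delta(v)}q^{\delta(v)}b_{\eta,u;\xi,v} + q^{\delta(v)}\sum_{u<z<v}b_{\eta,u;\zeta,z}\sum_i(-1)^i\bar c_{\zeta,z;\xi,v,i}\right),
\]
where $\pi_{\leq d-1}$ denotes truncation in $x$-degree. Since each $c_{\eta,u;-\xi,v,i}$ appears uniquely on the left at $x$-degree $i+\delta(v)$, it is recovered, up to sign and a factor of $x^{i+\delta(v)}$, as the coefficient of $x^{i+\delta(v)}$ on the right.

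The induction then proceeds as follows. For $d=1$ the inner sum over $z$ is empty, and we read off $c_{\eta,u;-\xi,v,-\delta(v)} = (-1)^{\delta(v)}\,[q^{-\delta(v)}]\,b_{\eta,u;\xi,v}$, i.e.\ a single monomial of $b_{\eta,u;\xi,v}$. For $d>1$, every $z$ with $u<z<v$ has $\delta(v)-\delta(z)<d$, so by the inductive hypothesis each $c_{\zeta,z;\xi,v,i}$ (and hence each $\bar c_{\zeta,z;\xi,v,i}$) is a $\bZ q^{\bZ}$-linear combination of products of monomials coming from various $b_{\eta',u';\xi',v'}$. Substituting these, expanding $b_{\eta,u;\zeta,z}\cdot \bar c_{\zeta,z;\xi,v,i}$ as a sum of products of $b$-monomials, and then extracting the appropriate $x$-coefficient writes each $c_{\eta,u;-\xi,v,i}$ as a finite sum of terms of the required form $a\prod_j b_j$. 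Swapping $\xi\leftrightarrow -\xi$ gives the result for $c_{\eta,u;\xi,v,i}$ itself.

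The main obstacle is establishing the degree separation above: it crucially uses the non-negativity hypothesis to pin each $c$ to a single predictable $x$-degree, creating the gap at $x^d$ that decouples $c$ from $\bar c$ in \eqref{eq:b c coeff relation}. Without this separation one would be forced to solve a coupled linear system in the $c$'s and their conjugates, which (as one sees already at $d=1$) would involve inverting Laurent polynomials such as $1-q^2$ and would not yield a product expression in the $b$'s. Once the degree separation is in place, the bookkeeping of which $b$-monomials contribute to a given extracted coefficient is routine.
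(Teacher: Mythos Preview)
Your argument is essentially the same as the paper's: the paper simply points to \cite[\S3.4.3]{MS} and says one compares degrees of $q$ in \eqref{eq:b c coeff relation} and does descending induction on $\delta(u)$. Your degree separation (the supports $\{0,\dots,d-1\}$ and $\{d+1,\dots,2d\}$ in the variable $x=q^{1/2}$) is exactly that degree comparison, and your induction on $d=\delta(v)-\delta(u)$ is, for fixed $v$, the same as descending induction on $\delta(u)$.

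Two small remarks. First, your range $i+\delta(v)\in\{0,\dots,d-1\}$ is not part of the hypothesis $c_{\eta,u;\xi,v,i}\in\bN q^{(i+\delta(v))/2}$; it comes from the support and cosupport conditions on the intermediate extension (as used later in the proof of Theorem~\ref{t:I_0 Poincare poly uniqueness}). You use it correctly, but it is worth making explicit. Second, your output is a finite \emph{sum} of products $a\prod_j b_j$, while the lemma is phrased as ``a product''. What the recursion actually yields (here and in \cite{MS}) is the sum-of-products form, and this is all that is needed downstream: once Lemma~\ref{degree bound} shows each $b$-monomial has integer $q$-exponent, every term in your sum has integer $q$-exponent, hence so does the single monomial $c_{\eta,u;\xi,v,i}$, giving parity vanishing. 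So the mismatch is in the phrasing of the statement rather than in your proof.
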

\begin{proof}
	It follows from the same argument as in \S3.4.3 of \emph{loc. cit.}
	by comparing degrees of $q$ 
	on both sides of \eqref{eq:b c coeff relation}
	and descending induction on $\delta(u)$.
\end{proof}

\subsection{An algorithm}\label{ss:algorithm}
We first describe an algorithm that computes $b_{\eta,u;\xi,v}$.
By Lemma \ref{l:c coeff} and Theorem \ref{purity} to be proved later,
this also provides a way to compute $c_{\eta,u;\xi,v,i}$
using \eqref{eq:b c coeff relation}.

Let $\cO_v$ be an $I_0$-orbit,
$\xi\in X(\pi_0(T_0))$.
If $\cO_v=\overline{\cO}_v$ is closed,
clearly $b_{\eta,u;\xi,v}=0$.
Otherwise by Lemma \ref{l:orbit type b},
there exists simple affine root $\alpha$
such that $\cO_v$ is of type b for $s_\alpha$.
In view of Corollary \ref{c:Iwahori closure and G(O) into Iwahori}.(i),
we can take induction on $\dim\cO_v$.

\subsubsection{}
Case 1: $\cO_v$ is of type IIb for some $s_\alpha$.
By Lemma \ref{l:orbit type}
$P_\alpha\cO_v=\cO_v\sqcup\cO_{v'}$ 
in which $\cO_v$ is open.
We adopt the trick in \cite[\S5.2]{MS}.
From \eqref{eq:aff Hecke act}, we have relation
\[
[\cL_{s_\alpha}][\cL_{\xi,v}]=(q-1)[\cL_{\xi,v}]+q[\cL_{s\xi,v'}].
\]
Apply $D$ to the above.
Similar as Proposition 3.3.2, Lemma 4.4.7 of \emph{loc. cit.}, we have
\[
D[\cL_{s_\alpha}]=q^{-1}([\cL_s]+(1-q)),\quad
D([\cL_{s_\alpha}][\cL_{\xi,v}])
=(D[\cL_{s_\alpha}])(D[\cL_{\xi,v}]).
\]
Note that $D$ inverts $q$.
We obtain
\[
(q^{-1}[\cL_{s_\alpha}]+q^{-1}-1)D[\cL_{\xi,v}]
=(q^{-1}-1)D[\cL_{\xi,v}]+q^{-1}D[\cL_{s\xi,v'}],
\]
i.e. $[\cL_{s_\alpha}]D[\cL_{\xi,v}]=D[\cL_{s\xi,v'}]$.
Since in the affine Hecke algebra, 
$[\cL_{s_\alpha}]^{-1}=q^{-1}[\cL_{s_\alpha}]+q^{-1}-1$,
we get
\begin{equation}\label{eq:IIb induction}
	D[\cL_{\xi,v}]=(q^{-1}[\cL_{s_\alpha}]+q^{-1}-1)D[\cL_{s\xi,v'}].
\end{equation}
Using induction hypothesis and \eqref{eq:aff Hecke act},
we can compute $b_{\eta,u;\xi,v}$ for any $u<v$.

\subsubsection{}
Case 2: $\cO_v$ is not of type IIb for any $s_\alpha$.
We adopt the trick in \S7.2 of \emph{loc. cit.}.
Let $J$ be the set of simple affine roots such that
$\cO_v$ is of type IIIb or IVb for $s_\alpha$, $\alpha\in J$.
By Lemma \ref{l:orbit type by psi},
$\psi_v\alpha=-\alpha$ for any $\alpha\in J$ and
$\psi_v\alpha>0$ for any $\alpha\in\Delta_\aff-J$.
Note that the action of $\psi_v$ on affine roots 
is the composition of $\theta$ with $w_v$,
and $\theta$ preserves $\Delta_\aff$ by assumption.
Then $J$ is a proper subset of simple affine roots,
since otherwise $w_v$ would map $\Delta_\aff$
to $-\Delta_\aff$, which is impossible.
Let $P=P_J$ be the standard parahoric subgroup 
associated to $J$,
with pro-unipotent radical $P^+$ and Levi subgroup $L_P$. 
Note that $P, P^+, L_P$ are all $\psi_v$-stable.
The set $J$ gives the simple roots of $L_P$.

\begin{lem}
	We have $Px_v=\overline{\cO}_v$.
\end{lem}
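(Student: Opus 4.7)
The plan is to establish the two inclusions $Px_v \subseteq \overline{\cO}_v$ and $\overline{\cO}_v \subseteq Px_v$ separately, following the template of \S 7.2 of \cite{MS}. For the first inclusion, since $P$ is generated by $I_0$ and the minimal parahorics $P_\alpha$ with $\alpha \in J$, and since $\overline{\cO}_v$ is manifestly $I_0$-stable, it suffices to check that $P_\alpha \cdot \overline{\cO}_v \subseteq \overline{\cO}_v$ for each $\alpha \in J$, equivalently that $P_\alpha \cO_u \subseteq \overline{\cO}_v$ for every $I_0$-orbit $\cO_u \leq \cO_v$. The key tool is the Richardson--Springer monotonicity of the $M$-operator $M_\alpha$ (sending $\cO$ to the unique open $I_0$-orbit in $P_\alpha \cO$): $\cO_u \leq \cO_v$ implies $M_\alpha(\cO_u) \leq M_\alpha(\cO_v) = \cO_v$, the last equality because $\cO_v$ is of type IIIb or IVb for $\alpha \in J$. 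Hence $P_\alpha \cO_u \subseteq \overline{M_\alpha(\cO_u)} \subseteq \overline{\cO}_v$. The monotonicity is classical in finite dimensions \cite{RSorder}, and I would extend it to the affine setting by reduction to the finite-dimensional case through the Mars--Springer slices of Section \ref{MS slices}, which locally identify the closure geometry near a boundary point with its finite-dimensional analog on $X$.

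For the reverse inclusion, my plan is to show that $Px_v$ is closed in $\overline{\cO}_v$; combined with $\cO_v \subseteq Px_v$ this forces $\overline{\cO}_v \subseteq Px_v$. Closedness would follow from analyzing the involution $\psi_v = \Ad_{n_v} \circ \theta$. Using $\psi_v(\alpha) = -\alpha$ for $\alpha \in J$ (Lemma \ref{l:orbit type by psi}(iv)) and $\psi_v(\alpha) > 0$ for $\alpha \in \Delta_\aff \setminus J$ (Lemma \ref{l:orbit type by psi}(v)), a direct root computation shows that $\psi_v$ preserves $P = P_J$ together with its Levi $L_J$ and pro-unipotent radical $P^+$. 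Lemma \ref{l:tau(P) image} then applies to give that $\tau_{n_v}(P) = P^{\on{inv}\circ\psi_v,\circ}$ is closed in $P$, and hence $\tau(Px_v) = P^{\on{inv}\circ\psi_v,\circ} \cdot n_v$ is closed in the $\theta$-twisted conjugation $P$-orbit of $n_v$ in $LG$. Since $J \subsetneq \Delta_\aff$, the Weyl group $W_J$ is finite, so $P/I_0 \cong Q_J/B_0$ is a classical projective flag variety; combining this projectivity with a placid-ind-scheme properness argument in the framework of Appendix \ref{s:appendix} allows one to descend to the conclusion that $Px_v$ itself is closed in $\overline{\cO}_v$.

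The main obstacles I anticipate are two verifications of a more technical nature: first, confirming the root-theoretic claim that the $\psi_v$-inversion set on $\Phi_\aff^+$ lies in the subsystem generated by $J$, which is what makes $P_J$ (together with its Levi decomposition) $\psi_v$-stable; and second, formulating the properness/closedness argument carefully in the semi-infinite placid framework, where ordinary projectivity arguments must be adapted to ind-schemes. Both should be manageable by following the finite-dimensional template of \S 7.2 of \cite{MS}, using the Mars--Springer slice constructions of Section \ref{Transversal slices} and the placidness results of Section \ref{placid} to reduce infinite-dimensional statements to their finite-dimensional shadows.
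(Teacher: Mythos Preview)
Your overall shape is correct, but both halves are more complicated than necessary, and in the second half you miss the one-line conclusion that the paper uses.

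For $Px_v \subseteq \overline{\cO}_v$: you do not need the Richardson--Springer monotonicity of the $M_\alpha$-operator, nor any reduction to finite dimensions via slices. Since $\cO_v$ is of type b for each $\alpha\in J$, the orbit $\cO_v$ is \emph{open} in $P_\alpha x_v$; hence $\overline{P_\alpha x_v}=\overline{\cO}_v$, and this closure is therefore $P_\alpha$-stable. As this holds for every $\alpha\in J$, the closure $\overline{\cO}_v$ is $P$-stable, so $Px_v\subseteq\overline{\cO}_v$. This is exactly the argument in the paper and avoids having to establish $M_\alpha$-monotonicity in the affine setting.

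For the reverse inclusion: your first steps (that $\psi_v$ preserves $P$, $L_J$, $P^+$, and then applying Lemma~\ref{l:tau(P) image}) match the paper. But after that you take an unnecessary detour through projectivity of $P/I_0$ and a properness argument in the placid framework. Note that Lemma~\ref{l:tau(P) image} already says $\tau_v(P)=P^{\on{inv}\circ\psi_v,\circ}$ is closed \emph{in $LG$}, not just in $P$ (parahorics are closed in $LG$). Hence $\tau(Px_v)=\tau_v(P)n_v$ is closed in $LG$, so closed in $\tau(LX)$, and since $\tau:LX\to LG$ is an embedding, $Px_v$ is closed in $LX$. Combined with $\cO_v\subseteq Px_v$ this gives $\overline{\cO}_v\subseteq Px_v$. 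No properness or projectivity is needed. (Also, your sentence ``$\tau(Px_v)$ is closed in the $\theta$-twisted conjugation $P$-orbit of $n_v$'' is tautological as written, since $\tau(Px_v)$ \emph{is} that orbit.)
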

\begin{proof}
On the one hand, for any $\alpha\in J$ 
since $\cO_v$ is of type b for $s_\alpha$,
$\cO_v$ is open in $P_\alpha x_v$.
Thus $\overline{P_\alpha x_v}=\overline{\cO}_v$
is stable under $P_\alpha$-action.
Therefore $\overline{\cO}_v$ is stable under $P$-action.
We get $Px_v\subset\overline{\cO}_v$.

On the other hand, $P$ is stable under $\psi_v$.
By Lemma \ref{l:tau(P) image} and the same proof of
Lemma \ref{l:orbit type b} for Iwahori subgroup,
$\tau(P)=\tau_v(P)n_v=P^{\on{inv}\circ\psi_v,\circ}n_v$
is closed in $P$ and further closed in $LG$.
Thus $\tau(Px_v)$ is closed in $\tau(LX)$
and $Px_v$ is closed in $LX$.
\end{proof}

In view of the above, we can define quotient map
\[
\pi:\overline{\cO}_v=Px_v
\simeq P/P^{\psi_v}\rightarrow L_P/L_P^{\psi_v}.
\]
We can see $\pi$ is smooth with fibers isomorphic to $P^+/(P^+)^{\psi_v}$.
It induces a bijection between $I_0$-orbits on $\overline{\cO}_v$
to the orbits of $B_{L_P}:=I_0\cap L_P$ on $X_{L_P}:=L_P/L_P^{\psi_v}$. 
Observe that $X_{L_P}$ is a finite dimensional symmetric variety,
whose set of $B_{L_P}$-orbits are parametrized by 
$\{\pi u\mid u\leq v\}$.
We denote $\cO_{\pi u}$ the $B_{L_P}$-orbit corresponding to 
$\pi u$ and $\delta(\pi u)=\dim\cO_{\pi u}$.
We have the following reduction to the finite dimensional case:
\begin{lem}\label{reduction to finite case}
	\begin{itemize}
		\item [(i)]
		$\pi^*\cL_{\eta,\pi u}=\cL_{\eta,u}$,\ $u\leq v$, 
            $\eta\in\hat{X}(T_0/T_u)$,
		
		\item [(ii)]
		$\IC_{\xi,v}\cong\pi^*\IC_{\xi,\pi v}[\delta(v)-\delta(\pi v)]$,
		
		\item [(iii)]
		$\mathbb D_\delta (j_{v,!}\cL_{\xi,v})\cong\pi^*(\mathbb D(j_{\pi v,!}\cL_{\xi,\pi v}))\langle2\delta(v)-2\delta(\pi v)\rangle$.
	\end{itemize}
\end{lem}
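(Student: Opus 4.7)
The plan is to reduce all three statements to standard smooth pullback and smooth base-change properties by exploiting the fact that $\pi\colon \overline{\cO}_v = Px_v \simeq P/P^{\psi_v} \to L_P/L_P^{\psi_v} = X_{L_P}$ is a formally smooth morphism with geometrically irreducible fibers modelled on $P^+/(P^+)^{\psi_v}$. Formal smoothness uses the $\psi_v$-equivariance of the Levi decomposition $P = L_P \ltimes P^+$ together with the pro-unipotence of $(P^+)^{\psi_v}$, argued as in Lemma \ref{l:tau(P) image}; geometric irreducibility of the fibers is automatic from pro-unipotence. Write $d := \delta(v) - \delta(\pi v)$; by the additive property of the dimension theory $\delta$ on $LX$ recalled in Appendix \ref{s:appendix}, this integer should be the relative dimension of $\pi$ in the sense appropriate for the $*$-adapted perverse $t$-structure and for the Verdier duality $\mathbb D_\delta$.

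For (i), note that $\pi|_{\cO_u}\colon \cO_u \to \cO_{\pi u}$ is a torsor under $P^+/(P^+)^{\psi_u}$ for every $u \le v$, and both $\cO_u$ and $\cO_{\pi u}$ fit into a commutative diagram over $T_0/T_u$ via the projections used to define the equivariant local systems in Lemma \ref{l:equiv local systems}. Since $\cL_{\eta,u}$ and $\cL_{\eta,\pi u}$ are by construction the pullback of the same rank-one local system $\cL_\eta$ on $T_0/T_u$, the equality $\pi^*\cL_{\eta,\pi u} = \cL_{\eta,u}$ is automatic.

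For (ii), formal smoothness of $\pi$ with geometrically irreducible fibers makes $\pi^*[d]$ $t$-exact for the respective perverse $t$-structures and compatible with intermediate extensions. Since $\cO_v = \pi^{-1}(\cO_{\pi v})$ and $\pi^*\cL_{\xi,\pi v} = \cL_{\xi,v}$ by (i), applying $\pi^*[d]$ to $\IC_{\xi,\pi v} = (j_{\pi v})_{!*}\cL_{\xi,\pi v}[\delta(\pi v)]$ gives $(j_v)_{!*}\cL_{\xi,v}[\delta(v)] = \IC_{\xi,v}$. For (iii), combine (i), smooth base-change $\pi^*j_{\pi v,!} \cong j_{v,!}\pi^*$, the identity $\pi^! \cong \pi^*\langle 2d\rangle$, and the compatibility $\mathbb D_\delta\pi^* \cong \pi^!\mathbb D$ of pullback with duality for formally smooth morphisms in the placid setting (Appendix \ref{s:appendix}): stringing these together yields $\mathbb D_\delta(j_{v,!}\cL_{\xi,v}) \cong \mathbb D_\delta\pi^*(j_{\pi v,!}\cL_{\xi,\pi v}) \cong \pi^*\mathbb D(j_{\pi v,!}\cL_{\xi,\pi v})\langle 2d\rangle$.

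The main obstacle is the dimension-theoretic bookkeeping: confirming that $d = \delta(v) - \delta(\pi v)$ really is the shift that makes $\pi^*$ $t$-exact and intertwines $\mathbb D_\delta$ with $\mathbb D$ in the placid ind-scheme setting, where the geometric fibers of $\pi$ are infinite-dimensional. This should follow from the construction of $\delta$ in Section \ref{placid} applied to the $I_0$-orbit stratification of $\overline{\cO}_v$ and its image stratification on the finite-dimensional symmetric variety $X_{L_P}$, combined with the standard behaviour of Verdier duality and perverse $t$-structures under formally smooth maps of placid (ind-)schemes recorded in Appendix \ref{s:appendix}.
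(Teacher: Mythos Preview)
Your approach is essentially correct and close to the paper's, but the paper sidesteps the very obstacle you flag at the end, and does so more simply than you propose.

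For (ii), the paper exploits that the perverse $t$-structure in use is the \emph{$*$-adapted} one from Appendix~\ref{s:appendix}: by construction, $*$-pullback along smooth (or pro-smooth) maps is $t$-exact \emph{with no shift}. Hence $\pi^*$ itself commutes with $j_{!*}$, giving $(j_v)_{!*}\cL_{\xi,v}\cong\pi^*(j_{\pi v})_{!*}\cL_{\xi,\pi v}$ directly; the shift $[\delta(v)-\delta(\pi v)]$ only enters afterward, from the definition $\IC=j_{!*}[\delta]$. There is no relative-dimension bookkeeping to do. Your statement that $\pi^*[d]$ is $t$-exact is the same fact phrased for the $\delta$-shifted $t$-structure, so your computation is fine, but the paper's normalization makes the argument one line.

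For (iii), the paper does not invoke a compatibility $\mathbb D_\delta\pi^*\cong\pi^!\mathbb D$ across $\pi$. That identity is true here, but it is \emph{not} an instance of Lemma~\ref{property of D}, which requires $f$ to be finitely presented, while $\pi$ has infinite-dimensional pro-unipotent fibers; justifying it would require unwinding the construction of $\mathbb D_\delta$ (taking $X_{L_P}$ as a stage in a placid presentation of $\overline{\cO}_v$). The paper instead applies the explicit formula~\eqref{D(j_!)} on each side separately: $\mathbb D_\delta(j_{v,!}\cL_{\xi,v})\cong j_{v,*}\cL_{-\xi,v}\langle2\delta(v)\rangle$ and $\mathbb D(j_{\pi v,!}\cL_{\xi,\pi v})\cong j_{\pi v,*}\cL_{-\xi,\pi v}\langle2\delta(\pi v)\rangle$, and then matches them via (i) and smooth base change for $j_*$. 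This avoids ever needing $\pi^!$ or a duality--pullback exchange for $\pi$.
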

\begin{proof}
Part (i) follows from  $T_u=T_0^{\psi_v}=T_{\pi u}$
and Part (iii) follows from~\eqref{D(IC)}.
Since we are using the $*$-adapted perverse $t$-structures on placid schemes (thus, $*$-pull back along smooth maps are $t$-exact), we have 
$(j_{v})_{!*}\cL_{\xi,v}\cong\pi^*((j_{\pi v})_{!*}\cL_{\xi,\pi v})$
and it follows that 
\[\IC_{\xi,v}=(j_{v})_{!*}\cL_{\xi,v}[\delta(v)]\cong\pi^*((j_{\pi v})_{!*}\cL_{\xi,\pi v}[\delta(\pi v)])[\delta(v)-\delta(\pi v)]=\pi^*\IC_{\xi,\pi v}[\delta(v)-\delta(\pi v)].\]
Part (ii) follows.
\end{proof}

In view of the above, 
the computation of $b_{\eta,u;\xi,v},c_{\eta,u;\xi,v,i}$
can be reduced to the finite dimensional symmetric variety $X_{L_P}$,
which has been done in \cite{MS}.
As a corollary of the algorithm we obtain 
the following property of $b_{\eta,u;\xi,v}$:
\begin{lem}\label{degree bound}
	$q^{\delta(v)}b_{\eta,u;\xi,v}$
        is a polynomial in $q$ with coefficients in $\mathbb Z$ of degree at most $\delta(v)-\delta(u)$.
\end{lem}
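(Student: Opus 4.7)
The plan is to prove the bound by induction on $\delta(v)$, following the algorithm of Section~\ref{ss:algorithm}. The base case is when $\cO_v$ is closed, in which case $b_{\eta,u;\xi,v}=0$ for every $u<v$ and the claim is vacuous. For the inductive step, Lemma~\ref{l:orbit type b} provides a simple affine root $\alpha$ for which $\cO_v$ is of some type b with respect to $s_\alpha$, and we treat the two cases of the algorithm in turn. Integrality (the absence of half-integer powers, and coefficients in $\mathbb Z$) is inherited throughout the induction, since both the reduction in Case~2 and the Hecke-action formulas in Proposition~\ref{construction of M} used in Case~1 involve only $\mathbb Z[q,q^{-1}]$-coefficients.

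If $\cO_v$ is not of type IIb for any $s_\alpha$ (Case~2 of Section~\ref{ss:algorithm}), the algorithm yields a smooth quotient $\pi:\overline{\cO}_v\to\overline{\cO}_{\pi v}$ onto an orbit closure inside the finite dimensional symmetric variety $X_{L_P}$. All fibers of $\pi$ share a common dimension $d=\delta(v)-\delta(\pi v)=\delta(u)-\delta(\pi u)$. Passing \eqref{eq:b coeff} through Lemma~\ref{reduction to finite case}.(iii) and comparing classes in the Grothendieck group yields the identity $b_{\eta,u;\xi,v}=q^{-d}\,b_{\eta,\pi u;\xi,\pi v}$, whence $q^{\delta(v)}b_{\eta,u;\xi,v}=q^{\delta(\pi v)}b_{\eta,\pi u;\xi,\pi v}$, and the claim reduces at once to the corresponding finite dimensional statement of Lusztig--Vogan and Mars--Springer (see \cite[Theorem 1.10]{LV} and \cite[\S5]{MS}).

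If $\cO_v$ is of type IIb for some $s_\alpha$ (Case~1 of Section~\ref{ss:algorithm}), we use the recursion~\eqref{eq:IIb induction} together with the observation that $\delta(v')=\delta(v)-1$. By the induction hypothesis, each coefficient $c_{\eta,u'}$ of $[\cL_{\eta,u'}]$ in $D[\cL_{s\xi,v'}]$ has the property that $q^{\delta(v')}c_{\eta,u'}$ is a polynomial in $q$ of degree at most $\delta(v')-\delta(u')$. The inductive step therefore reduces to showing that the operator $q^{-1}[\cL_{s_\alpha}]+(q^{-1}-1)$ preserves this estimate after the shift $\delta(v)=\delta(v')+1$. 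For the scalar $(q^{-1}-1)$-part the estimate is immediate. For the $q^{-1}[\cL_{s_\alpha}]$-part, a direct inspection of the nine formulas in Proposition~\ref{construction of M} shows that in every case the coefficient $a_u$ of $[\cL_u]$ in $[\cL_{s_\alpha}][\cL_{\eta,u'}]$ is a polynomial in $q$ of degree at most $\delta(u')-\delta(u)+1$, which is exactly what is needed so that $q^{\delta(v)}\cdot q^{-1}\,a_u\,c_{\eta,u'}=a_u\cdot q^{\delta(v')}c_{\eta,u'}$ is a polynomial of degree at most $\delta(v)-\delta(u)$.

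The only nontrivial piece of the argument is this final case-by-case inspection in Case~1, and it is short: for each of the nine orbit types, the codimension change between $\cO_{u'}$ and the orbits appearing in $P_\alpha\cO_{u'}$ is $0$ or $\pm1$ and the corresponding Hecke coefficients have degree at most $1$, matching the required estimate exactly. Conceptually, there is no genuine obstacle, as the machinery that makes the statement meaningful in the semi-infinite setting -- the dimension theory of Proposition~\ref{dim for LX}, the placidness results of Section~\ref{placid}, and the recursive algorithm of Section~\ref{ss:algorithm} -- has already been built.
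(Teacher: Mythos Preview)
Your proof is correct and follows essentially the same approach as the paper's: induction on $\delta(v)$, with Case~2 reducing to the finite-dimensional result of \cite{LV} via Lemma~\ref{reduction to finite case}, and Case~1 using the recursion~\eqref{eq:IIb induction} together with the explicit Hecke formulas in \eqref{eq:aff Hecke act}. In fact your Case~1 analysis is slightly sharper than the paper's: the paper records only that the Hecke coefficients are polynomials in $q$ of degree $\leq 1$, whereas you verify the finer bound $\deg(a_u)\leq\delta(u')-\delta(u)+1$, which is what is actually needed when the output orbit has larger dimension than the input (types IIa, IIIa, IVa).
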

\begin{proof}
We do induction on $\delta(v)$.
	When $u\geq v$, $b_{\eta,u;\xi,v}=0$.
	Suppose the statement holds for all $b_{\eta',u';\xi',v'}$
	with $\delta(v')<\delta(v)$.
	
	When $\cO_v$ is of type IIb for $s_\alpha$,
	by Lemma \ref{l:orbit type},
	we have 
	$\delta(v)-\delta(v')=\on{codim}_{\cO_{v}/\cO_{v'}}=1$.
	Then the desired claim follows from \eqref{eq:IIb induction}, the induction procedure, and  the observation that 
	all the coefficients appearing in the affine Hecke action
	as in the list \eqref{eq:aff Hecke act}
	are polynomials in $q$ of degree less or equal to $1$.
	
	When $\cO_v$ is not of type IIb for any $s_\alpha$,
	we have seen before that the problem can be reduced to
	a symmetric variety of a generalized Levi subgroup.
	Then the statement in 
    \cite[Theorem 1.10]{LV}
	implies that $q^{\delta(\pi v)}b_{\eta,\pi u;\xi,\pi v}\in\mathbb Z[q]$ is of degree at most 
    $\delta(\pi v)-\delta(\pi u)$.
    According to \ref{dimension theory}
    we have \[\delta(\pi v)-\delta(\pi u)=\on{codim}_{\overline\cO_{\pi u}/\overline\cO_{\pi v}}=
    \on{codim}_{\overline\cO_u/\overline\cO_v}=
    \delta(v)-\delta(u)\] 
    and together with Lemma \ref{reduction to finite case} (iii) we obtain 
	\[q^{\delta(v)}b_{\eta, u;\xi,v}=q^{\delta(\pi v)}b_{\eta,\pi u;\xi,\pi v}\in\mathbb Z[q]\]
    is of degree at most $\delta(v)-\delta(u)=\delta(\pi v)-\delta(\pi u)$.

\end{proof}

\subsubsection{Applications to spherical orbits}\label{sss:G(O) C coeff}
Note that the above results for Iwahoric orbits 
also provide an algorithm to compute  
 the multiplicities for $L^+G$-equivariant $\IC$-complexes.
Precisely,
let $\cL_{\chi,\lambda}$ be a $L^+G$-equivariant local system 
on $L^+G$-orbit $LX_\lambda$,
where $\chi$ is a representation of the component group of stabilizers
in $L^+G$ on $LX_\lambda$.
Consider the associated $\IC$-complex
\[\IC_{\chi,\lambda}:=(j_{\lambda})_{!*}\cL_{\chi,\lambda}[\delta(\lambda)]\in\on{Perv}_\delta(L^+G\backslash LX)\]
We can write down similar formula as \eqref{eq:c coeff}
in the Grothendieck group of
mixed $L^+G$-constructible sheaves:
\begin{equation}\label{eq:C coeff}
	[\sH^i\IC_{\chi,\lambda}]=\delta_{i,-\delta(\lambda)}[\cL_{\chi,\lambda}]+
	\sum_{\mu<\lambda}C_{\chi',\mu;\chi,\lambda,i}[\cL_{\chi',\mu}]
\end{equation}
where $C_{\chi',\mu;\chi,\lambda,i}\in\mathbb Z[q^{\frac{1}{2}},q^{\frac{-1}{2}}]$,
and each $\cL_{\chi',\mu}$ is a $L^+G$-equivariant local system on $LX_\mu$.

Let $h$ be the map from the Grothendieck group of 
mixed $L^+G$-constructible sheaves
into that of mixed $I_0$-constructible sheaves and
denote $h[\cL_{\chi',\mu}]=\sum_{(\eta,u)\subset(\chi',\mu)}[\cL_{\eta,u}]$.
Denote by $\cO_{v_\lambda}$ be unique open $I_0$-orbit in $LX_\lambda$
and let $\cL_{\xi_\chi,v_\lambda}=\cL_{\chi,\lambda}|_{\cO_{v_\lambda}}$.
Then 
\begin{equation}\label{IC=IC}
   \IC_{\chi,\lambda}=\IC_{\xi_\chi,v_\lambda}. 
\end{equation}
By comparing \eqref{eq:c coeff}
with the image under $h$ of \eqref{eq:C coeff},
we see the left hand sides are the same.
The matching of coefficients on the right hand sides provides relation
\begin{equation}\label{eq:c v.s. C}
	c_{\eta,u;\xi_\chi,v_\lambda,i}=
	\begin{cases}
		\delta_{i,-\delta(\lambda)},\hspace{1.2cm}(\eta,u)\subset(\chi,\lambda),\\
		C_{\chi',\mu;\chi,\lambda,i},\qquad (\eta,u)\subset(\chi',\mu),\ \mu<\lambda,\\
		0,\hspace{2.3cm}\text{otherwise}.
	\end{cases}
\end{equation}

From the above, we obtain 
\begin{lem}
	$c_{\eta,u;\xi_\chi,v_\lambda,i}$ depends only on the 
	$L^+G$-orbit of $u$.
\end{lem}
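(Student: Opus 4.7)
The plan is to deduce this as an essentially immediate consequence of the formula \eqref{eq:c v.s. C} established just above the lemma, which in turn rests on the identification $\IC_{\chi,\lambda}=\IC_{\xi_\chi,v_\lambda}$ from \eqref{IC=IC}.

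First, I would unpack what needs to be shown. Fix the $L^+G$-orbit $LX_\mu$ with $\mu<\lambda$, and consider two pairs $(\eta_1,u_1),(\eta_2,u_2)\in\sD$ with $\cO_{u_1},\cO_{u_2}\subset LX_\mu$ such that $\cL_{\eta_j,u_j}$ extends to the ``same'' $L^+G$-equivariant local system on $LX_\mu$, i.e.\ both satisfy $(\eta_j,u_j)\subset(\chi',\mu)$ for a common $(\chi',\mu)$. The assertion is that $c_{\eta_1,u_1;\xi_\chi,v_\lambda,i}=c_{\eta_2,u_2;\xi_\chi,v_\lambda,i}$.

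Next, I would invoke \eqref{eq:c v.s. C}: by the second case of that formula, both of these coefficients equal $C_{\chi',\mu;\chi,\lambda,i}$, which manifestly depends only on the $L^+G$-equivariant datum $(\chi',\mu)$ and not on the particular $I_0$-orbit $\cO_{u_j}\subset LX_\mu$. In the excluded cases where $(\eta,u)\not\subset(\chi',\mu)$ for any $\chi'$, the coefficient is $0$ by the third case of \eqref{eq:c v.s. C}, which again depends only on the $L^+G$-orbit of $u$ (more precisely, on the absence of an $L^+G$-equivariant extension). This settles the lemma.

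Conceptually, the underlying reason is that $\IC_{\chi,\lambda}$ is $L^+G$-equivariant by construction, so each cohomology sheaf $\sH^i(\IC_{\chi,\lambda})|_{LX_\mu}$ is an $L^+G$-equivariant constructible sheaf on the homogeneous space $LX_\mu$. Hence its decomposition into irreducibles is governed entirely by $L^+G$-equivariant data on $LX_\mu$, and any $I_0$-equivariant summand $\cL_{\eta,u}$ appearing at $\cO_u\subset LX_\mu$ must appear with the same multiplicity as its $L^+G$-translates over all other $I_0$-orbits in $LX_\mu$. The algebraic content of \eqref{eq:c v.s. C} is precisely this observation, so no further work is required beyond citing it; there is no genuine obstacle in the proof.
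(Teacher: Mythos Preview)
Your proposal is correct and matches the paper's approach exactly: the paper simply writes ``From the above, we obtain'' the lemma, meaning it is an immediate consequence of \eqref{eq:c v.s. C}. You have spelled out the case analysis in slightly more detail than the paper does, but the argument is the same.
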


In view of the above,
to compute
$C_{\chi',\mu;\chi,\lambda,i}$,
it suffices to take the open $I_0$-orbit $I_0v_\lambda\subset LX_\lambda$
and any $I_0$-orbit $I_0u\subset LX_\mu$,
which uniquely determines $(\xi_\chi,v_\lambda)\subset(\chi,\lambda)$
and $(\eta,u)\subset(\chi',\mu)$,
then compute $c_{\eta,u;\xi_\chi,v_\lambda,i}$.

\section{Parity vanishing and Poincar\'e polynomials of IC-complexes
}\label{main results}
In this section, we prove the parity vanishing of 
equivariant $\IC$-complexes
on the spherical and Iwahori orbits.
We will first prove for Iwahori orbits
following the strategy of Mars-Springer \cite{MS} 
in the finite dimensional situation,
then deduce the result for spherical orbits.
Along the way, we provide an algorithm 
that computes the Poincar\'e polynomials of 
Iwahori-equivariant $\IC$-complexes
in an inductive procedure.
We keep the same notations and assumptions as in \S\ref{s:Iwahori orbits}.

\subsection{Purity and parity vanishing}
Let $\IC_{\xi,v}:=(j_{v})_{!*}\cL_{\xi,v}[\delta(v)]$ 
(resp. $\IC_{\chi,\lambda}:=(j_{\lambda})_{!*}\cL_{\chi,\lambda}[\delta(\lambda)]$)
be the IC-complex
associated to the $I_0$-equivariant local system $\cL_{\chi,v}$ on $\cO_v$ (resp. 
 $L^+G$-equivariant local system $\cL_{\chi,\lambda}$ on $LX_\lambda$).
According to Section~\ref{Affine LV modules},
each $\IC_{\xi,v}$ has a canonical Weil structure $\Phi=\{\Phi_n,\ \text{for for $n$ divisible by $n_0$}\}$, 
where $\Phi_n:(F^n)^*\IC_{\xi,v}\cong\IC_{\xi,v}$
are systems of isomorphisms  
satisfying $(\Phi_n)^m=\Phi_{nm}$. 
The  discussion in \emph{loc. cit.} also implies that $\IC_{\chi,\lambda}$  carries a canonical Weil structure.

We recall the notion of pointwise purity (see  Definition \ref{pointwise pure complexes} for a more general discussion).
A perverse sheaf $(\cF,\Phi)\in\on{Perve}_\delta(I_0\backslash LX)^{{\on{Weil}}}$ 
is called $*$-pointwise pure (resp. $!$-pointwise pure) of weight $w$ if for any $I_0$-orbit $j_v:\cO_v\subset LX$
and a point $x\in(\cO_v)^{F^n}$, the eigenvalues of $\Phi^n$ on the stalk $\sH^m(j_v^*\cF)_x$ (resp. $\sH^m(j_v^!\cF)_x$)
are algebraic numbers in $\overline{\mathbb Q}_\ell^\times$ all of whose 
complex conjugates have absolute value 
$(q^n)^{\frac{w+m}{2}}$. It is called pointwise pure if it is both $*$ and $!$-pointwise pure.
We have similar notions of pointwise purity for $\on{Perve}_\delta(G(\cO)\backslash LX)^{{\on{Weil}}}$.

We have the following key pointwise purity result.
\begin{thm}\label{purity}
The complex $\IC_{\xi,v}$ (resp. $\IC_{\chi,\lambda}$) is pointwise pure of weight $\delta(v)$ (resp. $\delta(\lambda)$). 
\end{thm}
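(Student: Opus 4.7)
The plan is to prove pointwise purity first for the Iwahori-equivariant case, then deduce the spherical case using the identification $\IC_{\chi,\lambda}=\IC_{\xi_\chi,v_\lambda}$ from \eqref{IC=IC}. Throughout I adapt the Mars-Springer strategy using the transversal slices and the parahoric reduction already established in the paper.

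The first step is reduction to finite dimension via slices. To check $*$- and $!$-pointwise purity of $\IC_{\xi,v}$ at a point $y\in\cO_u\subset\overline\cO_v$, I would invoke the formally smooth multiplication map $m\colon I_0\times S^v_{\leq u}\to\overline\cO_v$ of Corollary \ref{c:bar O equi-singular}. Since the Mars-Springer-type slice $S^v_{\leq u}$ is a genuine finite type scheme (of finite type and affine, by Lemma \ref{p:L^n_n'}(iii) via $\tau\colon S^v\simeq L^{n_v}$) and carries a $\mathbb G_m$-contracting action stabilising all strata (Proposition \ref{Iwahori Slice}(ii),(v) and Corollary \ref{c:bar O equi-singular}(ii)), pointwise purity of $\IC_{\xi,v}$ is equivalent to pointwise purity of its $*$-pullback to $S^v_{\leq u}$, a classical finite-dimensional statement in Deligne's sense.

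The second step is induction on $\delta(v)$, split into the two cases already used by the algorithm in \S\ref{ss:algorithm}. If $\cO_v$ is \emph{not} of type IIb for any simple affine root, then by Lemma \ref{reduction to finite case} the smooth parahoric projection $\pi\colon\overline\cO_v\to\overline{\cO_{\pi v}}\subset X_{L_P}$ to the closure of a $B_{L_P}$-orbit in a finite dimensional symmetric variety $X_{L_P}$ satisfies $\IC_{\xi,v}\cong\pi^*\IC_{\xi,\pi v}[\delta(v)-\delta(\pi v)]$; Mars-Springer's purity theorem \cite{MS} applies to $\IC_{\xi,\pi v}$, and smooth pullback preserves pointwise purity up to the cohomological shift, which is absorbed by our choice of dimension theory $\delta$. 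If $\cO_v$ \emph{is} of type IIb for some $s_\alpha$, then $\cO_v$ is open in $P_\alpha\cO_{v'}$ with $\delta(v')=\delta(v)-1$. I would realize $\IC_{\xi,v}$ as a direct summand (up to shift) of the convolution $\IC_{s_\alpha}\star\IC_{s_\alpha\xi,v'}$: the convolution is semisimple by Proposition \ref{decomposition theory}(ii), and tracing through the Hecke-action formula \eqref{eq:aff Hecke act} for type IIb identifies $\IC_{\xi,v}$ among its summands. Since $\IC_{s_\alpha}$ is the constant sheaf on $\overline{LG}_{s_\alpha}\simeq\mathbb P^1$ (hence pure) and $\IC_{s_\alpha\xi,v'}$ is pure by the inductive hypothesis, pointwise purity of the convolution follows from Proposition \ref{decomposition theory}(i) combined with the usual weight estimates for proper pushforward (the convolution morphism, restricted over each orbit closure, is proper on placid covers as in the proof of Proposition \ref{decomposition theory}(ii)).

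Once the Iwahori case is established, the spherical case is immediate: by \eqref{IC=IC}, for any spherical orbit $LX_\lambda$ with $L^+G$-equivariant local system $\cL_{\chi,\lambda}$, the corresponding IC-sheaf equals $\IC_{\xi_\chi,v_\lambda}$ where $\cO_{v_\lambda}\subset LX_\lambda$ is the unique open Iwahori orbit (Corollary \ref{c:Iwahori closure and G(O) into Iwahori}), and $\delta(\lambda)=\delta(v_\lambda)$; pointwise purity is a property of the underlying complex, so the spherical statement follows directly. The main obstacle I anticipate is carefully handling the sheaf-theoretic formalism on placid ind-schemes: in particular, ensuring that the notion of pointwise purity behaves correctly under the formally smooth reduction of the first step (where the slice is finite type but the ambient orbit closure is placid of ind-finite type), and that the perverse $t$-structure normalised by the dimension theory $\delta$ of Proposition \ref{dim for LX} matches Mars-Springer's middle-perversity normalisation after the parahoric reduction, so that the shift $[\delta(v)-\delta(\pi v)]$ in Lemma \ref{reduction to finite case}(ii) correctly absorbs the weight shift under $\pi^*$.
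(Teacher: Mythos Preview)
Your approach differs substantially from the paper's and overcomplicates matters. The paper's proof is direct, with no induction: since $\cL_{\xi,v}$ is a pointwise pure local system of weight $0$, the complex $\IC_{\xi,v}$ is pure of weight $\delta(v)$ in the sense of Definition~\ref{pointwise pure complexes}(i) (see Example~\ref{example of pure complex}); the existence of contracting slices at every point (Proposition~\ref{Iwahori Slice}, resp.\ Proposition~\ref{slice spherical}) then feeds directly into the abstract criterion Proposition~\ref{purity criterion}, yielding $*$-pointwise purity in one stroke. The $!$-case follows by Verdier duality via Lemma~\ref{* to !} and~\eqref{D(IC)}. The inductive dichotomy you import from~\S\ref{ss:algorithm} is used in the paper only \emph{after} purity is established, as input to the computation of the polynomials $c_{\eta,u;\xi,v,i}$ and to parity vanishing.

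There are also concrete problems with your argument as written. First, Steps~1 and~2 are logically disconnected: if the slice reduction of Step~1 works, you are done (this \emph{is} the content of Proposition~\ref{purity criterion}, which you do not invoke), so the induction of Step~2 is superfluous; conversely the operations in Step~2 (parahoric projection, convolution) live on $\overline{\cO}_v$, not on the slice. Second, you have swapped indices: to probe stalks at $y\in\cO_u\subset\overline{\cO}_v$ you need the slice $S^u_{\leq v}$, not $S^v_{\leq u}$ (the latter is empty unless $v\leq u$, cf.\ Corollary~\ref{c:bar O equi-singular}(iii)). Third, and most seriously, your Case~2 has a gap: you assert that pointwise purity of $\IC_{s_\alpha}\star\IC_{s_\alpha\xi,v'}$ follows from Proposition~\ref{decomposition theory}(i) and ``the usual weight estimates for proper pushforward,'' but Proposition~\ref{decomposition theory}(i) is about Verdier duality, not weights, and proper pushforward preserves BBDG purity, which is strictly weaker than pointwise purity. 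Making this step rigorous would require a fiberwise cellular argument (the fibers of the convolution map embed in $P_\alpha/I_0\simeq\mathbb P^1$); this is feasible but not what you wrote, and in any case unnecessary once Proposition~\ref{purity criterion} is available.
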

\begin{proof}
Note that the local system $\cL_{\xi,v}$ is pointwise pure of weight $0$ in the sense of Definition \ref{pointwise pure complexes} (2)
and hence by Example \ref{example of pure complex} the IC-complex $\IC_{\xi,v}=(j_{v})_{!*}\cL_{\xi,v}[\delta(v)]$
is pure of weight $\delta(v)$ in the sense of 
Definition \ref{pointwise pure complexes} (i).
Then the existence of contracting slices in 
Proposition \ref{slice spherical} and Proposition \ref{Iwahori Slice}
together with the criterion in 
Proposition \ref{purity criterion} imply $\IC_{\xi,v}$ is 
$*$-pointwise pure of weight  $\delta(v)$.

On the other hand, Lemma \ref{* to !} implies 
$\mathbb D_\delta(\IC_{-\xi,v})$ is $!$-pointwise pure of weight $-\delta(v)$
and 
the isomorphism $\mathbb D_\delta(\IC_{-\xi,v})(-\delta(v))\cong \IC_{\xi,v}$
implies   $\IC_{\xi,v}$ is $!$-pointwise pure of $-\delta(v)+2\delta(v)=\delta(v)$.

The same proof works for the case  $\IC_{\chi,\lambda}$.
\end{proof}

We are ready to prove the parity vanishing of $\IC$-complexes.
\begin{thm}\label{t:IC parity vanishing}
	For any $(\xi,v), (\eta,u)\in\sD$, we have 
	\begin{itemize}
		\item [(i)] 
		$c_{\eta,u;\xi,v,i}\in\bN q^{\frac{1}{2}(i+\delta(v))}$.
		Moreover, $c_{\eta,u;\xi,v,i}=0$ 
		if $i+\delta(v)$ is odd.
		
		\item [(ii)]
		$\sH^i\IC_{\xi,v}=0$ if $i+\delta(v)$ is odd.

            \item[(iii)]
            $\sH^i\IC_{\chi,\lambda}=0$ if $i+\delta(\lambda)$ is odd.	
	\end{itemize}
\end{thm}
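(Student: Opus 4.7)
The plan is to deduce all three parts from Theorem \ref{purity} (pointwise purity of $\IC_{\xi,v}$ of weight $\delta(v)$), combined with the algebraic framework of Section \ref{polynomials b and c}. I would start with part (i).

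First, for the non-negativity statement in (i): by Theorem \ref{purity}, the restriction $\sH^i(\IC_{\xi,v})|_{\cO_u}$ is a pure $\ell$-adic local system of weight $i+\delta(v)$. Gabber's semisimplicity theorem, applied via the placid presentation of $\cO_u$ from Section \ref{placid} and the $\ell$-adic sheaf formalism in Appendix \ref{s:appendix}, then forces this local system to decompose as a direct sum of simple constituents, each being a Tate twist $\cL_{\eta,u}(-k)$ with $2k = i+\delta(v)$. Thus in the Grothendieck group $M$, each such constituent contributes $q^{(i+\delta(v))/2}[\cL_{\eta,u}]$ with non-negative integer multiplicity, giving $c_{\eta,u;\xi,v,i} \in \mathbb{N}\, q^{(i+\delta(v))/2}$.

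Next, the parity vanishing in (i). With non-negativity now in hand, Lemma \ref{l:c coeff} applies: each $c_{\eta,u;\xi,v,i}$ can be written as a product $a\prod_j b_j$ with $a \in \mathbb{Z}\, q^{\mathbb{Z}}$ and each $b_j$ a monomial appearing in the polynomial expansion of some $b_{\eta',u';\xi',v'}$. By Lemma \ref{degree bound}, each such $b_{\eta',u';\xi',v'}$ equals $q^{-\delta(v')}$ times a polynomial in $q$ with integer coefficients, so every monomial $b_j$ has integer $q$-exponent, and the whole product lies in $\mathbb{Z}\,q^{\mathbb{Z}}$. Comparing with the form $c_{\eta,u;\xi,v,i} = m\, q^{(i+\delta(v))/2}$ with $m \in \mathbb{N}$ from the previous step, the exponent $(i+\delta(v))/2$ must be an integer whenever $c_{\eta,u;\xi,v,i} \neq 0$; hence $c_{\eta,u;\xi,v,i} = 0$ whenever $i+\delta(v)$ is odd.

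Part (ii) then follows: for $i + \delta(v)$ odd, both the $\delta_{i,-\delta(v)}$ term and all $c_{\eta,u;\xi,v,i}$ vanish, so $[\sH^i \IC_{\xi,v}] = 0$ in $M$. Since the first step shows $\sH^i(\IC_{\xi,v})|_{\cO_u}$ is semisimple on each orbit, vanishing of its Grothendieck class forces vanishing of the sheaf itself, orbit by orbit, hence $\sH^i \IC_{\xi,v} = 0$. Part (iii) follows from part (ii) via the identification \eqref{IC=IC}, $\IC_{\chi,\lambda} = \IC_{\xi_\chi, v_\lambda}$, together with $\delta(\lambda) = \delta(v_\lambda)$, which holds because $\cO_{v_\lambda}$ is open in $LX_\lambda$. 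The main technical obstacle lies in the first step: one must justify Gabber-style semisimplicity and the BBD purity formalism in the placid ind-scheme setting, which is handled by the placid presentations of orbits and the dimension theory set up in Section \ref{placid} (reducing stalk-level statements to the classical finite-type situation), whereas the parity statement itself is then a purely algebraic consequence of the integrality built into the $b$-polynomials.
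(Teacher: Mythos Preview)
Your proof is correct and follows the same route as the paper: pointwise purity (Theorem \ref{purity}) for the first claim of (i), then Lemma \ref{l:c coeff} combined with Lemma \ref{degree bound} for the parity vanishing, with (ii) and (iii) read off from \eqref{eq:c coeff} and \eqref{IC=IC}. Your appeal to Gabber semisimplicity is harmless but unnecessary: purity alone fixes the weight of each Jordan--H\"older constituent (which already gives $c_{\eta,u;\xi,v,i}\in\bN\, q^{(i+\delta(v))/2}$), and for (ii) the vanishing of the class in $M$ forces the sheaf to vanish simply because composition factors of an honest sheaf contribute with non-negative multiplicity.
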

\begin{proof}
	(i):
	The first claim follows from the $*$-pointwise purity of $\IC_{\xi,v}$ in Theorem \ref{purity}.
	The second vanishing property follows from 
	Lemma \ref{l:c coeff} and Lemma \ref{degree bound}

	(ii):
	This follows from the definition \eqref{eq:c coeff}.

       (iii):
       It follows from (ii) and~\eqref{IC=IC}.	
\end{proof}

\begin{rem}\label{relation with LV}
	The $I_0$-orbits on the closed orbit $L^+G/L^+K\subset LX$
	correspond one-to-one to 
	the $B_0$-orbits on $X=G/K$.
	Thus Theorem \ref{t:IC parity vanishing} is a generalization of
	results of Lusztig-Vogan \cite{LV} and Mars-Springer \cite{MS}.
\end{rem}

\subsection{Affine Kazhdan-Lusztig-Vogan polynomials}\label{AKLV}
For $I_0$-equivariant local systems 
$\cL_{\eta,u}$ and $\cL_{\xi,v}$,
define Poincar\'e polynomial
\begin{equation}\label{eq:Iwahori Poincare poly}	P_{\eta,u;\xi,v}:=\sum_i[\cL_{\eta,u},\sH^i(j_{!*}\cL_{\xi,v})|_{\cO_{\bar{u}}}]q^{\frac{i}{2}}.
\end{equation}
By Theorem \ref{t:IC parity vanishing}.(i), 
$[\cL_{\eta,u},\sH^i(j_{!*}\cL_{\xi,v})|_{\cO_{\bar{u}}}]q^{\frac{i}{2}}
=c_{\eta,u;\xi,v,i-\delta(v)}$.
Thus
\[
P_{\eta,u;\xi,v}=\sum_ic_{\eta,u;\xi,v,i-\delta(v)}.
\]
Here we take convention that 
$c_{\xi,v;\xi,v,i}=\delta_{i,-\delta(v)}$, 
and $c_{\eta,u;\xi,v,i}=0$ if
$u$ is not in the closure of $\cO_v$
or $u$ is in the closure of $\cO_v$
but $\cL_{\eta,u}$ is not a subquotient of $\cL_{\xi,v}|_{\cO_{\bar{u}}}$.
Then
\[
h[j_{!*}\cL_{\xi,v}]=\sum_{\eta,u}P_{\eta,u;\xi,v}[\cL_{\eta,u}].
\]

\begin{thm}\label{t:I_0 Poincare poly uniqueness}
	For any pair $(\eta,u),(\xi,v)$, 
	$P_{\eta,u;\xi,v}$ is a polynomial in $q$ 
	with non-negative integer coefficients.
	It is the unique family of polynomials in $q^{\frac{1}{2}}$ 
	satisfying the following conditions:
	\begin{itemize}
		\item [(i)] $P_{\xi,v;\xi,v}=1$.
		\item [(ii)] If $\cO_u\neq\cO_v$, 
		$\deg_q(P_{\eta,u;\xi,v})\leq\frac{1}{2}(\delta(v)-\delta(u)-1)$.
		\item [(iii)] $C_{\xi,v}:=\sum_{\eta,u}P_{\eta,u;\xi,v}[\cL_{\eta,u}]$
		satisfies $D_\delta C_{\xi,v}=q^{-\delta(v)}C_{-\xi,v}$.
	\end{itemize}
\end{thm}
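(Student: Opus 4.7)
The plan is to verify that the polynomials $P_{\eta,u;\xi,v}$ defined by~\eqref{eq:Iwahori Poincare poly} satisfy conditions (i)--(iii), and then to establish uniqueness by an induction on the closure order on $\overline{\cO}_v$.

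\emph{Existence.} The non-negativity of the coefficients of $P_{\eta,u;\xi,v}$ and the fact that it is a polynomial in $q$ (and not merely in $q^{1/2}$) are immediate consequences of Theorem~\ref{t:IC parity vanishing}: each $c_{\eta,u;\xi,v,i-\delta(v)}$ equals a non-negative integer times $q^{i/2}$, and vanishes unless $i$ is even. Condition (i) is immediate from $\IC_{\xi,v}|_{\cO_v}\cong\cL_{\xi,v}[\delta(v)]$. For condition (iii), the key input is the Verdier self-duality of the IC sheaf, $\mathbb{D}_\delta\IC_{\xi,v}\cong\IC_{-\xi,v}(\delta(v))$ from~\eqref{D(IC)}, which yields $D_\delta[\IC_{\xi,v}]=q^{-\delta(v)}[\IC_{-\xi,v}]$ in $M'$; passing through the transfer map $h:M'\cong M$ and invoking the intertwining $D_\delta\circ h=h\circ D$ of Lemma~\ref{compatibility of h and D} then produces $D_\delta C_{\xi,v}=q^{-\delta(v)}C_{-\xi,v}$. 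Condition (ii) comes from the standard support condition for the intermediate extension, namely $\sH^i((j_v)_{!*}\cL_{\xi,v})|_{\cO_u}=0$ for $i\geq\delta(v)-\delta(u)$ when $\cO_u\subsetneq\overline{\cO}_v$, which caps the highest power of $q^{1/2}$ appearing in $P_{\eta,u;\xi,v}$ at $\delta(v)-\delta(u)-1$ and hence bounds its $q$-degree by $(\delta(v)-\delta(u)-1)/2$.

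\emph{Uniqueness.} Suppose $\{Q_{\eta,u;\xi,v}\}\subset\mathbb{Z}[q^{\pm 1/2}]$ is a second family satisfying (i)--(iii). Set $A_{\xi,v}=C_{\xi,v}-\sum Q_{\eta,u;\xi,v}[\cL_{\eta,u}]=\sum_{u<v}a_{\eta,u;\xi,v}[\cL_{\eta,u}]$; by (i) the diagonal coefficient vanishes, and by (ii) applied to both families each $a_{\eta,u;\xi,v}$ has $q$-degree at most $(\delta(v)-\delta(u)-1)/2$. Condition (iii) for both families forces $D_\delta A_{\xi,v}=q^{-\delta(v)}A_{-\xi,v}$; expanding the left side via $D_\delta[\cL_{\eta',u'}]=q^{-\delta(u')}[\cL_{-\eta',u'}]+\sum_{z<u'}b_{\zeta,z;\eta',u'}[\cL_{\zeta,z}]$ from~\eqref{eq:b coeff} and comparing coefficients of $[\cL_{-\eta,u}]$ for each $u<v$ yields the recursion
\[
\bar a_{\eta,u;\xi,v}\,q^{-\delta(u)}+\sum_{u<u'<v}\bar a_{\eta',u';\xi,v}\,b_{-\eta,u;\eta',u'}=q^{-\delta(v)}a_{-\eta,u;-\xi,v}.
\]
Running the induction downward in the closure order (starting from those $u<v$ for which no $u'$ satisfies $u<u'<v$, where the middle sum is empty) reduces this to $\bar a_{\eta,u;\xi,v}=q^{\delta(u)-\delta(v)}a_{-\eta,u;-\xi,v}$. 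Combining this with the analogous identity for the pair $(-\xi,v)$ shows that the lowest $q$-degree of $a_{\eta,u;\xi,v}$ must be at least $(\delta(v)-\delta(u)+1)/2$, which contradicts the upper bound $\deg_q a_{\eta,u;\xi,v}\leq(\delta(v)-\delta(u)-1)/2$ unless $a_{\eta,u;\xi,v}=0$. This closes the induction and forces $A_{\xi,v}=0$.

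\emph{Main obstacle.} The existence half is essentially routine once Theorem~\ref{t:IC parity vanishing} is available: the substantial geometric work (pointwise purity, parity vanishing, and the construction of contracting transversal slices in Section~\ref{Transversal slices}) has already been done. The real care is needed in the uniqueness argument, where the descending recursion interweaves Lemma~\ref{degree bound} controlling the $b$-polynomials, the sign-flipped involution $\xi\mapsto-\xi$ on local systems appearing in (iii), and the Laurent-polynomial involution $q^{1/2}\mapsto q^{-1/2}$. The structural shape mirrors the classical Kazhdan--Lusztig--Vogan uniqueness argument of \cite[Theorem~1.11]{LV} and~\cite[\S5.4]{MS}, but with $\dim$ replaced throughout by the dimension-theory value $\delta$ and with an auxiliary passage through the isomorphism $h$.
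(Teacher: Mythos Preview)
Your proof is correct and follows essentially the same approach as the paper: existence is handled identically via Theorem~\ref{t:IC parity vanishing} and the support estimate for intermediate extensions, and uniqueness rests on the same degree-disjointness observation coming from condition~(iii). The only cosmetic difference is that the paper derives the recurrence~\eqref{eq:I_0 Poincare poly recurrence} directly for an arbitrary family satisfying (i)--(iii) and reads off uniqueness by induction on $\delta(v)-\delta(u)$, whereas you subtract two solutions and show the difference vanishes by the same induction; these are standard equivalent packagings of the Kazhdan--Lusztig uniqueness argument.
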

\begin{rem}
    Note that $\delta(v)-\delta(u)$
    is equal to the codimension of $\overline\cO_u\subset\overline\cO_v$ which is independent of the choice of the 
    dimension theory $\delta$.
\end{rem}
\begin{proof}
	By Theorem \ref{t:IC parity vanishing},
	we know $P_{\eta,u;\xi,v}$ satisfies all the above except (ii).
	By the property of intermediate extension,
	$c_{\eta,u;\xi,v,i-\delta(v)}\neq0$ only if $\cO_u\subset\overline{\cO}_v$
	and $\delta(u)<-(i-\delta(v)$,
	so that $\frac{1}{2}i\leq\frac{1}{2}(\delta(v)-\delta(u)-1)$.
	Then (ii) follows from Theorem \ref{t:IC parity vanishing}.(i).
	
	Conversely, suppose $P_{\eta,u;\xi,v}$ is a family of polynomials
	in $q^{\frac{1}{2}}$ satisfying (i)-(iii).
	Denote by $\overline{P_{\eta,u;\xi,v}}$ 
	the polynomial in $q^{\frac{-1}{2}}$
	obtained from $P_{\eta,u;\xi,v}$
	via $q^{\frac{1}{2}}\mapsto q^{\frac{-1}{2}}$.
	By the same discussion as \eqref{eq:b c coeff relation},
	we obtain 
	\begin{equation}\label{eq:I_0 Poincare poly recurrence}
		P_{\eta,u;-\xi,v}-q^{\delta(v)-\delta(u)}
		\overline{P_{-\eta,u;\xi,v}}
		=
		q^{\delta(v)}\sum_{u<z<v}b_{\eta,u;\zeta,z}\overline{P_{\zeta,z;\xi,v}}.
	\end{equation}
	By condition (ii),
	the degrees of $q$ in $P_{\eta,u;-\xi,v}$ 
	range from $0$ to $\frac{1}{2}(\delta(v)-\delta(u)-1)$,
	while the degrees of $q$ in 
	$q^{\delta(v)-\delta(u)}
	\overline{P_{-\eta,u;\xi,v}}$
	range from $\frac{1}{2}(\delta(v)-\delta(u)+1)$
	to $\delta(v)-\delta(u)$.
	Thus \eqref{eq:I_0 Poincare poly recurrence} 
	determines $P_{\eta,u;\xi,v}$
	by doing induction on  $\delta(v)-\delta(u)$
	with base case given by condition (i).
\end{proof}

\subsection{Relative 
Kostka-Foulkes polynomials}\label{Def of KF poly}
We can deduce similar results for $L^+G$-orbits from the above.
We resume the discussion in \S\ref{sss:G(O) C coeff}.
For $L^+G$-equivariant local systems $\cL_{\chi,\lambda}$,
we defined coefficients $C_{\chi',\mu;\chi,\lambda,i}$ parallel to $c_{\eta,u;\xi,v,i}$.
Recall we denoted the restriction of $\cL_{\chi,\lambda}$
to the open $I_0$-orbit in $LX_\lambda$
by $\cL_{\xi_\chi,v_\lambda}$.

Similarly, we define $B_{\chi',\mu;\chi,\lambda}\in\mathbb Z[q^{\frac{1}{2}},q^{\frac{-1}{2}}]$ by
\[
D_\delta[\cL_{\chi,\lambda}]
=q^{-\delta(\lambda)}[\cL_{-\chi,\lambda}]
+\sum_{\mu<\lambda}B_{\chi',\mu;\chi,\lambda}[\cL_{\chi',\mu}].
\]
We denote by $(\xi,v)\subset(\chi,\lambda)$
if $\cO_v\subset LX_\lambda$ and $\cL_{\xi,v}=\cL_{\chi,\lambda}|_{\cO_v}$.
By breaking $L^+G$-equivariant local systems into $I_0$-equivariant ones
in the Grothendieck group
and comparing with definition of $b_{\eta,u;\xi,v}$,
we obtain relation
\begin{equation}
	B_{\chi',\mu;\chi,\lambda}
	=\sum_{(\eta,u)\leq(\xi,v)\subset(\chi,\mu)}b_{\eta,u;\xi,v},
	\quad\quad\forall\ (\eta,u)\subset(\chi',\mu),
\end{equation}
where we take convention $b_{\xi,v;\xi,v}=q^{-\delta(v)}$
and $B_{\chi,\lambda;\chi,\lambda}=q^{-\delta(\lambda)}$.
From this we see 
$B_{\chi',\mu;\chi,\lambda}$ are integer coefficients
Laurent polynomials in $q$
with a pole of order at most $\delta(\lambda)$.

We define $P_{\chi',\mu;\chi,\lambda}$ similarly as $P_{\eta,u;\xi,v}$.
Parallel to \eqref{eq:I_0 Poincare poly recurrence}, 
we have recurrence relation
\begin{equation}\label{eq:G(O) Poincare poly recurrence}
	P_{\chi',\mu;-\chi,\lambda}-q^{\delta(\lambda)-\delta(\mu)}
	\overline{P_{-\chi',\mu;\chi,\lambda}}
=q^{\delta(\lambda)}\sum_{\mu<\gamma<\lambda}B_{\chi',\mu;\chi'',\gamma}\overline{P_{\chi'',\gamma;\chi,\lambda}}.
\end{equation} 

By the same argument as the Iwahori case, we obtain:
\begin{thm}\label{t:G(O) Poincare poly uniqueness}
	For any pair $(\chi',\mu),(\chi,\lambda)$, 
	$P_{\chi',\mu;\chi,\lambda}$ is a polynomial in $q$ 
	with non-negative integer coefficients.
	It is the unique family of polynomials in $q^{\frac{1}{2}}$ 
	satisfying the following conditions:
	\begin{itemize}
		\item [(i)] $P_{\chi,\lambda;\chi,\lambda}=1$.
		\item [(ii)] If $\mu\neq\lambda$, 
		$\deg_q(P_{\chi',\mu;\chi,\lambda})\leq\frac{1}{2}(\delta(\lambda)-\delta(\mu)-1)$.
		\item [(iii)] $C_{\chi,\lambda}:=\sum_{\chi',\mu}P_{\chi',\mu;\chi,\lambda}[\cL_{\chi',\mu}]$
		satisfies $D_\delta C_{\chi,\lambda}=q^{-\delta(\lambda)}C_{-\chi,\lambda}$.
	\end{itemize}
\end{thm}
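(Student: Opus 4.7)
My plan is to mirror the proof of Theorem \ref{t:I_0 Poincare poly uniqueness} in the spherical setting, using the fact that the $L^+G$-equivariant IC-complexes behave formally just like the $I_0$-equivariant ones once we have the spherical parity vanishing in Theorem \ref{t:IC parity vanishing}(iii) and the spherical pointwise purity in Theorem \ref{purity}. I would split the argument into two halves: first checking that $P_{\chi',\mu;\chi,\lambda}$ as defined via intersection cohomology satisfies conditions (i)--(iii) (together with the positivity and polynomial-in-$q$ statements), and then establishing uniqueness via the recurrence \eqref{eq:G(O) Poincare poly recurrence}.

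First I would verify that $P_{\chi',\mu;\chi,\lambda}$ satisfies the listed conditions. Condition (i) is the normalization $(j_\lambda)_{!*}\cL_{\chi,\lambda}[\delta(\lambda)]|_{LX_\lambda} = \cL_{\chi,\lambda}[\delta(\lambda)]$. Condition (ii) is the standard Deligne-type bound on stalks of an intermediate extension in a smaller stratum, namely that $\sH^i(\IC_{\chi,\lambda})|_{LX_\mu}$ vanishes unless $i < -\delta(\mu)$ when $\mu < \lambda$; this converts into the degree bound $\deg_q(P_{\chi',\mu;\chi,\lambda}) \le \tfrac{1}{2}(\delta(\lambda)-\delta(\mu)-1)$, where we crucially use the parity vanishing of Theorem \ref{t:IC parity vanishing}(iii) to strengthen the strict inequality into a half-integer bound. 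Condition (iii) is the Verdier self-duality $\mathbb D_\delta(\IC_{\chi,\lambda}) \cong \IC_{-\chi,\lambda}(\delta(\lambda))$, which after passing to Grothendieck classes and expanding in the standard basis yields precisely the required $D_\delta C_{\chi,\lambda} = q^{-\delta(\lambda)}C_{-\chi,\lambda}$. The positivity of coefficients and the polynomiality in $q$ (as opposed to $q^{1/2}$) both follow from the pointwise purity Theorem \ref{purity} combined with the parity vanishing in Theorem \ref{t:IC parity vanishing}(iii), since the Frobenius eigenvalues on $\sH^i(\IC_{\chi,\lambda})|_{LX_\mu}$ stalks are pure of weight $i+\delta(\lambda)$ which is even by parity vanishing, forcing integer powers of $q$ with nonnegative multiplicities.

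For the uniqueness half, I would proceed by induction on $\delta(\lambda)-\delta(\mu)$ using the recurrence relation \eqref{eq:G(O) Poincare poly recurrence}. The key observation, exactly as in the Iwahori proof, is that on the left-hand side the polynomial $P_{\chi',\mu;-\chi,\lambda}$ has degrees in the range $[0,\tfrac{1}{2}(\delta(\lambda)-\delta(\mu)-1)]$ while $q^{\delta(\lambda)-\delta(\mu)}\overline{P_{-\chi',\mu;\chi,\lambda}}$ has degrees in the complementary range $[\tfrac{1}{2}(\delta(\lambda)-\delta(\mu)+1),\delta(\lambda)-\delta(\mu)]$; these ranges do not overlap, so each side is uniquely recovered by separating powers of $q^{1/2}$. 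The right-hand side involves only the data of $B_{\chi'',\mu;\chi'',\gamma}$ and of polynomials $P_{\chi'',\gamma;\chi,\lambda}$ strictly higher up in the closure order (i.e.\ with $\delta(\lambda)-\delta(\gamma) < \delta(\lambda)-\delta(\mu)$), so the induction hypothesis applies. The base case is condition (i).

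The hard part has essentially already been dispatched, since the delicate inputs---parity vanishing and pointwise purity for spherical IC-complexes, and the self-duality constraint (iii)---are established in Theorems \ref{purity} and \ref{t:IC parity vanishing}. The remaining step of extracting the uniqueness from the recurrence is a direct transcription of the Iwahori argument; the only point that requires brief justification is that the polynomials $B_{\chi',\mu;\chi,\lambda}$ figuring in \eqref{eq:G(O) Poincare poly recurrence} are indeed expressible in terms of the $b_{\eta,u;\xi,v}$ (which is the displayed formula just before \eqref{eq:G(O) Poincare poly recurrence}), so no additional computations of Verdier duals at the spherical level are required beyond repackaging the Iwahori data.
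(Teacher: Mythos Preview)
Your proposal is correct and follows essentially the same approach as the paper. The paper's own proof consists of the single sentence ``By the same argument as the Iwahori case, we obtain'', and you have faithfully spelled out what that argument is: verify (i)--(iii) via the IC-sheaf normalization, the support condition on intermediate extensions combined with parity vanishing (Theorem~\ref{t:IC parity vanishing}(iii)), and Verdier self-duality, then deduce uniqueness from the recurrence~\eqref{eq:G(O) Poincare poly recurrence} by the non-overlapping degree ranges and induction on $\delta(\lambda)-\delta(\mu)$.
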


\section{Applications}\label{Formality}
We discuss applications of our main results to 
relative Langlands duality \cite{BZSV}.

\subsection{Semisimplicity
criterion}\label{Semi cri}
In the group case, an important application of the 
parity vanishing of $\IC$-complexes is the 
semisimplicity of the Satake category
$\on{Perv}(L^+G\backslash\Gr)$.
Using the parity vanishing in Theorem \ref{t:IC parity vanishing} and \cite{CN3}
,  we prove the following 
semisimplicity
criterion and Langlands dual description
for the relative Satake category 
$\on{Perv}_{}(L^+G\backslash LX)$:

\begin{thm}\label{semi}
   Assume the codimensions of $L^+G$-orbits in the same connected component of $LX$ are even.
\begin{itemize}
    \item [(i)] The relative Satake category  $\on{Perv}_{\delta}(L^+G\backslash LX)$ is semisimple.
\item [(ii)] Assume further that the the 
$L^+G$-stabilizers on $LX$ are connected.
Then there is an equivalence of abelian categories
\[\on{Perv}_{\delta}(L^+G\backslash LX)\cong\on{Rep}(\check G_X)\]
where the $\on{Rep}(\check G_X)$
is the category of finite dimensional
complex representations of the dual group 
$\check G_X$ of X \cite{GN}.
\end{itemize}

\end{thm}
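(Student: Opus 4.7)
For part (i), the plan is to reduce semisimplicity of $\on{Perv}_\delta(L^+G\backslash LX)$ to the vanishing of $\on{Ext}^1_{\on{Perv}_\delta}(\IC_{\chi,\lambda}, \IC_{\chi',\mu})$ for all pairs of simple objects. Since distinct connected components of $LX$ contribute orthogonal blocks to the abelian category, I would fix $\beta \in \pi_0(LX)$ and restrict attention to the simples supported in $LX_\beta$. The very definition of a dimension theory (see Proposition \ref{dim for LX}), together with the hypothesis that codimensions of $L^+G$-orbits in $LX_\beta$ are even, implies $\delta(\lambda) \equiv \delta(\mu) \pmod{2}$ for all $\lambda,\mu \in \Lambda_{S,\beta}^+$; here I use that $\Lambda_{S,\beta}^+$ is a filtered poset by Lemma \ref{direct}, so any two orbits have a common dominator. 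Combined with the parity vanishing Theorem \ref{t:IC parity vanishing}(iii), this forces every $\IC_{\chi,\lambda}$ in the component to have cohomology sheaves concentrated in degrees of a single fixed parity.

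From here I would follow the standard parity/purity argument that proves semisimplicity for the classical affine Grassmannian Satake category (cf.~\cite[Proposition 5.1.1]{ZhuIntroGr}). Concretely, one applies $R\on{Hom}(\IC_{\chi,\lambda}, -)$ to the open-closed adjunction triangle attached to the stratification of $\overline{LX}_\mu$, and tracks degrees using the stalk parity established above; every potential contribution to $\on{Ext}^1$ must come from an odd-parity cohomology sheaf, which vanishes identically. An alternative and cleaner route is to use the pointwise purity of Theorem \ref{purity}: each $\IC_{\chi,\lambda}$ is pure of weight $\delta(\lambda)$, and the common-parity property upgrades this to a genuine ``even pure'' structure in a mixed version of the $L^+G$-equivariant derived category, after which the Beilinson--Bernstein--Deligne yoga kills $\on{Ext}^1$ between pure perverse sheaves of the same weight.

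For part (ii), the connected-stabilizer hypothesis means that every $L^+G$-orbit $LX_\lambda$ supports only the trivial equivariant local system, so by Proposition \ref{p:sym var orbits}(i) the simple objects of $\on{Perv}_\delta(L^+G\backslash LX)$ are precisely the $\IC_\lambda$ indexed by $\lambda \in \Lambda_S^+$, and part (i) makes the category semisimple. To identify it with $\on{Rep}(\check G_X)$ as abelian categories, I would combine the Gaitsgory--Nadler construction \cite{GN}, which produces $\check G_X$ together with a bijection between $\Lambda_S^+$ and the set of dominant coweights of $\check G_X$, with the real-symmetric equivalence \cite{CN3}. The latter transports $\on{Perv}_\delta(L^+G\backslash LX)$ onto the real Satake category $\on{Perv}(L^+G_\bR\backslash \Gr_\bR)$, where the identification with $\on{Rep}(\check G_X)$ at the abelian level is already available in the literature.

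The main obstacle is the $\on{Ext}^1$-vanishing in part (i): unlike the classical Satake setting, where $\overline{LG\cdot t^\lambda}$ is projective and Deligne's theory of weights applies transparently, here $\overline{LX}_\lambda$ is merely a placid scheme of possibly infinite dimension, so one must verify that the BBD weight/parity formalism extends compatibly to the equivariant constructible derived category on placid ind-schemes. The placidness results of Section \ref{placid} provide the needed finite-dimensional reductions (fp-morphisms to finite-type schemes), and Theorem \ref{purity} supplies the pointwise input; once this technical bridge is installed, the classical semisimplicity argument transfers without essential change.
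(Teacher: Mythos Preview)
Your argument for part (i) is correct and matches the paper's approach: the paper likewise invokes the standard semisimplicity argument from \cite[Proposition 5.1.1]{ZhuIntroGr}, noting that it only needs parity vanishing of $\IC$-complexes together with the even-codimension hypothesis, both of which are in place here.

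Your argument for part (ii), however, has a genuine gap. You assert that \cite{GN} furnishes a bijection between $\Lambda_S^+$ and the dominant coweights of $\check G_X$, but this is not automatic. What \cite[Corollary 13.8]{CN3} actually provides is a full subcategory of $\on{Perv}_\delta(L^+G\backslash LX)$ equivalent to $\on{Rep}(\check G_X)$, consisting of the $\IC$-complexes with trivial coefficient supported on those $\overline{LX}_\lambda$ for which $\lambda$ lies in the image of $\tau:\Lambda_T\to\Lambda_S$, $\mu\mapsto -\theta(\mu)+\mu$. To conclude that this subcategory is \emph{all} of $\on{Perv}_\delta(L^+G\backslash LX)$, you must show that every $\lambda\in\Lambda_S$ is of the form $-\theta(\mu)+\mu$. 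This is a separate combinatorial statement (Lemma \ref{support} in the paper) whose proof again uses the even-codimension hypothesis in an essential way: one writes $\lambda=(-\theta(\mu_1)+\mu_1)+\lambda'$ with $\lambda'\in\Lambda_S\cap Q$, decomposes $\lambda'$ over simple coroots of a suitable Levi, and uses the codimension formula of Proposition \ref{codim formula} to exclude the possibility $-\theta(\alpha)=\alpha$ (which would force an odd-codimension orbit). Without this step, you have only embedded $\on{Rep}(\check G_X)$ into the relative Satake category, not exhausted it.
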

\begin{proof}
(i) The standard proof in the group case,  
   see, e.g., \cite[Proposition 5.1.1]{ZhuIntroGr},
   only uses the parity vanishing of $\IC$-complexes 
   and parity of the dimension of spherical orbits in each connected component, thus is applicable in our setting.

(ii)
The assumption implies 
$\on{Perv}_\delta(L^+G\backslash LX)$
is a semisimple abelian category 
whose irreducible objects are 
$\IC$-complexes with trivial local systems.
\cite[Corollary 13.8.]{CN3} implies that 
$\on{Perv}(L^+G\backslash LX)$
contains a full subcategory 
$\on{Rep}(\check G_X)$
consisting of $\IC$-complexes 
with trivial coefficients
supported on the orbit closures
$\overline{LX}_\lambda$ with 
$\lambda\in\Lambda_S^+$
of the form
$-\theta(\mu)+\mu$, $\mu\in\Lambda_T$.
Now the desired claim follows from the Lemma \ref{support} below.
\end{proof}

\begin{exam}\label{new examples}
Assume $LX$ is connected (for example, when $G$ is simply connected).
According to the codimension formula in Proposition \ref{codim formula},
the evenness assumption in the theorem is satisfied if and only if 
for $LX_\lambda\subset\overline{LX}_\mu$
we have 
$\on{codim}_{\overline{LX_\lambda}}(\overline{LX}_\mu)=\delta(\mu)-\delta(\lambda)=\langle\rho,\mu-\lambda\rangle\in 2\mathbb Z$. 
Using this formula,
one can check that when
$X=\SL_{2n}/\Sp_{2n},\mathrm{Spin}_{2n}/\on{Spin}_{2n-1},\mathrm{E}_6/\mathrm{F}_4$ with
relative dual groups $\check G_X=\PGL_{2n},\PGL_2$, $\PGL_3$ (the splitting rank cases), both the evenness and connectedness assumptions
are satisfied and 
we obtain the following new instances of 
abelian relative Satake equivalence:
\begin{enumerate}
    \item $\on{Perv}(L^+\SL_{n}\backslash L(\SL_{n}/\Sp_{n}))\cong\on{Rep}(\PGL_n)$,
    \item $\on{Perv}(L^+\mathrm{Spin}_{2n}\backslash L(\mathrm{Spin}_{2n}/\on{Spin}_{2n-1}))\cong\on{Rep}(\PGL_2)$,
    \item $\on{Perv}(L^+\mathrm{E}_6\backslash L(\mathrm{E}_6/\mathrm{F}_4))\cong\on{Rep}(\PGL_3)$.
\end{enumerate}

\end{exam}

\begin{rem}\label{SO_2}

    The assumption in the theorem is necessary.
    For example, consider the complex symmetric variety 
    $X=\SL_2/\SO_2$. 
    Using the codimension formula above one can
    check that there are orbits with codimension equal to one. 
   On the other hand, from \cite[theorem 1.8]{BAF}, the category 
    $\on{Perv}_\delta(L^+\SL_2\backslash LX)$
    has a full subcategory equivalent to the 
    abelian category 
    $\wedge^\bullet T^*\mathbb C^2\on{-mod}^{\SL_2}$
    of finite dimensional $\SL_2$-equivariant modules
    over the exterior algebra $\wedge^\bullet T^*\mathbb C^2$. In particular, $\on{Perv}(L^+\SL_2\backslash LX)$ is not semisimple.
Another way to see this is to use the real-symmetric equivalence 
\[\on{Perv}_\delta(L^+\SL_2\backslash LX)\cong\on{Perv}(L^+\SL_2(\mathbb R)\backslash\Gr_{\SL_2(\mathbb R)})\]
in \cite{CN3}, where the right hand side is the real Satake category for $\SL_2(\mathbb R)$.
There is a real spherical orbit closure in 
$\Gr_{\SL_2(\mathbb R)}$ homeomrphic to
the real two dimensional pinched torus
and the 
extension by zero $j_!(\mathbb C[1])$
along the open orbit
provides 
a non-semisimple object
in $\on{Perv}(L^+\SL_2(\mathbb R)\backslash\Gr_{\SL_2(\mathbb R)})$.

\end{rem}

\begin{lem}\label{support}
    Assume the codimensions of $L^+G$-orbits in the same connected component of $LX$ are even.
    Then every element $\Lambda_S$
    is of the form
    $-\theta(\mu)+\mu$ for some $\mu\in\Lambda_T$.
\end{lem}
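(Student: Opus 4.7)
The plan is to translate the codimension hypothesis into a parity condition on the pairing with $\rho$, then reduce via the exact sequence~\eqref{component} to a lattice-theoretic statement, and finally exploit the decomposition of $\rho$ relative to the minimal $\theta$-stable parabolic.

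First I would combine Proposition~\ref{codim formula} (applied componentwise on $LX$) with the description of $\pi_0(LX)$ in~\eqref{component}. The codimension formula gives $\on{codim}_{\overline{LX}_\lambda/\overline{LX}_\mu}=\langle\rho,\mu-\lambda\rangle$ for $\lambda\leq\mu$ in the same component, while~\eqref{component} identifies $\pi_0(LX)\cong\Lambda_S/(\Lambda_S\cap Q)$, so two orbits lie in the same component precisely when their coweights differ by an element of $\Lambda_S\cap Q$. Using the filtered structure from Lemma~\ref{direct} to realize every element of $\Lambda_S\cap Q$ as such a difference of dominant coweights, the even-codimension hypothesis becomes the arithmetic condition
\[
\langle\rho,\gamma\rangle\in 2\mathbb Z\qquad\text{for every }\gamma\in\Lambda_S\cap Q.
\]
On the other hand, the surjectivity of $\bar\tau:\Lambda_T/Q\twoheadrightarrow\Lambda_S/(\Lambda_S\cap Q)$ from~\eqref{component} amounts to $\Lambda_S=\tau(\Lambda_T)+(\Lambda_S\cap Q)$, so the desired equality $\Lambda_S=\tau(\Lambda_T)$ is equivalent to the inclusion $\Lambda_S\cap Q\subset\tau(\Lambda_T)$, and via the natural isomorphism $(\Lambda_S\cap Q)/(\tau(\Lambda_T)\cap Q)\cong\Lambda_S/\tau(\Lambda_T)$ to the vanishing of the $2$-torsion group $\hat H^1(\mathbb Z/2,\Lambda_T)\cong\Lambda_S/\tau(\Lambda_T)$.

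For the main step I would exploit the decomposition $\rho=\rho_M+\rho_N$ arising from the minimal $\theta$-stable parabolic $P$ of \S\ref{sym}: here $\rho_M$ and $\rho_N$ are the half-sums of the positive roots of the Levi $M=Z_G(S)$ and of the unipotent radical of $P$, respectively. Since $\theta$ sends the roots of the unipotent radical to their negatives, one has $\theta^*\rho_N=-\rho_N$ and $\theta^*\rho_M=\rho_M$, so $\rho|_{V^-}=\rho_N|_{V^-}$ and $\langle\rho,\tau(\mu)\rangle=\langle 2\rho_N,\mu\rangle\in\mathbb Z$ for every $\mu\in\Lambda_T$. Any real simple coroot $\alpha^\vee$ (one with $\theta(\alpha^\vee)=-\alpha^\vee$) then lies in $\Lambda_S\cap Q$ and satisfies $\langle\rho,\alpha^\vee\rangle=1$, so the parity hypothesis immediately rules out any real simple coroot. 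Under this ``no real simple root'' condition I would then construct, for each $\gamma\in\Lambda_S$, an explicit lift $\mu\in\Lambda_T$ with $\tau(\mu)=\gamma$ by decomposing $\gamma$ along the remaining (imaginary and complex) $\theta$-orbits on simple coroots and using the identity $\tau(\alpha^\vee)=\alpha^\vee-\beta^\vee$ for each complex pair $\{\alpha,\beta=\theta(\alpha)\}$.

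The main obstacle is this last construction, i.e.\ promoting the ``no real simple coroot'' fact to full surjectivity of $\tau:\Lambda_T\to\Lambda_S$. Complex and imaginary $\theta$-orbits on simple coroots produce elements of $\tau(\Lambda_T)$ directly, but $\Lambda_S\cap Q$ may contain integer combinations not immediately visible as such images; one must verify---using the integrality $2\rho_N\in X^*(T)$ together with the parity condition on \emph{all} of $\Lambda_S\cap Q$, not only on real simple coroots---that these are nevertheless in $\tau(\Lambda_T)$. As a consistency check, the resulting picture matches Remark~\ref{SO_2} (where both the hypothesis and the conclusion of the lemma fail for $X=\SL_2/\SO_2$) as well as the splitting-rank examples of Example~\ref{new examples} (where both hold).
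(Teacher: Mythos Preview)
Your reduction via~\eqref{component} to the inclusion $\Lambda_S\cap Q\subset\tau(\Lambda_T)$, and your use of the codimension formula to exclude any simple coroot $\alpha^\vee$ with $\theta(\alpha^\vee)=-\alpha^\vee$, are exactly the moves the paper makes. The gap is precisely where you flag it, and your proposed fix does not work as written: relative to the $\theta$-split Borel $B$, the involution $\theta$ need not send a simple coroot to $\pm$(a simple coroot). For instance, for $G=\SL_4$, $K=\Sp_4$, one has $\theta(\alpha_1^\vee)=-\alpha_3^\vee$ while $\theta(\alpha_2^\vee)=\alpha_2^\vee$; the first pair is neither real, imaginary, nor ``complex'' in your sense. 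So there is no trichotomy of $\theta$-orbits on $\Delta^\vee$ to decompose along, and an element of $\Lambda_S\cap Q$ written as $\sum n_\alpha\alpha^\vee$ gives no immediate handle.

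The paper resolves this by not working with all of $\Delta^\vee$. It passes to the Levi $L$ centralizing $2\check\rho_M$ and invokes \cite[Lemma~2.1.1]{NadlerRealGr}, which supplies the two structural facts you are missing: (a) $-\theta(\Delta_L)=\Delta_L$, so $-\theta$ genuinely permutes the simple coroots of $L$; and (b) every element of $\Lambda_S\cap Q$ is already an integer combination of coroots in $\Delta_L$. With (a) and (b) in hand, your intended argument goes through inside $\Delta_L$: the parity hypothesis rules out $(-\theta)$-fixed elements of $\Delta_L$, leaving only two-element orbits $\{\alpha,-\theta(\alpha)\}$; since $\lambda'\in\Lambda_S$ forces equal coefficients on each orbit, one gets $\lambda'=\sum n_\alpha(\alpha-\theta(\alpha))\in\tau(\Lambda_T)$. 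The detour through $\rho=\rho_M+\rho_N$ is not needed; the paper uses only $\langle\rho,\alpha^\vee\rangle=1$ for simple $\alpha^\vee$.
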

\begin{proof}
    From diagram~\eqref{component}, we can write 
    $\lambda=(-\theta(\mu_1)+\mu_1)+\lambda'$
    where $\lambda'\in\Lambda_S\cap Q$ (recall we assume $K$ is connected).
    Let $\Delta_L$ be the set of  simple coroots of the Levi $L\subset G$ that centralizes $2\check\rho_M$.
    According to \cite[Lemma 2.1.1]{NadlerRealGr}, we have $-\theta(\Delta_L)=\Delta_L$
    and $\lambda'=\sum_{\alpha\in\Delta_L} n_\alpha\alpha$. 
    Note that we have 
    $-\theta(\alpha)\neq\alpha$ for any $\alpha\in\Delta_L$, 
    otherwise 
    $\alpha\in\Lambda_S^+\cap Q$
    and Proposition \ref{codim formula}  would imply 
    \[\on{codim}_{\overline{LX_\alpha}}(\overline{LX}_0)=\langle\rho,\alpha\rangle=1\]
    contracditing the evenness assumption (note that $LX_\alpha$ and $LX_0$ are in the same component).
    Thus
     we can write 
    $\lambda'=\sum n_{\alpha}(-\theta(\alpha)+\alpha)$
    where $\alpha$ runs through representatives of $(-\theta)$-orbits on $\Delta_L$. The lemma follows.
\end{proof}
\begin{rem}
    The converse of Lemma \ref{support} is not true.
    For example when $X=\GL_{n}/\GL_p\times\GL_q$
    with $n=p+q$ even, every element $\Lambda_S^+$ is of the form
    $-\theta(\mu)+\mu$ but there are orbits with 
    odd codimension $n-1$.
\end{rem}

\subsection{Formality of dg extension algebras}\label{Ext algebras}
Let $D^{}(L^+G\backslash\Gr)$ be the derived Satake category for 
$G$ and 
let $D^{}(L^+G\backslash LX)$ be the 
derived relative Satake category of 
dg derived category of 
$L^+G$-equivariant complexes on $LX$.
Here we use the $*$-sheaves theory,
see, Appendix \ref{s:appendix}.
We have the Hecke action, denoted by $\star$, of the  Satake category  $D^{}(L^+G\backslash LG/L^+G)$ on $D^{}(L^+G\backslash LX)$ defined similary as in Section \ref{category of sheaves} (replacing $I_0$ by $L^+G$).
We have the monoidal abelian Satake equivalence 
\[\on{Rep}(\check G)\cong\on{Perv}^{}(L^+G\backslash\Gr_G):\ \ \ V\to \ \IC_V.\]
By restricting the action to $\on{Perv}^{}(L^+G\backslash\Gr_G)\cong\on{Rep}(\check G)$, we obtain 
a monoidal action of $\on{Rep}(\check G)$ on $D^{}(L^+G\backslash LX)$.
Let $\mathrm{e}_{L^+X}\in D^{}(L^+G\backslash LX)$ be the constant sheaf on 
the closed $L^+G$-orbit
$LX_0=L^+X$.
Let $\IC_{\cO(\check G)}$
(an ind-object in $\on{Perv}^{}(L^+G\backslash\Gr_G)$)
be the image of the regular representation 
$\cO(\check G)$
under the abelian Satake equivalence.
Since $\cO(\check G)$ is a ring object in $\on{Rep}(\check G)$, the 
RHom space 
\begin{equation}\label{def of A_X}
A_X:=R\Hom_{D(L^+G\backslash LX)}(\mathrm{e}_{L^+X},\IC_{\cO(\check G)}\star\mathrm{e}_{L^+X})
\end{equation}
is naturally a dg-algebra, known as the de-equivariantized Ext algebra.
The formality conjecture in relative Langlands duality is the following assertion 
for $X$ a spherical variety(see, e.g., \cite[Conjecture 8.1.8]{BZSV}):
\begin{conj}\label{conj}
	The dg-algebra $A_X$
	is formal, that is, it is quasi-isomorphic to the 
	graded algebra 
	$A_X\cong\on{Ext}^\bullet_{D(L^+G\backslash LX)}(\mathrm{e}_{L^+X},\IC_{\cO(\check G)}\star\mathrm{e}_{L^+X})$
	with trivial differential.
\end{conj}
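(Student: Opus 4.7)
The plan is to prove formality by showing that the underlying cohomology algebra $H^\bullet(A_X)$ is Frobenius-pure in a suitable weight-graded sense and then invoking a standard intrinsic-formality criterion. The main input is Theorem~\ref{purity}.

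\emph{Step 1: Purity of the convolution sheaf.} For each finite-dimensional representation $V \in \on{Rep}(\check G)$, I would first show that $\IC_V \star \mathrm{e}_{L^+X}$ is a semisimple complex on $L^+G\backslash LX$ whose simple summands are (shifts of) IC-complexes $\IC_{\chi,\lambda}$ with Frobenius weights matching the shifts. Semisimplicity follows from Proposition~\ref{decomposition theory}(ii), while pointwise purity of the summands is Theorem~\ref{main 4}(i), applied to spherical orbits via the reduction~\eqref{IC=IC} to Iwahori orbits. The weight-matching is the content of a convolution-preserves-purity assertion: the Frobenius weights of the stalks of a convolution are additive, which follows from a careful analysis of the formally smooth multiplication map entering the definition of $\star$ in Section~\ref{category of sheaves}, combined with the purity of $\mathrm{e}_{L^+X}$ (pure of weight $0$, being the constant sheaf on a placid scheme).

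\emph{Step 2: Purity of the Ext spaces.} With $\IC_V \star \mathrm{e}_{L^+X}$ decomposed into pointwise pure summands, the $\on{Ext}$-groups
\[
\on{Ext}^i\bigl(\mathrm{e}_{L^+X},\, \IC_V \star \mathrm{e}_{L^+X}\bigr)
\]
can be computed as costalks of the summands along the inclusion of the closed orbit $L^+X \hookrightarrow LX$, up to an overall shift encoding the dimension theory of Section~\ref{Dimension theory}. Combining the $*$- and $!$-pointwise purity of Theorem~\ref{purity}, each such $\on{Ext}^i$ is pure of weight $i$. Summing over irreducibles $V$ in the regular representation $\cO(\check G)$, this shows $H^\bullet(A_X)$ is Frobenius-pure with $H^i(A_X)$ concentrated in weight $i$.

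\emph{Step 3: Purity implies formality.} The de-equivariantized Ext algebra $A_X$ is a dg-algebra over $\overline{\mathbb Q}_\ell$ equipped with a Frobenius action compatible with its cohomology grading. A Frobenius-pure dg-algebra whose degree-$i$ cohomology is pure of weight $i$ is intrinsically formal: the Hochschild obstruction classes to lifting an $A_\infty$-quasi-isomorphism $H^\bullet(A_X) \xrightarrow{\sim} A_X$ live in $\on{HH}^{n,2-n}$ groups whose Frobenius weights are incompatible with those of a nontrivial Massey product. This is the standard Deligne-style argument, used, e.g., in \cite[\S8.1]{BZSV} and in the splitting-rank cases \cite{QuaternionicSatake,LOSatake}, and it applies verbatim once purity of $H^\bullet(A_X)$ is established.

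\emph{Main obstacle.} The technical heart is executing Steps 1-2 in the infinite-dimensional setting. The loop space $LX$ is only a placid ind-scheme, $\IC_{\cO(\check G)}$ is merely an ind-object, and the Verdier duality underlying the purity arguments depends on the dimension theory fixed in Section~\ref{Dimension theory}. One has to verify carefully that the stalk/costalk Frobenius weights are well-defined and additive under convolution, working through the placid presentations of Section~\ref{placid}. The crucial input that makes the argument uniform across \emph{all} symmetric varieties, rather than proceeding case-by-case as in \cite{QuaternionicSatake,LOSatake}, is precisely Theorem~\ref{purity}, whose proof in turn requires the conical transversal slices constructed in Section~\ref{Transversal slices}.
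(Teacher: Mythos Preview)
Your proposal is correct and follows essentially the same route as the paper: decompose the convolution $\IC_{\cO(\check G)}\star\mathrm{e}_{L^+X}$ into pointwise pure IC-complexes via Proposition~\ref{decomposition theory} and Theorem~\ref{purity}, deduce that $\on{Ext}^i$ is pure of weight $i$ by identifying it with $!$-costalk cohomology along $j:L^+X\hookrightarrow LX$, and then invoke purity-implies-formality. The paper cites \cite[Section~6.5]{BF} for the last step rather than the $A_\infty$/Hochschild obstruction language you use, but these are the same underlying mechanism.
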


\begin{thm}\label{t:formality}
	For $X$ a symmetric variety, Conjecture \ref{conj} holds true.
\end{thm}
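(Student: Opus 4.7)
The plan is to deduce formality from the pointwise purity of the $\IC$-complexes established in Theorem~\ref{purity}, following the standard paradigm: show that the Ext-algebra is the cohomology of a semisimple pure complex, and then invoke the classical fact that a dg-algebra whose cohomology is pure in a way compatible with the cohomological grading is automatically formal.

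First I would identify the object $\IC_{\cO(\check G)}\star \mathrm e_{L^+X}$ explicitly in terms of $L^+G$-equivariant $\IC$-sheaves on $LX$. The constant sheaf $\mathrm e_{L^+X}$ is (up to shift and twist) the $\IC$-sheaf of the closed orbit $LX_0=L^+X$ with trivial coefficients, hence pointwise pure by Theorem~\ref{purity}. Writing $\IC_{\cO(\check G)}=\bigoplus_{V\in\Irr(\check G)}\IC_V\otimes V^*$ as an ind-object in $\Perv(L^+G\backslash\Gr)$, each $\IC_V$ is pointwise pure on $\Gr$. By Proposition~\ref{decomposition theory}(ii) the convolution $\IC_V\star\mathrm e_{L^+X}$ is semisimple in $D(L^+G\backslash LX)$, and an argument with Tate twists and proper direct image along the action map shows it is a direct sum of Tate twists of shifts of $L^+G$-equivariant $\IC$-complexes $\IC_{\chi,\lambda}$, each of which is pointwise pure of weight $\delta(\lambda)$ by Theorem~\ref{purity}. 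Consequently $\IC_{\cO(\check G)}\star\mathrm e_{L^+X}$ is pointwise pure.

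Next I would compute $A_X$ as the cohomology of a pure complex and extract purity of the graded algebra $H^\bullet(A_X)$. By adjunction and the identification $\mathrm e_{L^+X}=(i_0)_*\overline{\mathbb Q}_\ell$ for $i_0:L^+X\hookrightarrow LX$ the closed embedding of the minimal orbit, we rewrite
\begin{equation*}
A_X\;\simeq\;R\Gamma\bigl(L^+G\backslash L^+X,\; i_0^!(\IC_{\cO(\check G)}\star\mathrm e_{L^+X})\bigr).
\end{equation*}
Since $\IC_{\cO(\check G)}\star\mathrm e_{L^+X}$ is a direct sum of pointwise pure $\IC$-complexes, its $!$-restriction to the closed orbit is a direct sum of pure complexes by Lemma~\ref{* to !}, and its $L^+G$-equivariant cohomology is then pure (the classifying stack $B L^+G$ contributes only Tate classes). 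Therefore each $\on{Ext}^i$-group carries a Frobenius-semisimple action whose eigenvalues are algebraic numbers of the prescribed weight $i$ (up to a fixed shift coming from the weights of $\mathrm e_{L^+X}$ and $\IC_{\cO(\check G)}$).

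Finally, to pass from purity of $H^\bullet(A_X)$ to formality of $A_X$ itself, I would invoke the Deligne--Beilinson yoga of weights, as formalized for instance in the work of Schnürer (or in the pure dg-algebra formality argument going back to Deligne): a dg-algebra obtained from a mixed Weil sheaf whose cohomology is pure of weight equal to the cohomological degree is intrinsically formal, because the weight filtration provides a canonical splitting of the truncation filtration that commutes with the multiplicative structure. A spreading-out argument reduces the characteristic-zero/general-$k$ formality to the $\mathbb F_q$-formality that we just established, since everything in sight is obtained by base change from a finite-type situation over a finitely generated ring.

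The main obstacle I expect is the bookkeeping of weights and Weil structures in the placid ind-scheme framework of Appendix~\ref{s:appendix} — in particular, making precise the purity of the equivariant cohomology on $L^+G\backslash L^+X$ despite the infinite-dimensionality of $L^+G$, and upgrading the object-level purity of $\IC_{\cO(\check G)}\star\mathrm e_{L^+X}$ to a genuine weight filtration on the dg-algebra $A_X$ that is multiplicative. Once these foundations are in place, the formality follows from purity by a formal argument.
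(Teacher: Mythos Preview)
Your proposal is correct and follows essentially the same route as the paper: decompose $\IC_{\cO(\check G)}\star\mathrm e_{L^+X}$ into shifts and Tate twists of pointwise pure $\IC$-complexes via Proposition~\ref{decomposition theory} and Theorem~\ref{purity}, then rewrite the Ext-groups as equivariant cohomology of $j^!$ on the closed orbit to conclude purity in weight $i$. For the final purity-implies-formality step the paper simply cites \cite[Section~6.5]{BF}, which packages exactly the Deligne--Beilinson argument you describe; the spreading-out reduction is unnecessary since the paper works over $\overline{\mathbb F}_p$ from the outset.
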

\begin{proof}
	The same proof as in Proposition \ref{decomposition theory} 
        shows that 
	$\IC_{\cO(\check G)}\star\mathrm{e}_{L^+X}$
        is a pure complex of weight zero isomorphic to
	a direct sum of  $\IC$-complexes $\IC_{\chi,\lambda} 
        (\frac{\delta(\lambda)}{2})$ shifted by $[2n](n)$.
	By Theorem \ref{purity}, the $\IC$-complexes  $\IC_{\chi,\lambda} 
        (\frac{\delta(\lambda)}{2})$ are pointwise pure of weight zero 
	and it follows that the
	Ext group 
	$\on{Ext}^i_{D(L^+G\backslash LX)}
	(\mathrm{e}_{L^+X},\IC_{\chi,\lambda}(\frac{\delta(\lambda)} 
        {2}))\cong 
        H^i_{LG^+}(L^+X,j^!\IC_{\chi,\lambda}(\frac{\delta(\lambda)}{2}))$ is pure of weight $i$ (here $j:L^+X\to LX$ is the inclusion).
        Now the desired formality of $A_X$ 
        follows from the result of \cite[Section 6.5]{BF}.
		
\end{proof}

\section{Disconnected groups}\label{disconnected cases}
In the proofs from Section \ref{S:Iwahori orbits} to Section \ref{Formality}, the connectedness of $K$ is used only to insure that the points in $LX(k)=X(F)$ 
land in the image of the natural map $\pi:LG\to LX$ (see Proposition \ref{p:sym var orbits} (ii)).
In the general case of not necessary connected $K$, the image $LX_\mathcal L=\pi(LG)\subset LX$ is 
an union of components
$LX$, and  all the results in the previous sections 
are valid if we restrict to 
orbits in $LX_\mathcal L$.
More precisely:
\begin{enumerate}
\item
 All the results from Section \ref{S:Iwahori orbits} to Section \ref{Formality}
are valid when we consider $L^+G$ or $I_0$-orbits in $LX_\mathcal L$.

\item
The formality conjecture Theorem \ref{t:formality} holds true for general $K$.
Indeed,
the union of components $LX_\mathcal L\subset LX$ 
	is $LG$-invariant and 
	$L^+X\subset LX_\mathcal L$.  
	It follows that 
	$\IC_V\star\mathrm{e}_{L^+X}$ lands in the 
	full-subcategory $D^{}(L^+G\backslash LX_\mathcal L)\subset D^{}(L^+G\backslash LX)$
	and we have an isomorphism of dg algebras
	$A_X\cong R\Hom_{D(L^+G\backslash LX_\mathcal L)}(\mathrm{e}_{L^+X},\IC_{\cO(\check G)}\star\mathrm{e}_{L^+X})$.
	Now the desired claim follows from part $(1)$ above.  
\end{enumerate}

\appendix
\section{Sheaves on placid schemes}
\label{s:appendix}

We collect some basic definitions 
and properties of placid (ind) stacks, 
dimension theory, and 
sheaves theory on  placid (ind) stacks.
The main references for this section are
\cite{BKV,FKV,D,R}.

\subsection{Placid schemes and placid stacks
}\label{Placid schemes}

All schemes are over a fixed algebraically closed field $k$.
When talking about limits and colimits, all
the index sets are assumed to be countable.

\begin{enumerate}

\item 
A placid presentation of a scheme $Y$ is 
an isomorphism 
$Y \cong \lim_{j} Y_j$, where the limit is taken with respect to a filtered set $J$,
each $Y_j$, $j\in J$ is a scheme of finite type and the transition maps 
$Y_{j'} \to Y_{j}$ for $j\to j'$ are affine and  smooth.
We say that
a scheme $Y$ is \emph{placid} if 
it has an \'etale covering by schemes admitting placid presentations.
    
\item
Let $f:X\to Y$ be a finitely presented morphism.
Assume $Y\cong\lim_{j} Y_j$ admits a placid presentation. Then 
there is 
an index $j_0$, and a morphism 
$f_{j_0}:X_{j_0}\to Y_{j_0}$ of schemes of finite type such that $f$ is the pull-back of $f_{j_0}.$
In particluar, $X\cong\lim_{j\geq j_0}X_{j_0}\times_{Y_{j_0}}Y_j$
is a plaicd presentation of $X$.

\item
We call a morphism $f:X\to Y$ \emph{pro-smooth}
if locally in the \'etale topology $X$ has a presentation $X=\lim_ jX_j$ over $Y$ such that 
the all the projection $X_j\to Y$ are smooth and finitely presented and all the transition maps $X_j\to X_{j'}$ are smooth, finitely presented and affine.\footnote{
In \cite[Section 1.3.1 (c)]{BKV} and \cite[Section 1.2 (b)]{FKV}, the authors used the term \emph{smooth morphisms} for our pro-smooth morphisms.}
By definition,
a scheme $X$ is placid if and only if there is a 
pro-smooth morphism $f:X\to Y$ to a finite type scheme $Y$.

Let $f:X\to Y$ be a pro-smooth morphism. Then $Y$ is placid implies  $X$ is placid 
(see \cite[Section 1.1.3]{BKV}).
 
\item An ind-scheme $Y$ is called placid if it can be written as a filtered colimit
$Y\cong\on{colim}_i Y^i$
where each $Y^i$ is placid  
and the transition maps $Y^i\to Y^{i'}$ 
finitely presented closed embedding.

\item
A morphism $X\to Y$ from a scheme $X$
to an ind-scheme $Y\cong\on{colim}_i Y^i$ is called finitely presented if it factors through a finitely presented morphism $X\to Y^i$. If $Y$ is placid then part (2) implies $X$ is placid.


\item 
Let $H$ be placid group scheme acting on a placid ind-scheme $Y$.
We call $Y$ an $H$-placid ind-scheme if there exists 
a placid presentation $Y\cong\on{colim}_i Y^i$
such that 
each $Y^i$ is stable under the $H$-action.

   \item 

By a \emph{stack}, we mean a stack in groupoids 
in the \'etale topology.
A morphism $f:\mathcal X\to\mathcal Y$ between stacks is called a \emph{covering} if 
it is a surjective morphism of stacks.
A stack $\mathcal Y$ is called \emph{placid} if there is a pro-smooth representable covering
$Y\to\cY$ from a placid scheme $Y$.
An ind-stack $\cY$ is called placid 
if there is a presentation 
$Y\cong\on{colim}_i\cY^i$
where each $\cY^i$ is a placid stack and the transition maps
$\cY^i\to\cY^{i'}$
are  finitely presented representable closed embeddings.

The quotient stack $\mathcal Y\cong H\backslash Y$ of an $H$-placid ind scheme $Y$  is an example of placid ind-stack.

\item 
A morphism $f:\mathcal X\to\mathcal Y$ between stacks is called pro-smooth if there are 
coverings $Y\to\mathcal Y$
and $X\to\mathcal X\times_{\mathcal Y}Y$ from schemes $X$ and $Y$ such that 
the composed map
$X\to\mathcal X\times_{\mathcal Y}Y \to Y $ is pro-smooth.

\end{enumerate}

\subsection{Formally smoothness}
\begin{defe}
 \begin{itemize}
   \item [(i)]
   A morphism $f:X\to Y$ between ind-schemes is called formally smooth if for any $k$-algebra $R$ and any ideal $I\subset R$ with $I^2=0$ and a commutative solid diagram 
   \[
   \xymatrix{\on{Spec}(R/I)\ar[r]\ar[d]&X\ar[d]\\
   \on{Spec}(R)\ar[r]\ar@{-->}[ru]&Y}
   \]
   there is  
   a dotted arrow 
   which makes the diagram commute.
        \item [(ii)]
  An ind-scheme $X$ is called formally smooth if the natural map $f:X\to\on{Spec}(k)$ is formally smooth.
   \end{itemize}

\end{defe}

\begin{lem}\label{pro implies formal}
Let $f:X\to Y$ be a pro-smooth morphism between schemes.
Then $f$ is formally smooth.
\end{lem}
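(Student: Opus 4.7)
The plan is to reduce to a genuine pro-smooth presentation of $X$ over $Y$ by using that formal smoothness is an \'etale-local property of the source, and then construct a lift by induction along a cofinal chain in the index set of the presentation, using at each step the formal smoothness of a smooth morphism between finite-type schemes.

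First I would reduce, working \'etale-locally on $X$, to the case where $X\cong\lim_{j\in J} X_j$ is an actual placid presentation over $Y$ with each $X_j\to Y$ smooth and finitely presented, each transition map $X_j\to X_{j'}$ smooth, finitely presented, and affine. The justification for this reduction is that formal smoothness of $f:X\to Y$ can be checked \'etale-locally on $X$: given an \'etale surjection $U\to X$ with $U\to Y$ formally smooth and a lifting problem $\Spec(R/I)\to X$, $\Spec(R)\to Y$ with $I^2=0$, one passes to an \'etale cover $\Spec(\widetilde R/\widetilde I)\to\Spec(R/I)$ through which the map factors through $U$, lifts this (uniquely, by topological invariance of the \'etale site) to an \'etale cover $\Spec(\widetilde R)\to\Spec(R)$, uses formal smoothness of $U\to Y$ to produce a lift to $U$ and hence to $X$ over $\Spec(\widetilde R)$, and finally descends the resulting lift by fpqc descent for morphisms to $X$.

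Once we are in this setting, the given map $h:\Spec(R/I)\to X$ corresponds to a compatible system $h_j:\Spec(R/I)\to X_j$. Choose a cofinal chain $j_0\leq j_1\leq\cdots$ in $J$ and construct lifts $g_n:\Spec(R)\to X_{j_n}$ inductively. For the base case, since $X_{j_0}\to Y$ is smooth and hence formally smooth, one lifts $h_{j_0}$ against $g:\Spec(R)\to Y$ to a map $g_0:\Spec(R)\to X_{j_0}$. For the inductive step, given $g_{n-1}$, form the base change
\[
\Spec(R)\times_{X_{j_{n-1}}}X_{j_n}\longrightarrow \Spec(R),
\]
which is smooth because $X_{j_n}\to X_{j_{n-1}}$ is smooth. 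The maps $h_{j_n}$ and $g_{n-1}|_{\Spec(R/I)}$ together define a section of this base change over $\Spec(R/I)$, and formal smoothness of a smooth morphism produces a section $g_n:\Spec(R)\to X_{j_n}$ over $\Spec(R)$ extending it and compatible with $g_{n-1}$. The compatible system $\{g_n\}$ extends uniquely to a compatible system over all of $J$ by composition with transition maps, and hence defines the desired map $\Spec(R)\to\lim_j X_j=X$.

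The main obstacle is the \'etale-local reduction, where one needs to invoke topological invariance of the \'etale site (valid because $I$ is nilpotent) together with descent to combine local lifts; the inductive construction itself is a routine application of the fact that smooth morphisms between finite-type schemes are formally smooth.
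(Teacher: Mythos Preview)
The paper's proof is a single sentence invoking the infinitesimal lifting criterion with no details, so your write-up is far more explicit. Your inductive construction in the case of a genuine presentation $X\cong\lim_j X_j$ over $Y$ is correct: the paper's countability convention on index sets guarantees a cofinal chain exists, and the step-by-step lifting using smoothness of the projection $X_{j_0}\to Y$ and of the transition maps $X_{j_n}\to X_{j_{n-1}}$ is exactly right.

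The gap is in the \'etale-local reduction, which you correctly flag as the main obstacle but do not actually resolve. Producing a lift $\Spec(\widetilde R)\to X$ over an \'etale cover $\Spec(\widetilde R)\to\Spec(R)$ and then invoking ``fpqc descent for morphisms to $X$'' is not enough: descent requires the two pullbacks of your lift to $\Spec(\widetilde R\otimes_R\widetilde R)$ to agree, and there is no reason they should, since formal smoothness only gives existence of lifts, not uniqueness. What one needs is that the local lifts can be \emph{modified} to satisfy the cocycle condition; equivalently, that the \v Cech $1$-cocycle measuring their discrepancy is a coboundary. This amounts to vanishing of $\check H^1$ for the sheaf $\mathcal{H}om_{\cO_{\Spec(R/I)}}(h^*\Omega_{X/Y},\widetilde I)$ on the affine scheme $\Spec(R/I)$. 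Because $X\to Y$ is not finitely presented, $\Omega_{X/Y}$ is not of finite presentation and this $\mathcal{H}om$ sheaf need not be quasi-coherent, so Serre vanishing does not apply directly. Whether formal smoothness is \'etale-local on the source in this generality is genuinely delicate (cf.\ the discussion surrounding EGA~IV, 17.1.6); in the pro-smooth setting one can presumably exploit that $\Omega_{X/Y}$ is \'etale-locally a filtered colimit of finite locally free sheaves, but this is real work that neither your sketch nor the paper's one-line proof carries out.
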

\begin{proof}
It follows from the infinitesimal lifting criterion of formally smoothness.
\end{proof}

\begin{lem}\label{formally smooth criterion}
Let $f:X\to Y$ be a morphism between formally smooth
placid ind-schemes. Then
$f$ is formally smooth
if the differential map $df:T_X\to f^*T_Y$ between 
tangent sheaves
is surjective.

\end{lem}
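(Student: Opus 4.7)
The plan is to verify the infinitesimal lifting criterion directly. Given a commutative solid diagram
\[
\xymatrix{\Spec(R/I)\ar[r]^{a}\ar[d] & X\ar[d]^{f}\\ \Spec(R)\ar[r]^{b}\ar@{-->}[ur] & Y}
\]
with $I\subset R$ a square-zero ideal, I must produce the dotted arrow. First I would use formal smoothness of $X$ to produce some lift $\tilde a:\Spec(R)\to X$ of $a$, ignoring compatibility with $b$. Both $b$ and $f\circ\tilde a$ are then extensions of $f\circ a:\Spec(R/I)\to Y$ to $\Spec(R)$, and by formal smoothness of $Y$ the set of such extensions is a torsor over $\Gamma(\Spec(R/I),\,a^{*}f^{*}T_Y\otimes I)$. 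Hence $b$ and $f\circ\tilde a$ differ by a uniquely determined section $\xi$ of this sheaf.

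Similarly, the set of lifts $\Spec(R)\to X$ of $a$ is a torsor over $\Gamma(\Spec(R/I),\,a^{*}T_X\otimes I)$; modifying $\tilde a$ by a section $\eta$ yields a new lift $\tilde a_\eta$ with $f\circ\tilde a_\eta=f\circ\tilde a+df(\eta)$. The problem is therefore reduced to finding $\eta$ with $df(\eta)=\xi$. For this I would invoke the hypothesis that $df:T_X\to f^{*}T_Y$ is surjective. Because $\Spec(R/I)$ is affine and quasi-compact, the map $a$ factors through some placid subscheme $X^{i}$ in an ind-presentation of $X$, and after passing to an \'etale neighbourhood through a finite level $X^{i}_{j_0}$ of a placid presentation $X^{i}\cong\lim_j X^{i}_j$. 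At this finite level, the surjectivity of $df$ refines to a surjection of coherent sheaves on a smooth finite-type scheme, and since a surjection of coherent sheaves on an affine scheme is surjective on global sections, this produces the required $\eta$.

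The main obstacle is the bookkeeping around the tangent sheaf $T_X$ of a formally smooth placid ind-scheme and the precise meaning of ``$df$ surjective'' at this level of generality, together with the verification that this surjectivity refines, after pulling back along $a$, to a surjection on the relevant finite level of a placid presentation. Once these identifications are in place, the argument reduces to the classical fact that a morphism of smooth finite-type schemes with surjective differential is smooth, applied level-wise in the presentation.
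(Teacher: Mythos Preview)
Your overall strategy matches the paper's: verify the infinitesimal lifting criterion by identifying the sets of lifts to $X$ and to $Y$ as torsors over suitable modules and then use surjectivity of $df$ to conclude that the map between these torsors is onto. The paper explicitly notes that this is the argument of Beilinson--Drinfeld in the ind-finite-type case, carried over to the placid setting.

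The gap is in your torsor identification. Deformation theory (the paper cites \cite[16.5.14]{Gro 1}) gives that the set of lifts $\Spec(R)\to X$ of $a$ is a torsor over $\Hom_{R/I}(a^*\Omega^1_X,I)$, not over $\Gamma(\Spec(R/I),a^*T_X\otimes I)$; these coincide only when $\Omega^1_X$ is locally free of finite rank, which fails for placid (ind-)schemes. The paper's key technical input, which your outline is missing, is Drinfeld's result \cite[Theorem 6.2]{D} that for a formally smooth placid ind-scheme $\Omega^1_X$ is a Tate sheaf; this yields an isomorphism $\Hom(a^*\Omega^1_X,I)\cong\Hom(I^\vee,a^*T_X)$ with $I^\vee=\Hom_{R/I}(I,R/I)$, after which surjectivity of $df$ gives surjectivity of the map of torsors directly. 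Your proposed reduction to a finite level of a placid presentation is not a substitute: the morphism $f$ need not descend to a map between finite-type truncations $X^i_{j}\to Y_k$, so there is no well-defined ``surjectivity of $df$ at finite level'' to invoke, and surjectivity of coherent sheaves on an affine scheme is not the relevant statement here. You are right that the bookkeeping around the tangent sheaf is the crux, but the resolution goes through the Tate structure of $\Omega^1_X$ rather than through a level-wise argument.
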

\begin{proof}
    In the case when both $X$ and $Y$ are ind-schemes of ind-finite type this is proved in 
    \cite[Lemma in page 332]{BD}.
    The argument in \emph{loc. cit.} works equally  to the placid ind-schemes setting: 
    Let $I\subset R$ be an  ideal in a $k$-algebra $R$ such that $I^2=0$.
    For a morphism $s:S=\on{Spec}(R/I)\to X$, let 
    $E_X(R,I,s)$ (resp. $E_Y(R,I,s)$)
    be the set of extensions of $s$ to a morphism 
    $\on{Spec}(R)\to X$ (resp. $\on{Spec}(R)\to Y$).
    Formally smooth of $f$ means the map 
    $f_*:E_X(R,I,s)\to E_Y(R,I,s)$ is surjective for all
    possible $R,I,s$ as above.
    Since $X$ (resp. $Y$) is formally smooth,
    according to \cite[Section 16.5.14]{Gro 1}, $E_X(R,I,s)$ (resp. $E_Y(R,I,s)$) is a torsor 
    over $\on{Hom}(s^*\Omega^1_X,I)$
    (resp. $\on{Hom}(s^*f^*\Omega^1_Y,I)$)
    where $\Omega^1_X$ (resp. $\Omega^1_Y$)
    is the cotangent sheaf. 
    Since $X$ (resp. $Y$) is placid, according to \cite[Theorem 6.2 (iii)]{D}, $\Omega^1_X$ (resp. $\Omega^1_Y$) is a Tate sheaf and it follows that
    there is an isomorphism
    $\on{Hom}(s^*\Omega^1_X,I)\cong\on{Hom}(I^\vee,s^*T_X)$ (resp. $\on{Hom}(I^\vee,s^*f^*T_Y)$) where $I^\vee=\Hom_{R/I}(I,R/I)$.  Moreover, the surjectivity of the differential $df$
    induces a surjective map 
    $\on{Hom}(I^\vee,s^*T_X)\to \on{Hom}(I^\vee,s^*f^*T_Y)$ and it follows that 
    the map $f_*$ between the corresponding torsors is also surjective. The lemma follows. 
    
\end{proof}

\begin{lem}\label{fixed points}
 Let $\sigma:X\to X$ be an involution on
a formally smooth  ind-scheme $X$.
Then the fixed points $X^\sigma$ is again a formally smooth ind-scheme.

\end{lem}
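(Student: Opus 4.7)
The plan is to first establish that $X^\sigma$ is an ind-scheme, and then verify the infinitesimal lifting criterion using that $\mathrm{char}(k) \neq 2$, following a standard averaging argument.

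For the ind-scheme structure, I would write $X \cong \on{colim}_i X^i$ with each $X^i$ a scheme and the transition maps finitely presented closed embeddings. By replacing each $X^i$ with $X^i \cup \sigma(X^i)$ if necessary, I may assume each $X^i$ is $\sigma$-stable, so that the scheme-theoretic fixed points $(X^i)^\sigma$, which are closed subschemes of $X^i$, form a directed system with closed-embedding transition maps. One then checks that $X^\sigma \cong \on{colim}_i (X^i)^\sigma$ represents the fixed-point functor on test affine schemes.

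For formal smoothness, I would verify the infinitesimal lifting criterion directly. Suppose given a $k$-algebra $R$, a square-zero ideal $I\subset R$, and a map $\bar x:\on{Spec}(R/I)\to X^\sigma$ together with a compatible map $\on{Spec}(R)\to\on{Spec}(k)$. The composition $\bar x:\on{Spec}(R/I)\to X$ factors through some $\sigma$-stable $X^i$, and formal smoothness of $X$ yields a lift $x:\on{Spec}(R)\to X$ (factoring through $X^i$ after enlarging $i$). The composition $\sigma\circ x$ is another such lift of $\bar x$. The set of lifts is a torsor under an abelian group $D$ (concretely $\Hom_{R/I}(\bar x^*\Omega^1_{X^i},I)$), and $\sigma$ acts on this torsor compatibly with its abelian structure via its action on $\Omega^1_{X^i}$ pulled back along the $\sigma$-fixed point $\bar x$. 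Write $\sigma\circ x = x + \xi$ for the unique element $\xi\in D$.

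Applying $\sigma$ to both sides and using $\sigma^2=\on{id}$ gives $x=\sigma\circ x+\sigma(\xi)=x+\xi+\sigma(\xi)$, so $\sigma(\xi)=-\xi$. Since $\mathrm{char}(k)\neq 2$, I set $y:=x+\tfrac{1}{2}\xi$; then $\sigma\circ y=\sigma\circ x+\tfrac{1}{2}\sigma(\xi)=x+\xi-\tfrac{1}{2}\xi=y$, so $y$ factors through $(X^i)^\sigma\subset X^\sigma$ and provides the desired lift. The main subtlety is justifying the torsor/averaging picture at the level of ind-schemes, but since all data factor through a single $\sigma$-stable $X^i$, this reduces to the standard deformation theory for schemes. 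Verifying that $X^\sigma$ is an ind-scheme is essentially bookkeeping; the substantive input is the hypothesis $\mathrm{char}(k)\neq 2$, which makes $\tfrac{1}{2}$ available for the averaging step.
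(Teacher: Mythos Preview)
Your proposal is correct and follows essentially the same approach as the paper: the paper phrases the key step as the vanishing $H^1(\mu_2,\on{Hom}(s^*\Omega^1_X,I))=0$ (since $\on{char}(k)\neq 2$) and then invokes \cite[Lemma 9.4]{Gro2}, which is exactly the abstract form of your explicit averaging $y=x+\tfrac{1}{2}\xi$. Your treatment is slightly more self-contained in that you spell out the ind-scheme structure of $X^\sigma$ and work on a fixed $\sigma$-stable $X^i$, whereas the paper works directly with $\Omega^1_X$ as a Tate sheaf on the ind-scheme.
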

\begin{proof}
Let $R,I$ be as in Lemma \ref{formally smooth criterion} and let $s:\on{Spec}(R/I)\to X^\sigma$.
The involution $\sigma$ acts naturally on 
$\on{Hom}(s^*\Omega_X^1,I)$ and we have 
$H^1(\mu_2,\on{Hom}(s^*\Omega_X^1,I))=0$
since $\on{char} k\neq 2$ (here $-1\in\mu_2$ acts on $\on{Hom}(s^*\Omega_X^1,I)$ by $\sigma$).
Now we can apply 
 \cite[Lemma 9.4]{Gro 2} to conclude that 
$s$ admits a lifting $\tilde s:\on{Spec}(R)\to X^\sigma$.
The lemma follows.

\end{proof}

\subsection{Placid morphisms}\label{placid morphism}
\begin{defe}
\begin{itemize}
    \item [(i)]
A morphism $f:X\to Y$ between placid schemes is called \emph{placid} 
 if for any 
 \'etale cover 
 $X'\cong\lim X'_i\to X$ (resp. $Y'\cong\lim Y'_j\to Y$)
 admitting placid presentation and any map 
$f':X'\to Y'$ fitting into the following 
 commutative diagram (not necessarily Cartesian)
\[\xymatrix{X'\ar[r]^{f'}\ar[d]&Y'\ar[d]\\
X\ar[r]^f&Y}\]
the composition $X'\to Y'\to Y'_j$
factors as  
 $X'\to X'_i\stackrel{f'_{i,j}}\to Y'_j$ where $f'_{i,j}:X'_i\to Y'_j$ is smooth. 
 \item [(ii)]
 A morphism $f:\calX\to\mathcal Y$ bewteen placid stacks is called placid if there are pro-smooth representable coverings $Y\to\mathcal Y$ 
 and $X\to \calX\times_{\mathcal Y}Y$
 from placid schemes $X$ and $Y$ 
 such that composed map
$X\to \calX\times_{\mathcal Y}Y\to Y$ 
is placid.
 \end{itemize}
\end{defe}

\begin{exam}
Pro-smooth morphisms are placid.
However,
placid morphisms might not be pro-smooth.
More precisely, let 
$f:X\to Y$ be  a  placid morphism between schemes admitting placid presentations.
 For any index $j$ let 
$f_{i,j}:X_i\to Y_j$ be the smooth morphism as above.  We can form the 
base change $f_j:X_i\times_{Y_j}Y\to Y$ and 
$f\cong\lim f_j:X\cong\lim_j X_i\times_{Y_j}Y\to Y$ is a presentation over $Y$
where all the projections $f_j$ are smooth but the transition maps 
$X_i\times_{Y_j}Y\to X_{i'}\times_{Y_{j'}}Y$ might not be smooth.
\end{exam}

We have the following useful criterion to recognize placid morphisms.

\begin{lem}\label{formally smooth}
Let $f:X\to Y$ be a formally smooth morphism between placid schemes.
Then $f$ is placid.

\end{lem}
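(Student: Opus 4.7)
The plan is to verify the factorization property in the definition of placidness by combining Noetherian approximation with the infinitesimal lifting criterion for smoothness. Since placidness of a morphism is tested after passage to \'etale covers admitting placid presentations, and since formal smoothness is \'etale-local on source and target, I may assume that $X$ and $Y$ themselves admit placid presentations $X\cong\lim_i X_i$ and $Y\cong\lim_j Y_j$ with smooth affine transition maps; the induced morphism remains formally smooth because the \'etale covers used in the reduction are themselves formally smooth.

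Fix $j$. Since $Y_j$ is of finite type over $k$ and the transitions in $X\cong\lim_i X_i$ are affine, Noetherian approximation gives $\Hom(X,Y_j)\cong\on{colim}_i\Hom(X_i,Y_j)$, so the composition $X\to Y\to Y_j$ factors through some $f_{i_0,j}:X_{i_0}\to Y_j$. Because $f_{i_0,j}$ is a morphism of finite-type $k$-schemes, smoothness is equivalent to formal smoothness, which I would verify via the infinitesimal lifting criterion. Given an artinian local $k$-algebra $R$, a square-zero ideal $I\subset R$, and a commutative square with $u:\Spec(R/I)\to X_{i_0}$ and $v:\Spec(R)\to Y_j$ satisfying $v|_{\Spec(R/I)}=f_{i_0,j}\circ u$, I would first lift $u$ to an $(R/I)$-point $\widetilde u:\Spec(R/I)\to X$: using Henselianness of $R/I$ (artinian local rings are Henselian) and smoothness of each transition $X_{i+1}\to X_i$, one iteratively lifts $u$ to each $X_i$ for $i\geq i_0$ to produce an element of $X(R/I)=\lim_i X_i(R/I)$. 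Since $f:X\to Y$ is formally smooth and $Y\to Y_j$ is pro-smooth (hence formally smooth by Lemma~\ref{pro implies formal}), the composition $X\to Y_j$ is formally smooth, so $\widetilde u$ extends to a morphism $\Spec(R)\to X$ compatible with $v$; projecting this to $X_{i_0}$ produces the required lift of $u$.

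The main obstacle is executing the iterative lifting step coherently through the inverse system. Lifting an $(R/I)$-point across a single smooth transition $X_{i+1}\to X_i$ over a Henselian base requires the fiber at the relevant residue-field point to be nonempty, which is ensured when the transition is surjective. One arranges this by refining the placid presentation so that the transition maps become surjective, for instance by replacing each $X_i$ by a subscheme containing the image of $X\to X_i$; this refinement preserves smooth affine transitions and still presents $X$ as a placid scheme, after which the rest of the argument goes through unchanged.
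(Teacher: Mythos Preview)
Your approach is genuinely different from the paper's and worth comparing. The paper argues via the cotangent complex: since $X\to Y$ and $Y\to Y_j$ are formally smooth, so is $X\to Y_j$, hence $L_{X/Y_j}$ has $H^{-1}=0$ and locally projective $H^0$; since $\pi_i:X\to X_i$ is pro-smooth, $L_{X/X_i}$ is flat in degree zero; the distinguished triangle $\pi_i^*L_{X_i/Y_j}\to L_{X/Y_j}\to L_{X/X_i}$ then forces $H^{-1}(\pi_i^*L_{X_i/Y_j})=0$ and $H^0(\pi_i^*L_{X_i/Y_j})$ flat, and one descends along the faithfully flat $\pi_i$ to conclude $f_{i,j}$ is smooth. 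This is a global argument: smoothness of $f_{i,j}$ on all of $X_i$ is obtained in one stroke, with no need to probe individual points. Your lifting-criterion approach is more elementary in spirit but is inherently pointwise, which is exactly where the trouble enters.

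The gap is in your final paragraph. The definition of a placid morphism quantifies over \emph{all} placid presentations $X'\cong\lim X'_i$: given such a presentation, one must exhibit an index $i$ in \emph{that} system with $f'_{i,j}$ smooth. Refining the presentation by replacing each $X_i$ with a smaller open subscheme produces a different inverse system $(X''_i)$, and proving $X''_i\to Y_j$ smooth says nothing about $X_{i'}\to Y_j$ for $i'$ in the original index set --- the points of $X_{i'}$ lying outside $X''_{i'}$ are precisely the ones your lifting argument cannot reach, and there is no mechanism forcing $X_{i'}$ to eventually land in the smooth locus of $f_{i,j}$. Moreover, the phrase ``a subscheme containing the image of $X\to X_i$'' is not made precise (the image need not be locally closed), and even granting some open $X''_i\supseteq\mathrm{Im}(X\to X_i)$ with smooth affine transitions, you have not argued that the refined transitions become surjective. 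The cotangent-complex route sidesteps this entirely: one never needs to lift points of $X_i$ back to $X$, only to pull back a coherent-sheaf computation along the flat map $\pi_i$ and then descend.
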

\begin{proof}
The proof is a variation of the argument in \cite[Lemma 1.1.4. (b)]{BKV}.

We can assume $X\cong\lim X_i$ and $Y\cong\lim Y_j$ admit placid presentations.
Since $Y_j$ is of finite type,
the composition $f_j:X\to Y\to Y_j$
factors as 
\[f_j:X\stackrel{\pi_i}\to X_i\to Y_j\] for some $f_{i,j}:X_i\to Y_j$.
Since the projection $X\to X_i$ is pro-smooth 
the cotangent complex $L_{X/X_i}$
is a flat module concentrated in degree zero.
Since both $f:X\to Y$ and the
projection $Y\to Y_j$ are formally smooth (Lemma \ref{pro implies formal}), the map
$f_j$ is formally smooth and by \cite[Tag. 0D10]{Stacks} the cotangent complex 
$L_{X/Y_j}$ satisfies a) $H^{-1}(L_{X/Y_j})=0$
and b) $H^0(L_{X/Y_j})\cong\Omega^1_{X/Y_j}$ is locally projective (in particular, it is flat).
Using the distinguished triangle
\[\pi_i^*L_{X_i/Y_j}\to L_{X/Y_j}\to L_{X/X_i}\]
we see that 
$H^{-1}(\pi_i^*L_{X_i/Y_j})=0$
and $H^0(\pi_i^*L_{X_i/Y_j})$ is a flat module.
Since $\pi_i$ is faithfully flat and $f_{i,j}:X_i\to Y_j$ is finitely presented,
we conclude that  $H^{-1}(L_{X_i/Y_j})=0$
and $H^0(L_{X_i/Y_j})$ is projective,
and  
\cite[Tag. 0D10 and 02GZ]{Stacks} implies $f_{i,j}$ is smooth.

 \end{proof}

It is known that a smooth morphism 
$f:X\to Y$ is an 
 open morphism, in particular, the image $f(X)\subset Y$
 has non-empty intersection $f(X)\cap U\neq\emptyset$
with any
open dense subset $U\subset Y$.
We have the following generalization to placid morphisms:
\begin{lem}\label{quasi-flatness}
Let $f:X\to Y$ be a placid morphism.
Assume  $X\cong\lim_{i}X_i$ admits a placid presentation with surjective 
projection maps $X\to X_i$.
Then the image of $f$
has non-empty intersection with any open dense finitely presented open
embedding $u:U\to Y$.
\end{lem}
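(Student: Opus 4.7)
The plan is to reduce the statement to the classical fact that a smooth morphism of finite type schemes is open, and then to exploit the density of $U$ to conclude. Since density is preserved under flat surjective base change and since non-emptiness of $f(X)\cap U$ can be tested after pulling back along an \'etale cover of $Y$, I will first pass to such a cover to arrange that $Y$ admits a global placid presentation $Y\cong\lim_j Y_j$. Combined with the given placid presentation $X\cong\lim_i X_i$ whose projections $\pi_i:X\to X_i$ are surjective, the definition of a placid morphism will produce indices $i,j$ and a smooth morphism $f_{i,j}:X_i\to Y_j$ of finite type schemes with $p_j\circ f=f_{i,j}\circ\pi_i$, where $p_j:Y\to Y_j$ is the projection. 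Moreover, the finite presentation of $u:U\to Y$ together with Noetherian descent (see \ref{Placid schemes}(2)) allows, after possibly enlarging $j$, writing $U=U_j\times_{Y_j}Y$ for an open embedding $U_j\hookrightarrow Y_j$.

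Using the identity $f^{-1}(U)=\pi_i^{-1}(f_{i,j}^{-1}(U_j))$ and the surjectivity of $\pi_i$, the non-emptiness of $f(X)\cap U$ will reduce to the non-emptiness of $f_{i,j}(X_i)\cap U_j$ in $Y_j$. Since $f_{i,j}$ is smooth of finite type it is open, so $f_{i,j}(X_i)$ is a non-empty open subset of $Y_j$ (the case $X=\emptyset$ being vacuous). Note in addition that $f_{i,j}(X_i)=p_j(f(X))$ is contained in the image $C_j:=p_j(Y)\subset Y_j$.

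The key step will be to transfer the density of $U\subset Y$ to density of $U_j\cap C_j$ inside $C_j$ in the subspace topology. Given any open $W\subset Y_j$ with $W\cap C_j\neq\emptyset$, the preimage $p_j^{-1}(W)$ is a non-empty open subset of $Y$, which by density of $U$ must meet $U=p_j^{-1}(U_j)$; this forces $W\cap U_j\cap C_j\neq\emptyset$. Applying this to $W=f_{i,j}(X_i)$, which is a non-empty open subset of $Y_j$ contained in $C_j$, yields $f_{i,j}(X_i)\cap U_j\neq\emptyset$, and unwinding back through $\pi_i$ completes the argument.

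The main technical obstacle will be the \'etale-local reduction to a global placid presentation of $Y$ while preserving both the density of $U$ and the compatibility of the factorization of $f$ with the specific placid presentation of $X$ having surjective projections. Once that bookkeeping is arranged, the core content of the lemma is simply the openness of smooth morphisms of finite type combined with a routine topological transfer of density under the projection $p_j$, so no deep input should be required beyond the definitions and Noetherian descent.
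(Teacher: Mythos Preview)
Your proposal is correct and follows essentially the same approach as the paper: reduce to a placid presentation of $Y$, use the definition of placid morphism to factor through a smooth $f_{i,j}:X_i\to Y_j$, invoke surjectivity of $\pi_i$ to get $f_{i,j}(X_i)=p_j(f(X))$ open in $Y_j$, descend $U$ to $U_j$, and then use density of $U$ in $Y$ to force $f_{i,j}(X_i)\cap U_j\neq\emptyset$. The paper phrases the final step as a contradiction (if $f(X)\subset Z=Y\setminus U$ then $\pi_j^{-1}(Z_j)=Z$ would contain the non-empty open $\pi_j^{-1}(f_{i,j}(X_i))$, contradicting density of $U$), whereas you argue directly via the density transfer through $C_j=p_j(Y)$; your detour through $C_j$ is slightly more than needed but harmless, and the two arguments are contrapositives of each other.
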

\begin{proof}
Since the pre-images of open dense subsets under \'etale morphisms are open dense, 
we can assume $Y\cong\lim_j Y_j$  admits a placid presentation.
As $f$ is placid, the composed map $f_j=\pi_j\circ f:X\to Y\to Y_j$
factors as 
$X\to X_i\to Y_j$ where
 $f_{i,j}:X_i\to Y_j$ is smooth.
Since $X\to X_i$ is surjective, we see that
the image 
$f_j(X)=f_{i,j}(X_i)\subset Y_j$ is open.

Choose an open subset  $U_j\subset Y_j$ such that 
$U=\pi_j^{-1}(U_j)$.
If $f(X)\subset Z=Y\setminus U$ is contained in the closed complement, then 
$Z_j=\pi_j(Z)$ will contain
the open subset $f_j(X)=\pi_j(f(X))\subset Z_j$. This is
 impossible since it will imply
 its pre-image $Z=\pi^{-1}_j(Z_j)$
has non-empty intersection with the open dense subset $U$.
The lemma follows.

\end{proof}

\subsection{Dimension theory}\label{dimension theory}

\begin{enumerate}
\item For a scheme $Y$ of finite type over $k$ and $y\in Y$ we denote by $\dim_y(Y)$
the maximum of dimensions of irreducible components of $Y$ containing $y$.
The dimension function $\dim_Y:Y\to\mathbb Z$ is defined as 
$\dim_Y(y)=\dim_y(Y)$.
Let $f:X\to Y$ be a finitely presented morphism between finite type schemes.
We define $\dim_f=\dim_{X/Y}=\dim_X-f^*\dim_Y:X\to\mathbb Z$.

\item
A finite type scheme $Y$ is called 
\emph{locally equidimensional}
if each connected component of $Y$ is equidimentional, equivalently,
the dimension function $\dim_Y:Y\to\mathbb Z$ is locally constant.
 A placid presentation $Y\cong\on{lim}_jY_j$
 is called locally equidimentional
if each $Y_j$ is locally equidimensional.
 A placid scheme $Y$ 
  is called  locally equidimentional
 if there exists  an \'etale covering by a scheme with locally equidimensional
 placid presentation.
 A placid ind-scheme $Y$ is called locally equidimentional if there is a placid presentation 
 $Y\cong\on{colim} Y^i$ such that each
  $Y^i$ is  locally equidimentional.

\item 
Let $f:X\to Y$ be a finitely presented morphism between schemes admitting placid presentations.
According to Section 2 of \cite{BKV,FKV}, 
there is a well-defined function 
$\dim_f=\dim_{X/Y}:X\to\mathbb Z$ such that
$\dim_f=\pi_{j_0}^*\dim_{f_{j_0}}$
where $f_{j_0}:X_{j_0}\to Y_{j_0}$ is as in \eqref{Placid schemes}(4)
and 
$\pi_{j_0}:X\to X_{j_0}$ is the projection map.

\item
For every locally finitely presented morphism $f:X\to Y$ of placid scheme,
there is a unique function $\dim_f=\dim_{X/Y}:X\to\mathbb Z$ such that for every commutative diagram
\[\xymatrix{X'\ar[r]^{f'}\ar[d]^h&Y'\ar[d]\\
X\ar[r]^f&Y}\]
where $f':X'\to Y'$ is a finitely presented morphism of schemes admitting placid presentations 
and the vertical maps are \'etale morphisms,
we have $\dim_{f'}=h^*\dim_f$.

\item
For every representable locally finitely presented morphism 
$f:\mathcal X\to\mathcal Y$
of placid stacks, 
there is a unique function 
$\dim_f=\dim_{\mathcal X/\mathcal Y}:[\mathcal X]\to\mathbb Z$, where $[\mathcal X]$ is the underlying set of $\calX$, such that for every 
Cartesian diagram
\[\xymatrix{X\ar[r]^{f'}\ar[d]^h&Y\ar[d]\\\mathcal X\ar[r]^f&\mathcal Y}\]
where the vertical maps are pro-smooth coverings, we have 
$\dim_{f'}=h^*\dim_f$.

\item 
Let $f:\calX\to\mathcal Y$ be
a finitely presented representable locally closed embedding. We define $\on{codim}_f=-\on{dim}_f$
to be called the codimension function of $f$.

\item 
According to \cite[Section 2.4]{FKV},
the dimension function is additive, that is, for every pair $\calX\stackrel{f}\to\mathcal Y\stackrel{g}\to\mathcal Z$
morphisms as in (5), we have an equality 
\[\dim_{g\circ f}=\dim_f+f^*\dim_g.\]

\item
Consider a 
Cartesian diagram of placid stacks
\[
\xymatrix{\mathcal X'\ar[r]^{f'}\ar[d]^h&\mathcal Y'\ar[d]^b\\\mathcal X\ar[r]^f&\mathcal Y}
\]
such that $f$ is locally finitely presented
representable and $b$ is placid.
According to \cite[Lemma 2.3.7]{BKV}, we have 
\[h^*\dim_f=\dim_{f'}.\]

\item
Let $Y$ be a  placid ind-scheme.
A \emph{dimension theory} $\delta$ on $Y$ is an assignment
to each finitely presented  locally equidimensional subscheme
$S\subset Y$
a locally constant function
$\delta_{S}:S\to\mathbb Z$
such that for any such pair of subschemes $S\subset S'\subset Y$ we have 
\[
\delta_{S}-\delta_{S'}|_{S}=\on{dim}_{S/S'}:S\to\mathbb Z.
\]

\item 
Given two dimension theories $\delta$ and $\delta'$,
their difference $\delta-\delta':Y\to\mathbb Z$
sending $y\to \delta_S(y)-\delta'_S(y)$, where $S$ is any
finitely presented subscheme containing $y$,
is a well defined locally constant function on $Y$.
Thus the set of dimension theories on 
$Y$ is a pseudo torsor over the space of 
locally constant functions on $Y$.

\item
Given a locally equidimensional 
placid ind-scheme $Y\cong\on{colim}_i Y^i$,
a dimension theory $\delta$ on $Y$ is uniquely characterized by
the assignment $\delta_{Y^i}$ for each $Y^i$.
Indeed, for any finitely presented locally equidimensional subscheme
$S\subset Y$, we have a finitely presented embedding $S\subset Y^i$ for a large enough $i$
and we define $\delta_S=\delta_{Y^i}|_S-\dim_{S/Y^i}$.

\end{enumerate}

\begin{exam}\label{dim for placid and ind-finite type}
Assume $Y\cong\on{colim}_i Y^i$
  is  ind-finite type, that is, each $Y^i$ is locally equidimensional 
  of finite type.
  Then we have a canonical dimension theory 
  given by the dimension of $Y^i$,
$\delta_{Y^i}=\dim_{Y^i}:Y^i\to\mathbb Z$.

\end{exam}

\subsection{Sheaf theories}
\label{sheaf theories}

\begin{enumerate}
\item
We will be working with $\overline{\mathbb Q}_\ell$-linear dg-categories. Unless specified otherwise, all dg-categories will be assumed cocomplete, i.e., containing all small colimits,
and all functors between dg-categories will be assumed continuous, i.e., preserving all small colimits. 
For any finite type scheme $Y$ we denote by $D(Y)=\on{Ind}(D^b_c(Y))$ the ind-completion of 
the bounded 
 dg derived category $D^b_c(Y)
 $ of $\ell$-adic constructible complexes on $Y$.
For any $\cF\in D(Y)$ and any integer $n\in\mathbb Z$ we will write $\cF\langle n\rangle=\cF[n](n/2)$. More generally, for any locally constant function $f:Y\to\mathbb Z$,
we define $\cF\langle f\rangle$ such that for any connected 
component $Y_\beta\subset Y$ we have 
$\cF\langle f\rangle|_{Y_\beta}=\cF\langle f(Y_\beta)\rangle|_{Y_\beta}$.
We have the Verdier duality fucntor 
$\mathbb D_Y:D(Y)\cong D(Y)^{op}$
satisfying 
$f^!\mathbb D_Y\cong \mathbb D_X f^*$.
Given a smooth map $f:X\to Y$ between schemes of finite types, we have 
$f^!\cong f^*\langle 2\dim_f\rangle $ where $\dim_f$ is the relative dimension $f$. 

\item
For a scheme $Y$ admitting a  placid presentation 
$Y\cong\on{lim}_jY_j$,  we define 
the $*$-version of dg derived category of constructible complexes on $Y$ as the colimit 
$D(Y)\cong\on{colim}^*_j D(Y_j)$.
For any placid stack  $\cY$ we define 
$D(\cY)\cong\on{lim}^*_{[n]\in\Delta_s^{op}} D(X^{[n]})$
where $\{X^{[n]}\}_{[n]\in\Delta_s^{op}}$ is the 
 \v Cech nerve of a smooth covering $X\to\cY$
 admitting a placid presentation
 (note that by~\eqref{Placid schemes}(4) each $X^{[n]}$ admits a placid presentation).
For any  placid ind-stack $\cY\cong\on{colim}_i\cY^i$,
we define the $*$-version of dg derived category of constructible complexes on $\cY$ as
$D(\cY)=\on{colim}_{i,!} D(\cY^i)$
where the first  colimit is with respect to the 
$!$-pushforward along the 
closed embedding $\cY^i\to\cY^{i'}$.

We can also introduce $!$-version of $D(\cY)$ 
denoted by $D_!(\cY)$  using the $!$-pullbacks in the above definitions.

 \item
Let $f:\mathcal X\to\cY$ be a 
finitely presented representable morphism between placid stacks.
Then we have (derived) functors
$f_*,f_!,f^*,f^!$ between 
$D(\mathcal X)$ and $D(\cY)$  (resp. $D_!(\mathcal X)$ and $D_!(\cY)$).

\end{enumerate}

\subsection{Verdier dualities}\label{Appendix:Verdier}

\begin{lem}\label{l:dim}

\begin{itemize}

    \item [(i)]
   Let $Y$ be an $H$-placid ind-scheme. There is a Verdier duality functor
\begin{equation}\label{Verdier}
      \mathbb D'_Y: D(H\backslash Y)\cong D_{!}(H\backslash Y)^{op}
\end{equation}
satisfying $(\mathbb D_Y')^2\cong\on{Id}$.
\item [(ii)]
Let $Y$ is locally equidimensional
   $H$-placid scheme.
   There is a canonical equivalence
   \[\Phi_{Y}:D_!(H\backslash Y)\cong D^{}(H\backslash Y).\]
\item [(iii)]
Let $Y\cong\on{colim}_i Y^i$ be a locally equidimensional
   $H$-placid ind-scheme.
Then a dimension theory $\delta$ on 
	$Y$ induces  equivalences
   \[\Phi_\delta\cong\on{colim}_i\Phi_{Y^i}\langle2\delta_{Y^i}\rangle:D_!(H\backslash Y)\cong D^{}(H\backslash Y).\]
\end{itemize}

\end{lem}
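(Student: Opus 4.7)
The strategy for all three parts is to bootstrap from classical Verdier duality on finite-type schemes via the colimit/limit presentations defining $D$ and $D_!$ in \ref{sheaf theories}. Part (i) is the main content; parts (ii) and (iii) then follow by tracking how dimension shifts relate the $*$- and $!$-pullback colimit structures.

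For (i), I would proceed in three stages. At the finite-type level, equivariant Verdier duality on $D(H_j \backslash Y_j)$ is classical and self-dual. Given a placid presentation $Y \cong \lim_j Y_j$ with smooth affine $H$-equivariant transition maps $f_{j'j} : Y_{j'} \to Y_j$, the intertwining $\mathbb{D}_{Y_{j'}} \circ f_{j'j}^* \cong f_{j'j}^! \circ \mathbb{D}_{Y_j}$ (on equivariant categories) means that classical duality takes the $*$-pullback tower to the $!$-pullback tower. Hence the colimit $D(H\backslash Y) = \mathrm{colim}_j^* D(H_j \backslash Y_j)$ is sent to $D_!(H\backslash Y)^{op}$, yielding $\mathbb{D}'_Y$ at the scheme level with $(\mathbb{D}'_Y)^2 \cong \mathrm{Id}$ inherited from each finite-type layer. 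For a placid ind-scheme $Y \cong \mathrm{colim}_i Y^i$ with finitely presented closed embeddings, one uses the identity $\iota_! = \iota_*$ to match the $!$-pushforward defining $D(H\backslash Y)$ with the $*$-pushforward appearing under duality in $D_!(H\backslash Y)^{op}$; the ind-scheme version then follows by functoriality. The equivariant structure is handled throughout by smooth descent through $H$-equivariant placid approximations, so one only ever needs duality on smooth quotients by smooth finite-type groups.

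For (ii) and (iii), the key fact is that for a smooth morphism $f$ of relative dimension $d_f$ one has $f^! \cong f^*\langle 2d_f\rangle$, so on each $D(H_j \backslash Y_j)$ with $Y_j$ locally equidimensional, the identity functor shifted by $\langle 2\dim_{Y_j}\rangle$ intertwines the $*$- and $!$-pullback transition functors up to a uniform shift. In (ii), this yields a canonical equivalence $\Phi_Y : D_!(H\backslash Y) \cong D(H\backslash Y)$ by assembling these shifted identities across the placid presentation; local equidimensionality is precisely what makes the tower of shifts coherent. In (iii), for an ind-scheme $Y \cong \mathrm{colim}_i Y^i$, the dimension theory $\delta$ pins down a locally constant function $\delta_{Y^i}$ on each $Y^i$, and the defining axiom $\delta_{Y^i} - \delta_{Y^{i'}}|_{Y^i} = \dim_{Y^i/Y^{i'}}$ of Appendix~\ref{dimension theory}(9) guarantees that the shifted equivalences $\Phi_{Y^i}\langle 2\delta_{Y^i}\rangle$ commute with the closed-embedding pushforwards in the colimit, so they glue to a global $\Phi_\delta$.

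The principal technical obstacle will be the coherent bookkeeping: verifying that classical Verdier duality, which is pointwise on finite-type schemes, actually intertwines the $\infty$-categorical colimit and limit structures underlying $D$ and $D_!$; handling the base-change compatibility $\mathbb{D} f^* \cong f^! \mathbb{D}$ and its equivariant variant; and ensuring the sign/shift conventions align so that transition isomorphisms compose without hidden Tate twists. For (iii) in particular, one must check that shifting by $\langle 2\delta_{Y^i}\rangle$ is compatible with the $!$-pushforward along closed embeddings $Y^i \hookrightarrow Y^{i'}$, which reduces to the additivity of dimension functions in Appendix~\ref{dimension theory}(7) together with the triviality of shift behavior of $\iota_!$ for a closed embedding. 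Once these compatibilities are in place, all three statements follow formally from classical duality on finite-type schemes.
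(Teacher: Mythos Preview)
Your strategy is correct and matches the paper's: both parts rest on the finite-type identities $f^!\mathbb D\cong\mathbb D f^*$ (for (i)) and $f^!\cong f^*\langle 2\dim_f\rangle$ for smooth $f$ (for (ii), (iii)), assembled through the colimit presentations defining $D$ and $D_!$. The one substantive difference is in how equivariance is handled. You propose working directly with finite-level equivariant categories $D(H_j\backslash Y_j)$, which presumes a compatible system of $H_j$-actions on $Y_j$; this is not part of the definition of an $H$-placid scheme and would need to be arranged. The paper instead follows its own definition of $D(H\backslash Y)$ via the \v Cech nerve $\{X^{[n]}\}$ of a smooth covering $X\to H\backslash Y$ by a placid scheme: duality and the shift $\langle -2\dim(X^{[n]}_j)\rangle$ are applied on the non-equivariant finite-type pieces $X^{[n]}_j$, and descent through the simplicial limit $\lim^*_{[n]}$ recovers the equivariant category. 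This sidesteps the question of whether $H_j$ acts on $Y_j$ and aligns literally with how $D(H\backslash Y)$ is defined in \S\ref{sheaf theories}. (A minor point: your shift in (ii) should be $\langle -2\dim\rangle$ rather than $\langle 2\dim\rangle$ to go from the $!$-tower to the $*$-tower, as you anticipated in your caveat about sign conventions.)
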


\begin{proof}

Proof of (ii).
Let $\{X^{[n]}\}_{[n]\in\Delta_s^{op}}$ be  the 
 \v Cech nerve of a smooth covering $X\to H\backslash Y$ admitting a placid presentation.
The assignment
	 sending 
	$\{\cF^{[n]}_j\}\in D_!(X^{[n]})\cong\on{colim}^!_j D(X^{[n]}_j)$,
	to  
	$\{\cF^{[n]}_j\langle-2\dim(X^{[n]}_j)\}\in D(X^{i,[n]})\cong\on{colim}^*_j D(X^{i,[n]}_j)$  
   induces an equivalence.
   
Proof of (i).
Pick an $H$-placid presentation 
$Y\cong\on{colim}_i Y^i$.
Let $\{X^{i,[n]}\}_{[n]\in\Delta_s^{op}}$ be the 
 \v Cech nerve of a smooth covering $X^i\to H\backslash Y^i$ admitting a placid presentation.
The isomorphism 
$f^!\mathbb D_Y\cong \mathbb D_X f^*$ for maps between finite type schemes implies that the assignment sending
$\{\cF^{i,[n]}_j\}\in D(X^{i,[n]})\cong\on{colim}^*_j D(X^{i,[n]}_j)$ 
to 
$\{\mathbb D_{}\cF^{i,[n]}_j\}\in D_!(X^{i,[n]})\cong\on{colim}^!_j D(X^{i,[n]}_j)$
induces a well-defined 
 functor
\begin{equation}\label{Verdier}
    \mathbb D_Y': D(H\backslash Y)\cong D_{!}(H\backslash Y)^{op}
\end{equation}
satisfying $(\mathbb D'_Y)^2\cong\on{Id}$.  

Proof of (iii).
The assignment
	 sending 
	$\{\cF^{i,[n]}_j\}\in D_!(X^{i,[n]})\cong\on{colim}^!_j D(X^{i,[n]}_j)$ $\ $
	to
	$\{\cF^{i,[n]}_j\langle-2\dim(X^{i,[n]}_j)+2\delta_{Y^i}\rangle\}\in D(X^{i,[n]})\cong\on{colim}^*_j D(X^{i,[n]}_j)$  
   induces an equivalence.

\end{proof}

The lemma above implies the following.

\begin{prop}\label{Verdier duality}
\begin{itemize}
    \item [(i)]
 Let 
   $Y$ be a locally equidimensional $H$-placid scheme. 
There is an equivalence
   \[
   \mathbb D_Y:=\Phi_Y\circ\mathbb D'_Y: D(H\backslash Y)\cong D(H\backslash Y)^{op}
   \]
   satisfying $(\mathbb D_Y)^2\cong\on{id}$.
   
  \item [(ii)]  
  Let 
   $Y\cong\on{colim}_i Y^i$ be a locally equidimensional $H$-placid ind-scheme. 
   A dimension theory $\delta$ on $Y$ induces an equivalence
   \[\mathbb D_\delta:=\Phi_\delta\circ\mathbb D'_Y\cong\on{colim}_i\mathbb D_{Y^i}\langle2\delta_{Y^i}\rangle: D(H\backslash Y)\cong D(H\backslash Y)^{op}\]
    satisfying $(\mathbb D_\delta)^2\cong\on{id}$.
    \end{itemize}
\end{prop}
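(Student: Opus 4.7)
The plan is to obtain the proposition as a direct assembly of the two equivalences produced by Lemma~\ref{l:dim}. Set $\mathbb D_Y := \Phi_Y \circ \mathbb D'_Y$ for part (i) and $\mathbb D_\delta := \Phi_\delta \circ \mathbb D'_Y$ for part (ii). In each case this is a composition of equivalences, hence it yields an equivalence $D(H\backslash Y) \cong D(H\backslash Y)^{op}$ for free. The nontrivial content of the proposition is therefore the involutivity $(\mathbb D_Y)^2 \cong \on{id}$ (respectively $(\mathbb D_\delta)^2 \cong \on{id}$), and, in part (ii), the explicit description $\mathbb D_\delta \cong \on{colim}_i \mathbb D_{Y^i}\langle 2\delta_{Y^i}\rangle$.

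For part (i), I would unravel the definition on the data used to construct $D(H\backslash Y)$, namely a \v{C}ech nerve $\{X^{[n]}\}$ of a smooth representable covering $X \to H\backslash Y$, together with placid presentations $X^{[n]} \cong \lim_j X^{[n]}_j$ with each $X^{[n]}_j$ locally equidimensional of finite type. By the construction in the proof of Lemma~\ref{l:dim}(i)--(ii), the functor $\mathbb D_Y$ acts slotwise as $\mathbb D_{X^{[n]}_j}\langle -2\dim X^{[n]}_j\rangle$. A one-line computation combining $(\mathbb D_{X^{[n]}_j})^2 \cong \on{id}$ with the elementary identity $\mathbb D(\cF\langle n\rangle) \cong \mathbb D(\cF)\langle -n\rangle$ cancels the two shifts and yields slotwise involutivity. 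Coherence among the $X^{[n]}_j$ under smooth transition maps $f$ of relative dimension $d$ is the classical identity $f^*\mathbb D \cong \mathbb D f^*\langle -2d\rangle$, and smooth descent along the \v{C}ech nerve promotes this slotwise involutivity to the required isomorphism on $D(H\backslash Y)$.

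For part (ii), the same calculation on each locally equidimensional piece $Y^i$ gives $\mathbb D_\delta|_{Y^i} \cong \mathbb D_{Y^i}\langle 2\delta_{Y^i}\rangle$ and shows that its square is the identity on $Y^i$. The step that will require the most care, and the place where the dimension theory $\delta$ is truly essential, is checking that these slotwise functors glue along the $!$-pushforwards used to build $D(H\backslash Y) \cong \on{colim}_{i,!} D(H\backslash Y^i)$. For a finitely presented closed embedding $Y^i \hookrightarrow Y^{i'}$, the intrinsic Verdier dualities $\mathbb D_{Y^i}$ and $\mathbb D_{Y^{i'}}$ differ along $Y^i$ by a shift equal to twice the relative dimension, and the defining relation~\eqref{dim_S/S'} of a dimension theory says precisely that the twists $\langle 2\delta_{Y^i}\rangle$ and $\langle 2\delta_{Y^{i'}}\rangle$ differ by the same amount along $Y^i$, so that the two discrepancies cancel. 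This is the key and only non-formal bookkeeping; with it in hand, the proposition is a direct consequence of Lemma~\ref{l:dim} together with the additivity of dimension functions recorded in Appendix~\ref{dimension theory}(6)--(9). I do not expect any genuine obstacle beyond carefully tracking the interaction between the shifts $\langle 2\delta_{Y^i}\rangle$ and the $!$-pushforwards along $Y^i \hookrightarrow Y^{i'}$.
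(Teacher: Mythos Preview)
Your proposal is correct and matches the paper's approach: the paper treats the proposition as an immediate consequence of Lemma~\ref{l:dim}, stating only ``The lemma above implies the following'' with no further argument. Your writeup in fact supplies more detail than the paper does, correctly identifying the slotwise involutivity check and the role of the dimension-theory relation~\eqref{dim_S/S'} in making the shifts $\langle 2\delta_{Y^i}\rangle$ compatible with the $!$-pushforwards along $Y^i \hookrightarrow Y^{i'}$.
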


\begin{exam}\label{ind of finite type}
Assume 
$Y\cong\on{colim}_iY^i$ is of ind-finite type.
 Then, by  Example \ref{dim for placid and ind-finite type}, there is a canonical dimension theory 
$\delta_{Y^i}=\dim_{Y^i}$  for $Y$ and one can check that  $D(H\backslash Y)\cong D_!(H\backslash Y)$,
$\Phi_{\delta_{}}\cong\on{id}$, 
and 
$\mathbb D_\delta\cong\mathbb D'_Y$ is the usual Verdier duality.

\end{exam}

We will need the following properties of Verdier dualities later.
\begin{lem}\label{property of D}
 Let $f:S\to Y$ be a finitely presented morphism 
between locally equidimensional $H$-placid schemes.
Then there are canonical isomorphisms 
\begin{itemize}
    \item 
[(i)] 
$f^!\mathbb D_Y\cong\mathbb D_S f^*\langle2\dim_{f}\rangle
$,  
\item [(ii)]
$\mathbb D_Y f_!\cong f_*\mathbb D_S \langle2\dim_{f}\rangle$.
\end{itemize}
\end{lem}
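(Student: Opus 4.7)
The plan is to reduce both identities to the finite-type setting via the placid presentation and to track how the dimension-shift portion of $\mathbb D_Y = \Phi_Y\circ\mathbb D'_Y$ interacts with base change. Since $f\colon S\to Y$ is finitely presented and $Y\cong\lim_j Y_j$ admits a placid presentation, \ref{Placid schemes}(2) furnishes an index $j_0$, a finite-type morphism $f_{j_0}\colon S_{j_0}\to Y_{j_0}$, and a compatible presentation $S\cong\lim_{j\ge j_0}S_j$ with $S_j := S_{j_0}\times_{Y_{j_0}}Y_j$. For every $j\ge j_0$ the square with vertical projections $\pi_{j_0,j}\colon Y_j\to Y_{j_0}$ and $\pi_{S,j_0,j}\colon S_j\to S_{j_0}$ is Cartesian with smooth affine vertical arrows, and (after working locally so that the locally equidimensional placid schemes are equidimensional) one has $\pi_{j_0,j}^{!}\cong\pi_{j_0,j}^{*}\langle 2(\dim Y_j-\dim Y_{j_0})\rangle$ and similarly for $\pi_{S,j_0,j}$.

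For part (i), I would compute both sides at level $j$ on a test object $\mathcal G\in D(Y)$ arising from $\mathcal G_{j_0}\in D(Y_{j_0})$. Using finite-type Verdier duality $\mathbb D_{Y_j}\pi_{j_0,j}^{*}\cong\pi_{j_0,j}^{!}\mathbb D_{Y_{j_0}}$, a direct computation shows
\[
\mathbb D_Y(\mathcal G)_j \;\cong\; \pi_{j_0,j}^{*}\!\left(\mathbb D_{Y_{j_0}}(\mathcal G_{j_0})\langle -2\dim Y_{j_0}\rangle\right),
\]
so that the pro-smooth shift $\langle 2(\dim Y_j-\dim Y_{j_0})\rangle$ exactly cancels the $\langle -2\dim Y_j\rangle$ coming from $\Phi_Y$. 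Applying $f^{!}$ (which at this level is $f_{j_0}^{!}$ composed with the smooth base change $\pi_{S,j_0,j}^{*}$) and invoking the finite-type identity $f_{j_0}^{!}\mathbb D_{Y_{j_0}}\cong\mathbb D_{S_{j_0}}f_{j_0}^{*}$ yields
\[
(f^{!}\mathbb D_Y\mathcal G)_j\;\cong\;\pi_{S,j_0,j}^{*}\!\left(\mathbb D_{S_{j_0}}(f_{j_0}^{*}\mathcal G_{j_0})\langle -2\dim Y_{j_0}\rangle\right).
\]
Running the same unpacking on $\mathbb D_S f^{*}\langle 2\dim_f\rangle(\mathcal G)$ gives the initial shift $\langle -2\dim S_{j_0}\rangle$ to which the relative shift $\langle 2\dim_f\rangle = \langle 2(\dim S_{j_0}-\dim Y_{j_0})\rangle$ is added, producing exactly the same object. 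Functoriality in $j$ and in $\mathcal G$ is automatic from the construction.

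Part (ii) can be obtained either by a parallel direct computation, using smooth base change $f_{j,*}\pi_{S,j_0,j}^{*}\cong\pi_{j_0,j}^{*}f_{j_0,*}$ together with the finite-type identity $\mathbb D_{Y_{j_0}}f_{j_0,!}\cong f_{j_0,*}\mathbb D_{S_{j_0}}$, or more cleanly by adjunction from (i): passing to adjoints in (i), using $\mathbb D_Y^2\cong\on{id}$ and $\mathbb D_S^2\cong\on{id}$, and rearranging, one obtains the identity $\mathbb D_Y f_! \cong f_*\mathbb D_S\langle 2\dim_f\rangle$. To accommodate the $H$-action, the entire argument is then run on the \v Cech nerves: a smooth $H$-equivariant cover $X\to H\backslash Y$ admitting a placid presentation pulls back along $f$ (which is representable and finitely presented) to an analogous cover of $H\backslash S$, and at each simplicial level the schemes involved are placid, so the level-wise argument above applies and the resulting isomorphisms glue to the required isomorphisms of functors on $D(H\backslash\cdot)$.

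The main obstacle will be bookkeeping: one must verify that the level-wise Verdier duality and base change isomorphisms are compatible with the transition maps of both the placid presentations and the \v Cech simplicial structure. The essential algebraic input is the cancellation between the pro-smooth shift coming from $\pi_{j_0,j}^{!}\cong\pi_{j_0,j}^{*}\langle 2(\dim Y_j-\dim Y_{j_0})\rangle$ and the normalizing shift $\langle-2\dim Y_j\rangle$ built into $\Phi_Y$; once this is recorded, the proof reduces to a finite-type check.
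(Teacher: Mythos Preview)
Your proposal is correct and follows essentially the same approach as the paper: reduce to the finite-type level via the placid and \v Cech presentations, invoke the classical identity $f_{j_0}^{!}\mathbb D\cong\mathbb D f_{j_0}^{*}$, and track the dimension shifts, the key point being exactly the cancellation you isolate (which in the paper's notation is recorded as $2\dim_f=-2\dim X^{[n]}_j+2\dim V^{[n]}_j$). The paper works directly on the \v Cech nerve throughout rather than first treating the non-equivariant case and then gluing, and it handles (ii) by a parallel direct computation rather than by adjunction, but these are presentational differences only.
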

\begin{proof}
We give a proof of (i). The case of (ii) is similar.
Pick a smooth covering $X\to H\backslash Y$ admitting a placid presentation
and let $V=(H\backslash S)\times_{H\backslash Y}X$ be the base change.
The map $f$ induces a map 
$f^{[n]}\cong\lim_j f^{[n]}_j:V^{[n]}\cong\lim_jV^{[n]}_j\to X^{[n]}\cong\lim_j X^{[n]}_j$
between the 
 \v Cech nerves.
Note that we have 
\begin{equation}\label{dim equality}
2\dim_f=2\dim_{f^{[n]}_j}=-2\dim_{X^{[n]}_j}+2\dim_{V^{[n]}_j}
\end{equation}
and it follows that
\begin{align*}
f^!\mathbb D_Y(\cF)
&\ \cong f^!(\on{lim}^*_{[n]}\on{colim}^*_j (\mathbb D_{X^{[n]}_j}(\cF_j^{[n]})\langle-2\dim_{X^{[n]}_j}\rangle))\\
&\ \cong \on{lim}^*_{[n]}\on{colim}^*_j ((f^{[n]}_j)^!\mathbb D_{X^{[n]}_j}(\cF_j^{[n]})\langle-2\dim_{X^{[n]}_j}\rangle)\\
&\stackrel{~\eqref{dim equality}}\cong\on{lim}^*_{[n]}\on{colim}^*_j (\mathbb D_{V^{[n]}_j}(f^{[n]}_j)^*(\cF_j^{[n]})\langle-2\dim_{V^{[n]}_j}+2\dim_f\rangle)\\
&\ \cong\mathbb D_{V}(\on{lim}^*_{[n]}\on{colim}^*_j ((f^{[n]}_j)^*(\cF_j^{[n]})))\langle2\dim_f\rangle
\cong\mathbb D_V f^*\cF\langle2\dim_f\rangle.
\end{align*}
\end{proof}

\subsection{t-structures and perverse sheaves}\label{t-structures}
Let $Y$ be a placid scheme.
Since $*$-pullback is $t$-exact with respect to the 
standard $t$-structures on finite type schemes, 
the category $D(Y)$ carries a canonical standard $t$-structure $(D^{\leq0}_{}(Y),D^{\geq0}_{}(Y))$
and we denote by $\on{Shv}(Y)$ the corresponding abelian category of 
constructible sheaves on $Y$.
Assume $Y$ is an $H$-placid scheme where $H$ is a placid affine group scheme.
We can pull back the standard $t$-structure 
on $D_{}(Y)$ 
along the forgetful map 
$D(H\backslash Y)\to D_{}(Y)$
and obtain a
$t$-structure $(D^{\leq0}_{}(H\backslash Y),D^{\geq0}_{}(H\backslash Y))$
on $D(H\backslash Y)$
with abelian heart 
\[\on{Shv}(H\backslash Y)=D^{\leq0}_{}(H\backslash Y)\cap D^{\geq0}_{}(H\backslash Y)\]
of $H$-equivariant  constructible sheaves on $Y$.
We denote by 
\[\cH^0:D_{}(H\backslash Y)\to\on{Shv}(H\backslash Y)\]
the corresponding cohomological functor.

Let $Y$ be a finite type scheme.
Following \cite[Section 6]{BKV}, we can introduce the $*$-\emph{adapted perverse
$t$-structure} 
$({^p}D^{\leq0}_{}(Y),{^p}D^{\geq0}_{}(Y))$
on $D_{}(Y)$ such that for any smooth morphism 
$f:X\to Y$, the pullback $f^*$ is $t$-exact.\footnote{In \emph{loc. cit.}, the authors considered the 
$!$-adapted $t$-structure instead of the $*$-adapted one.} 
If $Y$ is locally equidimensional, we have 
\begin{equation}\label{*-adpated t-structure}
    {^p}D^{\leq0}_{}(Y)={^{p_{cl}}}D^{\leq0}(Y)[-\dim_Y]
\end{equation}
where ${^{p_{cl}}}D^{\leq0}(Y)$ is the classical perverse $t$-structure and 
$\dim_Y:Y\to\mathbb Z$ the dimension function (which is locally constant).
In the general case, we consider the 
canonical equidimensional stratification $\{Y_\beta\} $ of $Y$ and define ${^p}D^{\leq0}_{}(Y)$
as the gluing of ${^p}D^{\leq0}_{}(Y_\beta)$, see 
\cite[Section 6.2.3]{BKV} for more details.

Let $Y$ be a placid scheme.
Then we can consider the \emph{perverse
$t$-structure}
$({^p}D^{\leq0}_{}(Y),{^p}D^{\geq0}_{}(Y))$
 on $D_{}(Y)$
given by
\begin{equation}\label{standard t-structure}
    {^p}D^{\leq0}_{}(Y)=\on{lim}^{*}_{[n]}\on{colim}_j^{*}({^{p_{}}}D^{\leq0}(X^{[n]}_j))
\end{equation}
where the colimit is taken with respect to $*$-pullback (here  $\{X^{[n]}\cong\on{lim}_j X^{[n]}_j\}_{[n]\in\Delta_s^{op}}$ is  the 
 \v Cech nerve for an \'etale cover $X\to Y$ admitting a placid 
presentation).
Assume $Y$ is $H$-placid then, similarly, 
we can pull back the perverse $t$-structure on $D_{}(Y)$ along the forgetful map 
$D_{}(H\backslash Y)\to D_{}(Y)$
and obtain a
perverse $t$-structure 
$(^{p}D^{\leq0}_{}(H\backslash Y),{^{p}}D^{\geq0}_{}(H\backslash Y))$
with abelian heart of 
$H$-equivariant perverse sheaves on $Y$:
\[\on{Perv}(H\backslash Y)={^p}D^{\leq0}(H\backslash Y)\cap{^p}D^{\geq0}_{}(H\backslash Y).\]
We denote by
\[{^p}\cH^0:D(H\backslash Y)\to\on{Perv}(H\backslash Y)\]
the corresponding cohomological functor.

Let $Y\cong\on{colim}_iY^i$ be a locally equidimensional $H$-placid ind-scheme equipped with 
a dimension theory $\delta_{Y^i}:Y^i\to\mathbb Z$. 
It follows from 
$\delta_{Y^i}=\delta_{Y^{i'}}|_{Y^i}+\dim_{Y^i/Y^{i'}}$
and~\eqref{*-adpated t-structure}
that the functor 
$(\iota_{i,i'})_!:D(H\backslash Y^i)\to D(H\backslash Y^{i'})$
(where $\iota_{i,i'}:Y^i\to Y^{i'}$ is the transition map) satisfies
\[
(\iota_{i,i'})_!({^p}D^{\leq0}_{}(H\backslash Y^i)[\delta_{Y^i}])\subset {^p}D^{\leq0}_{}(H\backslash Y^{i'})[\delta_{Y^{i'}}],
\]
\[
(\iota_{i,i'})_!({^p}D^{\geq0}_{}(H\backslash Y^i)[\delta_{Y^i}])\subset {^p}D^{\geq0}_{}(H\backslash Y^{i'})[\delta_{Y^{i'}}].
\]
Hence there is a perverse $t$-structure
$({^{p}}D^{\leq0}_{}(H\backslash Y)[\delta],{^{p}}D^{\geq0}_{}(H\backslash Y)[\delta])$
on the colimit $D_{}(H\backslash Y)\cong\on{colim}_{i,!}(D(H\backslash Y^i))$
characterized by 
\begin{equation}\label{perverse t-structure}
    {^{p}}D^{\leq0}_{}(H\backslash Y)[\delta]=\on{colim}_{i,!}({^{p_{}}}D^{\leq0}(H\backslash Y^{i})[\delta_{Y^i}]),
\end{equation}
with the abelian heart 
of $H$-equivariant perverse sheaves on $Y$
\[\on{Perv}_\delta(H\backslash Y)=\on{colim}_{i,!}\on{Perv}(H\backslash Y^i)[\delta_{Y^i}]={^p}D^{\leq0}(H\backslash Y)[\delta]\cap{^p}D^{\geq0}_{}(H\backslash Y)[\delta].\]
We denote by 
\[{^{p}}\cH^0_\delta:D_{}(H\backslash Y)\to\on{Perv}_\delta(H\backslash Y)\]
the corresponding cohomological functor.

\begin{lem}\label{fully faithful}
Let $Y\cong\on{colim}_iY^i$ be a locally equidimensional $H$-placid ind-scheme equipped with 
a dimension theory $\delta$. 
\begin{itemize}
    \item [(i)] The Verdier duality $\mathbb D_\delta:D_{}(H\backslash Y)\cong D_{}(H\backslash Y)^{op}$
~\eqref{Verdier} is $t$-exact with respect to the perverse 
$t$-structure~\eqref{perverse t-structure} and induces an 
equivalence 
$\mathbb D_\delta:\on{Perv}_\delta(H\backslash Y)\cong\on{Perv}_\delta(H\backslash Y)^{op}$.
\item [(ii)]
Assume $H$ is irreducible
and admits a placid presentation $H\cong\lim_j H_j$
where the kernel $H\to H_j$ is pro-unipotent.
Then the forgetful functor 
$\on{For}:\on{Perv}_\delta(H\backslash Y)\to\on{Perv}_\delta(Y
)$
is fully-faithful.
\end{itemize}

\end{lem}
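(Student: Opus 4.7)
The plan for (i) is to reduce the $t$-exactness to the classical fact on finite-type schemes. Since $D(H\backslash Y)\cong\on{colim}_{i,!}D(H\backslash Y^i)$ with perverse $t$-structure defined term-wise as $^pD^{\leq 0}(H\backslash Y^i)[\delta_{Y^i}]$, and Proposition~\ref{Verdier duality}(ii) gives $\mathbb D_\delta\cong\on{colim}_i\mathbb D_{Y^i}\langle 2\delta_{Y^i}\rangle$, it suffices to check on each $Y^i$ that $\mathbb D_{Y^i}\langle 2\delta_{Y^i}\rangle$ is $t$-exact for the shifted $*$-adapted $t$-structure. For this I would pick a pro-smooth representable covering $X^i\to H\backslash Y^i$ from a placid scheme $X^i\cong\lim_j X^i_j$ and consider the associated \v{C}ech nerve. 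Both the $t$-structure and $\mathbb D_{Y^i}\langle 2\delta_{Y^i}\rangle$ are defined by smooth descent along this cover, and $*$-pullback along smooth morphisms is $t$-exact for the $*$-adapted $t$-structure by construction, so the problem reduces to each finite-type piece $X^i_{j,[n]}$. There, the formula $^pD^{\leq 0}(Z)={^{p_{cl}}}D^{\leq 0}(Z)[-\dim_Z]$ combined with the $t$-exactness of classical Verdier duality $\mathbb D^{cl}_Z$ for the classical perverse $t$-structure implies that $\mathbb D^{cl}_Z\langle 2\dim_Z\rangle$ is $t$-exact for the $*$-adapted $t$-structure. The remaining shifts attached to $\delta_{Y^i}$ cancel between the $\langle 2\delta_{Y^i}\rangle$ appearing in $\mathbb D_\delta$ and the $[\delta_{Y^i}]$ appearing in the $t$-structure, yielding the desired statement on $Y^i$. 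The involution $\mathbb D_\delta^2\cong\on{id}$ from Proposition~\ref{Verdier duality}(ii) then restricts to an equivalence of abelian categories on the heart.

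For (ii), the plan is to factor the forgetful functor through the intermediate quotients $H_j\backslash Y$ and exploit both the pro-unipotent kernel assumption and the irreducibility of $H$. Since each kernel $K_j=\ker(H\to H_j)$ is pro-unipotent, it has no non-trivial finite-dimensional tame local systems, and the natural pullback $\on{Perv}_\delta(H_j\backslash Y)\to\on{Perv}_\delta(H\backslash Y)$ is an equivalence of abelian categories: $K_j$-equivariance on an $\ell$-adic perverse sheaf is automatic and uniquely determined. Thus it suffices to show that $\on{Perv}_\delta(H_j\backslash Y)\to\on{Perv}_\delta(Y)$ is fully faithful for each finite-type algebraic group $H_j$. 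Since $H$ is irreducible, each quotient $H_j$ is connected, and the standard monodromy argument applies: a morphism of perverse sheaves between two objects carrying $H_j$-equivariant structures is automatically equivariant, because the locus of non-equivariance would be a closed subscheme of the connected group $H_j$ whose complement already meets the identity. This argument transports to the placid ind-scheme setting via the definition of the $H_j$-equivariant derived category as perverse sheaves on the quotient stack $H_j\backslash Y$, using smooth descent along a pro-smooth cover.

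The main technical obstacle in (i) is the careful bookkeeping of Tate twists and shifts: the factor $\langle 2\delta_{Y^i}\rangle$ in $\mathbb D_\delta$, the shift $[\delta_{Y^i}]$ in the perverse $t$-structure, and the relative dimensions appearing in the \v{C}ech nerve of the chosen covers are all engineered to cancel precisely, and making this cancellation explicit requires tracing through the formulas in Lemma~\ref{l:dim} and the construction in~\eqref{perverse t-structure}. In (ii), the technical point is justifying both the pro-unipotent equivalence and the connected-group monodromy argument in the placid setting, which is essentially automatic but requires care with the smooth descent of the perverse $t$-structure along pro-smooth coverings of $H\backslash Y$.
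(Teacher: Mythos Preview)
Your approach to part (i) matches the paper's: both unwind $\mathbb D_\delta$ via the \v{C}ech nerve of a cover and check that the shifts $\langle 2\delta_{Y^i}\rangle$, $[-\dim_{X^{i,[n]}_j}]$, and $[\delta_{Y^i}]$ cancel so that the classical $t$-exactness of Verdier duality on each finite-type piece $X^{i,[n]}_j$ passes to the limit.

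For part (ii) the paper takes a different and more direct route. Rather than factoring through intermediate quotients $H_j\backslash Y$, the paper considers the commutative diagram
\[
\xymatrix{\on{Perv}_\delta(H\backslash Y)\ar[r]^{a^{*}}\ar[d]^{\on{For}}&\on{Perv}_\delta(H\backslash (H\times Y))\ar[d]^{\on{For}}&\on{Perv}_{\delta}(Y)\ar[l]_{\ \ \ \ \simeq}\ar[dl]^{p^{*}}\\
\on{Perv}_\delta(Y)\ar[r]^{p^{*}}&\on{Perv}_\delta(H\times Y)}
\]
where $H$ acts on $H\times Y$ on the first factor and $a,p$ are the action and projection maps. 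The hypotheses on $H$ make both $a$ and $p$ pro-smooth with all projection and transition maps smooth with connected fibers, so $a^*$ and $p^*$ are fully faithful by \cite{BBDG}; since $\Phi^{-1}\circ a^*=\on{For}$ (restrict to $\{e\}\times Y$), this gives the result in one step.

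Your factorization through $H_j\backslash Y$ has a genuine gap in the abstract setting of the lemma: the finite-type quotient $H_j$ need not act on $Y$. Nothing in the definition of an $H$-placid ind-scheme forces the $H$-action on $Y$ (or on any $Y^i$) to factor through some $H_j$, so the stack $H_j\backslash Y$ and the category $\on{Perv}_\delta(H_j\backslash Y)$ are not a priori defined. Your two ingredients (pro-unipotent kernels contribute nothing; connected groups give full faithfulness via monodromy) are exactly the content of the hypothesis, but the paper's argument packages them without ever needing $H_j$ to act: the pro-unipotence and irreducibility are used only to guarantee that the fibers of $a$ and $p$, and of the transition maps in a placid presentation of $H\times Y$ over $Y$, are connected.
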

\begin{proof}
    Proof of (i). It follows from
    \begin{align*}
        &\mathbb D_\delta(^{p}D^{\leq0}_{}(H\backslash Y^i)[\delta_{Y^i}])\cong
    \Phi_\delta\circ\mathbb D'_Y(\on{lim}^*_{[n]}\on{colim}^*_{j}{^{p,cl}D^{\leq0}}(X^{i,[n]}_j)[-\dim_{X^{i,[n]}_j}+\delta_{Y^i}])\\
    \cong&\Phi_\delta(\on{lim}^!_{[n]}\on{colim}^!_{j}{^{p,cl}}D^{\leq0}_{}(X^{i,[n]}_j)[\dim_{X^{i,[n]}_j}-\delta_{Y^i}])\\
    \cong&\on{lim}^*_{[n]}\on{colim}^*_{j}{^{p,cl}}D^{\leq0}_{}(X^{i,[n]}_j)[\dim_{X^{i,[n]}_j}-\delta_{Y^i}]\langle-2\dim_{X^{i,[n]}_j}+2\delta_{Y^i}\rangle\\
    \cong&\on{lim}^*_{[n]}\on{colim}^*_{j}{^{p,cl}}D^{\leq0}_{}(X^{i,[n]}_j)[-\dim_{X^{i,[n]}_j}+\delta_{Y^i}]
    \cong
    {^{p}}D^{\leq0}_{}(H\backslash Y^i)[\delta_{Y^i}].
    \end{align*}

    Proof of (ii). The proof is similar to the case of schemes of finite types : consider the following diagram of functors
    \[\xymatrix{\on{Perv}_\delta(H\backslash Y)\ar[r]^{a^{*}}\ar[d]^{\on{For}}&\on{Perv}_\delta(H\backslash (H\times Y))\ar[d]^{\on{For}}&\on{Perv}_{\delta}(Y)\ar[l]_{\ \ \ \ \simeq}\ar[dl]^{p^{*}}\\
\on{Perv}_\delta(Y)\ar[r]^{p^{*}}&\on{Perv}_\delta(H\times Y)}\]
    where $H$ acts on $H\times Y$
    on the first factor and 
    $a,p:H\times Y\to Y$ are the action  and projection maps respectively.
    The assumption implies that 
    both $a,p$ are strongly pro-smooth with all the projection and transition maps being smooth with connected fibers. 
    It follows that 
    $a^{*}$, $p^{*}$ are fully-faithful \cite{BBDG}. Thus   the vertical forgetful maps are fully-faithful.
\end{proof}

\begin{exam}(IC-complexes)\label{perverse local systems}
We preserve the setup of Lemma \ref{fully faithful}.
Let $\cO=H\cdot y$ be the $H$-orbit 
through $y\in Y^i$
and let $S=H_y$ be the stabilizer of $y$. 
Choose a placid presentation $H\cong\on{lim}_jH_j$
and let $S_j=\on{Im}(S\subset H\to H_j)$
be the image of $S$ along the projection $H\to H_j$.
Then the natural projection map 
$H_{j'}/S_{j'}\to H_{j}/S_{j}$
is smooth affine and  unipotent and we get a 
placid presentation 
$\cO\cong H/S\cong\on{lim} H_j/S_j$. In particular, $\cO$
is irreducible placid scheme.
 
Let $\pi_0(S_j)$ be the group of connected components of $S_j$ and 
for any representation $\xi\in\on{Rep}(\pi_0(S_j))$ of $\pi_0(S_j)$ we denote by $\cL_\xi$
the corresponding $H_j$-equivariant local system 
on $H_j/S_j$.
There is an equivalence 
\[
\on{Rep}(\pi_0(S_j))\cong\on{Perv}(H\backslash \cO)\cong\on{Shv}(H\backslash \cO), \ \ \xi\to\cL_{\xi,\cO}:=\pi_j^*\cL_\xi.
\] 
Assume the inclusion $j:\cO\to Y^i$
is a finitely presented open embedding. Then we have the \emph{standard sheaf} 
(resp. \emph{co-standard complex})
\begin{equation}\label{standard-costandard}
j_!\cL_{\xi,\cO}\in\on{Shv}(H\backslash Y^i)\ \ \ (resp.\ \ j_*\cL_{\xi,\cO}\in D(H\backslash Y^i))
\end{equation}
and the intermediate extension 
\begin{equation}\label{IC}
    j_{!*}\cL_{\xi,\cO}:=
    \on{Im}({^p}\cH^0(f_!(\cF))\to {^p}\cH^0(f_*(\cF)))\in\on{Perv}(H\backslash Y^i).
\end{equation}
According to the definition of perverse $t$-structure~\eqref{perverse t-structure},
we have 
\begin{equation}\label{shifted IC}
\IC_{\xi,\cO}:=j_{!*}\cL_{\xi,\cO}[\delta_\cO]\in \on{Perv}_\delta(H\backslash Y)=\on{colim}_{i,!}\on{Perv}(H\backslash Y^i)[\delta_{Y^i}].
\end{equation}
Finally, it follows from  Lemma \ref{property of D} that 
\begin{equation}\label{}
\mathbb D_{Y^i}( j_{!*}\cL_{\xi,\cO})\cong  j_{!*}\cL_{-\xi,\cO},\ \ \ \ \ \mathbb D_{Y^i}(j_{!}\cL_{\xi,\cO})\cong j_*\cL_{-\xi,\cO},
\end{equation}
and hence from Proposition \ref{Verdier duality}
\begin{equation}\label{D(j_!)}
\mathbb D_\delta(j_{!}\cL_{\xi,\cO})\cong\mathbb D_{Y^i}(j_{!}\cL_{\xi,\cO})\langle2\delta_\cO\rangle\cong j_*\cL_{-\xi,\cO}\langle2\delta_\cO\rangle,
\end{equation}
\begin{equation}\label{D(IC)}
\mathbb D_\delta(\IC_{\xi,\cO})\cong \mathbb D_{Y^i}(j_{!*}\cL_{\xi,\cO}[\delta_\cO])\langle2\delta_\cO\rangle\cong
(j_{!*}\cL_{-\xi,\cO}[-\delta_\cO])\langle2\delta_\cO\rangle
\cong
\IC_{-\xi,\cO}(\delta_\cO).
\end{equation}
\end{exam}

\subsection{Pointwise purity}\label{A: pointwise purity}
Let $Y$ be a locally equidimensional $H$-placid scheme.
In this section we
assume the following:
\begin{enumerate}
    \item $H$ is irreducible and 
    there is a placid presentation $H\cong\lim_j H_j$
    where the kernels $H\to H_j$ are pro-unipotent.
    \item 
    $Y$ is a union of finitely many $H$-orbits 
    and for any orbit $\cO\subset Y$
the inclusion $j:\cO\to Y$
is finitely presented.
\item Assume $H$ and $Y$ are defined over a finite field $\mathbb F_q$.

\end{enumerate}

We write $F:Y\to Y$ for the Frobenius morphism and 
we denote by $D(H\backslash Y)^F$ the corresponding category of 
$F$-equivariant complexes whose objects consist of pairs $(\cF,\Phi)$
where $\cF\in D(H\backslash Y)$ and $\Phi:F^*\cF\cong\cF$.
Similarly, we denote by $\on{Shv}(H\backslash Y)^F$, $\on{Perv}(H\backslash Y)^F$, etc, the corresponding categories of 
$F$-equivariant objects.

\begin{defe}\label{pointwise pure complexes}
    \begin{itemize}

    \item [(i)]
    A complex $(\cF,\Phi)\in D(H\backslash Y)^F$ is called pure of weight $w$ if there 
    exists a smooth cover $\pi:X\to H\backslash Y$ defined over $\mathbb F_q$ admitting a placid presentation 
    $X\cong\lim_j X_j$ and a pure complex   $\cF_{j_0}\in D(X_{j_0})^F$ of weight $w$ in the sense of \cite{BBDG} such that 
    $\pi^*\cF\cong\pi_{j_0}^*\cF_{j_0}$. Here $\pi_{j_0}:X\to X_{j_0}$ is the transition map.

    \item [(ii)]
    An $F$-equivariant local system $(\cL,\Phi)\in\on{Shv}(H\backslash\cO)^F$
    is called pointwise pure of weight $w$
    if for any positive integer $n$
    and any
    $x\in\cO^{F^n}$, the eigenvalues 
    of $\Phi^n$ on the stalk $\cL_x$
   are algebraic numbers in $\overline{\mathbb Q}_\ell^\times$ all of whose 
complex conjugates have absolute value 
$(q^n)^{\frac{w}{2}}$. If $\cL$ is pointwise pure of weight $w$, then we say $\cL[m]$ is pointwise pure of weight $w+m$.
\item [(iii)]
A complex $(\cF,\Phi)\in D(H\backslash Y)^F$
is said to be $*$-pointwise pure (resp. $!$-pointwise pure)
of weight $w$ if for any $H$-orbit $j:\cO\to Y$ and integer $m$, the local system $\sH^m(j^*\cF)$
(resp. $\sH^m(j^!\cF)$)
is pointwise pure of weight $w+m$
(Note that the assumption (ii) above implies that the functor $j^!$ is well-defined). It is said to be pointwise pure if 
it is both $*$-pointwise pure and $!$-pointwise pure.

    \end{itemize}
\end{defe}

\begin{lem}\label{* to !}
If $\cF\in D(H\backslash Y)^F$ is $*$-pointwise pure of weight $w$,
    then its Verdier dual $\mathbb D_Y(\cF)$ is $!$-pointwise pure of weight $-w$.
   \end{lem}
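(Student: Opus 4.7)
The proof is a pointwise computation that reduces, via the placid presentation of orbits, to the classical fact on finite-type schemes that Verdier duality interchanges $*$- and $!$-pointwise purity while flipping the sign of the weight. My first step is to fix an orbit inclusion $j:\cO\hookrightarrow Y$ and aim to show that $\sH^m(j^!\mathbb D_Y\cF)$ is a pointwise pure local system of weight $-w+m$ for every integer $m$. The starting identity comes from Lemma \ref{property of D}(i) applied to $j$ (which is finitely presented by the assumptions of Section \ref{A: pointwise purity}):
\[
j^!\mathbb D_Y\cF \;\cong\; \mathbb D_\cO(j^*\cF)\langle 2\dim_j\rangle.
\]
Since $\dim_j$ is locally constant on $\cO$ and pointwise purity is checked connected-component-wise, I may treat $\dim_j$ as a fixed integer.

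Next I would reduce to the finite-dimensional setting via the placid presentation of the orbit. By Example \ref{perverse local systems}, $\cO\cong H/S\cong\lim_j\cO_j$ with $\cO_j=H_j/S_j$ smooth of finite type, smooth affine transition maps, and connected fibers of the projections $\pi_j:\cO\to\cO_j$. The complex $j^*\cF\in D(\cO)=\on{colim}^*_j D(\cO_j)$ is therefore represented by some $\cG_{j_0}\in D(\cO_{j_0})^F$ with $j^*\cF\cong\pi_{j_0}^*\cG_{j_0}$. Since $\pi_{j_0}$ is pro-smooth with connected fibers, the hypothesis that $\sH^m(j^*\cF)=\pi_{j_0}^*\sH^m(\cG_{j_0})$ is pointwise pure of weight $w+m$ transfers directly to the same statement for $\sH^m(\cG_{j_0})$ on the finite-type scheme $\cO_{j_0}$.

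The next step applies classical BBD: a complex on a finite-type scheme whose every cohomology sheaf is a pointwise pure local system of weight $w+m$ is itself pointwise pure of weight $w$, so its Verdier dual $\mathbb D_{\cO_{j_0}}\cG_{j_0}$ is pointwise pure of weight $-w$, i.e.\ each $\sH^k(\mathbb D_{\cO_{j_0}}\cG_{j_0})$ is pointwise pure of weight $-w+k$. Unwinding the definition $\mathbb D_\cO=\Phi_\cO\circ\mathbb D'_\cO$ from Proposition \ref{Verdier duality} using the representative at level $j_0$ and the identity $\mathbb D\pi^*=\pi^!\mathbb D$ for the smooth transition maps, a direct bookkeeping yields
\[
\mathbb D_\cO(j^*\cF)\;\cong\;\pi_{j_0}^*\bigl(\mathbb D_{\cO_{j_0}}\cG_{j_0}\bigr)\langle-2\dim\cO_{j_0}\rangle,
\]
and substituting into the first display gives
\[
j^!\mathbb D_Y\cF\;\cong\;\pi_{j_0}^*\bigl(\mathbb D_{\cO_{j_0}}\cG_{j_0}\bigr)\langle-2\dim\cO_{j_0}+2\dim_j\rangle.
\]
Taking $m$th cohomology and using $\langle n\rangle=[n](n/2)$, the local system $\sH^m(j^!\mathbb D_Y\cF)$ is identified with $\pi_{j_0}^*\sH^{m-2\dim\cO_{j_0}+2\dim_j}(\mathbb D_{\cO_{j_0}}\cG_{j_0})(-\dim\cO_{j_0}+\dim_j)$, which by the purity statement above has weight
\[
-w+(m-2\dim\cO_{j_0}+2\dim_j)+2(\dim\cO_{j_0}-\dim_j)\;=\;-w+m,
\]
as required.

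The main obstacle I foresee is the careful bookkeeping of the dimension shifts and Tate twists at two different places: in the translation of $\mathbb D_\cO=\Phi_\cO\circ\mathbb D'_\cO$ into the explicit formula involving $\pi_{j_0}^*(\mathbb D_{\cO_{j_0}}\cG_{j_0})\langle-2\dim\cO_{j_0}\rangle$, and in the interaction of this shift with the shift $\langle 2\dim_j\rangle$ coming from Lemma \ref{property of D}(i). Once verified that these shifts combine correctly (using additivity of dimension functions from \ref{dimension theory}(7)), the classical BBD weight flip for Verdier duality on $\cO_{j_0}$ immediately concludes the proof.
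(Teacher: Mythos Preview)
Your proof is correct and follows the same approach as the paper, which simply records ``It follows from Lemma \ref{property of D}''; you have unpacked that one-liner by explicitly passing to a finite-type model $\cO_{j_0}$ of the orbit, invoking the classical weight flip under Verdier duality there (using that $\cO_{j_0}$ is smooth, so $i_x^!$ and $i_x^*$ differ by $\langle -2\dim\cO_{j_0}\rangle$), and tracking the dimension shifts coming from $\Phi_\cO$ and from $\langle 2\dim_j\rangle$. The bookkeeping you flagged as the main obstacle is carried out correctly.
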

\begin{proof}
It follows from Lemma \ref{property of D}.
\end{proof}

\begin{exam}\label{example of pure complex}
Let $j:\cO\to Y$ be an open orbit and let 
$j_{!*}(\cL_{\xi,\cO})$ be the intermediate extension in Example \ref{perverse local systems}.
If $\cL_{\xi,\cO}$ is a pointwise pure local system of weight $w$, then $j_{!*}(\cL_{\xi,\cO})$
is pure of weight $w$.

\end{exam}

We have the following criterion  for $*$-pointwise purity
generalizing  \cite[Proposition 2.3.3]{MS} to the setting of 
$H$-placid schemes.
To this end, we first recall the notion of
contracting slices.

\begin{defe}\label{def of slice}
Let $\cO=H\cdot x\subset Y$ be the
$H$-orbit through a point $x\in Y$.
 A contracting slice at $x$ with respect to $\cO$
  is a locally closed finite type subscheme $S\subset Y$ defined over $\mathbb F_q$ such that 
 \begin{itemize}

     \item [(i)] $S$ contains $x$,
     \item [(ii)] the action map $a:H\times S\to Y$
     is formally smooth,
     \item [(iii)] there is a homomorphism $\lambda:\mathbb G_m\to H$
    such that $S$ is preserved by $\lambda(\mathbb G_m)$ 
    and the resulting $\mathbb G_m$-action on $S$ is attracting 
    with $x$ as its unique fixed point.
   
 \end{itemize}
 
\end{defe}

\begin{prop}\label{purity criterion}
Let $(\cF,\Phi)\in D(H\backslash Y)^F$ be a pure complex of weight $w$.
Assume there is a contracting slice at each point $x\in Y$ with respect to the 
orbit $\cO=H\cdot x$.
Then $\cF$ is $*$-pointwise pure of weight $w$.
\end{prop}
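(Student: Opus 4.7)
The plan is to follow the Mars-Springer strategy of reducing $*$-pointwise purity on $Y$ to the classical purity theorem for $\mathbb G_m$-contracting actions on finite-type schemes. Given a point $x \in Y$ with orbit $\cO = H \cdot x$ and a contracting slice $i_S : S \hookrightarrow Y$ at $x$ with contracting cocharacter $\lambda : \mathbb G_m \to H$, the stalks of $j_\cO^* \cF$ at $x$ coincide with those of $\cF$ at $x$, so it suffices to show, for every $x \in Y^{F^n}$ and every $m$, that the Frobenius eigenvalues of $\Phi^n$ on $\sH^m(\cF)_x$ all have complex absolute value $(q^n)^{(w+m)/2}$.

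First I would verify that $i_S^* \cF$ is a pure complex of weight $w$ on the finite-type scheme $S$ in the classical Deligne sense. By Definition~\ref{pointwise pure complexes}(i), there is a smooth cover $\pi : X \to H \backslash Y$ with placid presentation $X \cong \lim_j X_j$ such that $\pi^* \cF \cong \pi_{j_0}^* \cF_{j_0}$ for some pure complex $\cF_{j_0}$ on the finite-type scheme $X_{j_0}$. Base-changing $\pi$ along $i_S$ yields a pro-smooth cover $X_S \to S$ from a placid scheme; since $S$ is of finite type and the transition maps in the placid presentation of $X$ are smooth affine, standard Noetherian descent trims this to a smooth finite-type cover $X_{S,j} \to S$ along which $i_S^* \cF$ agrees with a smooth pullback of $\cF_{j_0}$. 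Because smooth pullback preserves purity in the BBD sense, $i_S^* \cF$ is pure of weight $w$ on $S$, and it is $\mathbb G_m$-equivariant via $\lambda$ because $\cF$ is $H$-equivariant and $\lambda(\mathbb G_m)$ preserves $S$ by Definition~\ref{def of slice}(iii).

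Second I would apply the classical purity theorem for contracting $\mathbb G_m$-actions (due to Lusztig and Kazhdan-Lusztig; see also \cite[Lemma 2.3.2]{MS} and the hyperbolic localization formalism of Braden): if $\cG$ is a pure $\mathbb G_m$-equivariant complex of weight $w$ on a finite-type scheme $S$ with $\mathbb G_m$ contracting $S$ to a unique fixed point $x$, then the stalk $\cG_x$ is pointwise pure of weight $w$. The key mechanism is that for such $\cG$, Braden's theorem identifies $i_x^* \cG$ with $p_! \cG = R\Gamma_c(S,\cG)$ and $i_x^! \cG$ with $p_* \cG = R\Gamma(S,\cG)$, where $p : S \to \{x\}$ is the structure map; combined with Deligne's weight bounds on $R\Gamma_c$ and $R\Gamma$ of a pure complex, these give matching upper and lower bounds that force the Frobenius eigenvalues on $\sH^m(\cG)_x$ to have absolute value exactly $(q^n)^{(w+m)/2}$. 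Applying this with $\cG = i_S^* \cF$ yields the required pointwise purity of $\cF$ at $x$, and as $x$ was arbitrary the proposition follows. The main technical obstacle is the descent step in the preceding paragraph, where the placid notion of purity must be reconciled with the classical BBD notion in a way that respects both Frobenius and the $\mathbb G_m$-equivariant structure; once this compatibility is pinned down carefully, the remainder of the argument transcribes the finite-dimensional Mars-Springer proof essentially verbatim.
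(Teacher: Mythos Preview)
Your approach is essentially the paper's: reduce to showing $i_S^*\cF$ is pure of weight $w$ on the finite-type slice $S$ in the classical sense, then invoke the Mars--Springer contracting $\mathbb G_m$ argument (the paper cites \cite[Proposition 2.3.3]{MS} directly rather than unpacking Braden's theorem, but this is the same content).

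The one substantive point you gloss over is the role of condition~(ii) in Definition~\ref{def of slice}. Your descent argument asserts that after Noetherian trimming one obtains a finite-type $X_{S,j}$ on which $i_S^*\cF$ is a \emph{smooth} pullback of $\cF_{j_0}$, but smoothness of the map $X_{S,j}\to X_{j_0}$ does not follow from ``standard Noetherian descent'' and the smoothness of transition maps in the presentation of $X$. The paper makes this explicit: your $X_S = X\times_{H\backslash Y}S$ is identified with $V=(H\times S)\times_Y X$ via the action map, so the projection $V\to X$ is the base change of $a:H\times S\to Y$, which is formally smooth precisely by condition~(ii); Lemma~\ref{formally smooth} then upgrades this to a placid morphism, and that is what forces the finite-level map $V_{j'}\to X_j$ to be smooth. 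You use conditions~(i) and~(iii) of the slice but never~(ii), and~(ii) is exactly the ingredient that makes the slice transversal enough for purity to descend --- without it the descent step has a real hole.
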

\begin{proof}
Let $i:S\subset Y$ be a contracting slice at $x\in Y^{F^n}$.
We claim that $\cF|_S=i^*\cF\in D(S)^F$
is pure of weight $w$ in the sense of \cite{BBDG}.
Assumption (iii) in Definition \ref{def of slice} implies that  $\cF|_S$ is equivariant with respect to the contracting $\mathbb G_m$-action 
on $S$ and we can apply  \cite[Proposition 2.3.3]{MS} to conclude that  the eigenvalues of 
$\Phi^n$ on the stalk $\sH^m(\cF|_x)\cong \sH^m((\cF|_S)_x)$
are algebraic numbers in $\overline{\mathbb Q}_\ell^\times$ all of whose 
complex conjugates have absolute value 
$(q^n)^{\frac{w}{2}}$. The proposition follows.

Proof of the claim.
Consider the action map $a:H\times S\to Y$.
Let $X\cong\lim_j X_j\to Y$
be an \'etale cover admitting a placid presentation
and let $V=(H\times S)\times_{Y}X$ 
be the base change. 
Since the projection $f:V\to H\times S$ is an \'etale cover, in particular finitely presented,
by Section \ref{Placid schemes} (4), there is an \'etale cover 
 $f_{j_0}:V_{j_0}\to H_{j_0}\times S$ 
 such that $f$ is the base change of $f_{j_0}$
 and 
 $V\cong\lim_{j\geq j_0} V_j=V_{j_{0}}\times_{(H_{j_0}\times S)}
 (H\times S)$ is a placid presentation of $V$.
Moreover, 
since the projection map $b:V\to X$ is formally smooth (being the base change of the 
formally smooth map $a$), it follows from Lemma \ref{formally smooth}  that 
the composed map $V\to X\to X_j$
factors as $V\to V_{j'}\stackrel{c}\to X_j$ 
with $c$ a smooth map.
We summarize the above discussion by 
the following commutative diagram
\begin{equation}\label{reduction to finite types}
  \xymatrix{H_{j'}\times S&H\times S\ar[r]^a\ar[l]_q&Y\\
V_{j'}\ar[u]_{f_{j'}}\ar[rrd]^c&V\ar[r]^{b}\ar[l]_g\ar[u]_{f}&X\ar[u]_{h
}\ar[d]^{p}\\
 &&X_j}  
\end{equation}
where the upper squares are Cartesian diagrams,
the  maps $h,f,f_{j'}$ are \'etale, and  
the map $c$ is smooth.
From the definition of pure complexes in Definition \ref{pointwise pure complexes},  there is a  
 large enough $j$ such that 
\[h^*\cF \cong
p^*\cF_{j}\]
where $\cF\in D(X_j)^F$ is pure of weight $w$.
Note that there is an isomorphism
\begin{equation}\label{purity F}
    (f_{j'})^*(\overline{\mathbb Q}_{\ell,H_{j'}}\boxtimes\cF|_{S})\cong c^*\cF_j.
\end{equation}
Indeed, since $g$ is pro-smooth with contractible 
fibers it suffices to check~\eqref{purity F} after applying $g^*$,
but since $a^*(\cF)\cong\overline{\mathbb Q}_{\ell,H}\boxtimes\cF|_{S}$ 
by the $H$-equivariance of $\cF$, 
we have 
\[g^*(f_{j'})^*(\overline{\mathbb Q}_{\ell,H_{j'}}\boxtimes\cF|_{S})\cong f^*((\overline{\mathbb Q}_{\ell,H}\boxtimes\cF|_S)\cong f^*a^*(\cF)\cong b^*h^*\cF\cong b^*p^*\cF_j\cong g^*c^*\cF_j.\]
Since $c$ is a smooth map (between finite type schemes) the complex $c^*\cF_j$ is pure of weight $w$
and the isomorphism~\eqref{purity F} together with the fact that   
 $f_{j'}$ is \'etale 
 imply that $\overline{\mathbb Q}_{\ell,H_{j'}}\boxtimes\cF|_{S}$
 is pure of weight $w$.
 Finally, since $e^*\cong e^!\langle 2\dim(H_{j'})\rangle$
 on $\overline{\mathbb Q}_{\ell,H_{j'}}\boxtimes\cF|_{S}$,
 where $e:S\to H_{j'}\times S$ is the embedding $e(x)=(1,x)$ ($1\in H_{j'}$ is the unit), we conclude that 
 $\cF|_{S}\cong e^*(\overline{\mathbb Q}_{\ell,H_{j'}}\boxtimes\cF|_{S})$
 is pure of weight $w$.
The claim follows.

\end{proof}

\begin{bibdiv}
\begin{biblist}

\bib{BS}{article}
{
    AUTHOR = {Bao, H.},
    Author = {Song, J.},
     TITLE = {Dual canonical bases and embeddings of symmetric spaces},
   JOURNAL = {},
  FJOURNAL = {},
    VOLUME = {},
      YEAR = {2025},
     PAGES = {},
      ISSN = {},
   MRCLASS = {},
  MRNUMBER = {},
MRREVIEWER = {},
       DOI = {},
       eprint={2505.01173},
	archivePrefix={arXiv},
	primaryClass={math.RT}
       URL = {},
}

\bib{BY}{article}
{
    AUTHOR = {Bezrukavnikov, R.},
    Author = {Yun, Z.},
     TITLE = {On {K}oszul duality for {K}ac-{M}oody groups},
   JOURNAL = {Represent. Theory},
  FJOURNAL = {Representation Theory. An Electronic Journal of the American
              Mathematical Society},
    VOLUME = {17},
      YEAR = {2013},
     PAGES = {1--98},
      ISSN = {1088-4165},
   MRCLASS = {20G44 (14F05 14M15)},
  MRNUMBER = {3003920},
MRREVIEWER = {Dongwen\ Liu},
       DOI = {10.1090/S1088-4165-2013-00421-1},
       URL = {},
}

\bib{BBDG}{article}
{
	AUTHOR = {Beilinson, A.},
	Author = {Bernstein, J.},
	Author = {Deligne, P.},
        Author = {Gabber, O.},
     TITLE = {Faisceaux pervers},
   JOURNAL = {Ast\'erisque},
  FJOURNAL = {Ast\'erisque},
      YEAR = {2018},
    NUMBER = {100},
     PAGES = {vi+180},
      ISSN = {0303-1179,2492-5926},
      ISBN = {978-2-85629-878-7},
   MRCLASS = {32S30 (14F08 18G80 19F27 32S60 55N33)},
  MRNUMBER = {4870047},
}

\bib{BD}{article}
{
        Author = {Beilinson, A.},
	AUTHOR = {Drinfeld, D.},
	TITLE = {Quantization of Hitchin integrable system and Hecke eigensheaves},
	JOURNAL = {preprint available at https://math.uchicago.edu/~drinfeld/langlands/QuantizationHitchin.pdf},
	YEAR = {1991},
	Note= {\url{https://math.uchicago.edu/~drinfeld/langlands/QuantizationHitchin.pdf}},
}

\bib{BAF}{article}
{
	AUTHOR = {Braverman, A.},
	Author = {Finkelberg, M.},
     TITLE = {A quasi-coherent description of the category {$D$}-{${\rm
              mod}({\rm Gr}_{{\rm GL}(n)})$}},
    SERIES = {Trends Math.},
     PAGES = {133--149},
 PUBLISHER = {Birkh\"auser/Springer, Cham},
      YEAR = {2022},
      ISBN = {978-3-030-82006-0; 978-3-030-82007-7},
   MRCLASS = {14A30 (14F08 14M15)},
  MRNUMBER = {4486917},
MRREVIEWER = {Yu-Wei\ Fan},
       DOI = {10.1007/978-3-030-82007-7\_5},
       URL = {https://doi.org/10.1007/978-3-030-82007-7_5},
}

\bib{BFT}{article}
{
    AUTHOR = {Braverman, A.},
    Author = {Finkelberg, M.},
    Author = {Travkin, R.},
     TITLE = {Gaiotto conjecture for {${\rm Rep}_q({\rm GL}(N-1|N))$}},
   JOURNAL = {Pure Appl. Math. Q.},
  FJOURNAL = {Pure and Applied Mathematics Quarterly},
    VOLUME = {21},
      YEAR = {2025},
    NUMBER = {2},
     PAGES = {663--695},
      ISSN = {1558-8599,1558-8602},
   MRCLASS = {22E67 (17B37 58B32)},
  MRNUMBER = {4847249},
       DOI = {10.4310/pamq.241205003652},
       URL = {https://doi.org/10.4310/pamq.241205003652},
}

\bib{BF}{article}
{
	AUTHOR = {Bezrukavnikov, R.},
	Author = {Finkelberg, M.},
	
	TITLE = {Equivariant Satake category and Kostant-Whittaker reductions},
   JOURNAL = {Mosc. Math. J.},
  FJOURNAL = {Moscow Mathematical Journal},
    VOLUME = {8},
      YEAR = {2008},
    NUMBER = {1},
     PAGES = {39--72, 183},
      ISSN = {1609-3321,1609-4514},
   MRCLASS = {19E08 (22E65 37K10)},
  MRNUMBER = {2422266},
MRREVIEWER = {Dipendra\ Prasad},
       DOI = {10.17323/1609-4514-2008-8-1-39-72},
       URL = {https://doi.org/10.17323/1609-4514-2008-8-1-39-72},
}

\bib{BFGT}{article}
{
	AUTHOR = {Braverman, A.},
	Author = {Finkelberg, M.},
		Author = {Ginzburg, V.},
	Author = {Travkin, R.}
	TITLE = {Mirabolic Satake equivalence and supergroups},
   JOURNAL = {Compos. Math.},
  FJOURNAL = {Compositio Mathematica},
    VOLUME = {157},
      YEAR = {2021},
    NUMBER = {8},
     PAGES = {1724--1765},
      ISSN = {0010-437X,1570-5846},
   MRCLASS = {14D24 (14F08 14M15 17B20 22E57)},
  MRNUMBER = {4292176},
MRREVIEWER = {Ramdin\ Mawia},
       DOI = {10.1112/S0010437X21007387},
       URL = {https://doi.org/10.1112/S0010437X21007387},
}

\bib{BKV}{article}
{
	AUTHOR = {Bouthier, A.},
	Author = {Kazhdan, D.},
	Author = {Varshavsky, Y.}
	TITLE = {Perverse sheaves on infinite-dimensional stacks, and affine Springer theory},
	JOURNAL = {Adv. Math.},
	FJOURNAL = {Advances in Mathematics},
	VOLUME = {228},
	YEAR = {2011},
	NUMBER = {5},
	PAGES = {2984--2988},
	ISSN = {0001-8708,1090-2082},
	MRCLASS = {14L30},
	MRNUMBER = {2838068},
	DOI = {10.1016/j.aim.2011.05.030},
	URL = {https://doi.org/10.1016/j.aim.2011.05.030},
}

\bib{BZSV}{article}
{
	AUTHOR = {Ben-Zvi, D.
	},
	Author = {Sakellaridis, Y.
	},
	Author = {Venkatesh, A.
	}
	TITLE = {Relative Langlands duality},
	Note = {\url{ https://www.math.ias.edu/~akshay/research/BZSVpaperV1.pdf}},
}

\bib{FGT}{article}
{
	Author = {Finkelberg, M.},
	Author = {Ginzburg, V.},
	Author = {Travkin, R.}
	TITLE = {Mirabolic affine Grassmannian and character sheaves},
   JOURNAL = {Selecta Math. (N.S.)},
  FJOURNAL = {Selecta Mathematica. New Series},
    VOLUME = {14},
      YEAR = {2009},
    NUMBER = {3-4},
     PAGES = {607--628},
      ISSN = {1022-1824,1420-9020},
   MRCLASS = {20G05 (20C08 20G40)},
  MRNUMBER = {2511193},
       DOI = {10.1007/s00029-009-0509-x},
       URL = {https://doi.org/10.1007/s00029-009-0509-x},
}

\bib{BC}{article}
{
	title={Reductive group schemes}, 
	Author={Conrad, B},
	Author={},
	year={2014},
	eprint={},
	archivePrefix={},
	primaryClass={},
	url={}
    Note = {\url{https://math.stanford.edu/~conrad/papers/luminysga3smf.pdf}}, 
}

\bib{LOSatake}{article}
{
	title={Lorentzian and Octonionic Satake equivalence}, 
	Author={Chen, T.-H.},
	Author={O'Brien, J.},
	year={2024},
	eprint={2409.03969},
	archivePrefix={arXiv},
	primaryClass={math.RT},
	url={https://arxiv.org/abs/2409.03969}, 
}

\bib{QuaternionicSatake}{article}
{
	Title={Quaternionic Satake equivalence}, 
	Author={Chen, T.-H.},
	Author={Macerato, M.},
	Author={Nadler, D.},
	Author={O'Brien, J.},
	year={2022},
	eprint={2207.04078},
	archivePrefix={arXiv},
	primaryClass={math.RT}
}

\bib{CMT}{article}
{
	title={In preparation}, 
    author={Chen, T.-H.}
	author={Macerato, M.},
	author={Taylor, J.},
	year={},
	eprint={},
	archivePrefix={},
	primaryClass={},
	url={}, 
}

\bib{CN2}{article}
{
      AUTHOR = {Chen, T.-H.},
      AUTHOR = {Nadler, D.},
      title={Real and symmetric quasi-maps}, 
      year={2023},
      eprint={1805.06564},
      archivePrefix={arXiv},
      primaryClass={math.RT},
      url={https://arxiv.org/abs/1805.06564}, 
}

\bib{CN3}{article}
{
	AUTHOR = {Chen, T.-H.},
	AUTHOR = {Nadler, D.},
	title={Real groups, symmetric varieties and Langlands duality}, 
	year={2024},
	eprint={2403.13995},
	archivePrefix={arXiv},
	primaryClass={math.RT},
	url={https://arxiv.org/abs/2403.13995}, 
}

\bib{D}{incollection}
{
	AUTHOR = {Drinfeld, V.},
	title={Infinite-dimensional vector bundles in algebraic geometry}, 
 BOOKTITLE = {The unity of mathematics},
    SERIES = {Progr. Math.},
    VOLUME = {244},
     PAGES = {263--304},
 PUBLISHER = {Birkh\"auser Boston, Boston, MA},
      YEAR = {2006},
      ISBN = {978-0-8176-4076-7; 0-8176-4076-2},
   MRCLASS = {14F05 (14C35 14F42)},
  MRNUMBER = {2181808},
       DOI = {10.1007/0-8176-4467-9\_7},
       URL = {https://doi.org/10.1007/0-8176-4467-9_7},
}

\bib{Stacks}{article}
{
    AUTHOR = {de Jong, A.J.},
    TITLE = {Stacks Project},
    Note = {\url{https://stacks.math.columbia.edu/}},
}

\bib{F}{article}
{
	AUTHOR = {Faltings, G.},
	TITLE = {Algebraic loop groups and moduli spaces of bundles},
	JOURNAL = {J. Eur. Math. Soc},
  FJOURNAL = {Journal of the European Mathematical Society (JEMS)},
    VOLUME = {5},
      YEAR = {2003},
    NUMBER = {1},
     PAGES = {41--68},
      ISSN = {1435-9855,1435-9863},
   MRCLASS = {14D20 (14M15 17B67 22E67)},
  MRNUMBER = {1961134},
MRREVIEWER = {Harry\ Tamvakis},
       DOI = {10.1007/s10097-002-0045-x},
       URL = {https://doi.org/10.1007/s10097-002-0045-x},
}

\bib{FKV}{article}
{
	AUTHOR = {Finkelberg, M.},
	Author = {Kazhdan, D.},
    Author = {Varshavsky, Y.}
	TITLE = {Lusztig conjectures on $S$-cells in affine Weyl groups},
	JOURNAL = {},
	FJOURNAL = {Moscow Mathematical Journal},
	VOLUME = {10},
	YEAR = {2010},
	NUMBER = {1},
	PAGES = {65--137, 271},
	ISSN = {1609-3321,1609-4514},
	MRCLASS = {22E57 (14D24 14M27)},
	MRNUMBER = {2668830},
	MRREVIEWER = {Alessandro\ Ruzzi},
	DOI = {10.17323/1609-4514-2010-10-1-65-137},
	URL = {https://doi.org/10.17323/1609-4514-2010-10-1-65-137},
}

\bib{EGA IV}{article}
{
	AUTHOR = {Grothendieck, A.},
	TITLE = {Elements de geometrie algebrique : IV. Etude locale des schemas et des morphismes de schemas, Seconde partie},
	JOURNAL = {Publications Mathematiques de l'IHES},
	FJOURNAL = {},
	VOLUME = {},
	YEAR = {1965},
	NUMBER = {24},
	PAGES = {},
	ISSN = {},
	MRCLASS = {},
	MRNUMBER = {},
	DOI = {},
	URL = {},
}

\bib{Gro 1}{article}
{
	AUTHOR = {Grothendieck, A.},
        TITLE = {\'El\'ements de g\'eom\'etrie alg\'ebrique. {IV}. \'Etude
              locale des sch\'emas et des morphismes de sch\'emas {IV}},
   JOURNAL = {Inst. Hautes \'Etudes Sci. Publ. Math.},
  FJOURNAL = {Institut des Hautes \'Etudes Scientifiques. Publications
              Math\'ematiques},
    NUMBER = {32},
      YEAR = {1967},
     PAGES = {361},
      ISSN = {0073-8301,1618-1913},
   MRCLASS = {14.55},
  MRNUMBER = {238860},
MRREVIEWER = {J.\ P.\ Murre},
       URL = {\url{http://www.numdam.org/item?id=PMIHES_1967__32__361_0}},
}

\bib{Gro 2}{article}
{
	AUTHOR = {Grothendieck, A.},
	TITLE = {SGA3 EXPOS\'E XII},
	JOURNAL = {},
	FJOURNAL = {},
	VOLUME = {},
	YEAR = {1970},
	NUMBER = {},
	PAGES = {},
	ISSN = {},
	MRCLASS = {},
	MRNUMBER = {},
	DOI = {},
	URL = {},
        Note={\url{https://webusers.imj-prg.fr/~patrick.polo/SGA3/Expo12.pdf}}
}

\bib{GN}{article}
{
	AUTHOR = {Gaitsgory, D.},
	Author = {Nadler, D.},
	TITLE = {Spherical varieties and {L}anglands duality},
	JOURNAL = {Mosc. Math. J.},
	FJOURNAL = {Moscow Mathematical Journal},
	VOLUME = {10},
	YEAR = {2010},
	NUMBER = {1},
	PAGES = {65--137, 271},
	ISSN = {1609-3321,1609-4514},
	MRCLASS = {22E57 (14D24 14M27)},
	MRNUMBER = {2668830},
	MRREVIEWER = {Alessandro\ Ruzzi},
	DOI = {10.17323/1609-4514-2010-10-1-65-137},
	URL = {https://doi.org/10.17323/1609-4514-2010-10-1-65-137},
}

\bib{K}{article}
{
	AUTHOR = {Kottwitz, R.},
	Author = {}
	TITLE = {Isocrystals with additional structure. II},
   JOURNAL = {Compositio Math. },
  FJOURNAL = {Duke Mathematical Journal},
    VOLUME = {109},
      YEAR = {1997},
    NUMBER = {},
     PAGES = {255–339},
      ISSN = {},
   MRCLASS = {},
  MRNUMBER = {},
MRREVIEWER = {},
       DOI = {},
       URL = {},
}

\bib{KT}{article}
{
	AUTHOR = {Kashiwara, M},
	Author = {Tanisaki, T.}
	TITLE = {Kazhdan-Lusztig conjecture for affine Lie algebras with negative level},
   JOURNAL = {Duke Math. J.},
  FJOURNAL = {Duke Mathematical Journal},
    VOLUME = {77},
      YEAR = {1995},
    NUMBER = {1},
     PAGES = {21--62},
      ISSN = {0012-7094,1547-7398},
   MRCLASS = {17B67 (17B10)},
  MRNUMBER = {1317626},
MRREVIEWER = {Alex\ Jay\ Feingold},
       DOI = {10.1215/S0012-7094-95-07702-3},
       URL = {https://doi.org/10.1215/S0012-7094-95-07702-3},
}

\bib{KWWY}{article}
{
	AUTHOR = {Kamnitzer, J.
	},
	Author = {Webster, B.},
	Author = {Weekes, A.
	}
	Author = {Yacobi, O.
	}
	TITLE = {Yangians and quantizations of slices in the affine grassmannian},
	JOURNAL = {Adv. Math.},
	FJOURNAL = {Advances in Mathematics},
	VOLUME = {228},
	YEAR = {2011},
	NUMBER = {5},
	PAGES = {2984--2988},
	ISSN = {0001-8708,1090-2082},
	MRCLASS = {14L30},
	MRNUMBER = {2838068},
	DOI = {10.1016/j.aim.2011.05.030},
	URL = {https://doi.org/10.1016/j.aim.2011.05.030},
}

\bib{L4}{article}
{
     AUTHOR = {Lusztig, G.},
     TITLE = {Green polynomials and singularities of unipotent classes},
     JOURNAL = {Adv. in Math.},
     FJOURNAL = {Advances in Mathematics},
     VOLUME = {42},
     YEAR = {1981},
     NUMBER = {2},
     PAGES = {169--178},
     ISSN = {0001-8708},
     MRCLASS = {20G40 (14M15)},
     MRNUMBER = {641425},
     DOI = {10.1016/0001-8708(81)90038-4},
     URL = {https://doi.org/10.1016/0001-8708(81)90038-4},
}

\bib{L3}{article}
{
     AUTHOR = {Lusztig, G.},
     TITLE = {Singularities, character formulas, and a {$q$}-analog of
              weight multiplicities},
     BOOKTITLE = {Analysis and topology on singular spaces, {II}, {III}
              ({L}uminy, 1981)},
    SERIES = {Ast\'erisque},
    VOLUME = {101-102},
    PAGES = {208--229},
    PUBLISHER = {Soc. Math. France, Paris},
    YEAR = {1983},
    MRCLASS = {17B10 (05A30 20G05 22E47)},
    MRNUMBER = {737932},
}

\bib{L}{article}
{
    AUTHOR = {Lusztig, G.},
     TITLE = {Characters of reductive groups over a finite field},
    SERIES = {Annals of Mathematics Studies},
    VOLUME = {107},
 PUBLISHER = {Princeton University Press, Princeton, NJ},
      YEAR = {1984},
     PAGES = {xxi+384},
      ISBN = {0-691-08350-9; 0-691-08351-7},
   MRCLASS = {20G05 (14L20 20C15)},
  MRNUMBER = {742472},
MRREVIEWER = {Bhama\ Srinivasan},
       DOI = {10.1515/9781400881772},
       URL = {https://doi.org/10.1515/9781400881772},
}

\bib{L2}{article}
{
      AUTHOR = {Lusztig, G.},
      title={Comments on my papers}, 
      year={2021},
      eprint={1707.09368},
      archivePrefix={arXiv},
      primaryClass={math.RT},
      url={https://arxiv.org/abs/1707.09368}, 
}

\bib{LV}{article}
{
	AUTHOR = {Lusztig, G.},
	Author = {Vogan, D.},
     TITLE = {Singularities of closures of {$K$}-orbits on flag manifolds},
   JOURNAL = {Invent. Math.},
  FJOURNAL = {Inventiones Mathematicae},
    VOLUME = {71},
      YEAR = {1983},
    NUMBER = {2},
     PAGES = {365--379},
      ISSN = {0020-9910,1432-1297},
   MRCLASS = {14L30 (20G10 22E47)},
  MRNUMBER = {689649},
MRREVIEWER = {H.\ H.\ Andersen},
       DOI = {10.1007/BF01389103},
       URL = {https://doi.org/10.1007/BF01389103},
}

\bib{LYPoisson}{article}
{
	AUTHOR = {Lu, J.-H.},
	Author = {Yakimov, M.},
	TITLE = {Group orbits and regular partitions of {P}oisson manifolds},
	JOURNAL = {Comm. Math. Phys.},
	FJOURNAL = {Communications in Mathematical Physics},
	VOLUME = {283},
	YEAR = {2008},
	NUMBER = {3},
	PAGES = {729--748},
	ISSN = {0010-3616,1432-0916},
	MRCLASS = {53D17 (22F30)},
	MRNUMBER = {2434745},
	MRREVIEWER = {Jorge\ A.\ Vargas},
	DOI = {10.1007/s00220-008-0536-z},
	URL = {https://doi.org/10.1007/s00220-008-0536-z},
}

\bib{QuantumSL2}{article}
{
	title={The real affine Grassmannian and quantum SL(2)}, 
	author={Macerato, M.},
	author={Taylor, J.},
	year={2024},
	eprint={2408.00931},
	archivePrefix={arXiv},
	primaryClass={math.RT},
	url={https://arxiv.org/abs/2408.00931}, 
}

\bib{MS}{article}
{
	AUTHOR = {Mars, J. G. M.},
	Author = {Springer, T. A.},
	TITLE = {Hecke algebra representations related to spherical varieties},
	JOURNAL = {Represent. Theory},
	FJOURNAL = {Representation Theory. An Electronic Journal of the American
		Mathematical Society},
	VOLUME = {2},
	YEAR = {1998},
	PAGES = {33--69},
	ISSN = {1088-4165},
	MRCLASS = {20G10 (14F32 14M17)},
	MRNUMBER = {1600804},
	MRREVIEWER = {Fran\c{c}ois\ Digne},
	DOI = {10.1090/S1088-4165-98-00027-2},
	URL = {https://doi.org/10.1090/S1088-4165-98-00027-2},
}

\bib{NadlerMatsuki}{article}
{
	AUTHOR = {Nadler, D.},
	TITLE = {Matsuki correspondence for the affine {G}rassmannian},
	JOURNAL = {Duke Math. J.},
	FJOURNAL = {Duke Mathematical Journal},
	VOLUME = {124},
	YEAR = {2004},
	NUMBER = {3},
	PAGES = {421--457},
	ISSN = {0012-7094,1547-7398},
	MRCLASS = {22E67 (14M15)},
	MRNUMBER = {2084612},
	MRREVIEWER = {Ulrich\ G\"{o}rtz},
	DOI = {10.1215/S0012-7094-04-12431-5},
	URL = {https://doi.org/10.1215/S0012-7094-04-12431-5},
}

\bib{NadlerRealGr}{article}
{
	AUTHOR = {Nadler, D.},
	TITLE = {Perverse sheaves on real loop {G}rassmannians},
	JOURNAL = {Invent. Math.},
	FJOURNAL = {Inventiones Mathematicae},
	VOLUME = {159},
	YEAR = {2005},
	NUMBER = {1},
	PAGES = {1--73},
	ISSN = {0020-9910,1432-1297},
	MRCLASS = {14F05 (14M15 20G15)},
	MRNUMBER = {2142332},
	MRREVIEWER = {Nicolas\ Perrin},
	DOI = {10.1007/s00222-004-0382-3},
	URL = {https://doi.org/10.1007/s00222-004-0382-3},
}

\bib{Ngo}{article}
{
	title={Weierstrass preparation theorem and singularities in the
space of non-degenerate arcs}, 
	author={Ngo, B.C.},
	year={2017},
	eprint={1706.05926},
	archivePrefix={},
	primaryClass={math.RT},
	url={https://arxiv.org/abs/1706.05926}, 
}

\bib{R}{article}
{
	AUTHOR = {Raskin, S.},
	TITLE = {D-modules on infinite dimensional schemes},
	Note = {\url{https://gauss.math.yale.edu/~sr2532/dmod.pdf}},
}

\bib{Ri}{article}
{
	AUTHOR = {Richardson, R.W.},
	TITLE = {On orbits of algebraic groups and Lie groups},
JOURNAL = {BULL. AUSTRAL. MATH. SOC} 
	Note = {},
    VOLUME ={25},
    YEAR={1982}
}

\bib{RSorder}{article}
{
	AUTHOR = {Richardson, R. W.},
	Author = {Springer, T. A.},
	TITLE = {The {B}ruhat order on symmetric varieties},
	JOURNAL = {Geom. Dedicata},
	FJOURNAL = {Geometriae Dedicata},
	VOLUME = {35},
	YEAR = {1990},
	NUMBER = {1-3},
	PAGES = {389--436},
	ISSN = {0046-5755,1572-9168},
	MRCLASS = {20G15 (20G20)},
	MRNUMBER = {1066573},
	MRREVIEWER = {Aloysius\ Helminck},
	DOI = {10.1007/BF00147354},
	URL = {https://doi.org/10.1007/BF00147354},
}

\bib{RSbook}{article}
{
	AUTHOR = {Richardson, R. W.},
        Author = {Springer, T. A.},
	TITLE = {Combinatorics and geometry of {$K$}-orbits on the flag
		manifold},
	BOOKTITLE = {Linear algebraic groups and their representations 
        ({L}os {A}ngeles, {CA}, 1992)},
	SERIES = {Contemp. Math.},
	VOLUME = {153},
	PAGES = {109--142},
	PUBLISHER = {Amer. Math. Soc., Providence, RI},
	YEAR = {1993},
	ISBN = {0-8218-5161-6},
	MRCLASS = {14L30 (20G15)},
	MRNUMBER = {1247501},
	MRREVIEWER = {E.\ Aky\i ld\i z},
	DOI = {10.1090/conm/153/01309},
	URL = {https://doi.org/10.1090/conm/153/01309},
}

\bib{Springer85}{article}
{
	AUTHOR = {Springer, T. A.},
	TITLE = {Some results on algebraic groups with involutions},
	BOOKTITLE = {Algebraic groups and related topics ({K}yoto/{N}agoya, 1983)},
	SERIES = {Adv. Stud. Pure Math.},
	VOLUME = {6},
	PAGES = {525--543},
	PUBLISHER = {North-Holland, Amsterdam},
	YEAR = {1985},
	ISBN = {0-444-87711-8},
	MRCLASS = {20G05 (14M15)},
	MRNUMBER = {803346},
	MRREVIEWER = {S.\ I.\ Gel\cprime fand},
	DOI = {10.2969/aspm/00610525},
	URL = {https://doi.org/10.2969/aspm/00610525},
}

\bib{Springer87}{article}
{
	AUTHOR = {Springer, T. A.},
	TITLE = {The classification of involutions of simple algebraic groups},
	BOOKTITLE = {J. Fac. Sci. Univ. Tokyo Sect. IA Math.},
	SERIES = {},
	VOLUME = {34},
	PAGES = {655–670},
	PUBLISHER = {},
	YEAR = {1987},
	ISBN = {},
	MRCLASS = {},
	MRNUMBER = {},
	MRREVIEWER = {},
	DOI = {},
	URL = {},
}

\bib{Steinberg}{article}
{
	AUTHOR = {Steinberg, R.},
	TITLE = {Endomorphisms of linear algebraic groups},
	SERIES = {Memoirs of the American Mathematical Society},
	VOLUME = {No. 80},
	PUBLISHER = {American Mathematical Society, Providence, RI},
	YEAR = {1968},
	PAGES = {108},
	MRCLASS = {14.50 (22.00)},
	MRNUMBER = {230728},
	MRREVIEWER = {E.\ Abe},
}

\bib{SW}{article}
{
	AUTHOR = {Sakellaridis, Y},
    AUTHOR = {Wang, J},
	TITLE = {Intersection complexes and unramifeid L-factors},
	BOOKTITLE = {},
	SERIES = {},
    JOURNAL={J. Amer. Math. Soc}
	VOLUME = {35},
	PAGES = {799-910},
	PUBLISHER = {},
	YEAR = {2022},
	ISBN = {},
	MRCLASS = {},
	MRNUMBER = {},
	MRREVIEWER = {},
}

\bib{T}{article}
{
	AUTHOR = {Takeda, S.},
	TITLE = {On dual groups of symmetric varieties and distinguished representations of p-adic groups},
	Note = {\url{https://arxiv.org/abs/2308.15800}},
}

\bib{ZhuIntroGr}{article}
{
	AUTHOR = {Zhu, X.},
	TITLE = {An introduction to affine {G}rassmannians and the geometric
		{S}atake equivalence},
	BOOKTITLE = {Geometry of moduli spaces and representation theory},
	SERIES = {IAS/Park City Math. Ser.},
	VOLUME = {24},
	PAGES = {59--154},
	PUBLISHER = {Amer. Math. Soc., Providence, RI},
	YEAR = {2017},
	ISBN = {978-1-4704-3574-5},
	MRCLASS = {14M15 (14D24 20F65 22E57)},
	MRNUMBER = {3752460},
	MRREVIEWER = {Felipe\ Zald\'{\i}var},
}
\end{biblist}
\end{bibdiv}

\end{document}